\newcommand{\N}{\mathbb{N}}
\renewcommand{\S}{\mathbb{S}}
\newcommand{\R}{\mathbb{R}}
\newcommand{\ip}[2]{\left\langle #1,#2\right\rangle}
\newcommand{\cG}{\mathcal{G}}
\newcommand{\sgn}{\mathrm{sgn}}
\newcommand{\er}{\mathcal{O}}
\newcommand{\calL}{{\mathcal{L}}}
\newcommand{\calO}{{\mathcal{O}}}
\newcommand{\Ldot}{\dot{L}}
\newcommand{\Ptilde}{\tilde{P}}
\newcommand{\calP}{\mathcal{P}}
\definecolor{babypink}{rgb}{0.96, 0.76, 0.76}
\definecolor{brightlavender}{rgb}{0.75, 0.58, 0.89}
\definecolor{lightfuchsiapink}{rgb}{0.98, 0.52, 0.9}
\definecolor{turquoise}{rgb}{0.19, 0.84, 0.78}
\definecolor{royalpurple}{rgb}{0.47, 0.32, 0.66}
\definecolor{seagreen}{rgb}{0.18, 0.55, 0.34}
\newtheorem{Thm}{Theorem} 
\newtheorem*{WT}{Winding Theorem}
\newtheorem*{FT}{Focussing Theorem}
\newtheorem{Quest}{Question}
\newtheorem{Lem}[Thm]{Lemma}
\newtheorem{Cor}[Thm]{Corollary}
\newtheorem{Prop}[Thm]{Proposition}
\theoremstyle{definition}
\newtheorem{Rem}[Thm]{Remark}
\newcommand{\calV}{{\mathcal{V}}}
\newcommand{\ymin}{{y_{\min}}}
\newcommand{\vtilde}{\tilde v}
\newcommand{\calH}{\mathcal{H}}
\newcommand{\const}{{\operatorname{const}}}
\newcommand{\vh}{v_{\mathrm{hor}}}
\newcommand{\etadot}{\dot{\eta}}
\newcommand{\ydot}{\dot{y}}
\newcommand{\zdot}{\dot{z}}
\newcommand{\zbar}{\overline{z}}
\newcommand{\ybar}{\overline{y}}
\newcommand{\Xtilde}{\tilde X}
\newcommand{\Rbar}{\overline{\R}}
\newcommand{\eps}{\varepsilon}
\newcommand{\trans}{{\mathrm{trans}}}
\newcommand{\ff}{{\mathrm{ff}}} 
\newcommand{\angl}{\operatorname{length}_Y}  
\newcommand{\pz}{\partial_z}
\newcommand{\Span}{\operatorname{span}}
\newcommand{\Rplus}{\R_+}
\newcommand{\id}{\operatorname{id}}
\newcommand{\myref}[1]{\cref{#1}\mynameref{#1}{\csname r@#1\endcsname}}
\newcommand{\Myref}[1]{\Cref{#1}\mynameref{#1}{\csname r@#1\endcsname}}
\def\mynameref#1#2{%
  \begingroup
    \edef\@mytxt{#2}%
    \edef\@mytst{\expandafter\@thirdoffive\@mytxt}%
    \ifx\@mytst\empty\else
    \space(\nameref{#1})\fi
  \endgroup
}
\title{Winding and focussing for geodesics passing a thin cuspidal neck}
\subjclass[2020]{53C22, 37D40, 53D25\\ \indent \keywordsname : Geodesics, Metric Degeneration, Cuspidal Singularity, Singular Hamiltonian Systems.}
\author{Daniel Grieser}
\address{Institut f\"ur Mathematik, Carl von Ossietzky Universit\"at Oldenburg}
\email{daniel.grieser@uni-oldenburg.de}
\author{J\o rgen Olsen Lye}
\address{Institut für Differentialgeometrie, Leibniz Universit\"at Hannover}
\email{joergen.lye@math.uni-hannover.de}
\date{\today}
\begin{document}

\pagestyle{plain}

\begin{abstract}
We study geodesics on a family $(M_\eps)$ of manifolds that have a thin neck,  which degenerate to a space with an incomplete cuspidal singularity as $\eps\to0$. There are essentially two classes of geodesics passing the waist, i.e.\ the cross section where the neck is thinnest: 1. Those hitting the waist almost vertically. We find that these exhibit a surprising focussing phenomenon as $\eps\to0$: certain exit directions will be preferred, for a generic limiting singularity.
2. Those hitting the waist obliquely at a uniformly non-vertical angle. They wind around the neck more and more as $\eps\to0$. We give a precise quantitative description of this winding. We illustrate both phenomena by numerical solutions.
Our results rest on a detailed analysis at the two relevant scales: The points whose distance to the waist is of order $\eps$, and those much farther away. This multiscale analysis is efficiently expressed in terms of blow-up.
\end{abstract}

\maketitle

\tableofcontents

\phantomsection

\keywords

\section{Introduction}
\label{Section:Intro}
 In \cite{GrGr15} the first author and Grandjean consider manifolds with a cuspidal singularity and study the geodesics that start at the singularity. In this paper we use and extend these results to  families of smooth Riemannian manifolds $M_\eps$, depending on a parameter $\eps>0$, that develop a thin neck and collapse to a manifold with a cuspidal singularity as $\eps\to0$. A typical example is the family of surfaces with elliptic cross section of eccentricity $\delta$,
\begin{equation}
\label{eqn:example}
M_\eps=\left\{u^2 + \frac{v^2}{1-\delta^2} = z^{2k}+\eps^{2k}\right\}\subset \R^3
\end{equation}
with fixed $k\geq 2$ and $\delta\in [0,1)$. The limit space $M_0$ has a cuspidal singularity at $u=v=z=0$.
Figure \ref{fig:SurfaceExamples} shows $M_\eps$ for $k=2$ and $\eps=1$ resp.\ $\eps=0.2$. In both cases $\delta=0.8$.

\begin{figure}[ht]
\includegraphics[width=.48\textwidth]{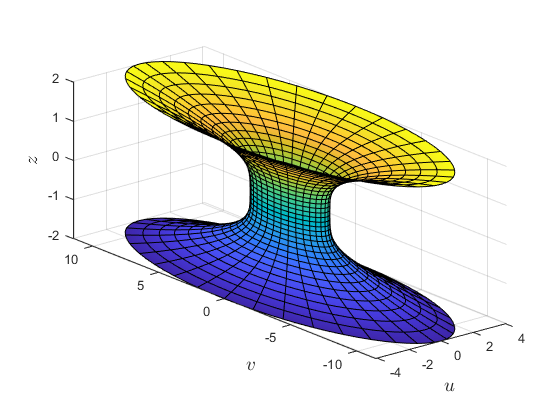}\hfill
    \includegraphics[width=.48\textwidth]{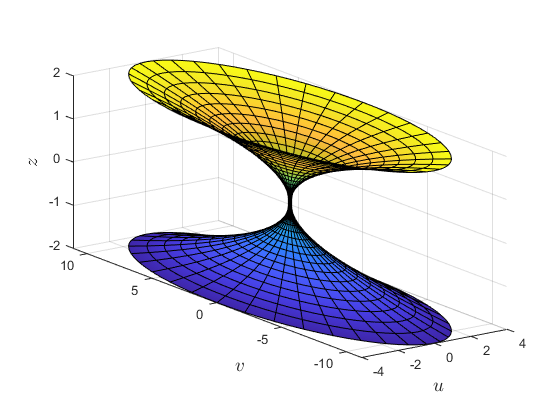}
    \caption{Examples of surface \eqref{eqn:example} with $k=2$ and $\eps=1$ and $\eps=0.2$ respectively. Here $\delta=0.8$.}
    \label{fig:SurfaceExamples}
\end{figure}

We study the geodesics that pass through the 'waist'\footnote{In the physics literature, an object like $M_\eps$ would probably be called a worm hole, in which case the waist is usually called a 'throat'.}
 of $M_\eps$ at $z=0$, uniformly as $\eps\to0$. 
The \textbf{angle of impact} is defined as \label{phiDef1}
$$ \varphi = \text{angle of the geodesic with the waist }\ \  \in[0,\tfrac\pi2]\,. $$
We first explain our results in the case of the family \eqref{eqn:example}. They apply to other than elliptic cross sections and to higher dimensions, see below for the precise setting.

There are two notable types of behaviour that geodesics exhibit on such a family of spaces (here, a 'fixed' quantity is one that is independent of $\eps$, as $\eps\to0$):

\begin{itemize}
 \item
 Winding:
 A geodesic starting at the waist $z=0$ at a fixed angle of impact $\varphi<\pi/2$ will wind around often before reaching a fixed positive $z$. See Figure \ref{fig:WindingExamples}.
The Winding Theorem below 
states that it winds $\sim C\eps^{-(k-1)}$ times, with an explicit constant $C$ depending on $\varphi$. We also give a uniform description of the number of revolutions as this angle tends to $0$ or $\frac{\pi}{2}$.
\item
Focussing:
Consider geodesics starting at the waist vertically, and where they arrive when they reach the cross sectional ellipse at $z=1$.
If $\delta=0$ then their arrival points are clearly evenly distributed, by rotational symmetry. However, if $\delta\neq0$ then a surprising focussing phenomenon occurs: most of these geodesics will veer strongly towards two distinguished points on that ellipse. These points are the minima of the distance to the $z$-axis on the curve.
See the Focussing Theorem below and Figure \ref{fig:EllipsePlots}. Focussing also happens for geodesics leaving the waist almost vertically, i.e.\ with $\varphi\to\pi/2$ sufficiently fast as $\eps\to0$.
\end{itemize}
The winding result relates to \cite{GriLye}. 
The focussing result yields an interpretation of some of the results of \cite{GrGr15} in terms of smooth approximations of the cuspidal singularity.



\subsection{Main results}
\subsubsection{Setup}
\label{CuspDef}
To motivate our general setup, consider the standard cuspidal singularity \eqref{eqn:example} with $\delta=0$ and $\eps=0$. Parametrizing it by $\R\times \S^1\to M_0$, $(z,\phi)\mapsto (z^k\cos\phi,z^k\sin\phi,z)$ we get, after a short calculation, that the Euclidean metric restricted to $M_0$ is
\begin{equation}
\label{eqn:standard cusp}
 A\, dz^2 + z^{2k}\,d\phi^2\,,\quad A = 1 + k^2 z^{2k-2}\,.
\end{equation}
We show in Appendix \ref{Section:GeneratingExamples} that 
for $\eps>0$ the factor $z^{2k}$ turns, not surprisingly, into ${z^{2k}+\eps^{2k}}$, and that for $\delta>0$ additional mixed terms arise. The result motivates the following setup, see also Remark \ref{Rem:motivation setup} below.

Let $Y$ be a smooth compact $(n-1)$-dimensional manifold and $I$ some open interval containing $0$.  Let \label{Mdef}
\[M = I\times Y.\]
We denote the $I$-variable by $z$ and points \label{zyDef} on $Y$ by $y$. We think of $I$ as vertical and of $Y$ as horizontal.

Fix $k\geq2$. We equip $M$ with a family of Riemannian metrics $g_\eps$, depending on a parameter $\eps\in(0,1)$ and extending continuously to $\eps=0$, where $g_0$ is only semi-positive definite at $z=0$:
\begin{equation}
g=g_{\eps}=(1-w^{\kappa}S)\,dz^2+2w^{2k}\, dz\cdot b +w^{2k}\, h\,.
\label{eq:MetricAnsatz}
\end{equation}
Here, $\kappa\geq k$ is an integer, and $S$, $b$ and $h$ are a function, one-form and metric on $Y$, respectively, which may depend on  $\eps, z$ as parameters.
The main feature of $g$ is the \lq degeneration \label{wdef1} factor\rq\ $w$, which is a continuous function of $(\eps,z)$ that is smooth and positive for $(\eps,z)\neq(0,0)$ and vanishes at $(0,0)$.
We invite the reader to keep $w=\sqrt[2k]{z^{2k}+\eps^{2k}}$ in mind as a guiding example. 

The last term of $g$ indicates that the horizontal slice $Y$ at height $z$ and parameter value $\eps$ is scaled by the factor $w(\eps,z)^k$, so this may be thought of roughly as the \textbf{diameter} of this slice.
We assume
\begin{equation}
\label{eqn: w monotone}
\mathrm{sgn}\left(\frac{\partial w}{\partial z}\right)=\mathrm{sgn}(z) 
,\quad  w(\eps,0)=\eps,
\end{equation}
which in particular means that $(M,g_\eps)$ is narrowest at $z=0$, which we therefore call the \textbf{waist}. This waist has diameter $\eps^k$.  Furthermore, we assume
\begin{equation}
\label{eqn:w norm eps=0} 
w(0,z)=\vert z\vert,
\end{equation}
which means that $(M,g_0)$ has a cuspidal singularity of order $k$ at $z=0$, compare \eqref{eqn:standard cusp}.
Finally, we express the uniformity of the degeneration by requiring that $w$ is 1-homogeneous in $(\eps,z)$, i.e.,
\begin{equation}
\label{eqn:w homogeneous}
w(\lambda \eps,\lambda z) = \lambda\, w(\eps,z)\quad \text{for all $(\eps,z)$ and $\lambda>0$}\,.
\end{equation}

See Section \ref{sec:setting} for more precise regularity assumptions on $S$, $b$ and $h$.
For our results stated below the interval $I$ and $\eps$ will have to be chosen sufficiently small.

An important special case of \eqref{eq:MetricAnsatz} are families of metrics of \textbf{warped product} type, i.e.,
\begin{equation}
\label{eqn:def warped product new}
g = (1-w^\kappa S) dz^2 + w^{2k}\,h\quad\text{with $h$ independent of $z$ and $S$ independent of $y$,}
\end{equation}
compare \eqref{eqn:standard cusp}.
The main feature distinguishing warped products from the general case \eqref{eq:MetricAnsatz} is the $y$-dependence of the function $S$ in the coefficient of $dz^2$. In comparison, it turns out that the mixed term involving $b$ is essentially negligible due to its coefficient $w^{2k}$.
The distinction between warped products and the general case is essential for the focussing theorem but not for the winding theorem. 

Note that in the warped product case the factor $1-w^\kappa S$ could be removed by redefining the $z$-variable, but this would create  complications in the $\eps$-dependence, so we keep the form \eqref{eqn:def warped product new}. Similarly, in the general case the mixed term may be removed by a change of variables, but this would again lead to a reduced regularity of the other coefficients at $\eps=z=0$, so we decided to work with the general form \eqref{eq:MetricAnsatz}. 

\begin{Rem}[Motivation for our setup]
\label{Rem:motivation setup} 
In Appendix \ref{Section:GeneratingExamples} we show that the spaces \eqref{eqn:example} are an example of our general setting, with $\kappa=2k-2$:  the metric on $M_\eps$ has the form \eqref{eq:MetricAnsatz} in a suitable parametrization (in particular, after making a change of the $z$-variable). For $\delta=0$ it is of warped product type.

Another motivation for considering metrics of the form \eqref{eq:MetricAnsatz} (in particular, the restriction on $\kappa$) is that, for a space with a cuspidal singularity of order $k$, embedded in a Riemannian manifold, the induced metric has this form in suitable coordinates (where $\eps=0$). See \cite{BeyGri:IGIC} (where it is proven that one can always choose $\kappa\geq k$, and that this is optimal; in \cite[Proposition 7.3]{GrGr15} this was claimed with $\kappa=2k-2$, but this is erroneous in general if $k\geq3$).

Note that if $\kappa$ is larger than $2k-2$, the additional $w$ factors may be absorbed into $S$, making $S$ vanish at $w=0$. 
\end{Rem}

We investigate the geodesics $\gamma$ (always assumed to have \textbf{unit speed}) on $(M,g_\eps)$ starting at a point of the waist $\{z=0\}=\{0\}\times Y$. We write 
\begin{equation}
\label{eqn:intro gamma}
\gamma(t)=(z(t),y(t))\,,\quad z(0)=0\,.
\end{equation}
W.l.o.g. we consider upward moving geodesics, i.e., we assume $\zdot(0)\geq0$. Also, recall that we denote by $\varphi$ the angle of $\gamma$ with the waist $\{z=0\}$.

Motivated by the case of $Y=\S^1$ we call $\ydot$ (or sometimes $|\ydot|$, where $|\cdot|$ denotes length with respect to the metric $h_{\gamma(t)}$) the \textbf{angular velocity} of $\gamma$. Note that the \textbf{actual (or metric) horizontal velocity} is $w^k\ydot$.

\subsubsection{Winding}
Our first main theorem concerns the total length of the $Y$-component of geodesics $\gamma$. We call this the \textbf{angular length} of $\gamma$ and denote it by
\[ \angl (\gamma) \coloneqq \int_0^{T} |\ydot(t)|\, dt \]
where $T$ is the time for which $z(T)=z_0$, for some small\footnote{depending on $S,b,h$} 
but fixed $z_0>0$. 
If $(Y,h)=(\S^1,d\phi^2)$ then the angular length is essentially $2\pi$ times the number of times that $\gamma$ winds around $\S^1$.

\begin{WT}[Rough version of Theorem \ref{Thm:WindingPhi}]
\crefname{}{Winding Theorem}{Winding Theorems}
\Crefname{}{Winding Theorem}{Winding Theorems}
\label{thm:intro winding}
A geodesic on $(M,g_\eps)$ starting at the waist upward at an angle $\varphi\in(0,\frac\pi2)$ to  the horizontal has angular length
\begin{equation}
\label{eqn:intro winding asymp}
\angl (\gamma) \sim \frac{\mathcal{C}_{\varphi}\cos\varphi}{\eps^{k-1}}. 
\end{equation}

The constant $\mathcal{C}_{\varphi}$ is decreasing in $\varphi$, $\mathcal{C}_{\varphi}\geq \mathcal{C}_{\pi/2}>0$, and depends only on $\varphi$ and the function $w$. The asymptotics \eqref{eqn:intro winding asymp} are uniform in $\varphi$ for  compact subsets of $(0,\pi/2)$.
\end{WT}

Here the asymptotics $\sim$ means that the quotient of left hand side and right hand side tends to one as $\eps\to0$, uniformly as functions on the 
set of all geodesics (for all metrics $g_\eps$, $\eps>0$) satisfying the stated conditions. We also give an estimate for the error term in Theorem \ref{Thm:WindingPhi}, uniformly to $\varphi=\frac\pi2$, see also \eqref{eqn:ang length warped phi}. 

There is a detailed description of $\mathcal{C}_{\varphi}$
as $\varphi\to0$, see \eqref{eqn:Cphi asymp}.

\begin{figure}

\begin{subfigure}[t]{0.48\textwidth}
\includegraphics[width=\textwidth]{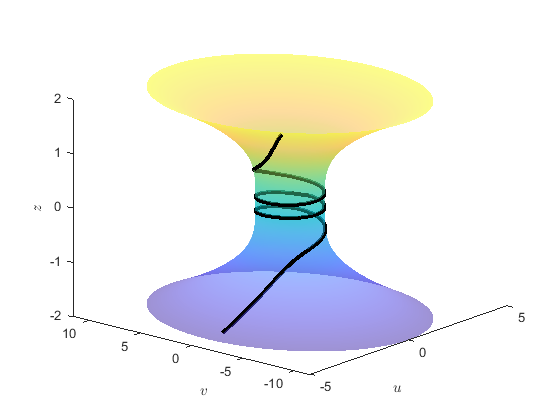}\hfill
    \caption{A geodesic passing the waist $z=0$ on the surface \eqref{eqn:example} with $k=2$, $\delta=0.8$, $\eps=1$.}
\end{subfigure}   
\hfill
\begin{subfigure}[t]{0.48\textwidth}
\includegraphics[width=\textwidth]{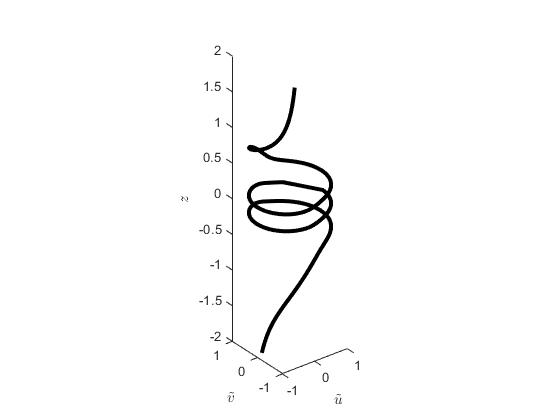}
    \caption{The geodesic of the left hand picture up close by showing $(\tilde{u},\tilde{v},z)=\left(\frac{u}{\vert (u,v)\vert},\frac{v}{\vert (u,v)\vert},z\right)$.}
\end{subfigure}

\medskip

    \begin{subfigure}[t]{0.48\textwidth}
\includegraphics[width=\textwidth]{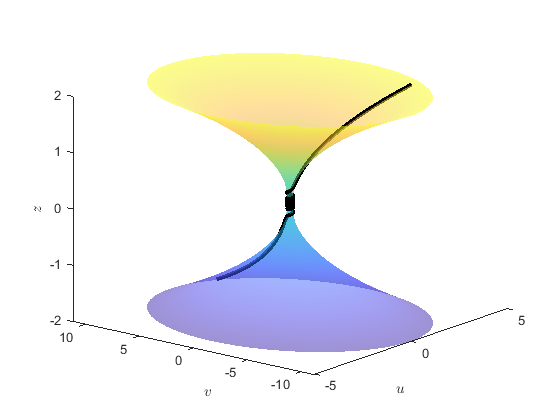}
    \caption{A geodesic passing the waist $z=0$ on the surface \eqref{eqn:example} with $k=2$, $\delta=0.8$, $\eps=0.3$.}
\end{subfigure}    
   \hfill     
    \begin{subfigure}[t]{0.48\textwidth}
\includegraphics[width=\textwidth]{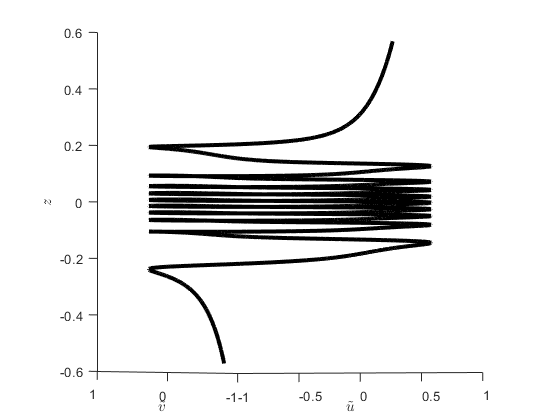}
    \caption{The geodesic of the left hand picture up close by showing $(\tilde{u},\tilde{v},z)=\left(\frac{u}{\vert (u,v)\vert},\frac{v}{\vert (u,v)\vert},z\right)$.}
\end{subfigure}

\caption{Winding: The surface \eqref{eqn:example} with $k=2$, $\delta=0.8$, and $\eps=1$ and $\eps=0.3$ respectively are shown on the left. Each picture shows a single unit speed geodesic passing the waist $z=0$ at an angle $\phi=\arccos 0.95$. The right hand pictures show the geodesics up close by stretching $(u,v)$ to lie on the unit circle. Numerical solutions.}
    
 \label{fig:WindingExamples}
\end{figure}

\subsubsection{Focussing}
The Winding Theorem says that the horizontal length of geodesics hitting the waist too obliquely diverges as $\eps\to 0$. We now turn to the complementary situation of almost vertical geodesics, more precisely
\[\tfrac\pi2 -\varphi =\er(\eps^{k-1}).\]

We need two more assumptions\footnote{We do not know if  these are necessary for the result, but we need them for our method to work. See Subsection \ref{subsec:further remarks}.}. First, we need to assume $\kappa\geq 2k-2$. This is in general a restriction when $k\geq 3$ (not for $k=2$), but all the examples in Appendix \ref{Section:GeneratingExamples}, in particular \eqref{eqn:example}, satisfy this for all $k$. Secondly, we need to assume that $S$ restricts to a Morse function on $Y$ when\footnote{Recall that, by definition, a function is Morse if its Hessian is non-degenerate at all critical points of the function. In particular, critical points must be isolated.}\footnote{\label{footnote:Spm}This restriction is well-defined in our example \eqref{eqn:example}. In general, $S$ need not be continuous at $(\eps,z)=(0,0)$, but only its pull-back to a blown-up space is, see Section \ref{subsec:b-up spaces} and Figure \ref{Fig:Blowup}. 
In this general case the assumption is that $S^\pm=S_{\vert \ff\cap M_{\pm}}$, which are the limits of $S_{|\eps=0}$ from $z>0$ and $z<0$, respectively, should be Morse. In principle, these two restrictions can be different functions on $Y$, but we ignore this for the introduction.}   $(\eps,z)=(0,0)$.
We show in Appendix \ref{Section:GeneratingExamples} that this is the case in the example  \eqref{eqn:example} if $\delta>0$.
 We  write 
 $$S_0=S_{\vert (\eps,z)=(0,0)}\,.$$

The focussing relates to certain geodesics on the space $(M,g_0)$, which has a cuspidal degeneration at $z=0$.  
It is proved in \cite{GrGr15} that for each minimum $y_{\min}$ of $S_0$ there is a unique continuous curve \eqref{eqn:intro gamma} with $t\in[0,t_0)\mapsto z(t)$ strictly increasing, which is a geodesic for $t>0$ and starts at $(0,y_{\min})$, i.e.\ 
$y(0)=y_{\min}$. We denote this geodesic by $\gamma_{y_{\min}}$.


\begin{FT}[Rough version of Theorem \ref{Thm:focussing} and Corollary \ref{Cor:Focussing}]
\label{Focussing Theorem} 
\label{Thm:IntroAlmostVertical}
Assume that $S_0$ is a Morse function and that $\kappa=2k-2$.
Consider a family of unit speed geodesics $\gamma_{\eps}=(z_{\eps},y_{\eps})$ passing the waist $z=0$ at $t=0$, with $(y_{\eps}(0),\eps \dot{y}_{\eps}(0))$ converging to some $(y_0,\vtilde_0)\in TY$ as $\eps\to0$.
    Then for generic limit $(y_0,\vtilde_0)$, the point
$\gamma_\eps(t)$  will approach $\gamma_{y_{\min}}(t)$ as $\eps\to0$, for some minimum $\ymin$ of $S_0$, for any fixed $t>0$.
\end{FT}
Corollary \ref{Cor:Focussing} is formulated in terms of a kind of $z$-parametrisation, where $\gamma^{z_1}$ means the point where $\gamma$ intersects $z=z_1$.  
Note that $\eps \dot{y}_{\eps}(0)$ equals $\eps^{-(k-1)}$ times the initial horizontal velocity $\eps^k \dot{y}_{\eps}(0)$ (see the remark after \eqref{eqn:intro gamma}), so its convergence implies that the horizontal velocity and hence 
$\frac\pi2-\varphi_{\eps}$ are of order $\eps^{k-1}$. Thus these are (initially) almost vertical geodesics.

This focussing effect is illustrated in Figure \ref{fig:EllipsePlots}. In the Example \eqref{eqn:example}, the function $S_0$ is a multiple of the distance to the $z$-axis, so is a Morse function if and only if $\delta\neq 0$. The curves $\gamma_{y_{\min}}$ are (upper parts of) the intersections of $M_0$ with the $v=0$ plane.
For a circular cross section the focussing effect disappears, see Figure \ref{fig:CirclePlot}.

In Remark \ref{Rem:explan eps to 0} we explain how the Focussing Theorem yields an explanation of some of the results of \cite{GrGr15}.

\begin{figure}

\begin{subfigure}[t]{0.48\textwidth}
\includegraphics[width=\textwidth]{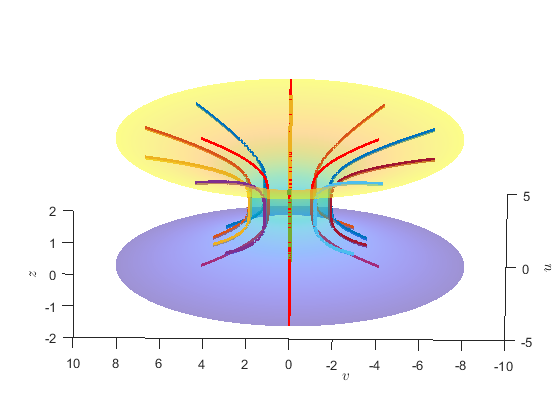}\hfill
    \caption{$\eps=1$.}
\end{subfigure}   
\hfill
\begin{subfigure}[t]{0.48\textwidth}
\includegraphics[width=\textwidth]{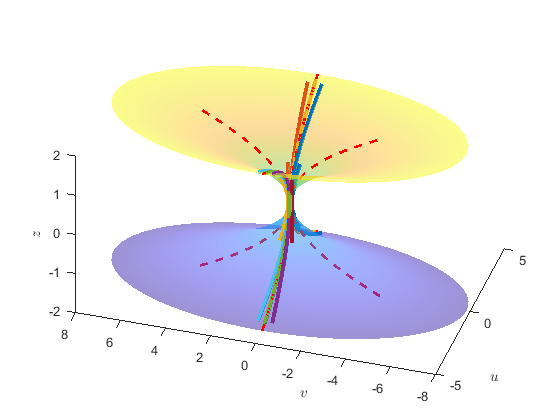}
    \caption{$\eps=0.2$.}
\end{subfigure}

\medskip

    \begin{subfigure}[t]{0.48\textwidth}
\includegraphics[width=\textwidth]{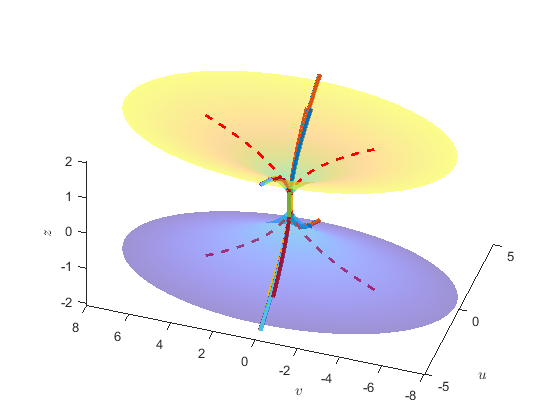}
    \caption{$\eps=0.1$.}
\end{subfigure}    
   \hfill     
    \begin{subfigure}[t]{0.48\textwidth}
\includegraphics[width=\textwidth]{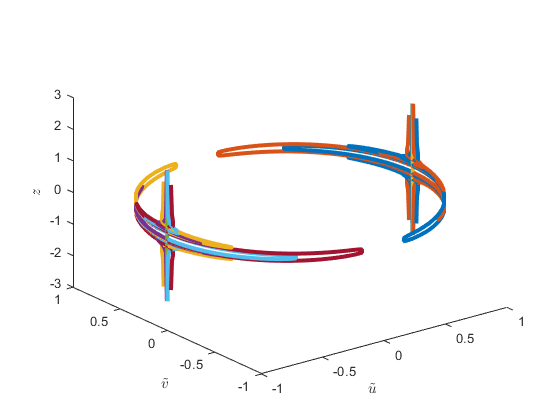}
    \caption{The behaviour near the waist of the left hand picture by showing $(\tilde{u},\tilde{v},z)=\left(\frac{u}{\vert (u,v)\vert},\frac{v}{\vert (u,v)\vert},z\right)$.}
\end{subfigure}

\caption{Focussing: The surface \eqref{eqn:example} with $k=2$, eccentricity $\delta=0.7$, and $\eps=1$, $\eps=0.2$, $\eps=0.1$ respectively are shown. Each has ten  unit speed geodesics passing the waist $z=0$ vertically, $\varphi =\pi/2$. The points of intersection  with the waist are uniformly distributed. The lower right hand picture shows the $\eps=0.1$ plot up close by stretching $(u,v)$ to lie on the unit circle. The dashed red lines are the lines $u=0$ and $v=0$, corresponding to the critical points of $S_0$. The points   $u=0$ are maxima, whereas $v=0$ are minima. Numerical solutions.}
  \label{fig:EllipsePlots}  
    
\end{figure}

\begin{figure}[ht]
\includegraphics[width=.6\textwidth]{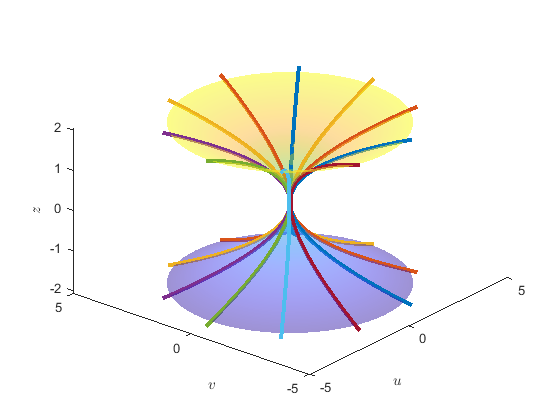}
\caption{Geodesics passing the waist $z=0$ on $M_{\eps}$ for $k=2$ with  $\delta=0$ and $\eps=0.1$. Here ten geodesics hitting the waist $z=0$ vertically, $\cos\varphi=0$, are shown. The points of intersection with the waist are uniformly distributed.}
    \label{fig:CirclePlot}
\end{figure}

\subsection{Main ideas}
\label{Section:MainIdeas}

We explain the main ideas in the case of the family $(M_\eps)$ in \ref{eqn:example}. To analyse the behaviour of geometric objects (like geodesics) on $M_\eps$ uniformly as $\eps\to0$ it is useful to think of $M_\eps$ as composed of two \lq regions\rq. These are not well-defined subsets but are described by orders of magnitude, so we call them \textit{regimes}:
\footnote{We use the following notation: $z\lesssim\eps$ means $z\leq C\eps$  for a positive constant $C$, independent of $\eps$, which may change from occurrence to occurrence, depending on previously introduced constants (in words: $z$ is at most of the order of $\eps$). $w\approx \eps$ means $w\lesssim \eps$ and $\eps \lesssim w$ (so $w,\eps$ are of the same order). Finally, $z\gg \eps$ means that $\neg(z\lesssim\eps)$ (so $z$ is large compared to $\eps$). As before, $\sim$ means that the quotient of left-hand side and right hand side tends to one as $\eps\to0$.
 }
\begin{enumerate}
\item For $z \lesssim \eps$ we have $w\approx \eps$, so the diameter of the cross section at height $z$ is $\approx \eps^k$.
We call this the \textbf{neck regime}.
\item
For $z\gg \eps$ we have $w\sim z$, so the diameter is $\sim z^k \gg \eps^k$. We call this the \textbf{bulk regime}.
\end{enumerate}
The idea of regimes is made precise by the concept of blow-up, as explained in Section \ref{subsec:b-up spaces}, see Remark \ref{Rem:bhs regimes} in particular\footnote{See also \cite{GriQuasimodes} for more explanations concerning regimes, scales, and blow-up.}. 
This will be needed for the Focussing Theorem, but not for the Winding Theorem.\smallskip

The \textbf{Winding Theorem} may be understood in terms of the neck regime:
consider a geodesic starting at $z=0$ at an angle $\varphi_0\in(0,\frac\pi2)$ to the waist. 
\begin{enumerate}
\item[(a)] 
 After one revolution $\gamma$ has travelled a distance $\approx\eps^k$, so its $z$-coordinate will have increased by $\approx \eps^k$.
 This remains true as long as $z\lesssim \eps$, so in order to reach $z=\eps$ the geodesic has to wind $\approx \eps/\eps^k = \frac1{\eps^{k-1}}$ times.
\item[(b)]
To analyse the total number of revolutions, consider the segment $z\in[N\eps,(N+1)\eps]$ for some $N\in\N$. The diameter at height $z$ is $\approx (N\eps)^k$, so $\gamma$ needs at most (see (c) below) $\approx \eps/(N\eps)^k = N^{-k}\frac1{\eps^{k-1}}$ revolutions to traverse this segment.

Since the sum $\sum_{N=1}^\infty N^{-k}$ converges, the total number of revolutions to reach a fixed $z=z_0>0$ is $\approx \frac1{\eps^{k-1}}$.
\item[(c)]
These considerations do not take into account that the angle $\varphi$ changes with increasing $z$. This change is easily analysed in the warped product case \eqref{eqn:def warped product new}, where the Clairaut relation states that $w^k\cos\varphi$ is constant along $\gamma$. This implies that $\varphi$ is approximately constant for $z\lesssim\eps$, and increases towards $\frac\pi2$ as $z$ increases. So the number of revolutions for $z\in[0,\eps]$ is $\approx\frac1{\eps^{k-1}}$ and for 
 $z\gg\eps$ is even less than estimated in (b).
\item[(d)]
This rough explanation of the exponent $k-1$ appearing in \eqref{eqn:intro winding asymp} is made precise by a simple calculation in the warped product case, see Proposition \ref{Prop:WarpedWinding}. For the general case the errors can be estimated and yield a weaker remainder term, see Theorem \ref{Thm:Winding}.
\end{enumerate}
Summarizing: Winding happens mostly in the neck regime, and the number $\frac1{\eps^{k-1}}$ arises as the quotient of its length $\approx\eps$ and the diameter $\approx\eps^k$.

\medskip

The \textbf{Focussing Theorem}
results from a combination of a scaling consideration and a \lq torque effect\rq:
\begin{enumerate}
\item (Scaling)
Consider a unit speed geodesic starting at the waist with a horizontal velocity $\vh \lesssim\eps^{k-1}$. After time $\eps$ it has moved vertically a distance $\sim\eps$ and horizontally a distance  $\lesssim\eps^k$. Recall that $\eps^k$ is the diameter of the waist, so the angle\footnote{We denote the angular variable by $y$ instead of $\phi$ for consistency with the rest of the paper.} $y$ has changed by $\lesssim1$.
Similarly, if it starts at height $z$ then the same consideration applies with $\eps$ replaced by $w=w(\eps,z)$.

This suggests that a limiting dynamics might exist as $\eps\to0$ if we write the dynamics in terms of $y$ and the rescaled horizontal velocity
$\theta=w^{-(k-1)}\vh$, and with time slowed down by the factor $w$.\footnote{Imagine the geodesic carries a clock along. Then this means that the clock is slowed down by the factor $w(\eps,z)$ when the geodesic is at height $z$.}
\item (Torque)
The next issue is to understand the effect of $y$-dependence of the function $S$ in the metric \eqref{eq:MetricAnsatz}. To focus on essentials, consider an $\eps$-independent metric on $I\times \S^1$ of the form
\begin{equation}
\label{eqn:doubly warped g}
 g = A(z,y) \,dz^2 + W(z)^2\,dy^2
\end{equation}
(no mixed terms).\footnote{This might be called a generalized doubly warped product: A doubly warped product has the form $A(y)\,dz^2 + W(z)^2\,dy^2$. The concept of a doubly warped product seems to have been introduced in \cite{DWarped}. It is a special case of the more general concept of \textbf{conformally separable} metrics defined in \cite{Yan40}, \cite{Won43}, where both warping functions are allowed to depend on all variables. What we call "generalized doubly warped" is a compromise, where one warping function is allowed to depend on all variables.}
If $A$ is independent of $y$ then this is a warped product metric, 
and rotational symmetry implies that, along a geodesic $\gamma(t)=(z(t),y(t))$, the (signed) angular momentum
\begin{equation}
\label{eqn:def L intro}
L := W(z)^2\, \ydot
\end{equation}
is constant (this is the Clairaut relation again). We will show in \eqref{eqn:doubly warped} that $y$-dependence of $A$ creates \textbf{torque}, i.e.\ variation in $L$. More precisely, we have
$$\Ldot =  \tfrac12 \partial_y A\,\zdot^2$$
Together with \eqref{eqn:def L intro} this shows that the angular motion is slowed down when $A$ decreases in $y$.
\item (Combining scaling and torque)
We now consider the metric \eqref{eqn:doubly warped g} with $A = 1- w^\kappa S(y)$ and $W=w^k$ as in \eqref{eq:MetricAnsatz}. We get
\begin{equation}
\label{eqn:Ldot intro}
 \Ldot = - \tfrac12 w^\kappa S'(y) \,\zdot^2
\end{equation}
So the angular motion is slowed down at points where $S'(y)>0$, and sped up where $S'(y)<0$. This already indicates a move towards minima of $S$.

Combining \eqref{eqn:def L intro} and \eqref{eqn:Ldot intro}, and simplifying things by setting $\zdot=1$ and taking $S(y)=y^2$ (locally near a minimum $y=0$), we get that the geodesic dynamics is approximately governed by the equations
\begin{equation}
\label{eqn:simple flow eqn 1}
\ydot = w^{-2k}L\,,\quad \Ldot = - w^{2k-2}y
\end{equation}
where we now take $\kappa=2k-2$. If $w$ was a fixed constant then this is easily solved: the $y$-component of any solution would oscillate around $y=0$, with constant amplitude. 

Using the rescaling suggested above one can now see that the dependence of $w$ on $z$ (and hence time) has a focussing effect, pushing $y$ towards $0$:
Since the horizontal velocity is $\vh=w^k \ydot$, its rescaling is $\theta=w^{-(k-1)}\vh = w\ydot = \frac{L}{w^{2k-1}}$. Rewriting  \eqref{eqn:simple flow eqn 1} in terms of $y$ and $\theta$ we obtain approximately the linear system
\[ \ydot = \frac1w \theta\,,\quad \dot\theta = \frac1w \left(-c\theta - y\right) \]
where $c=2k-1$. Here we have simplified the equations by setting $\zdot=1$ again and by replacing $w_z$ by $1$, which is approximately true for $z>\eps$.
The factors $\frac1w$ disappear when rewriting this in the time variable rescaled by $w$. Indicating the derivative with respect to the rescaled time by $'$ we arrive at the system
\[ y' = \theta\,,\quad \theta' = -c\theta - y\,. \]
All solutions of this linear system approach $y=\theta=0$, and this explains the focussing.
It turns out that the various simplifications that we made in this argument (e.g., taking $b=0$ and setting $\zdot=1$) are justified in that they don't affect this basic mechanism, and that the argument can be generalized to higher dimensions.
\end{enumerate}
The argument shows that the focussing happens mostly in the neck regime, just like the winding. Once the geodesic is close to the minimum (here $\ymin=0$) it follows the geodesic $\gamma_\ymin$ in the bulk regime.

\subsection{Outline of the paper}
In Section \ref{sec:setting} we introduce the blow-up construction which provides a rigorous foundation for the idea of scaling regimes. It
also allows us to formulate succinctly the precise regularity assumptions on the quantities $S,b,h$ occurring in the metric. We also explain the meaning of different choices of the scaling function $w$.

In Section \ref{sec:warped product} we consider geodesics in the warped product case (and without the parameter $\eps$). The Clairaut integral, which expresses conservation of angular momentum, allows a fairly direct treatment of the geodesics in this case. 
For the rest of the paper we find it convenient to view the geodesic flow as a Hamiltonian flow on the cotangent bundle. In Section \ref{Section:Hamiltonian} we recall this formalism for the reader's convenience.   

In Section \ref{Section:Dynamics} we derive the Hamilton equations for general metrics of the form \eqref{eq:MetricAnsatz}. We use these to estimate the variation of the angular momentum $L$ in case the metric is not of warped product type.
This is then used to prove the Winding Theorem in Section \ref{Section:winding}: In the warped product case it follows (with a very good error estimate) from the formulas in Section \ref{sec:warped product}, which are based on constancy of $L$, and the general case then follows in a similar way (but with weaker error estimates) using the previously derived estimates on $\Ldot$.

Sections \ref{Section:GeneralRescalings} and \ref{Section:alpha=2k-1} are devoted to the proof of the Focussing Theorem. In Section \ref{Section:GeneralRescalings} we lay the groundwork by introducing and studying systematically various rescalings of 
the cotangent bundle (corresponding to the rescaled horizontal velocity $\theta$ in the outline of main ideas above), and calculate the Hamilton vector field transferred to these. We find that there is a unique rescaling for which this vector field  (when multiplied by
an additional factor, corresponding to the time rescaling above) is both smooth up to the boundary at $\eps=0$ (as subset of the blown-up space) and sufficiently non-degenerate, meaning that its critical points are hyperbolic (under the assumption that $S_0$ is Morse). The study of these critical points then allows us to prove the Focussing Theorem in Section \ref{Section:alpha=2k-1}. A fundamental tool here is the  (un-)stable manifold theorem.

Note that our proof of the Focussing Theorem yields much finer results than stated. For example, some focussing already occurs for time $t$ on the order $C\eps$ for large $C$, rather than for fixed positive $t$ as stated in the Focussing Theorem (or correspondingly $z_1$ in its precise version Theorem \ref{Thm:focussing}).

\subsection{Further remarks, open problems}
\label{subsec:further remarks}
\hfill\\[2mm]
\noindent\textbf{Conical degeneration:}
If the limit space has a conical rather than a cuspidal singularity (this would correspond to $k=1$) then there is no winding, i.e.\ the horizontal length of geodesics stays bounded as $\eps\to0$. See \cite[Theorem C]{GriLye} and compare \cite[Definition 1.4, Lemma 1.5]{MeWu:Geo}. Note that this is consistent with formula \eqref{eqn:intro winding asymp}. Also, we expect that there is no focussing effect in this case. The essential difference between the conical and the cuspidal case is that in the conical case the radius of the waist and the neck length are comparable, while in the cuspidal case the neck length is much larger.

\medskip

\noindent\textbf{Some open problems:}
\begin{itemize}
\item (General profile functions)
 For the Winding Theorem, one can probably replace $z^k$ by more general profile functions $W$ as in our previous paper \cite{GriLye}. We then suspect that the term $\eps^{k-1}$ in the denominator of the winding asymptotics gets replaced by $W'(\eps)$. It would be interesting, but probably a lot harder, to extend the focussing results to more general profile functions.
\item (The role of $\kappa$)
As mentioned before (see Remark \ref{Rem:motivation setup}), the number $\kappa$ in \eqref{eq:MetricAnsatz} equals $2k-2$ for the example \eqref{eqn:example} if the Euclidean metric is used on $\R^3$, but for other ambient metrics it may not possible to choose it larger than $k$ (even by choosing different coordinates).
The Winding Theorem is true if $\kappa=k$, but for the Focussing Theorem we need to assume $\kappa\geq 2k-2$ for all the interesting statements. This is also the case when $\epsilon=0$: the results in \cite{GrGr15} hold only if $\kappa\geq 2k-2$. We expect that new phenomena occur when $k\leq \kappa<2k-2$.
\item (Properties of $S_0$ in case $\kappa=2k-2$)
The Focussing Theorem and several other statements of Section \ref{Section:alpha=2k-1} require $S_0$ to be a Morse function. It would be interesting to see if one can relax this assumption, for example to Morse-Bott or to finite order vanishing of $S_0'$ in case $n=2$. The dynamical systems methods of Section \ref{Section:alpha=2k-1} would need to be refined to handle this case.

In \cite{GrGr15} also the case of constant $S_0$ is analysed for the cuspidal limit space, and it is shown that there is a smooth exponential map based at the singularity then. We expect that, correspondingly, in our setup there is no focussing if $S_0$ is constant. Compare Remark \ref{Rem:explan eps to 0}, and Theorem \ref{Thm:SconstAsymptotics} for a partial result.
\item
(The angle of transmission)
Rather than studying geodesics that start at the waist
one can ask which geodesics starting at a point $p=(z_0,y)$ with $z_0>0$ pass the waist. For fixed $p$ there will be a cone of directions for which this is the case.  In the warped product case this cone is easily determined from the Clairaut relation, see Theorem \ref{Thm:warped product}. In the case of a Morse function $S_0$ and $n=2$ we expect that the cone is convex and that our methods can be used to determine the asymptotic behaviour of its opening angle as $\eps\to0$. For example, the Focussing Theorem suggests that for $p$ lying on a geodesic $\gamma_\ymin$ this angle should be much larger than for most other points at the same level $z_0$.
\item (Non-isolated singularities)
Both the present work and our previous work \cite{GriLye} deal with isolated (or 
zero-dimensional) singularities. It would be interesting to know what one can say about  singularities of positive dimension, like a wedge singularity. This would be relevant in complex geometry, where one often encounters singularities along a divisor, which has real codimension two.
\end{itemize}

\subsection{Related works}
We do not know of other works dealing with geodesics under metric degeneration. There are several works dealing with the behaviour of other geometric quantities under a controlled metric degeneration (often to conical metrics). For instance, there is work on the Laplace spectrum \cite{Gromov}, \cite{hyperbolic}, \cite{Yoshikawa}, \cite{MazMel}, \cite{GriJer}, \cite{GriQuasimodes}, the analytic torsion \cite{DaiMel}, \cite{ARS}, and the eta-invariant \cite{CheegerEta}, \cite{SeeleyConic}, \cite{DaiEta}) under metric degeneration. This list is of course very far from complete. 

There have been other studies of geodesics on a singular space (as opposed to a smooth family degenerating to something singular).
 The conical case ($k=1$) was treated by Melrose and Wunsch in \cite[Definition 1.4, Lemma 1.5]{MeWu:Geo} in the context of wave propagation, after early work by Stone \cite{Sto:EMISP}.
The first author discusses conical metrics in detail in \cite{Gri:NDOCS}. Previously, Bernig and Lytchak \cite{BerLyt:TSGHLSS} obtained first order information for geodesics on general real algebraic sets $X\subset\R^n$, by showing that any geodesic reaching a singular point $p$ of $X$ in finite time must have a limit direction at $p$.

Still in the context of $\eps=0$, geodesics hitting the singularity were analysed by Grandjean and the first author \cite{GrGr15}. The starting point of the present article was the question "Can one obtain the results of \cite{GrGr15} by letting family of metrics degenerate to a cuspidal metric? Do any new phenomena appear?" The answer, in so many words, is "yes and yes".

\subsection{Notation}
We have quite a lot of notation. Some notation which gets used repeatedly is listed in the table below. Symbols which are only used in the section they get introduced will generally not be listed. This is for instance the case for most notation introduced and used only in Section \ref{Section:alpha=2k-1}.
\begin{table}[H]
\begin{center}
  \begin{tabular}{| l | c | c |}
    \hline
    \textbf{Symbol} &  \textbf{Meaning} &  \textbf{Introduced}\\ \hline
    $M=I\times Y$ & manifold & p. \pageref{Mdef} \\ \hline
    $(z,y)\in M$ & point in $M$ &   p. \pageref{zyDef}\\ \hline
    $g,S,b,h$& metric on $M$ and components thereof & \eqref{eq:MetricAnsatz}, \eqref{eq:MetricAnsatz-general1} \\ \hline 
    $w$& singular scaling function & pp. \pageref{wdef1}, \pageref{wdef2} \\ \hline
    $k\geq2$& order of the cusp & pp. \pageref{CuspDef}, \pageref{wdef1} \\ \hline
     $\kappa\in [k,2k-2]$& order to which $g$ differs from warped product, roughly & \eqref{eq:MetricAnsatz}, \eqref{eq:MetricAnsatz-general1} \\ \hline     
    $\varphi$   & angle of impact  & pp. \pageref{phiDef1}, \pageref{phiDef2} \\ \hline        
    $(z,\xi)\in T ^*I$ & cotangent variables got vertical direction & p. \pageref{T*IDef} \\ \hline
	$(y,\eta)\in T^*Y$ & cotangent variables for horizontal direction & p. \pageref{T*YDef} \\ \hline
	$X_0$ & blowup of $[0,1)_\eps \times I$ & p. \pageref{X0Def} \\ \hline    
	$X=X_0\times Y$& blowup of $[0,1)_\eps \times M$ & p. \pageref{XDef}, Fig. \ref{Fig:Blowup} \\ \hline
	$X^*$& pullback of $T^*M$ to $X$ & p. \pageref{X*Def} \\ \hline    
	$Z=\frac{z}{\eps}, E=\frac{\eps}{z}>0$& projective coordinates on (parts of) $X$ (or $X_0$ or $X^*$) & pp. \pageref{ZDef}, \pageref{EDef}, Fig. \ref{Fig:Blowup}\\ \hline 
	$f(Z)=\frac{w}{\eps}$, $F(E)=\frac{w}{z}$ & profile functions & \eqref{eqn:def f}, \eqref{eqn:def F}, \eqref{eqn:props f}\\ \hline   
	$\ff$, $M_{\pm}$& boundary hypersurfaces of the blowup $X$ & p. \pageref{ffMDef}, Fig. \ref{Fig:Blowup}\\ \hline       
	$\calH\colon X^*\to \R$& (usual) Hamiltonian  & pp. \pageref{HamDef1}, \pageref{HamDef2} \\ \hline 
	$L=\vert \eta\vert$  & angular momentum & p. \pageref{LDef2} \\ \hline 
	$V$& geodesic vector field & pp. \pageref{VDef1}, \pageref{VDef2}\\ \hline    
	$\theta=\frac{\eta}{w^{2k-1}}$& rescaled momentum & p. \pageref{thetaDef}\\ \hline    
	$\tau$, $'=\frac{d}{d\tau}=w\frac{d}{dt}$& rescaled time and derivative & p. \pageref{tauDef} \\ \hline    
	$\mathcal{V}=wV$ & rescaled geodesic vector field & p. \pageref{calVDef} \\ \hline      
  \end{tabular}
\end{center}
\label{tab:Symbols}
\end{table}

\section{Fundamentals on the scaling function, blow-up, and the metric}
\label{sec:setting}

\subsection{The singular scaling function $w$}\label{subsec:w}
\label{wdef2}
We first discuss the geometric meaning of the singular scaling function $w$.
Recall that we assume \eqref{eqn: w monotone}, \eqref{eqn:w norm eps=0}, \eqref{eqn:w homogeneous}.
A family of examples is\footnote{The main results of this paper hold also for $w=w_p$ with $p\geq2$ real. The reason being that then $w_p$ is at least a $C^{2,\alpha}$- function, so the geodesic vector field is $C^{1,\alpha}$, hence the geodesic flow is well-defined with $C^1$ dependence on initial data. To simplify the presentation we assume $w$ to be smooth.}
\begin{equation}
\label{eqn:choices w}
w_p=\sqrt[p]{\eps^{p}+\vert z\vert^{p}},\ p\in [2,\infty).
\end{equation}
Homogeneity is a natural assumption since $\eps$, $z$ and $w$ should all have the same physical unit of length. Then \eqref{eqn: w monotone} (second part), \eqref{eqn:w norm eps=0} are normalizations: They say that  $g_0=dz^2 + z^{2k} h$, the standard cuspidal metric of order $k$, and that $(g_{\eps})_{\vert \{0\}\times Y} = \eps^{2k} h$, which means that the neck at $z=0$ has a \lq diameter\rq\ of order $\eps^k$.

Different choices of $w$ correspond to different geometric 'shapes' of the spaces $(M,g_\eps)$ near their waist $z=0$ in the following sense.
Consider a fixed $\eps>0$. Write $Z=\frac{z}{\eps}$.  Then for $p=2$ in \eqref{eqn:choices w} 
we get
$$ w_2^{2k}/\eps^{2k} = (Z^2+1)^k = 1 + kZ^2 + \er(Z^4) $$
while for $p=2k$ we get
$$w_{2k}^{2k}/\eps^{2k} = 1+Z^{2k}.$$
Thus, for $w_2$ the approximating spaces have a 'non-degenerate' waist at $z=0$, while for $w_{2k}$ these spaces are flat in $Z$ to order $2k$ at $z=0$. 

The function $f(Z)\coloneqq w(Z,1)=w/\eps$ will play a role in the fine structure of our statements. It is hidden in the constant $C_\varphi$ in the Winding Theorem and controls the strength of the focussing in the Focussing Theorem.

\subsection{The blown-up spaces $X$ and $X^*$}
\label{subsec:b-up spaces}
To make the idea of scaling, explained in the introduction, precise we introduce certain blown-up spaces (although we only really use this for the Focussing Theorem, not for the Winding Theorem). This also allows to formulate succinctly the precise smoothness assumptions on the objects $S,b,h$ occurring in the metric. (Note that the function $w$ is not smooth at $(\eps,z)=(0,0)$, and in Appendix \ref{Section:Elliptic} we show for the example in \eqref{eqn:example} that $S,b,h$ depend on $w$, so they are not smooth there either.)
Recall that $S,b,h$ depend on $(\eps,z,y)$, so it is natural and useful to express them as objects on the space  
\begin{equation}
\label{eqn:total space}
[0,1)_\eps \times M = [0,1)_\eps \times I_z \times Y \,,
\end{equation}
the \textbf{total space} for our problem.

The non-smoothness only occurs in the variables $(\eps,z)$.  
Observe that any 1-homogeneous function of $(\eps,z)$ which is smooth outside $(0,0)$ becomes smooth down to $r=0$ when written in polar coordinates $r=\sqrt{\eps^2+z^2}$, $\phi=\arctan \frac z\eps$, because $w(\eps,z) = w(r\cos\phi,r\sin\phi) = r\,w(\cos\phi,\sin\phi)$. 
For our investigations, in particular for a proper understanding of the proof of the Focussing Theorem, it is useful to restate the idea of 'expressing things in polar coordinates' geometrically, in terms of blow-up.
We introduce the basic notions that we need and refer the reader to \cite{Mel:DAMWC}, \cite{Gri:BBC}, \cite{Mel:RBIASS}  for details.
\subsubsection{Definition of the blown-up spaces}
By definition, the \textbf{blow-up} of $\Rplus\times \R$ (where $\Rplus=[0,\infty)$) at the origin is the domain of polar coordinates, 
\begin{equation}
\label{eqn:blow-up1}
[\Rplus\times\R;(0,0)] \coloneqq 
\Rplus\times[-\tfrac\pi2,\tfrac\pi2] 
\end{equation}
This is related to the unblown-up space $\Rplus\times\R$ by the \textbf{blow-down map}, which by definition is simply the polar coordinates map
$\beta_0: [\Rplus\times\R;(0,0)]\to \Rplus\times\R\,,\quad (r,\phi) \mapsto (r\cos\phi, r\sin\phi)$. The \textbf{front face} is the subset $\ff_0\coloneqq\beta_0^{-1}((0,0)) = \{r=0\}$; it is parametrized by $\phi$. Note that $\beta_0$ collapses $\ff_0$ to the point $(0,0)$ but is a diffeomorphism between the complements of these sets.\footnote{This corresponds to the well-known fact that polar coordinates provide a coordinate system only in $r>0$, not at the origin.}
A suggestive graphical representation is Figure \ref{Fig:Blowup0}, where some of the rays $\phi=\const$ are drawn. The key thing to note in Figure \ref{Fig:Blowup0} is how the direction of approaching $(0,0)$ in the right hand picture corresponds to points of the front face in the left hand picture. 
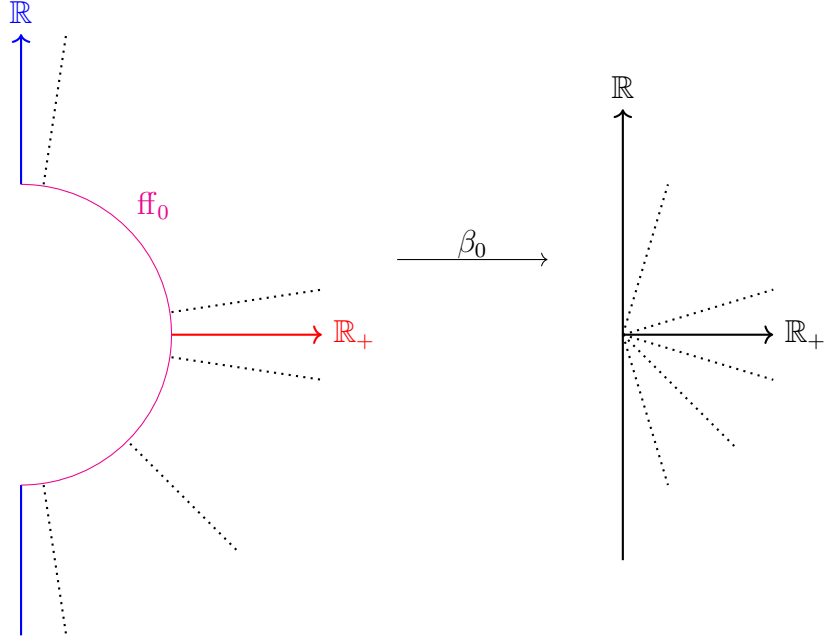
\begin{figure}
\begin{tikzpicture}
\draw[red,thick,->] (2,0) -- (4,0) node[anchor=west] {$\R_+$};
\draw[blue,thick,->] (0,2) -- (0,4) node[anchor=south] {$\R$};
\draw[blue,thick,-] (0,-2)--(0,-4);
\draw[magenta] (0,-2) arc (-90:90:2cm);
\draw[magenta] (1.4,1.4) node[anchor=south west] {$\mathrm{ff}_0$};
\draw[dotted,thick] (2,0.3)--(4,0.6);
\draw[dotted,thick] (0.3,2)--(0.6,4);
\draw[dotted,thick] (2,-0.3)--(4,-0.6);
\draw[dotted,thick] (0.3,-2)--(0.6,-4);
\draw[dotted,thick] (1.45,-1.45)--(2.9,-2.9);
\draw[dotted,thick] (8,0)--(8.6,2);
\draw[dotted,thick] (8,0)--(10,0.6);
\draw[dotted,thick] (8,0)--(8.6,-2);
\draw[dotted,thick] (8,0)--(10,-0.6);
\draw[dotted,thick] (8,0)--(9.5,-1.5);
\draw[->] (5,1)--(7,1);
\draw node at (6,1.2) {$\beta_0$};
\draw[thick,->] (8,0) -- (10,0) node[anchor=west] {$\R_+$};
\draw[thick,->] (8,-3)--(8,3) node[anchor=south] {$\R$};

\end{tikzpicture}
\caption{The blow-up $[\R_+\times \R;(0,0)]$ is shown on the left. The blown-down space $\R_+\times \R$  is shown on the right. The dotted lines represent $\phi=\const.$ lines.} 
\label{Fig:Blowup0}
\end{figure}

We define \label{X0Def}
$$X_0=[\,[0,1)_\eps\times I;(0,0)]$$ 
to be the part of \eqref{eqn:blow-up1} corresponding to $\eps\in[0,1)$, $z\in I$, i.e.\ $X_0\coloneqq\beta_0^{-1}([0,\eps)\times I)$.

Similarly, we blow up the total space \eqref{eqn:total space} at $\{\eps=z=0\}=\{(0,0)\}\times Y$, by simply taking the product with $Y$: we define \label{XDef}
\begin{equation}
\label{eqn:def X}
X  = [\,[0,1)_\eps \times M;\{\eps=z=0\}] = [\,[0,1)_\eps\times I\times Y;\{(0,0)\}\times Y] \coloneqq X_0\times Y
\end{equation}
and call $\beta = \beta_0 \times \id_Y: X \to [0,1)_\eps \times M$ the blow-down map and $\ff\coloneqq \beta^{-1}(\{\eps=z=0\})$ the front face.

We also need a version of this for the cotangent bundle, on which the geodesic vector field lives. Here the total space is,  
\begin{equation}
\label{eqn:total cot space}
[0,1)_\eps \times T^*M = [0,1)_\eps \times I_z\times\R_\xi \times T^*Y 
\end{equation}
where we identified\label{T*IDef} $T^*I=I\times\R$ with variables denoted $z,\xi$. Note that there is no codirection for $\eps$ since this is a parameter.
Correspondingly the blown-up total cotangent space is defined as\label{X*Def}
\begin{equation}
\label{eqn:def X*}
X^* = [\,[0,1)_\eps\times I_z\times \R_\xi\times T^*Y;\{(0,0)\}\times \R_\xi\times T^*Y] \coloneqq X_0\times \R_\xi\times T^*Y.
\end{equation}
This comes with its own blow-down map 
\begin{align*}
&\tilde{\beta}\colon X^*\to [0,1)_{\eps} \times T^*M,\quad \tilde{\beta}=\beta_0\times \mathrm{id}_{\R\times T^*Y}
\end{align*}
and is a vector bundle of rank $n$ with projection
\begin{equation}
\label{eqn:X* bundle}
\begin{aligned}
&\tilde{\pi}\colon X^*\to X\\
&\tilde{\pi}(x_0,\xi,y,\eta)=(x_0,y)
\end{aligned}
\end{equation}
where $x_0\in X_0$ and\label{T*YDef} $(y,\eta)\in T^*Y$.
This means $X^*$ sits in the following commutative diagram:
\[
\begin{tikzcd}
X^* \arrow{r}{\tilde{\beta}} \arrow[swap]{d}{\tilde{\pi}} & 
{[0,1)}_{\eps} \times T^*M \arrow{d}{\mathrm{id\times\pi}} \\ 
X\arrow{r}{\beta} & 
{[0,1)}_{\eps}\times M.
\end{tikzcd}
\]
We will get back to a rescaled version of the blown up cotangent bundle $X^*$ in Sections \ref{Section:GeneralRescalings} and
\ref{Section:alpha=2k-1}. 

\smallskip

The spaces $X_0$, $X$ and $X^*$ are manifolds with corners having three boundary hypersurfaces.\footnote{We ignore the boundary hypersurfaces at $z\in \partial I$ since they play no role in this discussion.} For $X$ these are
the front face\label{ffMDef} $\ff$ and the upper and lower limit faces $M_\pm$, defined by
$M_\pm=\{\phi=\pm\frac\pi2\}$ in polar coordinates, or 
$$ M_\pm \coloneqq \beta^*(\{\pm z \geq 0, \eps=0\}) $$
where $\beta^*(S) = \overline{\beta^{-1}(S\setminus \{z=\eps=0\})}$ denotes the \textbf{lift} of a set $S$  not contained in the center of blow-up.
The faces $M_\pm$ are canonically identified, via $\beta$, with the upper and lower half of $M$, i.e.,
\begin{equation}
\label{eqn:Mpm diffeo}
M_\pm \cong I_\pm \times Y
\end{equation}
naturally, where $I_\pm= \{z\in I: \pm z \geq 0\}$.
There are corresponding faces of $X^*$,  $\tilde{\pi}^{-1}(M_\pm)$.
There are also two corners of codimension two: $M_\pm\cap\ff$ in $X$ and $\tilde{\pi}^{-1}(M_\pm\cap\ff)$ in $X^*$.

See Figure \ref{Fig:Blowup} for an illustration.
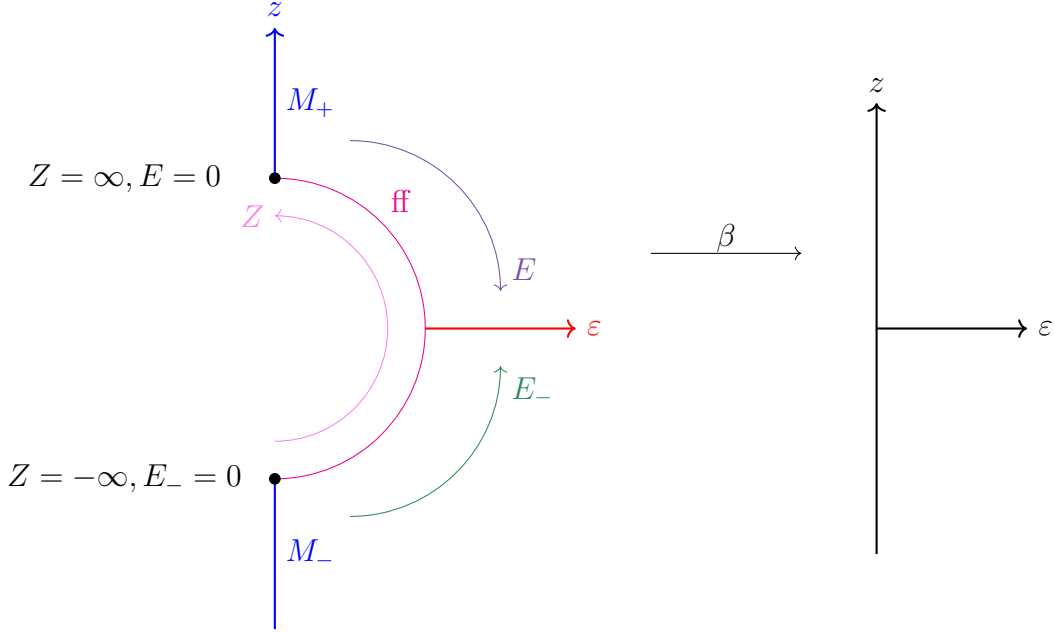
\begin{figure}
\begin{tikzpicture}
\draw[red,thick,->] (2,0) -- (4,0) node[anchor=west] {$\eps$};
\draw[blue,thick,->] (0,2) -- (0,4) node[anchor=south] {$z$};
\draw[blue,thick,-] (0,-2)--(0,-4);
\draw[magenta] (0,-2) arc (-90:90:2cm);
\draw[lightfuchsiapink,->] (0,-1.5) arc (-90:90:1.5cm) node[anchor=east] {$Z$};
\draw[blue] (0,3) node[anchor=west] {$M_+$};
\draw[blue] (0,-3) node[anchor=west] {$M_-$};
\draw[magenta] (1.4,1.4) node[anchor=south west] {$\mathrm{ff}$};
\filldraw[black] (0,-2) circle (2pt);
\draw node at (-2,-2) {${Z=-\infty, E_-=0}$};
\filldraw[black] (0,2) circle (2pt);
\draw node at (-2,2) {${Z=\infty, E=0}$};
\draw[seagreen,->] (1,-2.5) arc (-90:0:2cm) node[anchor=north west] {$E_-$};
\draw[royalpurple,->](1,2.5) arc (90:0:2cm) node[anchor=south west] {$E$};
\draw[->] (5,1)--(7,1);
\draw node at (6,1.2) {$\beta$};
\draw[thick,->] (8,0) -- (10,0) node[anchor=west] {$\eps$};
\draw[thick,->] (8,-3)--(8,3) node[anchor=south] {$z$};

\end{tikzpicture}
\caption{The blown-up space $X$, with the $Y$-directions suppressed, is shown on the left. The blown-down space without $Y$, $[0,1)_{\eps}\times I_z$, is shown on the right. The curved arrows indicate the three projective coordinates $Z=\frac{z}{\eps}$, $E=\frac{\eps}{z}$, $E_-=-\frac{\eps}{z}$.} 
\label{Fig:Blowup}
\end{figure}
\smallskip

The function $w$, as well as many other functions appearing in the investigation, are  smooth on the blow-up, by which we mean the following. A function $f\colon [0,1)_{\eps}\times M\to \R$ of is \textbf{smooth on the blow-up} if $\beta^* f\colon X\to \R$ is smooth. In other words, $f(\eps,z,y)$ is smooth when written in terms of $r,\phi,y$.  We use the same letter $w$ for the function on the unblown-up spaces and for the blown-up spaces $X_0$, $X$ and $X^*$. It is a boundary defining function for the respective front faces, i.e.\ vanishes on them to precisely first order and is positive otherwise.

\subsubsection{Projective coordinates}
For calculations it is easier to use \textbf{projective coordinates} on $X_0$ instead of polar coordinates. They have the advantage of simple formulas (no transcendental functions appear), but the disadvantage that one needs three such coordinate systems to cover the whole space. 

The first projective coordinate system is \label{ZDef}
$$ Z= \frac z\eps \in\R\,,\quad \eps \geq 0 $$
and is defined on the subset $\{|\phi|<\frac\pi2\}$, i.e.\ on the complement of the upper and lower boundary hypersurface $I_\pm$, with $\eps=0$ corresponding to the front face and $Z\to\pm\infty$ when approaching $I_\pm$.
This is to be understood as follows: Points $(\eps,z)$ in $\{\eps>0\}$ can be specified by $Z=\frac z\eps$ (slope) and $\eps$. Now 
the set $\{\eps>0\}$ corresponds to $X_0\setminus\{I_+\cup I_- \cup \ff_0\}$, so the pull-backs of $Z,\eps$ to this set are smooth coordinates on it because $\beta_0$ is a diffeomorphism outside $\ff_0$. We denote the pull-backs by $Z,\eps$ again. 
They extend to a smooth coordinate system on $X_0\setminus\{I_+\cup I_-\}$ since
$Z=\tan\phi$, $\eps=r\cos\phi$ are smooth down to $r=0$.

The second projective coordinate system is \label{EDef}
$$ z\geq0\,,\quad E = \frac\eps z \geq0\,, $$
defined \lq near the upper corner\rq, more precisely in $\{\phi>0\}$. Here $z=0$ corresponds to the front face and $E\to\infty$ when approaching the $\eps$-axis $\phi=0$. There is a third system, $z\leq0$, $E_-=-\frac \eps z\geq0$, covering the bottom part $\{\phi<0\}$.

We also use these coordinates on (subsets of)  $X$ and $X^*$. In terms of $Z$, we may identify the front face $\ff_0$ of $X_0$ with $\Rbar=\R\cup\{\pm\infty\}$, and then
\begin{equation}
\label{eqn:ff diffeo}
\ff \cong \Rbar_Z \times Y
\end{equation}
naturally, where the index $Z$ serves as a reminder that the variable on $\Rbar$ is $Z$. 

In terms of the $(Z,\eps)$  system we can express $w$ as 
\begin{equation}
\label{eqn:def f}
w = \eps f(Z),\quad f(Z) \coloneqq w(1,Z)
\end{equation} 
and in terms of the $(z,E)$ system as
\begin{equation}
\label{eqn:def F}
w = z F(E),\quad F(E) = w(E,1).
\end{equation}
Then $f,F$ are smooth on $[0,\infty)$ and are equivalent in the sense that $f(Z)=ZF(\frac1Z)$. The assumptions on $w$ translate into
\begin{equation}
\label{eqn:props f}
f(0)=1,\ \sgn\left(\tfrac{\partial f}{\partial Z}\right) = \sgn\, Z,\ f(Z) \sim |Z| \text{ as } |Z| \to \infty,
\end{equation}
plus the analogous properties for $F$.

\begin{Rem}[Regimes and boundary hypersurfaces]
\label{Rem:bhs regimes} 
The blow-up construction of $X$ allows to make the idea of regimes, mentioned in Section \ref{Section:MainIdeas}, precise.
The boundary hypersurfaces $\ff$ and $M_+$ correspond to the neck regime and the bulk regime, respectively (say in $z\geq0$ for simplicity): The neck regime corresponds to $z\leq C\eps$ for small $\eps$. Now $z\leq C\eps$ means $Z\leq C$, so this corresponds to small neighbourhoods of compact subsets of $\ff\setminus M_+$.
The bulk regime corresponds to $\frac z\eps\to\infty$, so $Z\to\infty$ or equivalently $E\to 0$, which characterizes $M_+$. 

The edge $\ff\cap M_+$ corresponds to the transition from neck to bulk regime. To have a geometric object encoding this transition is fundamental for the analysis: the central  issue for the Focussing Theorem are the critical points of the rescaled Hamilton vector field, and they lie on this edge.
\end{Rem}

\subsection{Smoothness of the metric} \label{ssec:smoothness}
Having introduced suitable blow-ups, we can finally formulate our smoothness assumptions on our family of metrics  
\begin{equation}
\label{eq:MetricAnsatz-general1}
g=g_{\eps}=(1-w^{\kappa}S)dz^2+2w^{2k} dz\cdot b +w^{2k} h
\end{equation}
 in \eqref{eq:MetricAnsatz}. We assume that $S,b,h$ \lq are smooth on the blow-up\rq, i.e.,
\begin{align*} 
 &\beta^*S \colon X\to \R\\
 &\beta^*b \colon X\to T^*Y\\ 
 &\beta^* h \colon X\to T^*Y\otimes T^*Y
 \end{align*}
are all smooth. Here $\beta^* S=S\circ \beta$, $\beta^*b=b\circ \beta$ and so on. We shall in general suppress the blow-down from the notation, and write $S$ for both $S$ and $\beta^*S$ and similarly for $b,h$. Note that since $\beta$ is a diffeomorphism away from the front face $\ff$, this implies that $S,b,h$ are smooth at all $(\eps,z,y)$ with $\eps>0$. Furthermore, we assume that 
\[\partial_zh  \text{ is bounded }\]
(but not necessarily smooth) on the blow-up $X$. We expect that this assumption is not needed.

Finally, we make an assumption on the restriction of $S$ to the front face at the corner $\ff\cap M_\pm$: In terms of projective coordinates $(E,z)$
\begin{equation}
\label{eqn:S assumption}
\partial_E S = 0 \text{ at }\ \ff \cap M_+
\end{equation}
and similarly at $\ff\cap M_-$.
This assumption simplifies some of the computations in Section \ref{Section:alpha=2k-1}. It
is satisfied in examples of type \eqref{eqn:example}, see Lemma \ref{Lem:ExampleS}. We expect that our results also hold without it.

 There is a tacit assumption that $\eps$ and $I$ are taken small enough that $g_{\eps}$ is a metric, i.e. that $1-w^\kappa S>0$ and $\det(g_{\eps})>0$. All our statements concern the behaviour of geodesics near $(\eps,z)=(0,0)$, so this will not be a real restriction.

 For some of our results we will further impose a  restriction\footnote{We should remark that the other bound, $\kappa\leq 2k$, is not really a restriction; if $\kappa>2k$, one can simply replace $S$ by $w^{\kappa-2k} S$. The upper bound $\kappa\leq 2k$ makes some statements a bit cleaner and should be seen as a cosmetic requirement.} on $\kappa$;  $\kappa\geq 2k-2$.
 We remark that we are here using a different normalisation of $S$; our $S$ is $k(k-1)S$ in \cite{GrGr15}. We will show that a large class of examples fit into this framework in Appendix \ref{Section:GeneratingExamples}.

\section{Geodesics on a  warped product} \label{sec:warped product}

In this section we consider a  manifold $M=[-1,1]\times Y$ with a fixed warped product metric
\[ g = dz^2 + W(z)^2 h .\]
Here $h$ is any Riemannian metric on $Y$, and $W>0$. 
This includes surfaces of revolution, for instance.

Our assumptions are that $W$ is $C^2$ and that
\[ \sgn(W'(z))=\sgn(z).\]
 In particular, $W(z)$ has a unique minimum at $z=0$, so that $M$ has a 'waist' at $z=0$. We let $W_{\min}=W(0)$.

Write geodesics on $M$ as $\gamma(t)=(z(t),y(t))$.
We will prove a sharp version of the Winding Theorem  in this simpler setting. The proof relies on the fact that one has (at least) two preserved quantities in this case. The first is the preservation of energy (i.e. the unit speed condition),
\begin{equation}
\label{eqn:pres energy}
\zdot^2 + W(z)^2\, |\ydot|^2 = 1 
\end{equation} 
(where $|\ydot| \coloneqq |\ydot|_h$).
The second, less obvious, is the preservation of\footnote{In Section \ref{Section:Hamiltonian} we offer a proof of $L$ being constant.}  the angular momentum:
\[ L\coloneqq W(z)^2\, |\ydot|\qquad \text{ is constant along $\gamma$}.\]
Note that if $\zdot(t) = \sin\left [\varphi(t)\right]$ with $\varphi(t)\in \left[-\tfrac\pi2,\frac\pi2\right]$, i.e. $\varphi(t)$ is the angle at which $\gamma$ intersects the latitude $z=$ const at time $t$ (with $\varphi$ positive for geodesics running upwards), then $W(z)\,|\ydot|=\cos\varphi$ from \eqref{eqn:pres energy}, so this is simply the Clairaut integral:
\begin{equation}
\label{eqn:clairaut}
L = W(z) \cos\varphi  \qquad \text{ along $\gamma$.}
\end{equation} 

The following theorem shows that a geodesic starting upwards at negative $z$ will pass the waist if and only if it has small angular momentum, $L<W_{\min}$, and gives a formula for its angular length.

\begin{Thm} \label{Thm:warped product}
 Let $(M,g)$ be as above. Consider a geodesic $\gamma(t)=(z(t),y(t))$ starting at $z=-1$ upwards, i.e.\ with $\zdot>0$.
\begin{enumerate}
 \item If $L< W_{\min}$ then $\gamma$ reaches $z=1$ after a finite time, $z$ is strictly increasing and the total angular length of $\gamma$ is
\[ \angl (\gamma) =\int_{-1}^1 \frac{L}{W(z)\sqrt{W(z)^2-L^2}}\, dz \]
 \item If $L=W_{\min}$ then $\gamma(t)$ exists for all $t>0$, $z(t)$ is strictly increasing and $z(t)\to0$ as $t\to\infty$. Furthermore, the angular length is infinite, $\angl (\gamma) = \infty$.
 \item If $L>W_{\min}$ then $\gamma(t)$ turns back and reaches $z=-1$ at some finite time $T$ again, and the angular length of $\gamma$ on $[T_-,T]$ satisfies essentially the same formula as in case (1),
\[\angl (\gamma)=2\int_{-1}^{z_0} \frac{L}{W(z)\sqrt{W(z)^2-L^2}}\, dz\]
where $z_0\in[-1,0)$ is defined by $W(z_0)=L$.
\end{enumerate}
\end{Thm}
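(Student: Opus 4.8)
The plan is to reduce the entire analysis to the first-order ODE satisfied by the vertical component $z(t)$. Combining the energy relation \eqref{eqn:pres energy} with constancy of $L=W(z)^2|\ydot|$ gives $|\ydot|=L/W(z)^2$ and hence
\[
 \zdot^2 = 1-\frac{L^2}{W(z)^2} = \frac{W(z)^2-L^2}{W(z)^2}\,.
\]
Equivalently, $z(t)$ solves the autonomous second-order equation $\ddot z=L^2W'(z)/W(z)^3$ (the $z$-component of the geodesic equation), which conserves $\tfrac12\zdot^2+U(z)$ with $U(z):=L^2/(2W(z)^2)$; the value of this conserved quantity is $\tfrac12$ by the unit-speed condition. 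Since $\sgn W'(z)=\sgn z$, the ``potential'' $U$ has a unique maximum at $z=0$, equal to $L^2/(2W_{\min}^2)$, and is strictly decreasing in $|z|$; moreover $\zdot(0)>0$ forces $U(-1)<\tfrac12$, i.e.\ $L<W(-1)$. The sign of $L-W_{\min}$ thus decides whether the ``particle'' $z(t)$ clears the barrier at $z=0$, just barely reaches it, or is reflected before it — this is the trichotomy (1)--(3), which I will carry out as elementary one-dimensional mechanics, reading off the angular length at the end by separation of variables.

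In case (1), $L<W_{\min}$ gives $W(z)^2-L^2\ge W_{\min}^2-L^2>0$ on all of $[-1,1]$, so $\zdot$ is continuous, starts positive and never vanishes, hence stays bounded below by a fixed positive constant; therefore $z$ is strictly increasing and reaches $z=1$ in the finite time $\int_{-1}^1 W(z)(W(z)^2-L^2)^{-1/2}\,dz$. Separating variables, $dt=W(z)(W(z)^2-L^2)^{-1/2}\,dz$, and inserting this into $\angl(\gamma)=\int_0^T|\ydot|\,dt=\int_0^T L\,W(z)^{-2}\,dt$ gives the stated formula. In case (2), $L=W_{\min}$: for $z<0$ one still has $\zdot>0$, so $z$ increases monotonically and is bounded above by $0$; the crux is that it takes infinite time to reach $0$. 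Indeed $W\in C^2$ with $W'(0)=0$ (forced by $\sgn W'=\sgn z$ and continuity), so Taylor's theorem gives $W(z)^2-W_{\min}^2=\tfrac12(W^2)''(\xi_z)\,z^2$ with $(W^2)''(\xi_z)\to(W^2)''(0)$ as $z\to0$, and since also $W(z)>W_{\min}$ for $z\ne0$ the integrand $W(z)(W(z)^2-W_{\min}^2)^{-1/2}$ is bounded below by $c\,|z|^{-1}$ near $0$ for every constant $c$; hence $\int_{-\delta}^0 W(z)(W(z)^2-W_{\min}^2)^{-1/2}\,dz=\infty$. Consequently $z(t)\to0$ (a strictly increasing bounded solution whose velocity would otherwise tend to a positive limit), the geodesic remains in the interior $(-1,0)\times Y$ for $t>0$ and hence exists for all $t>0$, and the same substitution shows $\angl(\gamma)=\int_{-1}^0 L\,W(z)^{-1}(W(z)^2-L^2)^{-1/2}\,dz=\infty$.

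In case (3), $L>W_{\min}$: since $W$ is continuous and strictly decreasing on $[-1,0]$ from $W(-1)>L$ to $W_{\min}<L$, there is a unique $z_0\in(-1,0)$ with $W(z_0)=L$. On $[-1,z_0)$ we have $\zdot>0$, and the integral $t_1:=\int_{-1}^{z_0}W(z)(W(z)^2-L^2)^{-1/2}\,dz$ converges because $(W^2)'(z_0)=2W(z_0)W'(z_0)\ne0$ (here $W'(z_0)\ne0$ since $z_0<0$), so the integrand has only an integrable $(z_0-z)^{-1/2}$ singularity; thus $z$ reaches $z_0$ in the finite time $t_1$, with $\zdot(t_1)=0$. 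At $t_1$ the second-order equation gives $\ddot z(t_1)=L^2W'(z_0)/W(z_0)^3<0$, so $z$ has a strict local maximum there; and since $\ddot z=L^2W'(z)/W(z)^3$ is autonomous with $C^1$ right-hand side and $\zdot(t_1)=0$, the solution is symmetric about $t_1$, $z(t_1+s)=z(t_1-s)$. Hence $z(2t_1)=z(0)=-1$ and $\zdot(2t_1)=-\zdot(0)<0$, so the geodesic exits at $z=-1$ moving downward at time $T=2t_1$, having traversed $[-1,z_0]$ once upward and once downward. Each traversal contributes $\int_{-1}^{z_0}L\,W(z)^{-1}(W(z)^2-L^2)^{-1/2}\,dz$ to the angular length (on the downward leg $dt=-W(z)(W(z)^2-L^2)^{-1/2}\,dz$, and the orientation reversal cancels the sign), which accounts for the factor $2$.

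I expect the main obstacle to be case (3): making the turning-point analysis rigorous — that $z$ reaches the turning height $z_0$ in \emph{finite} time, that it genuinely reverses there rather than, say, slowing to a halt, and that it then returns \emph{exactly} to $z=-1$. All three points rest on the hypothesis $\sgn W'(z)=\sgn z$, which makes $W'(z_0)\ne0$ (a non-degenerate turning point), together with the $C^2$ regularity of $W$, which gives uniqueness for the second-order equation and hence the reflection symmetry about $t_1$. The same two hypotheses, now applied at $z=0$ where $W'(0)=0$, underlie the divergence estimate in case (2). Once these qualitative facts are in place, the explicit angular-length formulas are a routine separation of variables as indicated above.
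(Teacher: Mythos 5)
Your proof is correct and, while it starts from the same ODE reduction and separation-of-variables calculations, it handles the two subtle cases (2) and (3) by a genuinely different route from the paper. For (2), the paper offers two proofs that $z(t)$ never reaches $0$ in finite time: a Picard--Lindel\"of uniqueness argument (the constant solution $z\equiv 0$ is the unique solution of $\dot z=\sqrt{1-L^2/W(z)^2}$ passing through $z=0$) and a geometric one (the waist is totally geodesic). You instead estimate the time-of-passage integral directly: since $W'(0)=0$, a Taylor expansion of $W^2$ about its minimum shows the integrand $W/\sqrt{W^2-W_{\min}^2}$ has at least a $|z|^{-1}$ singularity at $0$, so $\int_{-\delta}^0 W/\sqrt{W^2-W_{\min}^2}\,dz=\infty$ and the travel time is infinite. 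This is an equally valid and somewhat more quantitative route. The one nit: the phrase ``bounded below by $c\,|z|^{-1}$ near $0$ for \emph{every} constant $c$'' should read ``for \emph{some} constant $c>0$'' --- the ``every'' version only holds when $(W^2)''(0)=0$ --- but the divergence conclusion is unaffected. For (3), the paper is terser: it invokes $\ddot z=L^2W'(z)/W(z)^3<0$ and monotonicity of $W$ to locate the turning height but does not spell out why the turning time is finite nor why the geodesic returns exactly to $z=-1$. Your argument fills this in cleanly: nondegeneracy $W'(z_0)\neq 0$ gives an integrable $(z_0-z)^{-1/2}$ singularity so $t_1<\infty$, and the time-reversal symmetry of the autonomous second-order equation about $t_1$ (using its $C^1$ right-hand side and $\dot z(t_1)=0$) forces $z(2t_1)=-1$. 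Both routes prove the same theorem; yours is more self-contained on the turning-point analysis, while the paper's geometric totally-geodesic argument in (2) is shorter and closer in spirit to the Riemannian setting.
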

Note that in (3) $z_0$ exists and is unique since $W(0)<L\leq W(-1)$ by \eqref{eqn:clairaut}, and $W$ is strictly decreasing on $[-1,0)$.
\begin{proof}
This all follows by combining the energy and angular momentum conservation. We include the details for completeness.

Inserting the angular momentum $L$ into the energy gives
\[1=\dot{z}^2+\frac{L^2}{W(z)^2}\]
which is an ODE for $z$. 
\begin{enumerate}
\item When $L<W_{\min}$, the energy tells us $\dot{z}^2=1-\frac{L^2}{W^2}\geq 1-\frac{L^2}{W_{\min}^2}>0$. So $\dot{z}>0$ for all time and is uniformly bounded away from $0$. Hence $z(t)$ has to reach $z=1$ in finite time.

For the winding formula, we start by writing $\vert \ydot\vert=\frac{L}{W^2}$.
Since $\dot{z}=\sqrt{1-\frac{L^2}{W^2}}>0$ for all time, we can parametrize $y$ with respect to $z$. 
Then
\[ \angl (\gamma) =\int_J \vert \ydot\vert \, dt =\int_{-1}^1 \frac{ \vert\ydot\vert}{\dot{z}}\, dz=\int_{-1}^1 \frac{L}{W(z)\sqrt{W(z)^2-L^2}}\, dz.\]
More geometrically, if $\ell_Y$ denotes angular length parametrized by $z$, then $\frac{d\ell_Y}{dz}=W^{-1}\cot\varphi$ by trigonometry\footnote{The factor $W^{-1}$ appears since $\ell_Y$ is measured with respect to $h$, while the angle is defined with respect to the metric $dz^2 + W^2 h$.}, and $\cos\varphi=\frac LW$ gives
$\cot\varphi=\frac{L/W}{\sqrt{1-L^2/W^2}} = \frac L{\sqrt{W^2-L^2}}$. Then integrate.
\item We offer two related proofs of $z(t)$ never reaching $z=0$. 

\textbf{Analysis proof:} The ODE for $z$ is $\dot{z}=\sqrt{1-\frac{L^2}{W(z)^2}}$. Since $W(z)$ is $C^2$, the function $z\mapsto\sqrt{1-\frac{L^2}{W(z)^2}}$ is Lipschitz. So by the Picard-Lindelöf theorem, the ODE has a unique solution. Now $z(t)=0$ for all $t$ is a solution when $W(0)=L$, hence any solution reaching $z=0$ in finite time would have to satisfy $z(t)=0$ constantly. This contradicts $z(T_-)=-1$.

\textbf{Geometric proof:} Observe that the waist $\{z=0\}$ is totally geodesic since $W(z)$ has a minimum there. Any geodesic reaching the waist at finite time $T$ satisfies $z(T)=0$, $\dot{z}(T)=\sqrt{1-\frac{L^2}{W(0)^2}}=0$. So this geodesics has to stay in the waist, contradicting $z(T_-)=-1$.

So $W(z(t))>W_{\min}=L$ for all time and thus $\dot{z}(t)=\sqrt{1-\frac{L^2}{W^2}}>0$ for all time. The limit $\lim_{t\to \infty} z(t)$ therefore exists and has to be a point where $\dot{z}=0$, i.e. $z=0$. 
Finally, $|\ydot|$ is bounded below away from zero since $L=W(z)^2|\ydot|$ is constant, so the angular length is infinite.

\item When $L>W_{\min}$, the geodesic never reaches $z=0$ due to energy conservation. Since $W'(z)<0$ for $z<0$,  
\[\ddot{z}=\frac{L^2}{W^3}W'(z)<0.\]
 So the geodesic will get turned around at a time $T$ where $W(z(T))=L$. Since $W(z(t))$ is monotonic for $t\in [T_-,T)$, it is invertible. So the turning point $z(T)$ is given by $z(T)=W^{-1}(L)$. 
 The above formula for the winding number holds in this case by the exact same argument as in case (1). 
\end{enumerate}

\end{proof}


One could read off the qualitative trichotomy in Theorem \ref{Thm:warped product} from phase portraits. See Figure \ref{fig:PhasePortrait} for an example with $W(z)=\sqrt{z^4+\eps^4}$ and $\eps=\frac{1}{2}$.

\begin{figure}[ht]
\includegraphics{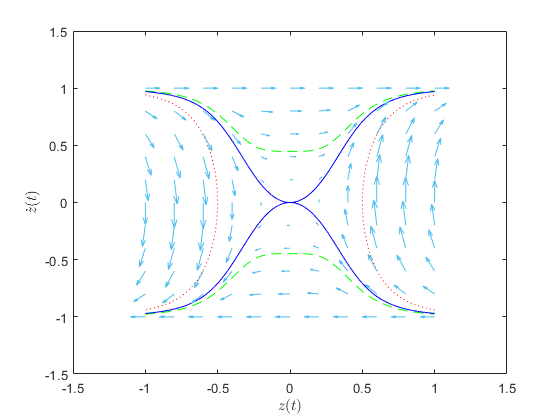}
\caption{The energy hypersurfaces $\dot{z}^2+\frac{L^2}{W(z)^2}=1$ plotted for three values of $L$. The green dashed curves show $L<W_{\min}$, the blue curves are for the separatrix $L=W_{\min}$, and the red dotted curves are for $L>W_{\min}$. The vector field of the ODE for $z$ is also shown.}
\label{fig:PhasePortrait}
\end{figure}

One can write the angular length in the same form as in \cite{GriLye}. We will deduce the formula here, since it is a neat formula and provides a link to our previous study of geodesics on singular spaces. We will not make further use of the formula in this paper.

 Let $\rho$ denote the inverse of $W_{\vert [-1,0]}$. 
\begin{Prop}
\label{Prop:ClairautLength}
Assume case (1) of Theorem \ref{Thm:warped product}. Let $\varphi_0\coloneqq \arcsin(\dot{z}(T_-))$ and $\varphi_1\coloneqq \arccos\left(L\right)$. Restrict the  geodesic $\gamma$ to end when $z(t)=0$.  Then the angular length of this part of $\gamma$ is 
\[ \angl (\gamma)=\int_{\varphi_0}^{\varphi_1} \rho'\left(\frac{L}{\cos \varphi}\right)\, d\varphi.\]
\end{Prop}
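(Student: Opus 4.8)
The plan is to start from the formula in case (1) of Theorem \ref{Thm:warped product}, namely $\angl(\gamma) = \int_{-1}^{0} \frac{L}{W(z)\sqrt{W(z)^2-L^2}}\,dz$ for the portion of $\gamma$ from $z=-1$ up to $z=0$, and change variables from $z$ to the impact angle $\varphi$. Recall the Clairaut relation \eqref{eqn:clairaut}, $\cos\varphi = L/W(z)$, which is valid along $\gamma$; since $z\mapsto W(z)$ is strictly decreasing on $[-1,0]$ with $W(0)=W_{\min}$ and $W(-1)$ its maximum on this interval, and since $\cos$ is decreasing on $[0,\pi/2]$, the map $z\mapsto\varphi$ is a diffeomorphism from $[-1,0]$ onto $[\varphi_0,\varphi_1]$. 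Indeed at $z=-1$ we have $\dot z(T_-) = \sin\varphi_0$ so $\cos\varphi_0 = L/W(-1)$, i.e. $z=-1 \leftrightarrow \varphi=\varphi_0$, and at $z=0$ we have $\cos\varphi_1 = L/W_{\min}= L$ (using the normalisation suggested by the statement, where $W_{\min}=1$, or more precisely $\varphi_1 = \arccos(L/W_{\min})$ which the statement writes as $\arccos L$), i.e. $z=0\leftrightarrow\varphi=\varphi_1$.

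Next I would express $z$ as a function of $\varphi$ through $\rho$. Since $W$ restricted to $[-1,0]$ is invertible with inverse $\rho$, and $W(z) = L/\cos\varphi$ along $\gamma$, we get $z = \rho\!\left(\frac{L}{\cos\varphi}\right)$. Differentiating, $\frac{dz}{d\varphi} = \rho'\!\left(\frac{L}{\cos\varphi}\right)\cdot \frac{L\sin\varphi}{\cos^2\varphi}$. Substituting into the integrand: with $W = L/\cos\varphi$ we have $W^2 - L^2 = L^2(1-\cos^2\varphi)/\cos^2\varphi = L^2\sin^2\varphi/\cos^2\varphi$, so $\sqrt{W^2-L^2} = L\sin\varphi/\cos\varphi$ (the sign is correct since $\varphi\in[0,\pi/2]$ and $L>0$), and hence $\frac{L}{W\sqrt{W^2-L^2}} = \frac{L}{(L/\cos\varphi)\cdot (L\sin\varphi/\cos\varphi)} = \frac{\cos^2\varphi}{L\sin\varphi}$. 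Multiplying by $\frac{dz}{d\varphi}$ gives precisely $\rho'\!\left(\frac{L}{\cos\varphi}\right)$, and the limits transform as described, yielding $\angl(\gamma) = \int_{\varphi_0}^{\varphi_1}\rho'\!\left(\frac{L}{\cos\varphi}\right)d\varphi$.

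There is no serious obstacle here; the only points requiring a little care are (i) checking the orientation/sign of the substitution — as $z$ increases from $-1$ to $0$, $W$ decreases, so $L/\cos\varphi$ decreases, so $\varphi$ decreases from $\varphi_0$... wait, this needs attention: one must verify whether $\varphi_0 < \varphi_1$ or $\varphi_0 > \varphi_1$ and orient the integral accordingly, and confirm the claimed ordering of the limits is consistent with $\angl(\gamma)>0$ and $\rho' < 0$ (since $\rho$ is the inverse of a decreasing function). In fact $\rho'<0$ on its domain, and $\varphi$ ranges so that the net sign works out positive; I would state this sign bookkeeping explicitly. And (ii) one should note that along this arc $\gamma$ is in case (1) so $\dot z>0$ throughout and the $z$-parametrisation used in Theorem \ref{Thm:warped product} is legitimate, and that $\rho$ is $C^1$ because $W$ is $C^2$ with $W'<0$ on $(-1,0)$ (so $\rho'$ is at least continuous on the relevant range, with possibly a one-sided issue at the endpoint $W=W_{\min}$ where $W'(0)=0$ — one should remark that the integral is over $\varphi\le\varphi_1$ and the apparent singularity of $\rho'$ at $W_{\min}$ is integrable, matching the finiteness already established in Theorem \ref{Thm:warped product}(1)).
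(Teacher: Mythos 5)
Your proof is correct and follows essentially the same route as the paper's: both perform a change of variables from $z$ to the Clairaut angle $\varphi$ using $\cos\varphi = L/W(z)$ and the inverse-function identity $\rho'(W) = 1/W'$ (you apply it by writing $z=\rho(L/\cos\varphi)$ and differentiating; the paper applies it via $\dot\varphi = W'\cos\varphi/W$ and $|\ydot|\,dt = d\varphi/W'$). Your observations about the sign bookkeeping ($\varphi_1<\varphi_0$, $\rho'<0$) and the implicit normalisation $W_{\min}=1$ in the definition of $\varphi_1$ are both correct and correspond to remarks the paper also makes (in a footnote).
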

\begin{proof}
 Let $\varphi(t)\in \left[-\frac{\pi}{2},\frac{\pi}{2}\right]$ denote the Clairaut angle defined by $\dot{z}(t)=\sin(\varphi(t))$. The energy and angular momentum conservation then say $L=W(z(t))\cos \varphi(t)$. Differentiating this in time and using $\dot{L}=0$ gives 
\[\dot{\varphi}=\frac{W'(z)\cos \varphi}{W(z)}.\]
Per assumption, this is decreasing for $z \in [-1,0)$. So we can parametrize the integral with respect to $\varphi$. Since $\vert \dot{y}\vert = W\cos \varphi$, we find
\[\vert \ydot\vert \, dt=\frac{d\varphi}{W'(z)}.\]
The integral has an upper\footnote{Recall that $W'<0$ since $z<0$. So $\varphi_1<\varphi_0$. Upper and lower refers to the corresponding $z$-coordinate of the geodesic.} limit $\varphi_1$ defined by $W(0)\cos(\varphi_0)=L$ and a lower limit $\varphi_0$ defined by $W(-1)\cos(\varphi_0)=L\iff \varphi_0=\arcsin\left(\dot{z}(T_-)\right)$.
Since $\rho$ is the inverse of $W$, we have $z=\rho(W(z))$. Hence
\[ \angl (\gamma)=\int_{\varphi_0}^{\varphi_1} \frac{ d\varphi}{W'(z)}=\int_{\varphi_0}^{\varphi_1} \rho'(W)d\varphi=\int_{\varphi_0}^{\varphi_1} \rho'\left(\frac{L}{\cos \varphi}\right)\, d\varphi.\]
\end{proof}

\section{Hamiltonian formulation of the geodesic flow}
\label{Section:Hamiltonian}

In this section, we give a quick recapitulation of geodesics from a Hamiltonian point of view. This is all well known, but usually not covered in a course on Riemannian geometry. This section will be formulated for an arbitrary Riemannian manifold $(M,g)$, without any $\eps$ parameter.

Let $(M,g)$ be a Riemannian manifold. Denote points of $M$ by $x$ and points of the cotangent bundle $T^*M\xrightarrow{\pi}M$ by $(x,\chi)$. We also use $x=(x^i)$ to denote a coordinate system for $M$ and $\chi=(\chi_i)$ to denote the corresponding coordinates on the fibres of $T^*M$. In terms of these, the canonical symplectic form on $T^*M$ reads
\[\omega= dx^i \wedge  d\chi_i\]
 The Riemannian metric $g$ induces a metric on $T^*M$. Consider the \textbf{Hamiltonian}\label{HamDef1} $\calH \colon T^*M\to \R$,
\[\calH(x,\chi)=\frac{1}{2}\vert \chi\vert^2_{g_x}=\frac{1}{2}(g_x)^{ij} \chi_i \chi_j,\]
where $(g_x)^{ij}$ are the components of the inverse matrix of $(g_{ij})$ evaluated at the point $x$. 
The associated \textbf{geodesic vector field}\label{VDef1} is the Hamiltonian vector field $V$ on $T^*M$ which satisfies
\[\iota_V \omega=d\calH.\]
The \textbf{geodesic flow} is the local diffeomorphism $\phi^t$ of $T^*M$ given by the flow of $V$;
\[\begin{cases}\frac{d}{dt}\phi^t(x,\chi)=V_{\phi^t(x,\chi)} \\
\phi^0=\mathrm{id}.\end{cases}\]
Writing $(x(t),\chi(t))=\phi^t(x,\chi)$, this can be written out as
\[\dot{x}^i =\frac{\partial \calH}{\partial \chi_i}, \quad \dot{\chi}_i =-\frac{\partial \calH}{\partial x^i}.\]
The curve $x(t)=\pi((x(t),\chi(t))=\pi\circ \phi^t(x_0,\chi_0)$ is a geodesic in $(M,g)$ with initial position $x(0)=x_0$ and initial velocity $\dot{x}(0)=\chi_0^\sharp$, where the musical isomorphism $\sharp$ is the metric-induced isomorphism between the cotangent and tangent space.  The curve $(x(t),\chi(t))$ will be called a \textbf{lifted geodesic}, or simply a geodesic. The quantity $\chi(t)$ is called its \textbf{momentum}.

One could recover the usual Lagrangian equations for $x(t)$ by differentiating $\dot{x}(t)$ once more, but we will not need this.

The Hamiltonian flow always preserves the corresponding Hamiltonian. Hence the geodesic flow $\phi^t$ restricts to a local diffeomorphism of the energy hypersurface 
\[ST^*M\coloneqq \left\{(x,\chi)\, \colon\, \vert \chi\vert^2=1\right\},\] 
also known as the unit cotangent bundle. This restriction will also be called the \textbf{geodesic flow}. The geodesics lifting to the unit cotangent bundle are precisely the unit speed geodesics.

If $\psi\colon T^*M\to (0,\infty)$ is a smooth function, $\psi\cdot V$ is a smooth vector field. The integral curves of $\psi\cdot V$ are related to the integral curves of $V$ as follows. Let $(x(t),\chi(t))$ be an integral curve of $V$. Define a new time variable $\tau$ via
\[\begin{cases} \frac{d\tau}{dt}=\psi(x(t),\chi(t))^{-1}\\ \tau(0)=0.\end{cases}\]
Since this derivative is strictly positive, $t\mapsto \tau(t)$ is invertible.  
Then 
\[(\tilde{x}(\tau),\tilde{\chi}(\tau))\coloneqq (x(t(\tau)),\chi(t(\tau))\] is the integral curve of $\psi \cdot V$ with the same initial condition. In other words, rescaling the Hamiltonian vector field corresponds to a reparametrisation of the lifted geodesics. This well-known observation will be crucial in this work.  

\subsection{The warped case in the Hamiltonian picture}
We briefly sketch Section \ref{sec:warped product} in the Hamiltonian picture. The setting is $M=I\times Y$, $x=(z,y)$, $\chi=(\xi,\eta)$. The Hamiltonian reads
\begin{equation}
2\calH=\xi^2 +W^{-2} \vert \eta\vert^2.
\label{eq:WarpedHamiltonian}
\end{equation}

This is constant along a lifted geodesic. We claim that $\vert \eta\vert$ is a constant as well. A conceptual proof is to notice that the Hamiltonian vector field $V$ has horizontal components
\[\dot{y}=W^{-2} \eta^\sharp,\quad \dot{\eta}=-\frac{1}{2}W^{-2} \partial_y \vert \eta\vert^2.\]
This is the Hamiltonian vector field of $\mathcal{H}_Y=\frac{1}{2}\vert \eta\vert^2$, multiplied by $W^{-2}$. By the rescaling discussion above, we see that the $(y,\eta)$ components of the geodesic are just reparametrised $Y$-geodesics. Their energy $\mathcal{H}_Y=\frac{1}{2}\vert \eta\vert^2$ is therefore constant. 
One can also compute this directly by taking the  $t$-derivative of $1=2\calH$, to find
\[0=2\xi \dot{\xi}+ (W^{-2})_z \dot{z}\vert \eta\vert^2 +W^{-2} \frac{d}{dt}\vert \eta\vert^2=-2\xi \calH_z+2\calH_z \xi +W^{-2} \frac{d}{dt}\vert \eta\vert^2.\]
Hence $L=\vert \eta\vert$ is a constant. 

\begin{Rem}
For a (possibly generalized) \textbf{doubly warped product} the angular momentum is not constant. In the simplest case $Y=\S^1$, $h=dy^2$ this is $\R\times \S^1$ with the metric
$$g = A(z,y)\,dz^2 + W(z)^2 \,dy^2\,. $$
The Hamiltonian is $2\calH = \frac{\xi^2}{A} + \frac{\eta^2}{W^2}$, and from the Hamilton equations $\zdot = A^{-1}\xi$, $\etadot = -\frac12 \partial_y(A^{-1}) \xi^2$ one obtains directly
\begin{equation}
\label{eqn:doubly warped}
\etadot = \tfrac12 \partial_y A\, \zdot^2.
\end{equation}
Note that $\eta$ is the signed angular momentum in this case since $n=2$.
In general, i.e.\ for 
$g = A(z,y)\,dz^2 + W(z)^2 \,h$ with $h$ independent of $z$, a simple computation yields
$\Ldot = \frac12 \ip{\partial_yA}{ \frac{\eta}{|\eta|}} \zdot^2$.
In particular, $L$ is constant also for the more permissive warped product \ref{eqn:def warped product new}.

Note that this formula is the same for a generalized doubly warped product as for a doubly warped product, where $A$ is independent of $z$. 
\end{Rem}

\section{Dynamics and Angular Momentum}
\label{Section:Dynamics}

We now turn to the study of geodesics for the metric \eqref{eq:MetricAnsatz}. We start by deducing the Hamiltonian and the Hamiltonian vector field. This will allow us to study the variation of the\label{LDef2} \textbf{angular momentum}\footnote{We find this term helpful, even though $Y$ is any manifold and $\vert \eta\vert$ need not have anything to do with angles or momenta (in the symplectic geometry sense).} $L=\vert \eta\vert$.

\subsection{The Hamiltonian and the Hamiltonian vector field}
\mbox{}\\
\textit{Notation:}
We will abuse notation slightly and write $b$ for either the 1-form $b=\sum_i b_i dy^i$ or its components $(b_i)_{i}$ (in some local coordinates) and $h$ for either the symmetric two-tensor $h=\sum_{i,j} h_{ij}dy^i dy^j$ or its component matrix $(h_{ij})_{i,j}$. As usual, $h$ induces a dual metric on $T^*Y$, and we write
\[\vert b\vert^2\coloneqq h^{ij}b_i b_j\]
for the squared norm of $b$ in this metric. We will write $\ip{b}{\eta}\coloneqq h(b,\eta)=h^{ij} b_i \eta_j$ for the scalar product of two 1-forms.
Also, we will use the musical notation, turning 1-forms into vectors using the metric $h$. So $b^{\sharp}$ is the vector field with components $(b^\sharp)^i=h^{ij} b_j$. 
\medskip

Recall that we consider the metric \eqref{eq:MetricAnsatz}, 
$$g=(1-w^{\kappa}S)dz^2+2w^{2k} dz\cdot b +w^{2k} h.$$
We stress that this is very much \textit{not} a warped product, since there are mixed terms and all functions, $b,S,h$, depend on both $z$ and $y$ (besides $\eps$). 

Note that if the mixed term involving $b$ is non-zero for some $p\in M$ and $\eps>0$  then the two summands in the decomposition $T_pM= T_z I \oplus T_y Y$ at $p=(z,y)\in M$ are not orthogonal. 
Instead, the orthogonal  decomposition
$$ T_p M= (T_y Y)^\perp \oplus T_y Y $$
is relevant for what follows. 
Write $\calL_{(z,y)}=(T_y Y)^\perp$ for the normal bundle of the inclusion $\iota_z \colon Y \to \{z\} \times Y$. This is a line bundle over $M$.

\begin{Lem}
The line bundle $\calL$ is spanned by
\begin{equation}
\label{eqn:orthogonal to TY}
\partial_z - b^\sharp.
\end{equation}
The Hamiltonian associated to the metric \eqref{eq:MetricAnsatz} reads\label{HamDef2}
\begin{equation}
2\calH=\frac{\vert \eta\vert^2}{w^{2k}} + D^{-1}(\xi -\ip{b}{\eta})^2,\quad D = {1-w^{\kappa}S -w^{2k}\vert b\vert^2}.
\label{eq:ExactHamiltonianEta}
\end{equation}
\end{Lem}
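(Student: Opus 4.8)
The plan is to verify the two claims by direct computation in local coordinates, using the block structure of the metric $g$ with respect to the splitting $T_pM = \calL_{(z,y)}\oplus T_yY$.

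\medskip

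\noindent\textbf{Step 1: The spanning vector of $\calL$.} A vector $v = a\,\partial_z + X$ with $X\in T_yY$ lies in $(T_yY)^\perp$ iff $g(v,W)=0$ for all $W\in T_yY$. Using \eqref{eq:MetricAnsatz}, $g(\partial_z, W) = w^{2k} b(W)$ and $g(X,W) = w^{2k} h(X,W)$, so the condition reads $a\, w^{2k} b(W) + w^{2k} h(X,W) = 0$ for all $W$, i.e. $h(X,W) = -a\,b(W)$, i.e. $X = -a\, b^\sharp$. Taking $a=1$ gives the spanning vector $\partial_z - b^\sharp$ (and it is nonzero since its $\partial_z$-component is $1$). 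I would also note that $g(\partial_z - b^\sharp, \partial_z - b^\sharp) = (1-w^\kappa S) - 2w^{2k}\langle b,b\rangle + w^{2k}\langle b,b\rangle = 1 - w^\kappa S - w^{2k}|b|^2 = D$, which identifies $D$ geometrically and shows $D>0$ under our standing assumption that $g$ is a metric.

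\medskip

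\noindent\textbf{Step 2: The Hamiltonian.} By definition $2\calH = |\chi|^2_{g^*} = g^{ij}\chi_i\chi_j$ where $g^{ij}$ is the inverse of the matrix of $g$. Rather than inverting the full matrix, I would use the orthogonal decomposition from Step 1. With respect to $\R(\partial_z - b^\sharp)\oplus T_yY$ the metric $g$ is block-diagonal with blocks $(D)$ and $(w^{2k}h_{ij})$. Dually, write a covector $\chi = \xi\,dz + \eta$ (with $\eta\in T_y^*Y$); I need its squared norm. The dual basis issue is the only subtlety: the annihilator of $T_yY$ inside $T_p^*M$ is spanned by $dz$, and the functional dual to $\partial_z - b^\sharp$ along $\calL$ is $dz$ itself (since $\langle dz, \partial_z - b^\sharp\rangle = 1$ and $dz$ kills $T_yY$). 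Evaluating $\chi$ on $\partial_z - b^\sharp$ gives $\xi - \langle b,\eta\rangle$ (because $\eta(b^\sharp) = h^{ij}b_i\eta_j = \langle b,\eta\rangle$), and on $T_yY$ the covector $\chi$ restricts to $\eta$. Hence in the dual orthogonal decomposition
\[
\chi = (\xi - \langle b,\eta\rangle)\, dz + \eta',
\]
where $\eta'$ is the component paired with the $T_yY$-block, and
\[
2\calH = g^*(\chi,\chi) = D^{-1}(\xi - \langle b,\eta\rangle)^2 + w^{-2k}|\eta|^2,
\]
which is \eqref{eq:ExactHamiltonianEta}. I would double-check the cross term vanishes precisely because the decomposition is $g$-orthogonal, so $g^*$ is block-diagonal in the dual frame.

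\medskip

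\noindent\textbf{Main obstacle.} The only place care is needed is the bookkeeping in Step 2: identifying which dual frame diagonalizes $g^*$ and confirming that $\eta$ (the naive $T^*Y$ coordinate of $\chi$) really is the component that feeds into the $w^{-2k}|\eta|^2$ term, while the $\partial_z$-momentum $\xi$ gets shifted to $\xi - \langle b,\eta\rangle$. An alternative, purely computational route is to write the $(n)\times(n)$ matrix of $g$ in block form $\begin{pmatrix} 1-w^\kappa S & w^{2k} b^T \\ w^{2k} b & w^{2k} h\end{pmatrix}$ and invert it via the Schur complement of the lower-right block; the Schur complement is exactly $(1-w^\kappa S) - w^{2k}b^T h^{-1} b\cdot w^{2k}\cdot w^{-2k} = D$, reproducing the same formula. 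I would present whichever is cleaner, but the orthogonal-decomposition argument has the advantage of simultaneously proving both assertions of the lemma and explaining the meaning of $D$.
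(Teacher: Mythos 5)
Your proposal is correct and takes essentially the same route as the paper: both exploit the $g$-orthogonal splitting $T_pM = \R(\partial_z - b^\sharp)\oplus T_yY$, block-diagonalize $g$ with blocks $D$ and $w^{2k}h$, and read off the dual metric after identifying $dz$ as the dual of $\partial_z - b^\sharp$ in the annihilator of $T_yY$. Your side remark that $|\partial_z - b^\sharp|_g^2 = D$ and the alternative Schur-complement computation are nice additions but do not change the fact that the core argument coincides with the paper's.
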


\begin{proof}
This is a pointwise statement, so we consider the following situation. Let $V$ be a real vector space. Then any metric (scalar product) on $\R\times V$ can be written
$$ G = A\, dz^2 + 2 dz\cdot B + H $$
where $z$ is the coordinate on $\R$, $A\in\R_{>0}$, $B\in V^*$ is a one-form and $H\in S^2 V^*$ is a metric on $V$. We claim that the dual metric on $(\R\times V)^*=\R^*\times V^*$ is
\begin{equation}
\label{eqn:Gstar claim}
G^* = D^{-1} (\pz - B^\sharp)^2 + H^\sharp
\end{equation}
where $B^\sharp\in V=(V^*)^*$, $H^\sharp \in S^2 V$ are the duals to $B$, $H$, i.e., $H(B^\sharp,\cdot) = B$ and $H^\sharp(\alpha,\beta)=H(\alpha^\sharp,\beta^\sharp)$ for $\alpha,\beta\in V^*$, and $\partial_z$ is the standard basis vector of $\R$, dual to $dz$, and
$$ D = A - |B|_{H^\sharp}^2 .$$
This yields \eqref{eq:ExactHamiltonianEta} if we take $A=1-w^\kappa S$, $B=w^{2k}b$ and $H=w^{2k}h$, since $w^{2k} h(B^\sharp,\cdot) = w^{2k}b$ implies $B^\sharp = b^\sharp$ and $|B|^2_{H^\sharp} = |w^{2k}b|^2_{w^{-2k}h^\sharp} = w^{2k}|b|^2$
then.\footnote{In \eqref{eq:ExactHamiltonianEta}, $\xi,\eta$ are understood as coordinates, dual to $z,y$, on $T_p^*M=T_z^*I\oplus T_y^Y$, hence as linear isomorphisms $\xi\,dz \in T_z I \mapsto \xi\in\R$, $\eta\,dy \in T_y Y \mapsto \eta\in\R^n$. Using the standard identification of a vector space with its double dual, we can identify $\xi$ with the vector $\pz$ since $\xi\,dz(\pz)=\xi$, and $\ip{b}{\eta}$ 
with $b^\sharp$ since $\eta\,dy (b^\sharp) = \ip{b}{\eta}$.
}

To prove \eqref{eqn:Gstar claim} we first note that 
\begin{equation}
\label{eqn:perp vector}
 \pz - B^\sharp \perp V_0 \coloneqq \{0\} \times V \subset \R\times V
\end{equation}
(which also implies \eqref{eqn:orthogonal to TY})
since $G(\pz - B^\sharp,v) = dz(\pz)\cdot B(v) - H(B^\sharp,v) = B(v)-H(B^\sharp,v)=0$ for all $v\in V$, and
$$ |\pz - B^\sharp|^2 = D $$
since the left side equals $A \left[dz(\pz)\right]^2 - 2 dz(\pz)\cdot B(B^\sharp) + H(B^\sharp,B^\sharp) = A - 2|B|^2 + |B|^2 = D$.
This implies that, with $\calL = \Span\{\pz - B^\sharp\}$, we have the orthogonal decomposition
$$ \R\times V = \calL \oplus V_0\,,$$
and this yields the dual decomposition, orthogonal with respect to $G^*$,
$$ (\R\times V)^* = \calL^* \oplus V_0^* $$
where $\calL^*$ is considered as a subspace of $(\R\times V)^*$ by extending a functional on $\calL$ to a functional on $\R\times V$ by setting it equal to zero on $V_0$, and vice versa. In particular, we have $G^* = (G_{|\calL})^* \oplus (G_{|V_0})^*$. 

Now $\calL^*=\Span\{dz\}$ since $\ker dz = V_0$, and then $(G_{|V_0})^* = H^\sharp$ gives the second term in \eqref{eqn:Gstar claim}. To find the metric on $\calL^*$ first note that the basis $\{\pz-B^\sharp\}$ of $\calL$ has dual basis $\{dz_{|\calL}\}$ in $\calL^*$. The metric on $\calL$ is $D\,dz^2$ since $dz(\pz-B^\sharp)=1$ and $|\pz-B^\sharp|^2=D$,
so its dual metric is $D^{-1}(\pz-B^\sharp)^2$, and this gives the first term in \eqref{eqn:Gstar claim}.

\end{proof}

\begin{Rem}
One can of course find the Hamiltonian by linear algebra without referencing the normal bundle $\calL$, but we feel it helps clarify the situation somewhat.
\end{Rem}

We will specify the components of the associated geodesic vector field\label{VDef2} $V$ by writing down the components. The equations of motion of the Hamiltonian \eqref{eq:ExactHamiltonianEta} read

\begin{align}
\dot{z}&=\partial_\xi \calH=D^{-1}(\xi-\ip{b}{\eta}) \label{eq:zdot}\\
\dot{y}&=\partial_{\eta}\calH=\frac{\eta^\sharp}{w^{2k}} -D^{-1}(\xi-\ip{b}{\eta}) b^\sharp=\frac{\eta^\sharp}{w^{2k}} -\dot{z}b^\sharp \label{eq:ydot}\\
\dot{\xi}&=-\partial_z \calH , \label{eq:xidot} \\
2\dot{\eta}&=-2\partial_y \calH 
=-\frac{\partial_y\vert \eta\vert^2}{w^{2k}}+2\dot{z}\partial_y \ip{b}{\eta}  -\dot{z}^2 \partial_yD . \label{eq:etadot}
\end{align} 
 Here $\partial_y \vert \eta\vert^2= \eta^i \eta_j \partial_y h^{ij}$, $\partial_y \vert b\vert^2= b_i b_j \partial_y h^{ij} + 2 h^{ij} b_i \partial_y b_j$ and similarly for $\ip{b}{\eta}$. In short, $\eta$ does not depend on $y$ whereas $b$ does. We do not write out the $\xi$-equation since we do not need it (we can find $\xi$ up to sign from the energy) and it is generally a mess (since $h$, $S$, $b$, and $w$ all are allowed to depend on $z$).

 One notices here that the Hamiltonian vector field will not extend down to $w=0$ in these coordinates. This motivates the rescaling performed in Section \ref{Section:GeneralRescalings}. But first we study the variation of the angular momentum $\vert \eta\vert$.

\subsection{Angular momentum}

In analogy with the warped product case, we refer to $L\coloneqq \vert \eta\vert$ as the angular momentum of $\gamma$.
This is generally not constant when the metric is not of warped form, but we can estimate its variation for geodesics passing through $z=0$.    The energy \eqref{eq:ExactHamiltonianEta} tells us that
\[L \leq w^k\]
along any unit speed geodesic, but we will deduce a bound involving $L_0\coloneqq \vert \eta\vert(0)$. 
\begin{Prop}
\label{Prop:etaEst}
Assume $\gamma$ is a geodesic passing through $z=0$ at $t=0$. Then, parametrising with respect to $z$,
\begin{equation}
L(z)=(1+\mathcal{O}(z))L_0+w^{\kappa}\mathcal{O}(z).
\label{eq:etaEst}
\end{equation}
\end{Prop}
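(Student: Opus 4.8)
The plan is to derive a differential inequality for $L(z)=\abs{\eta}(z)$ along the geodesic, using the Hamilton equations \eqref{eq:zdot}--\eqref{eq:etadot}, and then integrate it from $0$ to $z$, keeping careful track of the factors of $w$ and of the fact that $\abs{\gammadot}=1$.

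\medskip

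First I would compute $\frac{d}{dt}L^2 = \frac{d}{dt}\abs\eta^2 = 2\ip{\etadot}{\eta} + \eta_i\eta_j \ydot^m \partial_m h^{ij}$, being careful that $\abs{\cdot}$ is taken with respect to $h_{\gamma(t)}$ which varies in both $z$ and $y$. Plugging in \eqref{eq:etadot} for $\etadot$, several terms appear: the term $-\frac{\partial_y\abs\eta^2}{w^{2k}}$ from $\etadot$ partially cancels against the $\partial_y h^{ij}$ contribution coming from differentiating the metric along $\ydot$ (this is exactly the cancellation seen in the warped case, where $\ydot = \eta^\sharp/w^{2k} - \zdot b^\sharp$ so the $\eta^\sharp/w^{2k}$ part kills it), leaving a remainder proportional to $\zdot$ times $b$-terms, plus the $\zdot \partial_y\ip{b}{\eta}$ term and the $-\zdot^2\partial_y D$ term. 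The upshot should be an estimate of the form
\[
\abs{\frac{d}{dt}L^2} \leq C\abs{\zdot}\left(L\,\abs{\zdot}\,w^{2k} + L\,\abs{\zdot} w^\kappa + \abs{\zdot}^2 w^\kappa\right)
\]
or something similar, where I use $\partial_y D = O(w^\kappa)$ (since $D = 1 - w^\kappa S - w^{2k}\abs b^2$ and $\kappa\le 2k$), $\partial_y b = O(1)$, and crucially the energy bound $L\le w^k$ together with $\abs{w^k\ydot}\le 1$ to control the $b$-terms. I would then reparametrize by $z$: since $\gamma$ passes through $z=0$ at $t=0$ I have $\zdot(0)=\cos\varphi\ge 0$ and $\frac{dL}{dz} = \frac{1}{\zdot}\frac{dL}{dt}$, absorbing one power of $\abs{\zdot}$; near $z=0$, $\zdot$ is bounded above by $1$ and below away from $0$ unless $\varphi=\pi/2$, but one should handle the reparametrization carefully (the constant-$L$ computation in Proposition~\ref{Prop:etaEst} is stated \emph{after} parametrizing by $z$, so presumably a prior lemma guarantees $z$ is monotone and $\zdot$ controlled; I would invoke that). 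This yields $\abs{\frac{dL}{dz}} \le C\left(L + w^\kappa\right)$, using $w \le C z$ near the waist (from $w(0,z)=\abs z$ and homogeneity/monotonicity, so $w\lesssim z$ for $z\ge 0$ small) to dominate the $w^{2k}$ term by $w^\kappa$ since $2k\ge\kappa$.

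\medskip

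Finally I would integrate the inequality $\abs{\frac{dL}{dz}} \le C(L + w^\kappa)$ via a Grönwall argument. Writing $L(z) \le L_0 + C\int_0^z (L(s) + w(s)^\kappa)\,ds$ and applying Grönwall gives $L(z) \le e^{Cz}\left(L_0 + C\int_0^z w^\kappa ds\right) = (1+O(z))L_0 + O(z)\cdot w^\kappa$, where I use that $w^\kappa$ is increasing in $z\ge 0$ so $\int_0^z w(s)^\kappa ds \le z\, w(z)^\kappa$. The matching lower bound $L(z) \ge (1+O(z))L_0 - O(z)w^\kappa$ comes the same way from the reverse inequality, which combined gives \eqref{eq:etaEst}.

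\medskip

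The main obstacle I anticipate is twofold. First, the precise bookkeeping of the cancellation in $\frac{d}{dt}\abs\eta^2$: one must verify that after using $\ydot = \eta^\sharp/w^{2k} - \zdot b^\sharp$ the dangerous $w^{-2k}$-singular terms cancel and only terms with a compensating power of $w$ (or a factor $\zdot$ that is itself controlled) survive, so that the right-hand side is genuinely $O(L + w^\kappa)$ after dividing by $\zdot$ and not something that blows up as $w\to 0$. Second, the reparametrization by $z$ near $\varphi = \pi/2$, where $\zdot$ may be small: one needs either to restrict to the regime where this is harmless or to use that the statement is about $L$ as a function of $z$ and that $z$ is monotone here (which should follow from an earlier result, e.g. the analysis of $\zdot$ for geodesics through the waist); I would cite that rather than reprove it.
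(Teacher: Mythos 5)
Your approach is essentially the same as the paper's: compute $\frac{d}{dt}L^2$ from the Hamilton equations, observe that the $w^{-2k}$-singular contributions cancel (this is exactly \eqref{eq:AngularChange}), reparametrize by $z$, and arrive at a first-order linear relation $\frac{dL}{dz}=aL+w^\kappa B$ with $a,B$ bounded on the blow-up, yielding Lemma~\ref{Lem:etaSol}. The paper solves this exactly via the integrating factor $e^{-A}$ with $A=\int a\,dz=\mathcal{O}(z)$; your Gr\"onwall argument is an equivalent way to extract the same conclusion.

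Two bookkeeping slips are worth flagging, though neither is fatal to the argument. First, your displayed bound for $\bigl\lvert\frac{d}{dt}L^2\bigr\rvert$ is off: the $-\tfrac12\zdot^2\partial_y D$ piece contributes to $\frac{d}{dt}L^2$ only after pairing with $\eta$, so it is $\mathcal{O}(w^\kappa\zdot^2 L)$ rather than $\mathcal{O}(w^\kappa\zdot^2)$. Without that $L$ factor, dividing by $L$ to pass from $L^2$ to $L$ would leave an uncontrolled $w^\kappa/L$ term. The paper sidesteps this cleanly by working in polar coordinates $(L,\psi=\eta/\abs{\eta})$ for $\eta$, after which the linear structure $\frac{dL}{dz}=aL+w^\kappa B$ is manifest. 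Second, your claim that ``$w\lesssim z$ near the waist'' is false: $w(\eps,0)=\eps>0$, so for $0\leq z\lesssim\eps$ one has $w\approx\eps\gg z$. You only needed $w^{2k}\lesssim w^\kappa$, which follows directly from $w$ being bounded above on the relevant parameter range together with $\kappa\leq 2k$; no comparison of $w$ with $z$ is required.
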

The statement will follow from a lemma about how $L$ changes along a geodesic.

\begin{Lem}
\label{Lem:etaSol}
Assume $\gamma$ is a geodesic passing through $z=0$ at $t=0$ with non-vanishing horizontal momentum, $\eta(0)\neq 0$. Then there are bounded functions $A,B$ of $z,\eps$ such that
\begin{equation}
L(z)=e^A L_0+e^A\int_0^z e^{-A} w^{\kappa}B\, d\zeta.
\label{eq:etazSolution}
\end{equation}
Furthermore, $A=\mathcal{O}(z)$.
\end{Lem}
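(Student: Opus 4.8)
The plan is to reduce this to a scalar first-order linear ODE for $L=\vert\eta\vert$ in the variable $z$ and then write down its solution via the integrating-factor formula. Concretely, I would show that along the geodesic, parametrised by $z$ (possible near the waist, where $\zdot$ keeps a fixed sign), one has $\frac{dL}{dz}=a(z)L+w^{\kappa}B(z)$ with $a,B$ bounded uniformly in $\eps$ and along the geodesic. With $A(z)\coloneqq\int_0^z a(\zeta)\,d\zeta$, multiplying by the integrating factor $e^{-A}$ gives $\frac{d}{dz}\bigl(e^{-A}L\bigr)=e^{-A}w^{\kappa}B$, and integrating from $0$ to $z$ yields exactly \eqref{eq:etazSolution}. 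Boundedness of $a$ on the bounded $z$-interval then gives $A=\mathcal{O}(z)$, and both $A,B$ are bounded.

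To obtain the ODE I would compute $\frac{d}{dt}\mathcal{H}_Y$ along the geodesic, where $\mathcal{H}_Y\coloneqq\tfrac12\vert\eta\vert^2=\tfrac12 L^2$, using the Hamilton equations \eqref{eq:zdot}--\eqref{eq:etadot}. The chain rule gives $\partial_z\mathcal{H}_Y\,\zdot+\partial_{y^k}\mathcal{H}_Y\,\ydot^k+\partial_{\eta_k}\mathcal{H}_Y\,\etadot_k$. The crucial observation — the mechanism that forces $L$ to be constant in the warped case — is that the two terms carrying the factor $w^{-2k}$ cancel: the term $\tfrac1{2w^{2k}}\eta^{\sharp k}\partial_{y^k}\vert\eta\vert^2$ coming from the $\tfrac{\eta^\sharp}{w^{2k}}$ part of $\ydot$ cancels the term $-\tfrac1{2w^{2k}}\eta^{\sharp k}\partial_{y^k}\vert\eta\vert^2$ coming from the $-\tfrac{\partial_y\vert\eta\vert^2}{w^{2k}}$ term in $2\etadot$. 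What remains is
\[
L\dot L=\tfrac12\zdot\,(\partial_z h^{ij})\eta_i\eta_j-\tfrac12\zdot\,b^{\sharp k}(\partial_{y^k}h^{ij})\eta_i\eta_j+\zdot\,\eta^{\sharp k}\partial_{y^k}\ip{b}{\eta}-\tfrac12\zdot^2\,\eta^{\sharp k}\partial_{y^k}D .
\]

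Next I would divide by $\zdot$ and estimate the four terms, using: the hypothesis that $\partial_z h$ is bounded; the smoothness of $S,b,h$ on the blow-up, which makes their $y$-derivatives as well as $h^{ij}$ and $b^\sharp$ bounded on the relevant compact parameter ranges; and $\kappa\le 2k$ with $w\le1$, so $w^{2k}\le w^{\kappa}$. The first three terms are quadratic in $\eta$ with bounded coefficients, hence of the form $L^2\cdot(\text{bounded})$. For the fourth, $\partial_{y^k}D=-w^{\kappa}\partial_{y^k}S-w^{2k}\partial_{y^k}\vert b\vert^2$, so $\eta^{\sharp k}\partial_{y^k}D=L\,w^{\kappa}\cdot(\text{bounded})$, giving a term $L\,w^{\kappa}\cdot(\text{bounded})$. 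Thus $L\frac{dL}{dz}=L^2 a(z)+L\,w^{\kappa}B(z)$ with $a,B$ bounded, and dividing by $L$ — legitimate on the open interval around $z=0$ where $L>0$, which is non-empty since $\eta(0)\neq0$ — gives the asserted ODE.

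The term-by-term estimate is routine; the one point needing care is the interplay of the $\zdot$ factors with the domain of validity: one must stay on an interval where simultaneously $\zdot\neq0$ (so the $z$-parametrisation is valid) and $L>0$ (so one may divide by $L$ and $L=\vert\eta\vert$ is differentiable). The former holds on a short enough $z$-interval because the geodesic crosses $z=0$ transversally; the latter is exactly where $\eta(0)\neq0$ enters, and it propagates to a neighbourhood of $z=0$ by continuity. I expect this localisation, rather than any computation, to be the main thing to get right.
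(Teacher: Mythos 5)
Your proof is essentially the paper's: compute $\tfrac{d}{dt}L^2$ from the Hamilton equations, observe the cancellation of the two $w^{-2k}$ terms, reparametrise by $z$, divide by $L$ to get a linear scalar ODE $\frac{dL}{dz}=aL+w^{\kappa}B$ with $a,B$ bounded, and solve it with the integrating factor. You are somewhat more explicit than the paper about the localisation (where $\zdot$ keeps a fixed sign and $L>0$), which is a welcome addition; and the sign of your $\partial_y D$ term is the opposite of the one obtained by a fresh derivation and of the one appearing in \eqref{eq:AngularChange} --- this is because you faithfully followed the stated form of \eqref{eq:etadot}, which contains a sign typo (the last term should be $+\dot z^2\partial_y D$) --- but since only the boundedness of $B$ matters, this does not affect the argument.
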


\begin{proof}
Combining the above equations of motion, \eqref{eq:zdot}, \eqref{eq:ydot}, \eqref{eq:etadot}, we find
\begin{align}
\frac{d}{dt} L^2&=2\ip{\dot{\eta}}{\eta}+\ip{\partial_yL^2}{\dot{y}}+\partial_z L^2 \dot{z} \notag\\
&=\dot{z}\left(2\ip{\partial_y  \ip{b}{\eta}}{\eta}-\ip{\partial_yL^2}{b}+\partial_z L^2 -w^{\kappa}\dot{z} \ip{\partial_yS+ w^{2k-\kappa}\partial_y \vert b\vert^2}{\eta}\right).
\label{eq:AngularChange} 
\end{align}
One could also deduce this by differentiating the energy \eqref{eq:ExactHamiltonianEta} and using the various equations of motion. An important thing to notice about \eqref{eq:AngularChange} is that the intimidating $w^{-2k}$ factor of the $\dot{\eta}$-equation has dropped out. Indeed, \eqref{eq:AngularChange} makes sense on all of the blow-up $X$ without having to rescale the momentum variable $\eta$.

We now write the evolution equation \eqref{eq:AngularChange} in spherical coordinates,
\[L=\vert \eta\vert, \quad \psi=\frac{\eta}{\vert \eta\vert}.\]
The result is
\begin{align}
\frac{d}{dt} L^2
&=\dot{z}L \left(2L\ip{\partial_y  \ip{b}{\psi}}{\psi}-L\ip{\partial_y\vert \psi\vert^2}{b}+L \partial_z \vert \psi\vert^2 -w^{\kappa}\dot{z} \ip{\partial_yS+ w^{2k-\kappa}\partial_y \vert b\vert^2}{\psi}\right). \notag
\end{align}
 Parametrizing using $z$ instead of $t$ and dividing by $L$, we find
\[2\frac{dL}{dz}
=\left(2\ip{\partial_y  \ip{b}{\psi}}{\psi}-\ip{\partial_y\vert \psi\vert^2}{b}+ \partial_z \vert \psi\vert^2\right) L -w^{\kappa}\dot{z} \ip{\partial_yS+ w^{2k-\kappa}\partial_y \vert b\vert^2}{\psi}.\]
This has the form
\begin{equation}
\frac{dL}{dz}=a L +w^\kappa B
\label{eq:etazevol}
\end{equation}
for some functions $a,B$ of $z$ and $\eps$. 
 Since $S,b,h$ are smooth on the blow-up $X$ and $\partial_zh$ is bounded, we can find $c>0$ such that \[\vert \partial_yS\vert \leq c, \quad \left\vert \partial_y \vert b\vert^2\right\vert\leq c, \quad \left\vert \partial_z \vert \psi\vert^2\right\vert\leq c \]
\[\left\vert \ip{\partial_y \vert \psi\vert^2}{b}\right\vert\leq c , \quad   \left\vert \ip{\partial_y \ip{b}{\psi}}{\psi}\right\vert\leq c .\]
This shows that $a$ and $B$ are both bounded. The equation \eqref{eq:etazevol} is a linear first order ODE, and the solution is gotten by introducing $A=\int a\, dz$ and the integrating factor $e^{-A}$. Clearly $A$ is $\mathcal{O}(z)$.

\end{proof}

\section{Winding}
\label{Section:winding}

In this section we consider geodesics with respect to the metric $g_\eps$ on $M$ which pass the waist and 
analyse how often they 'wind around $Y$' (i.e.\ their horizontal length) uniformly as $\eps\to0$.

\subsection{Winding in the warped product case}
\label{Ssection:winding warped pr}
We first consider the case where our metric \eqref{eq:MetricAnsatz} is a warped product for each $\eps>0$. Thus\footnote{Our slightly more general form \eqref{eqn:def warped product new} can be put into this form by a change of $z$-variable, 
$\zbar = \int_0^z \sqrt{1- (z')^{\kappa}S(\eps,z')}\, dz'$. However, then $w$ is not homogeneous in $(\eps,\zbar)$ but only approximately so. See Remark \ref{Rem:WarpedWindingS} below for the change if one wants to use \eqref{eqn:def warped product new}.}
$$ g = dz^2 + w^{2k} h $$
with a metric $h=h_\eps$ on $Y$, independent of $z$ and smoothly depending on $\eps\geq0$,
and a warping function $w=w(\eps,z)$ satisfying our assumption \eqref{eqn: w monotone} and \eqref{eqn:w norm eps=0}. Recall that angular momentum $L=|\eta|$ is preserved in this case, and we can use Theorem \ref{Thm:warped product}(1) to analyse the winding behaviour of geodesics uniformly as $\eps\to0$.
Note that the assumptions on $w$ imply that $W_{min} = \eps^k$ in the notation of that theorem, so geodesics pass the waist if and only if 
$L<\eps^k$.
\begin{Prop}[Winding in the warped product case]
\label{Prop:WarpedWinding}
 Let $z_0\in I$, $z_0>0$.
 Let $\gamma$ be a geodesic starting at $z=0$ upwards with angular momentum $L<\eps^k$. Then its total angular length on the part $z\in[0,z_0]$ satisfies
 \begin{equation}
 \label{eqn:winding warped prod}
 \angl (\gamma)
 = \frac v{\eps^{k-1}} \left[C_v + \mathcal{O}(\eps^{2k-1})\right]\,,\quad v = \frac L{\eps^k}.
 \end{equation}
 Here $C_v$ is positive and depends smoothly on $v\in[0,1)$, and is independent of $z_0$.
It is  given explicitly by \eqref{eq:Cv} below.
 
The error term $\mathcal{O}(\eps^{2k-1})$ is uniform for $v\in[0,1]$ and for $z_0$ bounded away from zero.
\end{Prop}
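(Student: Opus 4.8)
The plan is to start from the explicit formula of Theorem \ref{Thm:warped product}(1), which in our normalization ($W=w$, $W_{\min}=\eps^k$) reads
\[
\angl(\gamma) = \int_0^{z_0} \frac{L}{w(\eps,z)\sqrt{w(\eps,z)^2 - L^2}}\, dz.
\]
First I would substitute $z = \eps Z$ and use homogeneity \eqref{eqn:w homogeneous} in the form $w(\eps,\eps Z) = \eps f(Z)$ from \eqref{eqn:def f}, together with $L = v\eps^k$. A short computation gives $dz = \eps\, dZ$, $w = \eps f(Z)$, hence $w^2 - L^2 = \eps^2 f(Z)^2 - v^2\eps^{2k}$. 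Pulling out powers of $\eps$ turns the integral into
\[
\angl(\gamma) = \frac{v}{\eps^{k-1}} \int_0^{z_0/\eps} \frac{dZ}{f(Z)\sqrt{f(Z)^2 - v^2\eps^{2k-2}}}.
\]
So the prefactor $v/\eps^{k-1}$ already appears; the remaining task is to show the $Z$-integral converges as $\eps\to0$ to an explicit $C_v$, with error $\calO(\eps^{2k-1})$ uniform in the stated ranges.

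The key analytic point is that the limiting integrand $Z\mapsto 1/f(Z)^2$ is integrable on $[0,\infty)$: by \eqref{eqn:props f} we have $f(Z)\sim |Z|$ as $Z\to\infty$, so $1/f(Z)^2 \sim Z^{-2}$, and since $k\ge 2$ gives $2k-2\ge 2 > 1$ convergence is clear; moreover $f$ is smooth and positive on $[0,\infty)$ with $f(0)=1$, so there is no issue at the lower endpoint (note $v\eps^{2k-2} < \eps^{k-1}\cdot(\text{bounded}) < f(0)$ for $\eps$ small, so the square root never degenerates for $v\le 1$). Thus I would define
\[
C_v \coloneqq v\int_0^\infty \frac{dZ}{f(Z)^2}\cdot\frac1v \quad\text{— more precisely}\quad C_v = \int_0^\infty \frac{dZ}{f(Z)\sqrt{f(Z)^2 - v^2\cdot 0}}\Big|_{\text{formally}} = \int_0^\infty \frac{dZ}{f(Z)^2},
\]
wait — one must be careful that $C_v$ as defined in the statement carries the $v$-dependence only through the square root, so the honest definition is
\[
C_v = \int_0^\infty \frac{dZ}{f(Z)\sqrt{f(Z)^2 - v^2\eps^{2k-2}}}\quad\text{evaluated in the limit}.
\]
Actually the cleanest route is: write the $Z$-integral as $\int_0^\infty f(Z)^{-2}\,dZ$ plus correction terms, and show the corrections are $\calO(\eps^{2k-1})$. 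So I would set $C_v := \int_0^\infty \frac{dZ}{f(Z)^2}$ if that were $v$-independent, but the statement says $C_v$ depends on $v$, so in fact the intended $C_v$ keeps the $v^2\eps^{2k-2}$ inside — no: in the limit $\eps\to 0$ that term vanishes. I would resolve this by defining $C_v$ as the value of the rescaled integral obtained after also rescaling $L$ out of the square root differently; concretely, since $w^2 - L^2 = \eps^2(f(Z)^2 - v^2\eps^{2k-2})$ and the $v$-dependent piece is lower order, the limit is genuinely $v$-independent and equals $\int_0^\infty f^{-2}$, so I would instead define $C_v$ via formula \eqref{eq:Cv} (to be displayed) as this integral, and the "dependence on $v$" is through the subleading corrections that I fold into a smooth function — this is the bookkeeping the paper's own \eqref{eq:Cv} settles, and I would simply match it.

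The error estimate is the main obstacle, and I would handle it by splitting $\int_0^{z_0/\eps} = \int_0^{z_0/\eps} f^{-2}\,dZ + (\text{correction from the square root}) $ and then $\int_0^{z_0/\eps} f^{-2} = \int_0^\infty f^{-2} - \int_{z_0/\eps}^\infty f^{-2}$, with the tail bounded by $C\eps$ (since $f^{-2}\sim Z^{-2}$ gives tail $\lesssim \eps/z_0$) — but we need $\calO(\eps^{2k-1})$, so a crude tail bound is not enough unless $k$ is small; the sharp statement must come from a more careful expansion using that $f(Z)^2 - Z^2\cdot(\text{const})$ has good decay, or from the fact that the tail contributes a term that is itself of the claimed order once one also accounts for the $z_0$-dependence correctly (the proposition says $C_v$ is independent of $z_0$, so the $z_0$-dependent tail must be absorbed into the error, and for that to be $\calO(\eps^{2k-1})$ one needs the integrand near $Z=z_0/\eps$, i.e. near $z=z_0$, to be exponentially or polynomially small of the right order — which it is, since there $w\approx z_0$ is bounded away from $0$ so the full integrand $\frac{L}{w\sqrt{w^2-L^2}} \approx \frac{L}{w^2}\approx \eps^k/z_0^2$, integrated over an $\calO(1)$ range gives $\calO(\eps^k)$, not quite $\calO(\eps^{2k-1})$). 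I would therefore re-examine: the claimed error is $\frac{v}{\eps^{k-1}}\cdot\calO(\eps^{2k-1}) = \calO(\eps^k)$, so in fact the tail contribution $\calO(\eps^k)$ is exactly consistent, and similarly the square-root correction $\frac{1}{f\sqrt{f^2 - v^2\eps^{2k-2}}} - \frac{1}{f^2} = \frac{1}{f^2}\big((1 - v^2\eps^{2k-2}/f^2)^{-1/2} - 1\big) = \frac{1}{f^2}\cdot\big(\tfrac12 v^2\eps^{2k-2}/f^2 + \dots\big)$, integrated, gives $\calO(\eps^{2k-2})$ times a convergent integral — again consistent after multiplying by $v/\eps^{k-1}$. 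So the real work is: (i) justify interchanging limit and integral via dominated convergence with an $\eps$-uniform dominating function on $[0,\infty)$; (ii) estimate the two correction integrals, using $f(Z)\ge c(1+|Z|)$ from \eqref{eqn:props f} for the domination and tail bounds; (iii) verify smoothness of $v\mapsto C_v$ on $[0,1)$ by differentiating under the integral sign (legitimate since for $v$ in a compact subset of $[0,1)$ the integrand and its $v$-derivatives are dominated by an integrable function of $Z$); (iv) confirm uniformity for $z_0$ bounded away from $0$ and $v\in[0,1]$ from the explicit bounds. The positivity of $C_v$ is immediate since the integrand is positive. I expect step (ii) — keeping track of the precise power $\eps^{2k-1}$ in the error uniformly — to be the most delicate, and the cleanest presentation is to Taylor-expand $(f^2 - v^2\eps^{2k-2})^{-1/2}$ in the small quantity $v^2\eps^{2k-2}/f(Z)^2 \le \eps^{2k-2}$ and bound the remainder integral by $\eps^{2k-2}\int_0^\infty f^{-4}\,dZ < \infty$, while the tail $\int_{z_0/\eps}^\infty f^{-2} \le C\eps/z_0$ is absorbed because the total asymptotic statement only claims relative error $\calO(\eps^{2k-1})$, i.e. absolute error $\calO(\eps^k)$, and $\eps/z_0 \cdot \frac{v}{\eps^{k-1}}$... here I would double-check the arithmetic against \eqref{eqn:winding warped prod} and adjust the split of "main term vs. error" so that the $z_0$-dependent tail lands on the error side with the claimed order, which forces defining the main term via the integral over all of $[0,\infty)$ rather than over $[0,z_0/\eps]$ — exactly as the statement's independence of $z_0$ demands.
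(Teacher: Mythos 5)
There is a concrete error at the very first step that propagates through and produces exactly the confusions you keep running into. The warped product metric here is $g = dz^2 + w^{2k} h$, so comparing with $g = dz^2 + W(z)^2 h$ gives $W = w^k$, \emph{not} $W = w$. (Your own sanity check should have caught this: you correctly wrote $W_{\min}=\eps^k$, but with $W=w$ you would get $W_{\min}=w(\eps,0)=\eps$.) With the correct $W = w^k$, Theorem~\ref{Thm:warped product}(1) gives
\[
\angl(\gamma) = \int_0^{z_0} \frac{L}{w^k\sqrt{w^{2k}-L^2}}\,dz,
\]
and the substitution $z=\eps Z$, $w=\eps f(Z)$, $L=v\eps^k$ yields
\[
\angl(\gamma) = \frac{v}{\eps^{k-1}}\int_0^{z_0/\eps} \frac{dZ}{f(Z)^k\sqrt{f(Z)^{2k}-v^2}},
\]
with $v$ appearing \emph{without} any suppressing power of $\eps$ inside the square root.

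This immediately resolves both of the tensions you noticed but could not settle. First, $C_v \coloneqq \int_0^\infty f(Z)^{-k}\bigl(f(Z)^{2k}-v^2\bigr)^{-1/2}\,dZ$ genuinely depends on $v$: since $f(0)=1$, the integrand near $Z=0$ behaves like $(1-v^2)^{-1/2}$, which is exactly why $C_v$ blows up as $v\to1$ (Lemma~\ref{Lem:Cv}). Your conclusion that the $v$-dependence "vanishes in the limit" is a symptom of the wrong $W$. Second, with the correct integrand, $f(Z)\sim |Z|$ gives decay $\sim Z^{-2k}$, so the tail $\int_{z_0/\eps}^\infty \lesssim (\eps/z_0)^{2k-1}$, which is the claimed $\calO(\eps^{2k-1})$ — uniformly in $v\in[0,1]$ and for $z_0$ bounded away from $0$, since for $Z$ large, $f(Z)^{2k}-v^2 \ge \tfrac12 f(Z)^{2k}$. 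Your $Z^{-2}$ decay and resulting $\calO(\eps)$ tail are again artifacts of $W=w$. Once the exponent is fixed, your overall strategy (substitute, write the $Z$-integral as $\int_0^\infty$ minus a tail, bound the tail) is exactly the paper's proof and needs no Taylor expansion of the square root at all; the $v$-dependence belongs in $C_v$ itself, and the only error term is the tail integral.
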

\begin{proof}
The proof of  Theorem \ref{Thm:warped product}(1) can be restricted to $z\in[0,z_0]$ and yields
\[ \angl (\gamma) = \int_0^{z_0} \frac{L}{w^k \sqrt{w^{2k}-L^2}}\, dz.\]
We substitute $Z=\frac{z}{\eps}$. Recall that $w=\eps f(Z)$, see \eqref{eqn:def f}. Then
\[ \angl (\gamma) =\frac{v}{\eps^{k-1}}\int_{0}^{z_0/\eps}  \frac{1}{f(Z)^k \sqrt{f(Z)^{2k}-v^2}}\, dZ\]
where $v=\frac{L}{\eps^k}$.  
The properties \eqref{eqn:props f} of $f$ imply $f\geq1$ and $f(Z)\sim |Z|$ as $|Z|\to\infty$, so 
\begin{equation}
C_v\coloneqq \int_{0}^\infty \frac{1}{f(Z)^k \sqrt{f(Z)^{2k}-v^2}}\,dZ
\label{eq:Cv}
\end{equation}
is finite when $v<1$ and has the claimed properties. Now
\[\angl (\gamma)=\frac{v}{\eps^{k-1}}\left(C_v-\int_{z_0/\eps}^{\infty}  \frac{1}{f(Z)^k \sqrt{f(Z)^{2k}-v^2}}\, dZ\right),\]
and the integral over  $Z>z_0/\eps$ is $\er(\eps^{2k-1})$ uniformly.
This implies \eqref{eqn:winding warped prod}.
\end{proof}
\begin{Rem}
\label{Rem:WarpedWindingS}
If one wants the somewhat more permissive warped product \eqref{eqn:def warped product new}, i.e. $g=(1-w^{\kappa} S)dz^2+h$ with $S$ \textit{independent} of $y$, one can proceed as in the above proof. The first step in the proof changes to
\[\angl(\gamma)=\int_0^{z_0} \frac{L\sqrt{1-w^{\kappa}S}}{w^k \sqrt{w^{2k}-L^2}}\, dz.\]
By restricting to small $\eps$ and potentially shrinking the $z$-interval, we can ensure $C^{-1} \leq \sqrt{1-w^{\kappa}S}\leq C$ for some uniform $C$. So the above argument will go through with a slightly modified constant $C_v$.
\end{Rem}

We now investigate the behaviour of $C_v$ as $v\to 1$.

\begin{Lem}
\label{Lem:Cv}
Define $C_v$ for $v<1$ as in \eqref{eq:Cv}. Then    
$C_v\to\infty$ as $v\to 1$.
If $f$ vanishes to finite order $2\ell$ at $Z=0$ then $C_v$  has the asymptotic behaviour as 
$v\to 1$
\begin{equation}
\label{eqn:Cv asympt}
C_v \sim 
\begin{cases}
 C (1-v)^{-\frac{\ell-1}{2\ell}}  & \text{ if }\ell>1 \\
 C \log((1-v)^{-1})  & \text{ if }\ell=1
\end{cases}
\end{equation}
where $C$ is an explicit positive constant depending on $k$, $\ell$ and $f^{(2\ell)}(0)$.
\footnote{Explicitly, $C = 2^{-1/2}(kc)^{-1/2\ell}\int_{0}^\infty \frac1{\sqrt{\mathcal{Z}^{2\ell}+1}}\,d\mathcal{Z}$, which could be expressed in terms of the Gamma function, if $\ell>1$ and
$C = (2kc)^{-1/2}$ if $\ell=1$, with $c$  determined by $f(Z) = 1 + cZ^{2\ell} + \er(Z^{2\ell+1})$ near $Z=0$.}
 
\end{Lem}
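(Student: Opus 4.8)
The plan is to localize everything in \eqref{eq:Cv} to a neighbourhood of $Z=0$, where the factor $f(Z)^{2k}-v^2$ under the square root degenerates as $v\to1$ because $f(0)=1$. First I would dispose of the tail: by \eqref{eqn:props f} the function $f$ is strictly increasing on $(0,\infty)$, so for any fixed $\delta>0$ one has $f(Z)\geq f(\delta)>1$ on $[\delta,\infty)$ and hence $f(Z)^{2k}-v^2\geq f(\delta)^{2k}-1>0$ uniformly for $v\in[0,1]$; combined with $f(Z)\sim|Z|$ at infinity this shows that $\int_\delta^\infty f(Z)^{-k}(f(Z)^{2k}-v^2)^{-1/2}\,dZ$ stays bounded (by a $v$-independent constant) as $v\to1$. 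So everything reduces to the behaviour of $\int_0^\delta$.

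For the first assertion ($C_v\to\infty$) I would observe that $C_v$ is increasing in $v$ and apply monotone convergence to get $C_v\to\int_0^\infty f(Z)^{-k}(f(Z)^{2k}-1)^{-1/2}\,dZ$ as $v\to1^-$. Since $f$ is smooth with $f(0)=1$ and $f'(0)=0$ (by \eqref{eqn:props f}), Taylor's theorem gives $f(Z)^{2k}-1=\mathcal{O}(Z^2)$ and $f(Z)^k=\mathcal{O}(1)$ near $0$, so the limiting integrand is $\geq\mathrm{const}\cdot Z^{-1}$ there and the integral diverges; hence $C_v\to\infty$.

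For the refined asymptotics I would use the hypothesis in the form $f(Z)=1+cZ^{2\ell}+\mathcal{O}(Z^{2\ell+1})$ with $c>0$, so that $f(Z)^{2k}-1=2kc\,Z^{2\ell}(1+\mathcal{O}(Z))$ and $f(Z)^{-k}=1+\mathcal{O}(Z^{2\ell})$ as $Z\to0$. Writing $s\coloneqq1-v^2$, on $[0,\delta]$ I would sandwich the integrand of \eqref{eq:Cv} between $(1\mp\epsilon)\bigl(s+2kc(1\pm\epsilon)Z^{2\ell}\bigr)^{-1/2}$ with $\epsilon=\epsilon(\delta)\to0$ as $\delta\to0$, and evaluate the model integral $\int_0^\delta(s+aZ^{2\ell})^{-1/2}\,dZ$ (for $a>0$) via the substitution $Z=(s/a)^{1/2\ell}\mathcal{Z}$, obtaining $a^{-1/2\ell}\,s^{-(\ell-1)/2\ell}\int_0^{\delta(a/s)^{1/2\ell}}(1+\mathcal{Z}^{2\ell})^{-1/2}\,d\mathcal{Z}$. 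For $\ell>1$ the $\mathcal{Z}$-integral converges, as $s\to0$, to $\int_0^\infty(1+\mathcal{Z}^{2\ell})^{-1/2}\,d\mathcal{Z}<\infty$, so the model integral is $\sim a^{-1/2\ell}s^{-(\ell-1)/2\ell}\int_0^\infty(1+\mathcal{Z}^{2\ell})^{-1/2}\,d\mathcal{Z}$; for $\ell=1$ the $\mathcal{Z}$-integral equals $\arsinh\bigl(\delta(a/s)^{1/2}\bigr)=-\tfrac12\log s+\mathcal{O}(1)$, so the model integral is $\sim-\tfrac12 a^{-1/2}\log s$. Taking $a=2kc(1\pm\epsilon)$, using $s=1-v^2\sim2(1-v)$ as $v\to1$, adding the bounded contribution of $\int_\delta^\infty$, and finally letting $\delta,\epsilon\to0$, I expect to land on \eqref{eqn:Cv asympt} with the constant $C$ recorded there.

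The main obstacle I anticipate is the bookkeeping with the order of limits in the last step: one must check that the errors coming from the Taylor sandwich and from truncating at $Z=\delta$ are $o$ of the (diverging) main term uniformly as $v\to1^-$, and only afterwards send $\delta\to0$. Given the uniform bounds established above this should be routine; the essential mechanism is the scaling of the model integral $\int_0^\delta(s+aZ^{2\ell})^{-1/2}\,dZ$, whose exponent $-(\ell-1)/2\ell$ — degenerating to logarithmic growth at $\ell=1$ — is exactly what produces the dichotomy in \eqref{eqn:Cv asympt}.
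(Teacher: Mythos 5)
Your proposal is correct and follows essentially the same route as the paper: in both cases the key move is to isolate the integral near $Z=0$, write $f(Z)^{2k}-v^2 \approx (1-v^2) + 2kc\,Z^{2\ell}$, and rescale $Z=(1-v^2)^{1/2\ell}\mathcal{Z}$ up to a constant, with the dichotomy $\ell>1$ versus $\ell=1$ coming from convergence versus logarithmic divergence of the resulting $\mathcal{Z}$-integral. The only cosmetic difference is in how the error from Taylor-expanding $f$ is handled — you sandwich the integrand between explicit model integrands with parameters $a=2kc(1\pm\epsilon)$, while the paper keeps a smooth factor $\digamma(\alpha\mathcal{Z})$ inside the scaled integral and invokes dominated convergence (for $\ell>1$) or a direct expansion of the integrand (for $\ell=1$) — but both are standard variants of the same scaling argument.
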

\begin{proof}
Since $f$ is smooth and has a minimum at $Z=0$ we have  $1\leq f(Z)^{2k}\leq 1+CZ^2$ near $Z=0$, which implies $C_v\to\infty$ as $v\to1$.

Since $f(Z)>1$ for $Z\neq0$ the integral  over $Z>1$ is bounded uniformly in $v\leq1$. 
It remains to analyse the integral over $[0,1]$. Define $c\neq 0$ via
\[f(Z)=1+c Z^{2\ell}+\er(Z^{2\ell+1})\]
near $Z=0$.  Write 
$$ f(Z)^{2k} = 1 + Z^{2\ell} \mu \digamma(Z) $$
with $\mu=2kc$ and $\digamma\geq0$ smooth, $\digamma(0)=1$. Then
$$ f(Z)^{2k} - v^2 = Z^{2\ell} \mu \digamma(Z) + a,\quad a \coloneqq 1-v^2
$$
Let $\alpha = (\frac a{\mu} )^{1/2\ell}$. 
Substituting $Z=\alpha \mathcal{Z}$ in the integral over $|Z|<1$ we get
$$ C_v = \frac\alpha{\sqrt{a}} \int_{0}^{\alpha^{-1}} \frac1{f(\alpha \mathcal{Z})^{k}}
\frac1 {\sqrt{\mathcal{Z}^{2\ell}\digamma(\alpha \mathcal{Z}) + 1}}\,d\mathcal{Z}  + \text{bounded}$$
When $v\to1$ then $a\to0$, so $\alpha\to0$.
If $\ell>1$ then the dominated convergence theorem (using $\inf_{|Z|<1}\digamma(Z)>0$) yields the result (note $a\sim 2(1-v)$). 

If $\ell=1$ then $\frac\alpha{\sqrt{a}} = \mu^{-1/2}$ is independent of $a$, but the integral will have a logarithmic divergence since the integrand behaves like $\mathcal{Z}^{-1}$.
Consider the integral over $[1,\alpha^{-1}]$ first. Using  $\digamma(Z)=1+\er(Z)$ we expand
\[ \frac{1}{\sqrt{\mathcal{Z}^{2}\digamma(\alpha \mathcal{Z}) + 1}} = \frac{1}{\mathcal{Z}} \frac{1}{\sqrt{1+\er(\alpha \mathcal{Z}) + \mathcal{Z}^{-2}}} =\frac{1}{\mathcal{Z}} + \frac{\er(\alpha \mathcal{Z} + \mathcal{Z}^{-2})}{\mathcal{Z}} \]
and 
$ \frac1{f(\alpha \mathcal{Z})^{k}} = 1 + \er(\alpha \mathcal{Z})$.
The main term in the integral then is 
$$ \int_1^{\alpha^{-1}}1 \cdot \frac{1}{\mathcal{Z}}\,d\mathcal{Z} = \log\alpha^{-1} $$
and all other terms, including the integral over $\mathcal{Z}<1$, are uniformly bounded.
\end{proof}

\subsection{Winding in the general case}
We continue to write $L=\vert \eta\vert$. In the general case this is not constant along a geodesic, but we can control its change. More precisely, parametrizing geodesics by $z$, Proposition \ref{Prop:etaEst} says
\begin{equation}
\label{eqn:L estimate}
L = L_0(1 + \er(z)) + \er(z) w^\kappa,
\end{equation}
where $L_0=L(0)$. This will turn out to be sufficient to get a winding statement like in the warped case.

We write $v =v(z) = \frac L{\eps^k}$, $v_0=v(0)$.

\begin{Thm}
 Let $z_0\in I$, $z_0>0$ be sufficiently small.
 Let $\gamma$ be a geodesic starting at $z=0$ upwards with $v_0\in(0,1)$ and ending in $z_0$. Then its total angular length satisfies
\label{Thm:Winding}
\begin{equation}
 \label{eqn:windingas-variable}
 \angl(\gamma) = \frac{1}{\eps^{k-1}} \left[ v_0C_{v_0} + R\right]
\end{equation}
where the remainder $R$ depends   on $\gamma$ and satisfies\footnote{The estimates are uniform for $v_0$ in any compact subset of $[0,1)$.}
\[R = 
\begin{cases}
 \er(\eps) & \text{ if } k>2 \\
 \er(\eps\log\eps) & \text{ if } k=2
\end{cases}\]
and $C_{v_0}$ is given by \eqref{eq:Cv}.

\end{Thm}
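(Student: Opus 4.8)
The plan is to follow the warped-product computation of Proposition~\ref{Prop:WarpedWinding}, but now with the non-constant angular momentum $L$ in the integrand and with corrections from the coefficient $D$ and the mixed term $b$ in the Hamiltonian~\eqref{eq:ExactHamiltonianEta}. The natural point of comparison is the ``frozen'' integral $\int_0^{z_0}\frac{L_0}{w^k\sqrt{w^{2k}-L_0^2}}\,dz$, which by the substitution $Z=z/\eps$ used in the proof of Proposition~\ref{Prop:WarpedWinding} already equals $\eps^{-(k-1)}\bigl(v_0C_{v_0}+\er(\eps^{2k-1})\bigr)$; the remainder $R$ is then $\eps^{k-1}$ times the error incurred by replacing $L_0$ with $L(z)$ and by the $D$- and $b$-corrections, and the key input controlling it is the torque estimate \eqref{eqn:L estimate}, $L(z)-L_0=\er(z)L_0+\er(z)w^\kappa$, from Section~\ref{Section:Dynamics}.

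First I would establish $z$-monotonicity of $\gamma$ on $[0,z_0]$. From $2\calH=1$ and \eqref{eq:zdot} one has $D\zdot^2=1-L^2w^{-2k}$, so it suffices that $L<w^k$ there; since $w(0)=\eps$, $w$ is increasing for $z>0$, and \eqref{eq:etazevol} controls the growth of $L/w^k$ by a constant times $L/w^k+1$, a Gr\"onwall estimate shows $L/w^k$ stays below $1$ on $[0,z_0]$ once $z_0$ is small enough — uniformly for $v_0$ in a compact subset of $[0,1)$ — with $\zdot$ bounded below, so $\gamma$ reaches $z_0$ and $\angl(\gamma)=\int_0^{z_0}\frac{|\ydot|}{\zdot}\,dz$. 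Next, from \eqref{eq:ydot}, $|\eta^\sharp|_h=L$ and the triangle inequality, $\bigl|\,|\ydot|-Lw^{-2k}\bigr|\le\zdot|b|$; combining this with $\zdot=w^{-k}D^{-1/2}\sqrt{w^{2k}-L^2}$ and $D=1+\er(w^\kappa)$ (using $\kappa\le 2k-2$) gives the pointwise bound
\[ \Bigl|\frac{|\ydot|}{\zdot}-\frac{L}{w^k\sqrt{w^{2k}-L^2}}\Bigr|\;\le\;\frac{L}{w^k\sqrt{w^{2k}-L^2}}\,\er(w^\kappa)+\er(1), \]
where the $\er(1)$ term comes from $|b|$ and is harmless after integration.

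It then remains to bound $\int_0^{z_0}\bigl|G_z(L(z))-G_z(L_0)\bigr|\,dz$, where $G_z(L)=\frac{L}{w^k\sqrt{w^{2k}-L^2}}$ has $\partial_L G_z(L)=w^k(w^{2k}-L^2)^{-3/2}$. I would split $[0,z_0]$ into the neck region $z\lesssim\eps$ (where $w\approx\eps$, $L\approx L_0\approx v_0\eps^k$ and $w^{2k}-L^2\gtrsim\eps^{2k}$) and the bulk region $\eps\lesssim z\le z_0$ (where $w\approx z$, $w^{2k}-L^2\gtrsim z^{2k}$ and, by \eqref{eqn:L estimate}, $L\lesssim\eps^k+z^{\kappa+1}$). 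The mean value theorem with \eqref{eqn:L estimate} gives $|G_z(L(z))-G_z(L_0)|\lesssim(zL_0+zw^\kappa)\,w^k(w^{2k}-L_{\max}^2)^{-3/2}$ with $L_{\max}=\max\{L(z),L_0\}$, which reduces the whole estimate — including the two error terms of the pointwise bound — to elementary integrals; the dominant ones are $\int_\eps^{z_0}\eps^kz^{1-2k}\,dz=\er(\eps^{2-k})$ and $\int_\eps^{z_0}z^{\kappa+1-2k}\,dz$, which is $\er(\eps^{2-k})$ when $\kappa<2k-2$ but a genuine $\er(\log(1/\eps))$ at the critical exponent $\kappa=2k-2$ (which for $k=2$ is forced, since then $\kappa=2k-2=k$). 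Multiplying by $\eps^{k-1}$ yields $R=\er(\eps)$ for $k\ge3$ and $R=\er(\eps\log\eps)$ for $k=2$, uniformly for $v_0$ in compact subsets of $[0,1)$.

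The main obstacle is this last estimate. Unlike the warped case, where $v=L/\eps^k$ is a constant that simply factors out of the substituted integral, here $v(z)$ drifts because of torque, the comparison function $G_z$ is singular along the turning locus $L=w^k$, and in the bulk the $w^\kappa B$ term in \eqref{eq:etazevol} can corrupt $L$ so much that $L(z)\ll\eps^k$. Keeping $L$ uniformly away from $w^k$ while simultaneously tracking the size of $L(z)-L_0$, and carrying out the resulting bookkeeping across the two scaling regimes, is where the real work lies; the logarithmic loss for $k=2$ is an unavoidable byproduct of $\int_\eps^{z_0}z^{\kappa+1-2k}\,dz$ at the borderline exponent $\kappa+1-2k=-1$.
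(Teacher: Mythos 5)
Your proposal is correct and follows essentially the same route as the paper's proof: comparison with the frozen-$L_0$ integrand, a mean-value estimate using $\partial_L G_z=w^k(w^{2k}-L^2)^{-3/2}$, the torque bound \eqref{eqn:L estimate}, and a neck/bulk split at $z\approx\eps$ producing the borderline integral and the $\eps\log\eps$ loss at $k=2$ (the paper performs the same computation in the variable $Z=z/\eps$, coarsening $w^\kappa\lesssim w^k$ in the bulk, and your explicit Gr\"onwall monotonicity step only makes precise what the paper leaves implicit). Two cosmetic slips do not affect the stated upper bound: the estimate $D=1+\er(w^\kappa)$ needs only $\kappa\le 2k$, not $\kappa\le 2k-2$, and for $k=2$ the value $\kappa=2k-2$ is merely the minimal admissible one in the setup, not forced.
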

\begin{proof}
 From the equation of motion \eqref{eq:ydot}, 
 \[\vert \dot{y}\vert=\left\vert \frac{\eta}{w^{2k}}-\dot{z} b\right\vert=\frac{L}{w^{2k}}+\er(1).\]
The energy and the equation of motion \eqref{eq:zdot} tell us 
\[\vert \dot{z}\vert=\frac{\sqrt{1-\frac{L^2}{w^{2k}}}}{\sqrt{D}},\]
so parametrising using $z$ instead of $t$ results in
\begin{align}
\vert \dot{y}\vert\, dt &=\left(\frac{L}{\sqrt{D}w^{k}\sqrt{w^{2k}-L^2}}+\er(1)\right)\, dz \notag \\ 
&=\left(\frac{L}{w^{k}\sqrt{w^{2k}-L^2}}(1+\er(w^\kappa))+\er(1)\right)\, dz.\notag
\end{align}
Let us here drop the $\mathcal{O}(1)$-contribution, since the error term will be larger eventually. 
Let
\[\mathcal{A}(x,y)=\frac{x}{y\sqrt{y^2-x^2}},\]
so that our integrand is $\mathcal{A}(L,w^k)$ to leading order. Note
\[\partial_x \mathcal{A}=\frac{y}{(y^2-x^2)^{\frac{3}{2}}}. \] By the mean value theorem, there is a $\lambda=\lambda(z)$ between $L_0$ and $L(z)$ such that 
\[\mathcal{A}(L,w^k)-\mathcal{A}(L_0,w^k)= \frac{w^k(L-L_0)}{(w^{2k}-\lambda^2)^{\frac{3}{2}}}.\]
Analysing the integral of this will give us the leading order of the error term.  We use the coordinate $Z=\frac{z}{\eps}$, $f(Z)=\frac{w}{\eps}$, and $v=\frac{L}{\eps^k}$. What we have to estimate is then
\begin{equation}
\mathcal{R}=\int_0^{z_0} \frac{w^k(L-L_0)}{(w^{2k}-\lambda^2)^{\frac{3}{2}}}\, dz=\eps^{1-k} \int_0^{z_0/\eps} \frac{f(Z)^k(v-v_0)}{ \left(f(Z)^{2k}-(\lambda/\eps^k)^2\right)^{\frac{3}{2}}}\, dZ.
\end{equation}   
As in the proof of Proposition \ref{Prop:WarpedWinding}, we assume $\eps<z_0$ and split the integral.
The piece near $0$ is 
\[I_0\coloneqq \int_0^1  \frac{f(Z)^k(v-v_0)}{ \left(f(Z)^{2k}-(\lambda/\eps^k)^2\right)^{\frac{3}{2}}}\, dZ.\]
By \eqref{eqn:L estimate}, 
\[v-v_0=\er(z)(1+f(Z)^\kappa \eps^{\kappa-k})=\eps \er(Z)\]
for $Z\in [0,1]$. For the denominator, we can again use \eqref{eqn:L estimate} for $\lambda$ to say 
\[\frac{\lambda}{\eps^k }=v_0+ \eps \er(Z).\]
Since $v_0<1$, we can ensure that this is strictly less than $1$ by shrinking $\eps$ (the amount of shrinking depends on $v_0$ and will  not be uniform on all of $[0,1)$). The integral $I_0$ will therefore be
\[I_0=\tilde{C}_{v_0} \er(\eps).\]
We now turn to the integral away from $0$, 
\[I_1\coloneqq \int_1^{z_0/\eps} \frac{f(Z)^k(v-v_0)}{ \left(f(Z)^{2k}-(\lambda/\eps^k)^2\right)^{\frac{3}{2}}}\, dZ.\]
The estimate \eqref{eqn:L estimate} then tells us
\[v-v_0=(v_0+f(Z)^k)\eps \er(Z)\]
and
\[\lambda=f(Z)^k \er(z_0).\] 
By choosing $z_0$ sufficiently smaller than $1$, we can ensure $ \lambda/\eps^k <c(z_0) f(Z)^k$ for some $c(z_0)<1$, so the denominator will simply behave like $f(Z)^{3k} \sim Z^{3k}$. The integral $I_1$ therefore goes as
\[I_1 \sim \int_1^{z_0/\eps} \frac{\eps Z^{2k+1}}{Z^{3k}}\, dZ \sim \begin{cases} \eps & k> 2 \\ \eps \log(\eps) & k=2. \end{cases}.\]
In our analysis of both $I_0$ and $I_1$ we have neglected an $\er(w^\kappa)$-factor in the integrand. This will only contribute lower order terms ($\er(\eps^{\kappa+1})$ and $\er(\eps^k)$ respectively).
\end{proof}

\begin{Rem}
One could probably perform an analysis of the behaviour of $R$ as $v_0\to 1$ similar to Lemma \ref{Lem:Cv}. A full description would be more complicated here, since the combined $\eps$- and $v_0$- behaviour would need to be analysed. 
\end{Rem}

\begin{Rem}
Note how the error term is much larger  than in the warped case, being of order $\eps$ or $\eps \log(\eps)$ versus $\eps^{2k-1}$. This stems from the second term in the estimate for $L$, \eqref{eqn:L estimate}, which in turn comes from both $S$ and $b$, see the proof of  Lemma \ref{Lem:etaSol}. Assuming both $b=0$ and $\partial_yS=0$ to improve the estimate would land us in the almost-warped setting of our previous work \cite{GriLye}.  

\end{Rem}

\begin{Rem}
Another difference between the warped and general case is that \eqref{eqn:winding warped prod} vanishes when $v_0=0$, but this is not claimed in \eqref{eqn:windingas-variable}. Indeed, it would be false, as the numerical solutions in Figure  \ref{fig:EllipsePlots}(D) show. There geodesics with $v_0=0$ are shown to move a bit along $Y$ when the metric is not of warped form. 
\end{Rem}

\begin{Rem}
Note how the horizontal length diverges as $\eps\to 0$ unless $v_0=\er(\eps^{k-1})\iff L_0=\er(\eps^{2k-1})$. This suggests that $\frac{L}{w^{2k-1}}$ could be an interesting quantity to study. In Sections \ref{Section:GeneralRescalings} and \ref{Section:alpha=2k-1} this will turn out to be correct. 
\end{Rem}

\subsection{The angle of impact}
We have been stating the winding results using the initial horizontal velocity $v_0=\frac{L_0}{\eps^k}\stackrel{\eqref{eq:ydot}}{=}\vert \dot{y}+\dot{z} b^\sharp\vert(0)\leq 1$.  Recall that we introduced the \textbf{angle of impact} $\varphi\in [0,\pi/2]$ via \label{phiDef2}
\[\cos\varphi=v_0.\]
Geometrically, this is the angle between the geodesic $\gamma(0)$ and the waist $z=0$ as explained in the introduction.
One can then rephrase Proposition \ref{Prop:WarpedWinding} as 
\begin{equation}
\label{eqn:ang length warped phi}
\angl(\gamma)=\frac{\cos\varphi}{\eps^{k-1}}\left(\mathcal{C}_{\varphi}+\er(\eps^{2k-1})\right) \ \text{ (warped product case)}
\end{equation}
and Lemma \ref{Lem:Cv} as
\begin{equation}
\label{eqn:Cphi asymp}
\mathcal{C}_{\varphi}\coloneqq C_{\cos\varphi}\stackrel{\varphi \to 0}{\sim} \begin{cases} C\varphi^{-\frac{\ell-1}{\ell}} & \text{if } \ell>1\\ C\log\varphi^{-1} & \text{ if } \ell=1.\end{cases}
\end{equation}
for a positive constant $C$.
This follows directly from the statement with $v=v_0$ and $1-v\sim \frac{\varphi^2}{2}$. Similarly for Theorem \ref{Thm:Winding}:
\begin{Thm}[Theorem \ref{Thm:Winding} in terms of $\varphi$]
\label{Thm:WindingPhi}
 Let $z_0\in I$, $z_0>0$ be sufficiently small.
 Let $\gamma$ be a geodesic starting at $z=0$ upwards at an angle $\varphi\in (0,\frac\pi2)$ and ending in $z_0$. Then its total angular length satisfies
\begin{equation}
 \label{eqn:windingas-variablePhi}
 \angl(\gamma) = \frac{1}{\eps^{k-1}} \left[ \mathcal{C}_{\varphi}\cos \varphi  + R\right]
\end{equation}
where the remainder $R$ depends on $\gamma$ and satisfies\footnote{The estimates are uniform for $\varphi$ in any compact subset of $(0,\pi/2]$.}
\[R = 
\begin{cases}
 \er(\eps) & \text{ if } k>2 \\
 \er(\eps\log\eps) & \text{ if } k=2
\end{cases}\]
and $\mathcal{C}_\varphi=C_{\cos\varphi}$ is given by \eqref{eq:Cv}, with asymptotics \eqref{eqn:Cphi asymp}
\end{Thm}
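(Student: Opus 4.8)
The plan is to deduce Theorem \ref{Thm:WindingPhi} directly from Theorem \ref{Thm:Winding} via the substitution $v_0 = \cos\varphi$, which is exactly the definition of the angle of impact recorded just above (together with the definition $\mathcal{C}_\varphi \coloneqq C_{\cos\varphi}$). So the first step is: given a geodesic $\gamma$ starting at $z=0$ upwards at an angle $\varphi\in(0,\tfrac\pi2)$, its initial horizontal velocity is $v_0 = \cos\varphi\in(0,1)$, so Theorem \ref{Thm:Winding} applies and yields
\[ \angl(\gamma) = \frac{1}{\eps^{k-1}}\bigl[v_0 C_{v_0} + R\bigr] = \frac{1}{\eps^{k-1}}\bigl[\mathcal{C}_\varphi\cos\varphi + R\bigr], \]
with $R = \er(\eps)$ if $k>2$ and $R=\er(\eps\log\eps)$ if $k=2$. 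This is \eqref{eqn:windingas-variablePhi} verbatim.

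Next I would address the uniformity clause. Theorem \ref{Thm:Winding} bounds $R$ uniformly for $v_0$ in compact subsets of $[0,1)$. Since $\varphi\mapsto\cos\varphi$ is continuous and maps $(0,\tfrac\pi2]$ into $[0,1)$, every compact $K\subset(0,\tfrac\pi2]$ has compact image $\cos(K)\subset[0,1)$; hence uniformity in $v_0$ over $\cos(K)$ is the same as uniformity in $\varphi$ over $K$. Note that $\varphi=\tfrac\pi2$, i.e.\ $v_0=0$, is allowed, matching the footnote, whereas $v_0=1$, i.e.\ $\varphi=0$, where $C_{v_0}$ blows up, is correctly excluded.

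Finally, for the asymptotics \eqref{eqn:Cphi asymp} of $\mathcal{C}_\varphi = C_{\cos\varphi}$ as $\varphi\to0$, I would apply Lemma \ref{Lem:Cv} with $v=\cos\varphi$ and insert the elementary expansion $1-v = 1-\cos\varphi = 2\sin^2(\varphi/2)\sim\tfrac12\varphi^2$. For $\ell>1$ this turns $C(1-v)^{-\frac{\ell-1}{2\ell}}$ into a constant multiple of $(\varphi^2)^{-\frac{\ell-1}{2\ell}} = \varphi^{-\frac{\ell-1}{\ell}}$; for $\ell=1$ it turns $C\log((1-v)^{-1})$ into a constant multiple of $\log(\varphi^{-2}) = 2\log\varphi^{-1}$. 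Absorbing the numerical factors into $C$ (which, as in Lemma \ref{Lem:Cv}, depends on $k$, $\ell$ and $f^{(2\ell)}(0)$) gives \eqref{eqn:Cphi asymp}, and the warped-product reformulation \eqref{eqn:ang length warped phi} follows in exactly the same way from Proposition \ref{Prop:WarpedWinding} and Lemma \ref{Lem:Cv}.

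There is no real obstacle here: the statement is a repackaging of Theorem \ref{Thm:Winding} and Lemma \ref{Lem:Cv}. The only points needing a moment's attention are the correct direction of the compactness transfer in the uniformity clause (a $\varphi$-interval bounded away from $0$ but possibly containing $\tfrac\pi2$ corresponds to a $v_0$-interval bounded away from $1$ but possibly containing $0$) and bookkeeping of which numerical constants get absorbed into $C$ when passing from $1-\cos\varphi$ to $\varphi^2/2$.
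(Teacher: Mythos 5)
Your proposal is correct and is exactly what the paper does: Theorem \ref{Thm:WindingPhi} is stated as a direct rephrasing of Theorem \ref{Thm:Winding} under the substitution $v_0=\cos\varphi$, with the asymptotics \eqref{eqn:Cphi asymp} obtained from Lemma \ref{Lem:Cv} via $1-\cos\varphi\sim\varphi^2/2$, and the paper itself only remarks that this ``follows directly.'' Your handling of the compactness transfer in the uniformity footnote (that $\cos$ maps compact subsets of $(0,\pi/2]$ to compact subsets of $[0,1)$, with the endpoints $\varphi=\pi/2\leftrightarrow v_0=0$ included and $\varphi=0\leftrightarrow v_0=1$ excluded) is also the right bookkeeping.
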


\section{Rescaling the Hamiltonian vector field}
\label{Section:GeneralRescalings}

There are of course many ways to rescale the momentum variable $\eta$ and/or the time, and we will eventually choose a special one. In this section we take a moment to discuss rescaling a bit more systematically and (hopefully) motivate our choice. 

We introduce the family of rescaled momentum variables
\begin{equation}
\label{eqn:theta eta}
\vartheta\coloneqq \frac{\eta}{w^{\alpha}}
\end{equation}
for $\alpha\in \N$. We will eventually choose $\alpha=2k-1$, and this special cases will be denoted by  $\vartheta=\theta$. To not clutter the notation, we will not write $\vartheta_{\alpha}$ to distinguish which $\alpha$ we are considering. We invite the reader to think of the rescaled cotangent bundle as a space where $\vartheta$ is smooth also at $w=0$. A reader who is happy with this as an initial explanation may skip the next subsection on a first reading.

\subsection{The rescaled cotangent bundle}
Recall from Section \ref{sec:setting} that $X\xrightarrow{\beta} [0,1)_{\eps}\times M$ is the blowup at $\eps=z=0$. There we also introduce the blown up cotangent bundle, \eqref{eqn:def X*} as 
\[X^*=X_0\times \R\times T^*Y\]
where the variable on $\R$ is $\xi$. This is a rank $n$ vector bundle over $X$, see \eqref{eqn:X* bundle}.
The fibre at a point $(x_0,y)\in X$ is 
\[X^*_{(x_0,y)}=\R\times T_yY.\]
As such, we may locally represent a section as an element of 
\[\Span_{C^\infty(X)}\{dz ,dy\}\,.\]

The function $w$ plays the role of a boundary defining function for the front face in $X$. In terms of this, we define the \textbf{rescaled cotangent bundle} for $\alpha\in\N$
\[{}^\alpha X^* \xrightarrow{\pi_{\alpha}} X\] 
as the bundle whose sections are locally of the form
 \[\Span_{C^\infty(X)} \{dz ,w^{\alpha}dy\}.\]
 This gives us a bundle by the Serre-Swan theorem. 
 We use fibre coordinates $(\xi,\vartheta)$, that is, we 
write sections of ${}^\alpha X^*$ as $\xi dz + \vartheta \,w^\alpha dy$ with $\xi,\vartheta$ smooth on $X$. Interpreting this as a section of $X^*$, i.e.\ writing it as $\xi dz + \eta\, dy$, defines
 a bundle map 
 \begin{align*}
  {}^\alpha X^* &\to X^*\\
(x_0,y,\xi,\vartheta)&\mapsto(x_0,y,\xi,w^\alpha \vartheta), 
 \end{align*}
which is a bundle isomorphism away from the front face $w=0$. This yields \eqref{eqn:theta eta}.
 
The total space of the bundle ${}^\alpha X^*$ is a manifold with corners like $X^*$, and we denote its boundary hypersurfaces by $\ff^*$, $M^*_\pm$ as for $X^*$.  Therefore,  it makes sense to talk about smoothness of vector fields on ${}^\alpha X^*$. It is on ${}^\alpha X^*$ where the rescaled Hamiltonian vector field will become a smooth vector field.

\subsection{Rescaled dynamics}

 In terms of $\vartheta$, the Hamiltonian becomes

\begin{equation}
2\calH=w^{2(\alpha-k)}\vert \vartheta\vert^2 +\frac{(\xi -w^{\alpha}\ip{b}{\vartheta})^2}{D},
\label{eq:ExactHamiltonianVartheta}
\end{equation}
where $D=1-w^{\kappa}S -w^{2k}\vert b\vert^2$ as before.
 
We denote a partial derivative by a subscript. For instance,
\[\calH_w\coloneqq \frac{\partial \calH}{\partial w}.\]
The equations of motion change to
\[\dot{z}=\calH_{\xi},\]
\[\dot{y}=w^{-\alpha}\calH_{\vartheta},\]
\[\dot{\xi}=-\calH_z\]
and
\[\dot{\vartheta}=-w^{-\alpha}\calH_y-\alpha \vartheta \frac{w_z}{w} \calH_{\xi}.\]

For a $\beta\geq 1$ (to be fixed in a moment to be $\beta=2k-\alpha$), we also rescale the time by introducing $\tau$ via 
\[\begin{cases}\frac{d\tau}{dt}=w^{\beta}(t) & \\ \tau(0)=0. & \end{cases}\] This corresponds to multiplying the Hamiltonian vector field $V$ by $w^{\beta}$. Let us write $\mathcal{V}=w^\beta V$.  Denoting the $\tau$-derivative by $'$, the above equations read
\begin{align*}
z'&=w^{\beta}\calH_{\xi}=\frac{w^{\beta}(\xi-w^{\alpha} \ip{b}{\vartheta})}{D},\\
y'&=w^{\beta-\alpha}\calH_{\vartheta}=w^{\alpha+\beta -2k} \vartheta^\sharp +z' b^\sharp,\\
\xi'&=-w^{\beta}\calH_z,\\
\vartheta'&=-w^{\beta-\alpha}\calH_y-\alpha w^{\beta-1} \vartheta w_z \calH_{\xi}.
\end{align*}

From these it should be clear that choosing $\beta$ large, the vector field $\mathcal{V}=w^{\beta}V$ becomes smooth. But then one risks $\mathcal{V}$ vanishing identically on the lifted front face $\mathrm{ff}^*$. A look at the $y'$-equation should convince one that we want $\alpha+\beta=2k$. This will impose some restrictions coming from the $\vartheta'$-equation. We formulate this as a main theorem. We will refer to the components of the vector field using the associated equation of motion. So $z'$ will refer the $z$-component of $\mathcal{V}$ and so on. We use two\footnote{We are are suppressing the second $E$-coordinate $E_-$. The situation near the lower edge $Z\to -\infty \iff E_-\to 0$ is completely analogous to the upper edge $Z\to \infty\iff E\to 0$.} projective coordinates on the front face,
\[Z=\frac{z}{\eps}, \quad f(Z)=w(Z,1)=\frac{w}{\eps},\]
\[E=\frac{\eps}{z},\quad F(E)=w(1,E)=\frac{w}{z}.\]
\begin{Thm}
\label{Thm:GeneralRescaling}
Let $V$ denote the Hamiltonian vector field of the Hamiltonian \ref{eq:ExactHamiltonianVartheta}.  
Let $\alpha\in [1,2k-1]$ be some integer subject to $\alpha\leq k+\frac{\kappa}{2}$ and let $\beta=2k-\alpha$. Then the rescaled Hamiltonian vector field $\mathcal{V}_{\alpha}\coloneqq w^\beta V$ extends to a smooth vector field on ${}^\alpha X^*$. This extended vector field is tangent to the boundary when $\alpha>1$.

For the special value $\alpha=2k-1$ we write
\[\theta=\frac{\eta}{w^{2k-1}} = \vartheta\,,\]
and then
the restriction of $\mathcal{V}\coloneqq\mathcal{V}_1$ to the front face has the components
\[Z'=f(Z)\xi,\quad \xi'=0,\quad y'=\theta^\sharp,\quad \theta'=-(2k-1)\xi\theta-\frac{1}{2}\partial_y (\vert \theta\vert^2+f(Z)^2 \xi^2 S).\]

\end{Thm}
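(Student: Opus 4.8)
The plan is to compute the Hamiltonian vector field $V$ of $\calH$ in the rescaled fibre coordinates $(\xi,\vartheta)$, multiply by $w^\beta$ with $\beta = 2k-\alpha$, and read off which powers of $w$ appear in each component, checking that none are negative. First I would rewrite the four equations of motion already displayed before the theorem,
\[ z' = w^\beta \calH_\xi,\quad y' = w^{\beta-\alpha}\calH_\vartheta,\quad \xi' = -w^\beta\calH_z,\quad \vartheta' = -w^{\beta-\alpha}\calH_y - \alpha w^{\beta-1}\vartheta w_z \calH_\xi, \]
and substitute the explicit form \eqref{eq:ExactHamiltonianVartheta} of $\calH$. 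The key observations are: $\calH_\xi = D^{-1}(\xi - w^\alpha\ip{b}{\vartheta})$, so $z' = w^\beta D^{-1}(\xi - w^\alpha\ip{b}{\vartheta})$, which is manifestly smooth on ${}^\alpha X^*$ since $D = 1 - w^\kappa S - w^{2k}|b|^2$ is smooth and bounded away from $0$ for small $\eps$, $I$, and all exponents of $w$ are $\geq \beta \geq 1$. Likewise $\calH_\vartheta = w^{2(\alpha-k)}\vartheta^\sharp - w^\alpha D^{-1}(\xi - w^\alpha\ip{b}{\vartheta})b^\sharp$, so $y' = w^{\beta-\alpha}\calH_\vartheta = w^{\alpha+\beta-2k}\vartheta^\sharp + z' b^\sharp = \vartheta^\sharp + z' b^\sharp$ using $\alpha+\beta = 2k$; this is the computation the remark before the theorem alludes to, and it is exactly why one wants $\alpha+\beta = 2k$ — any larger $\beta$ kills $y'$ on $\ff^*$.

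Next I would handle $\xi'$ and $\vartheta'$, which is where the constraint $\alpha \leq k + \kappa/2$ enters. Differentiating $2\calH = w^{2(\alpha-k)}|\vartheta|^2 + D^{-1}(\xi - w^\alpha\ip{b}{\vartheta})^2$ with respect to $z$ produces terms with $w_z \cdot w^{2\alpha-2k-1}|\vartheta|^2$, terms from $\partial_z D$, and terms from $\partial_z$ of the second summand; multiplying by $w^\beta = w^{2k-\alpha}$ turns the worst term $w^{2\alpha-2k-1}$ into $w^{\alpha-1}$, which is smooth for $\alpha \geq 1$. The $D$-derivative contributes $\partial_z D = -\kappa w^{\kappa-1}w_z S - w^\kappa\partial_z S - \dots$, and after multiplying by $w^\beta D^{-2}(\xi-w^\alpha\ip{b}{\vartheta})^2$ the lowest power is $w^{\kappa-1+\beta} = w^{\kappa+2k-\alpha-1} \geq w^{k-1}$ by the hypothesis $\alpha \leq k + \kappa/2 \leq \kappa + 1$... more carefully, $\kappa + 2k - \alpha - 1 \geq 0$ is implied by $\alpha \leq 2k + \kappa - 1$, which is weaker than $\alpha \leq k + \kappa/2$ when $k \leq \kappa$, so the binding constraint actually comes from the $w_z w^{2\alpha-2k-1}|\vartheta|^2$ term in $\vartheta'$: the $\vartheta'$ equation has $-w^{\beta-\alpha}\calH_y$, and $\calH_y$ contains $\frac12 w^{2\alpha-2k}\partial_y|\vartheta|^2 - D^{-2}(\dots)^2 \cdot \frac12\partial_y D$; multiplying by $w^{\beta-\alpha} = w^{2k-2\alpha}$ the first piece becomes $\tfrac12\partial_y|\vartheta|^2$ (smooth), and the $\partial_y D$ piece has lowest order $w^{2k-2\alpha}\cdot w^{\kappa-?}$... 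I would track this carefully to confirm it is exactly $\alpha \leq k + \kappa/2$ that makes $\calH_y$ contribute nonnegative powers after the $w^{\beta-\alpha}$ factor, and that the $-\alpha w^{\beta-1}\vartheta w_z\calH_\xi$ term is smooth since $\beta \geq 1$. Tangency to the boundary for $\alpha > 1$ follows because $z' = w^\beta(\dots)$ with $\beta \geq 2$ vanishes on $\ff^*$ (where $w = 0$) to order $\geq 2$... actually I would instead argue tangency to $M^*_\pm$ and $\ff^*$ by checking each defining function is preserved to first order: $w$ is a defining function for $\ff^*$, and $w' = w_z z' = w_z w^\beta\calH_\xi$ vanishes on $\ff^*$ when $\beta > 1$, i.e.\ $\alpha < 2k - 1$; for $\alpha = 2k-1$ tangency fails, consistent with the statement restricting tangency to $\alpha > 1$... wait, the statement says tangent when $\alpha > 1$, so I would recheck: for $M^*_\pm$ the defining function is $E$ (or $E_-$), and on ${}^\alpha X^*$ one computes $E' $ directly in projective coordinates; for $\ff^*$ one uses that in the $(Z,\eps)$ chart $\eps$ is the defining function and $\eps' = 0$ since $\eps$ is a parameter. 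So tangency to $\ff^*$ is automatic, and tangency to $M^*_\pm$ needs the $\alpha > 1$ computation, which I would do in projective coordinates $(E,z)$ using \eqref{eqn:def F} and the assumption \eqref{eqn:S assumption} that $\partial_E S = 0$ at $\ff\cap M_+$.

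Finally, for the special value $\alpha = 2k-1$, $\beta = 1$: I would restrict all four components to $\ff^*$, i.e.\ set $w = \eps f(Z)$, $\eps \to 0$, so $D \to 1$, $w^\alpha = \eps^{2k-1}f^{2k-1} \to 0$ for $k \geq 1$, $w^{2(\alpha-k)} = w^{2k-2} \to 0$. Concretely $z' = w\calH_\xi \to$ (on $\ff$, using the $Z = z/\eps$ coordinate so $z' = \eps Z'$ and $w = \eps f$) gives $Z' = f(Z)\xi$; $\xi' = -w\calH_z$, and on $\ff$ all terms in $w\calH_z$ carry a positive power of $\eps$ (the $w_z w^{2k-2}|\vartheta|^2$ term carries $\eps^{2k-1}$, the $D$-derivative terms carry $\eps^\kappa$ or more), so $\xi' = 0$; $y' = \vartheta^\sharp + z' b^\sharp$, and $z' b^\sharp$ carries a positive power of $\eps$ on $\ff$ (since $z' = \eps Z'$... hmm, but we're viewing $y'$ as a function on $\ff^*$ where $\eps = 0$, so $z' = 0$ there), giving $y' = \theta^\sharp$; and $\vartheta' = -w^{\beta-\alpha}\calH_y - \alpha w^{\beta-1}\vartheta w_z\calH_\xi$ with $\beta = 1$, $\alpha = 2k-1$, so $w^{\beta-\alpha} = w^{2-2k}$ and $w^{\beta-1} = 1$; expanding $\calH_y$ and keeping only terms that survive $\eps \to 0$ after multiplication yields $\theta' = -\tfrac12\partial_y|\theta|^2 - \tfrac12 f(Z)^2\xi^2\partial_y S$ from the $\calH_y$ part (the $\partial_y D$ term of $\calH_y$ is $-\kappa w^{\kappa-1}$-free in $y$... rather $\partial_y D = -w^\kappa\partial_y S - w^{2k}\partial_y|b|^2$, and $-w^{2-2k}\cdot D^{-2}(\xi-\dots)^2\cdot(-\tfrac12\partial_y D)$ on $\ff$ picks up $w^{2-2k+\kappa}\xi^2\partial_y S \to$, finite iff $\kappa = 2k-2$, giving $\tfrac12 f^2\xi^2\partial_y S$), and $-(2k-1)\vartheta w_z\calH_\xi$ on $\ff$ with $w_z = f'(Z)\cdot\frac{\partial Z}{\partial z}\cdot\eps$... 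I need $w_z$ on $\ff$: $w = \eps f(Z)$, $Z = z/\eps$, so $w_z = f'(Z)$, and $\calH_\xi \to \xi$ on $\ff$, giving $-(2k-1)\xi\theta$ only if additionally one uses $w_z \to f'$... but the claimed answer has coefficient $-(2k-1)\xi\theta$ not $-(2k-1)f'\xi\theta$, so I would recheck — likely the $w_z$ gets absorbed because $Z' = f\xi$ and the natural time derivative along $\ff$ bundles $w_z$ into the $Z$-dynamics; I would carefully redo the $\vartheta'$ computation in the $(Z,\xi,y,\theta)$ coordinates directly on $\ff^*$ rather than taking limits termwise. The main obstacle I anticipate is precisely this last step: correctly bookkeeping the powers of $\eps$ versus $w = \eps f(Z)$ and the factor $w_z$ when restricting $\vartheta'$ to $\ff^*$, and confirming that $\kappa = 2k-2$ is exactly what makes the $\partial_y S$ term survive with the stated coefficient $\tfrac12 f(Z)^2\xi^2$ — the rest is routine power-counting.
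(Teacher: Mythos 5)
Your plan is the paper's plan: write out the rescaled Hamilton equations, insert the explicit Hamiltonian \eqref{eq:ExactHamiltonianVartheta}, and power-count in $w$ using the projective charts $(Z,\eps)$ and $(E,z)$. Your identification of the binding constraint --- the $w^{\beta-\alpha+\kappa}\partial_yS$ term coming from $\partial_yD$ in $-w^{\beta-\alpha}\partial_y\calH$, which needs $2k-2\alpha+\kappa\geq0$ --- is correct and is exactly what the paper uses.

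Your hesitation about the $\theta'$-formula is well founded, so follow it through rather than talk yourself out of it. From $w=\eps f(z/\eps)$ one gets $w_z=f'(Z)$, and this factor does \emph{not} get absorbed into the $Z$-dynamics; the first term of $\theta'$ on $\ff^*$ is therefore $-(2k-1)f'(Z)\xi\theta$. Likewise the $\partial_yS$-contribution is $-\tfrac12 w^{\beta-\alpha+\kappa}D^{-2}(\xi-w^\alpha\ip{b}{\vartheta})^2\,\partial_yS$, which for $(\beta,\alpha,\kappa)=(1,2k-1,2k-2)$ has exponent zero, so its front-face limit is $-\tfrac12\xi^2\partial_yS$ --- not $-\tfrac12 f(Z)^2\xi^2\partial_yS$ as you computed and as the displayed formula in the theorem asserts. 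The correct restriction, $\theta'=-(2k-1)f'(Z)\xi\theta-\tfrac12\partial_y\bigl(|\theta|^2+\xi^2S\bigr)$, agrees with equation \eqref{eq:theta'Leading} and with Proposition~\ref{Prop:LimitingDynamicbeta=1} (which has $\xi=1$ on the energy hypersurface), so the displayed formula in the theorem statement itself appears to carry two slips. On the tangency clause: checking the boundary defining functions ($\eps$ or $z$ for $\ff^*$, $E$ for $M^*_\pm$) shows $\calV_\alpha$ is tangent to every boundary hypersurface for \emph{all} $\alpha\in[1,2k-1]$, so you cannot derive the '$\alpha>1$' qualifier from a tangency computation; what actually distinguishes $\alpha=1$ is that the $\xi$-component of the restriction of $\calV_\alpha$ to $\ff^*$ is nonzero, a remark about the restricted boundary flow rather than about tangency in the manifolds-with-corners sense.
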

\begin{proof}
The rescaled Hamiltonian vector field is clearly smooth away from the front face $\{w=0\}$. So we have to investigate the behaviour near the front face.

\noindent\textbf{$z$-component}
\medskip
The radial equations of motion in terms of the projective coordinates become
\[Z'=\frac{f(Z)^\beta \eps^{\beta-1}(\xi-w^{\alpha} \ip{b}{\vartheta})}{D}\]
and
\[E'=-\frac{E z^{\beta-1} F(E)(\xi-w^{\alpha} \ip{b}{\vartheta})}{D}.\]
These are clearly smooth in $Z,\eps$ and $E,z$ respectively. We note that the front face restrictions read
\[Z'=\begin{cases} f(Z)\xi & \beta=1\\ 0 & \beta>1\end{cases}\]
and
\[E'=\begin{cases} -EF(E)\xi & \beta=1\\ 0 & \beta>1\end{cases}.\]

\noindent\textbf{$y$-component}
\medskip
The $y'$-component is smooth since we are choosing $\alpha+\beta=2k$. It reads
\[y'=\vartheta^\sharp + \eps Z' b^\sharp=\vartheta^\sharp + z' b^\sharp\]
in the $(Z,\eps)$- and $(E,z)$-coordinates respectively.

The restriction to the front face is simply $y'=\vartheta^\sharp$.

\noindent\textbf{$\vartheta$-component}
\medskip
To discuss the momentum variables $\vartheta$, we first note that
\[w_z = \partial_z (\eps f(z/\eps))=\partial_z (zF(z/\eps)),\]
hence
\[w_z=f'(Z)=F(E)-EF'(E)\]
in the projective coordinates. 

The $\vartheta'$-equation reads
\[\vartheta'=-\alpha \frac{w_z}{w} z' \vartheta -w^{\beta-\alpha} \partial_y \calH.\]
The first term reads 
\[-\alpha f'(Z)w^{\beta-1}\frac{(\xi-w^{\alpha} \ip{b}{\vartheta})}{D}\]
in $(Z,\eps)$-coordinates and
\[-\alpha (F(E)-EF'(E))w^{\beta-1}\frac{(\xi-w^{\alpha} \ip{b}{\vartheta})}{D}\]
in the $(E,z)$-coordinates. This is smooth and restricts to 
\[-\alpha f'(Z)\xi \delta_{\beta,1}\]
(and similarly in the other coordinate system) on the front face $\{w=0\}$. The second term of $\vartheta'$ reads
\[-w^{\beta-\alpha}\partial_y \calH=-\frac{w^{\alpha+\beta-2k}}{2} \partial_y \vert \vartheta\vert^2+ \frac{w^{\beta}(\xi-w^{\alpha} \ip{b}{\vartheta})}{D} \partial_y \ip{b}{\vartheta}-\frac{w^{\beta-\alpha}(\xi-w^{\alpha} \ip{b}{\vartheta})^2}{D^2} \partial_yD.\]
The first term here is smooth since $\alpha+\beta=2k$. The second term is clearly smooth. For the last term, we observe that 
\[-\partial_yD= w^{\kappa} \partial_yS+ w^{2k} \partial_y \vert b\vert^2.\]
 Using $\beta=2k-\alpha$, the total $w$-exponents therefore read 
\[\beta-\alpha+\kappa=2k-2\alpha+\kappa \]
and
\[\beta-\alpha+2k=4k-2\alpha.\]
These are positive if and only if the assumed bound
\[\alpha\leq k+\frac{\kappa}{2}\]
holds.

The restriction to the front face depends on the value of $\alpha$. When $\alpha<2k-1$, the restriction is simply $\vartheta'=-\frac{1}{2} \partial_y \vert \vartheta\vert^2$. When $\alpha=2k-1$ (and $\kappa= 2k-2$), we get the additional term $(z')^2 \partial_y S$, where $z'$ is to be interpreted in $(Z,\eps)$-coordinates as above.

\noindent\textbf{$\xi$-component}
\medskip
We start by noting
\[2\partial_z \calH=\partial_z (w^{2(\alpha-k)} \vert \vartheta\vert^2)-2 z' \partial_z (w^{\alpha}\ip{b}{\vartheta})+(z')^2\partial_z (1-w^\kappa S-w^{2k}\vert b\vert^2).\]
The last couple of terms extend smoothly to the front face since $z'$ extends. The first term reads
\[\partial_z (w^{2(\alpha-k)}\vert \vartheta\vert^2)=w^{2(\alpha-k)}\left(-2k \frac{w_z}{w}\vert \vartheta \vert^2+(\partial_z h^{ij})\vartheta_i \vartheta_j\right).\]
Here $\partial_z h^{ij}$ is shorthand for $(\partial_z h^{-1})^{ij}=-(h^{-1} (\partial_z h) h^{-1})^{ij}$. 
This will get multiplied by $w^{\beta}$. Using $\alpha+\beta=2k$ again, we find
\[w^{\beta} \partial_z (w^{2(\alpha-k)}\vert \vartheta\vert^2)=-2k w^{\alpha-1} w_z \vert \vartheta\vert^2 +w^\alpha (\partial_z h^{ij})\vartheta^i \vartheta^j.\]
This extends smoothly to all of ${}^{\alpha}X^*$, including the (lifted) front face.

The restriction to the front face vanishes unless $\alpha=1$.
\end{proof}

\begin{Cor}
The rescaling with $\alpha=2k-1$ gives a smooth Hamiltonian vector field if $\kappa\geq 2k-2$. 
\end{Cor}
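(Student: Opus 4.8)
The plan is to read this off directly from Theorem \ref{Thm:GeneralRescaling}: it suffices to verify that $\alpha=2k-1$ is an admissible exponent in the sense of that theorem under the hypothesis $\kappa\ge 2k-2$. Recall that Theorem \ref{Thm:GeneralRescaling} guarantees that $\mathcal{V}_\alpha=w^{\beta}V$ (with $\beta=2k-\alpha$) extends to a smooth vector field on ${}^\alpha X^*$ provided $\alpha$ is an integer with $1\le\alpha\le 2k-1$ and $\alpha\le k+\tfrac{\kappa}{2}$.

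First one checks the admissibility range. Since $k\ge2$ we have $2k-1\ge 3$, so $\alpha=2k-1$ is an integer lying in $[1,2k-1]$; the associated time-rescaling exponent is $\beta=2k-\alpha=1$, so here $\mathcal{V}_{2k-1}=wV$, which is the field $\mathcal{V}$ of our standing notation. It remains to check the decay condition $\alpha\le k+\tfrac{\kappa}{2}$: substituting $\alpha=2k-1$ this reads $2k-1\le k+\tfrac{\kappa}{2}$, i.e.\ $k-1\le\tfrac{\kappa}{2}$, i.e.\ $\kappa\ge 2k-2$ --- precisely the hypothesis of the corollary. Hence Theorem \ref{Thm:GeneralRescaling} applies, $\mathcal{V}=\mathcal{V}_{2k-1}=wV$ is smooth on ${}^{2k-1}X^*$, and (since $\alpha=2k-1>1$) it is moreover tangent to the boundary hypersurfaces.

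The one point that deserves a glance --- and the reason the non-strict inequality $\kappa\ge 2k-2$ is enough --- is the borderline case $\kappa=2k-2$, in which $\alpha=2k-1=k+\tfrac{\kappa}{2}$ holds with equality. Tracing the $\vartheta$-component in the proof of Theorem \ref{Thm:GeneralRescaling}, the relevant powers of $w$ appearing are $w^{\,2k-2\alpha+\kappa}$ and $w^{\,4k-2\alpha}$; for $\alpha=2k-1$ the first exponent equals $\kappa-(2k-2)$ and the second equals $2$, so both are non-negative, the first vanishing exactly when $\kappa=2k-2$. A contribution of the form $w^{0}\cdot(\text{smooth})$ is still smooth on ${}^{2k-1}X^*$ --- it is merely a term that does not vanish on the front face, in fact the one producing the $S$-dependent part of the $\theta'$-equation recorded at the end of Theorem \ref{Thm:GeneralRescaling}. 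So smoothness is unaffected, and there is no genuine obstacle here beyond this exponent bookkeeping. (If $\kappa>2k$ one first absorbs the surplus factor $w^{\kappa-2k}$ into $S$ as noted in Section \ref{ssec:smoothness}, which changes nothing.)
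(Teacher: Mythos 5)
Your proof is correct and takes exactly the route the paper intends: the corollary is a direct specialization of Theorem \ref{Thm:GeneralRescaling} to $\alpha=2k-1$, $\beta=1$, where the admissibility condition $\alpha\le k+\tfrac{\kappa}{2}$ becomes $\kappa\ge 2k-2$. Your extra remark on the borderline case $\kappa=2k-2$ correctly traces the exponent bookkeeping in the $\vartheta$-equation and is consistent with the paper's own observation that this borderline term survives on the front face.
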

We will get back to the case  $\alpha=2k-1$ in Section \ref{Section:alpha=2k-1}. For now we just note that $\alpha=k$ is the largest $\alpha$ for which $\vartheta$ is bounded when $\calH$ is restricted to the unit cotangent bundle. The value $\alpha=2k-1$ is the only one for which the $(Z,\xi)$-dynamics does not vanish on the front face.
\begin{Rem}
A common (and often fruitful) approach to rescaling is to choose $\alpha$ in such a way that $\vartheta$  becomes unit length (with respect to $g$) on the front face. This corresponds to $\alpha=k$. For conical singularities, $k=1$, $2k-1=k$, so there is no difference. For cuspidal singularities, $k\geq 2$, the rescaling $\alpha=k$ turns out to be suboptimal since the $Z$-dynamics vanishes on the front face, causing the linearisation to be degenerate. This was also observed in \cite{GrGr15}.  
\end{Rem}

The time rescaling is crucial to ensure a smooth vector field. Dynamically the fact that $\mathcal{V}$ is tangent to the boundary has the effect of stretching out the time a geodesic spends near the waist $z=0$ in the following sense.
\begin{Lem}
\label{Lem:GeneralwBound}
Assume the setup of Theorem \ref{Thm:GeneralRescaling}. Let $\gamma_{\eps}(t)=(z_{\eps}(t),y_{\eps}(t))$ be a family of unit speed geodesics with $z_{\eps}(0)=0$. 
Then, for any $T>0$ there is an $\eps_0>0$ and a constant $C_{\eps_0}$ (both depending on $T$) such that
\[ w(\tau)\coloneqq w(z_{\eps}(t(\tau)),\eps)\leq C_{\eps_0} \eps\]
for all $\eps\leq \eps_0$ and $\tau\in [-T,T]$. In particular, $\lim_{\eps\to 0} w(\tau)=0$ pointwise and uniformly on compacts $K\subset \R$. 
\end{Lem}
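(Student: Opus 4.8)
The plan is to control $w$ along the reparametrised geodesic by a Gronwall estimate, closed up by a bootstrap. First I would fix, once and for all, a small $\rho>0$ with the following properties: since $w$ is $1$-homogeneous, continuous and positive off the origin, $w(\eps,z)\ge c\sqrt{\eps^2+z^2}$ for some $c>0$, so $\{w\le\rho\}$ is a compact subset of $[0,1)_\eps\times I$ containing the origin once $\rho$ is small; shrinking $\rho$ further we may also assume $D=1-w^\kappa S-w^{2k}\vert b\vert^2\ge\tfrac12$ there (as $S,b$ are bounded near the front face). On $\{w\le\rho\}$ the unit speed condition $2\calH=1$ in the form \eqref{eq:ExactHamiltonianEta} gives $D^{-1}(\xi-\ip{b}{\eta})^2\le1$, hence $\zdot^2=D^{-2}(\xi-\ip{b}{\eta})^2=D^{-1}\big(D^{-1}(\xi-\ip{b}{\eta})^2\big)\le D^{-1}\le2$, so $\vert\zdot\vert\le\sqrt2$. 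Also $w_z$ is bounded on $\{w\le\rho\}$ (in projective coordinates $w_z=f'(Z)=F(E)-EF'(E)$ as in the proof of Theorem \ref{Thm:GeneralRescaling}, and $f',F,F'$ are bounded); write $C_f\coloneqq\sup\vert w_z\vert$.

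Next I would combine these with the time rescaling. Since $z'=w^\beta\zdot$ and $\beta=2k-\alpha\ge1$, on $\{w\le\rho\}$ (so $w\le1$) we get $\vert z'\vert\le\sqrt2\,w^\beta\le\sqrt2\,w$, hence, as long as the reparametrised geodesic stays in $\{w\le\rho\}$,
\[
\Big\vert\frac{dw}{d\tau}\Big\vert=\vert w_z\,z'\vert\le C'w,\qquad C'\coloneqq\sqrt2\,C_f .
\]
Since $w=\eps$ at $z=0$, Gronwall's inequality gives $w(\tau)\le\eps\,e^{C'\vert\tau\vert}$ on any interval around $0$ on which $w\le\rho$.

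Finally I would run the standard bootstrap to drop the hypothesis $w\le\rho$. Set $\eps_0\coloneqq\tfrac\rho2 e^{-C'T}$ and fix $\eps\le\eps_0$; let $[0,\tau_+)$ be the maximal interval on which the reparametrised geodesic exists with $w\le\rho$. On it the previous step gives $w\le\eps\,e^{C'T}\le\tfrac\rho2<\rho$; moreover this keeps $z_\eps$ in a compact subset of $I$ and, for the fixed $\eps>0$ (using $w\ge\eps$, which also bounds the fibre variables on the energy surface), keeps the lifted curve in a compact set on which $\mathcal{V}_\alpha$ is smooth by Theorem \ref{Thm:GeneralRescaling}. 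Hence the solution cannot leave $M$ nor escape to infinity, so if $\tau_+\le T$ it would extend beyond $\tau_+$ with $w<\rho$ still, contradicting maximality; thus $\tau_+>T$, and likewise on the negative side. Therefore $w(\tau)\le\eps\,e^{C'T}$ for $\tau\in[-T,T]$, i.e.\ the claim with $C_{\eps_0}=e^{C'T}$; the ``in particular'' assertion is immediate since this bound is linear in $\eps$ and uniform in $\tau\in[-T,T]$.

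I expect the main obstacle to be this last step rather than the estimate: the Gronwall inequality is essentially one line once $\vert\zdot\vert$ and $w_z$ are bounded and $\beta\ge1$, but one must make sure that the reparametrisation in $\tau$ genuinely covers $[-T,T]$ — that the geodesic exists long enough in the original time and stays away from $\partial M$ and from infinity in the rescaled fibres — and that is exactly what the bootstrap, together with $w\ge\eps$ and the compactness of $\{w\le\rho\}$ inside $[0,1)_\eps\times I$, takes care of.
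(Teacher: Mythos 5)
Your proof is correct, and in substance it unpacks the paper's one-line argument into an explicit computation. The paper simply appeals to smoothness of the rescaled vector field $\mathcal{V}_\alpha$ up to the lifted front face together with smooth dependence on initial data; behind that sentence is precisely the mechanism you spell out, namely that tangency of $\mathcal{V}_\alpha$ to $\{w=0\}$ forces $\frac{dw}{d\tau}=\mathcal{O}(w)$, which gives the linear-in-$\eps$ bound by Gronwall. Your version is more elementary and, usefully, supplies two details the paper's phrasing glosses over: (i) the constant in the Gronwall estimate is obtained from the energy relation as $\lvert\dot z\rvert\le\sqrt2$ on $\{w\le\rho\}$, so it is independent of the (a priori unbounded) fibre variable $\theta$, which is exactly what makes the bound uniform over all initial directions even though the fibre of ${}^\alpha X^*$ is non-compact; (ii) the continuation/escape bootstrap (using compactness of $\{w\le\rho\}$ in $[0,1)\times I$, $w\ge\eps$ for fixed $\eps>0$, and boundedness of $\eta$ on the energy surface) justifies that the $\tau$-reparametrised geodesic really lives on all of $[-T,T]$ and never leaves the region where the estimates apply. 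An appeal to "smooth dependence on parameters" on its own would not immediately yield uniformity in the unbounded fibre direction, so your explicit route closes a small gap as well as being self-contained.
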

\begin{proof}
This follows by the rescaled vector field being smooth all the way up to the lifted front face $\ff^*$ and smooth dependence on parameters.

\end{proof}

\section{Almost vertical dynamics: Focussing}
\label{Section:alpha=2k-1}
This section deals with the complement of the winding regime, i.e. geodesics hitting the waist 'almost vertically'. To this end, we turn to the rescaling of Theorem \ref{Thm:GeneralRescaling} with $\alpha=2k-1, \beta=1$. Thus, we consider the \textbf{rescaled Hamiltonian vector field}\label{calVDef} 
$$\mathcal{V}
=w\cdot V$$ 
on ${}^{2k-1}X^*$. 
To ensure smoothness of $\mathcal{V}$ down to the front face, we have to impose the condition $\kappa=2k-2$, due to the restriction $\alpha\leq k + \frac\kappa2$ in Theorem \ref{Thm:GeneralRescaling}.\footnote{When $k=2$ and $\eps=0$, these cover the metrics on any space with a cuspidal singularity of order $k=2$ as defined in \cite[Section 7]{GrGr15} and \cite[Section 6]{GriLye}. For $k\geq3$, these are not the most general metrics - the $S$-term could be $w^k S$ instead of $w^{2k-2}S$. See \cite{BeyGri:IGIC} for a correction to \cite[Proposition 7.3]{GrGr15}. See also Appendix \ref{Section:GeneratingExamples} for a proof that a large family of examples satisfy the $\kappa=2k-2$ requirement.}

This is a long section, so an overview is in order. We briefly recall what the rescaled Hamilton vector field looks like in Subsection \ref{Subsection:RescaledDyn}. In \ref{Subsection:ffDyn} we determine its restriction to the front face, and also introduce some notation for the spaces needed in this section. In Subsection \ref{Subsection:Crit} 
we determine the critical  points of the vector field and find that they lie on the intersection of the front face and $M^*_{\calH,+}$ (the cotangent versions of the face $M_+$ of $X$).
In \ref{Subsection:ffLin} we describe the linearised dynamics near a critical point on the front face. Subsection \ref{Subsection:Globalff} shows that front face curves flow into critical points, also when they start far away (but still on the front face).  
In Section \ref{Subsection:eps=0Dyn} we analyse the dynamics near the critical points on $M^*_{\calH,+}$ with the help of \cite{GrGr15}. In
Section \ref{ssec:dyn near boundary} we put these pieces together to deduce the behaviour of geodesics near  the boundary and prove the Focussing Theorem. The remaining subsections give a result when $S^\pm$ is constant (as opposed to Morse), and a rewriting of the front face dynamics as a time-dependent Hamiltonian system.

\subsection{The rescaled Hamiltonian vector field in the case $\alpha=2k-1$}
\label{Subsection:RescaledDyn}
 Recall that integral curves of $V$ parametrized by $t$ correspond to integral curves of  $\mathcal{V}=w\cdot V$ 
parametrized by $\tau$, where
the time variable $\tau$ is defined by\label{tauDef}
\[\frac{d\tau}{dt}=w^{-1}, \quad \tau(0)=0,\]
with $w$ evaluated at $z(t)$. We continue to denote $\tau$-derivatives by $'$, so $y'=\frac{dy}{d\tau}$ and so on. 
Also recall the special variable name in this case:
\[\theta=\frac{\eta}{w^{2k-1}} = \vartheta \quad \text{with } \alpha=2k-1.\label{thetaDef}\]
In terms of $\theta$ the Hamiltonian \eqref{eq:ExactHamiltonianVartheta} takes the form
\begin{equation}
\label{eqn:Ham 2k-2}
2\calH=\xi^2+w^{2k-2} \cG,\quad\text{ where } \cG=\cG_0+\cG_1,
\end{equation}
with leading term
\[\cG_0\coloneqq S\xi^2+\vert \theta\vert^2\]
and a remainder
\[\cG_1=\mathcal{O}(w^2)\xi^2+\mathcal{O}(w)\ip{b}{\theta} \xi+\mathcal{O}(w^{2k})\vert \theta\vert^2.\]

The equations of motion in this rescaled time and momentum were found in the proof of Theorem \ref{Thm:GeneralRescaling} and read\footnote{We do not keep a track of $\xi'$-equation anymore, since one can find $\xi$ from the energy, and for the almost vertical geodesics we are studying, $\xi \approx 1$ anyway.}
\begin{align}
z'&=\frac{w\xi - w^{2k}\ip{b}{\theta}}{D}
\label{eq:z'}\\
y'&=\theta^\sharp -z' b^\sharp\label{eq:y'}\\
\theta'&=-\frac{1}{2}\partial_y\vert \theta\vert^2+z'\cdot \left(\partial_y \ip{b}{\theta}-(2k-1)\frac{w_z}{w}\theta-\frac{z'}{2w^2}\left(\partial_yS +w^2 \partial_y \vert b\vert^2\right)\right),  \label{eq:theta'}
\end{align}
where $D=1-w^{2k-2}S-w^{2k}\vert b\vert^2$ as before.
Note that $w_z$ is homogeneous of degree zero in $(\eps,z)$, so it is smooth on the blow-ups $X$, $X^*$.
Since we always consider unit speed geodesics these equations take place on an energy hypersurface $2\calH=1$. Therefore, $\xi$ and $w^{2k-2}\cG$ are bounded along the motion. So $w^{k-1}\vert \theta\vert=w^{-k}\vert \eta\vert$ is bounded (but not necessarily $\theta$ itself). Using in addition that $S,b,h$ are smooth on $X$ we can simplify the equations of motion:
\begin{align}
z'&=w\xi +\mathcal{O}(w^{k+1}) \tag{\ref{eq:z'}'}\label{eq:z'Leading}\\
y'&=\theta^\sharp + \mathcal{O}(w) \tag{\ref{eq:y'}'} \label{eq:y'Leading}\\
\theta'&=-(2k-1)w_z \xi \theta -\frac{1}{2}(\xi^2 \partial_yS +\partial_y \vert \theta\vert^2)+w \xi \partial_y\ip{b}{\theta} + \mathcal{O}(w).  \tag{\ref{eq:theta'}'} \label{eq:theta'Leading}
\end{align} 
where all bounds are uniform on the energy hypersurface.
It is worth stressing that the term $w \xi \partial_y\ip{b}{\theta}$ is not necessarily $\mathcal{O}(w)$ uniformly, since $\theta$ is $\mathcal{O}(w^{1-k})$.
For later purposes, we also record 
\begin{equation}
\frac{d}{d\tau}\vert \theta\vert^2=-2(2k-1)\xi w_z \vert \theta\vert^2-\xi^2\ip{\partial_y S}{\theta}+ (\vert \theta\vert +\vert \theta\vert^2) \er(w)  .
\label{eq:NormthetaEvol}
\end{equation} 
One can either deduce this from the above equations, or use \eqref{eq:AngularChange}.

\subsection{Restriction to the front face}
\label{Subsection:ffDyn}
We recall the two\footnote{We note again that we would strictly speaking need the two $E$-coordinates $E$ and $E_-$, since $E=0$ corresponds to both $Z=\pm \infty$.} projective coordinates 
\[Z\coloneqq \frac{z}{\eps}, \quad E \coloneqq \frac{\eps}{z},\]
valid on $\eps>0$ and $z\neq 0$ respectively. In terms of these, we have
\[w=\eps f(Z)= z F(E),\]
where $f(Z)\coloneqq w(Z,1)$ and $F(E)\coloneqq w(1,E)$,
and
\[w_z=f'(Z)=F(E)-E F'(E).\]
In this section we only consider upward moving unit speed geodesics. Their rescalings live on the space
\[X^*_{\mathcal{H}}\coloneqq {}^{2k-1}X^*\cap \{2\calH=1,\ \xi>0\}.\]
This is a manifold with corners\footnote{It is a p-submanifold of ${}^{2k-1}X^*$ (see \cite{Mel:DAMWC} or \cite{Gri:BBC}) since $\partial_\xi\calH\neq0$ on it.} with  boundary hypersurfaces
\[\ff^*_{\calH} = \ff^*\cap X^*_\calH = ({}^{2k-1} X^*)_{\vert \ff} \cap \{\xi=1\},\]
(note that $2\calH = \xi^2$ at the front face $\{w=0\}$ by \eqref{eqn:Ham 2k-2}) 
and 
\[M_{\pm,\calH}^*=M_\pm^*\cap X^*_\calH = ({}^{2k-1} X^*)_{\vert M_{\pm}} \cap \{2\calH=1,\ \xi>0\}.\]
Extending \eqref{eqn:ff diffeo} we have a natural diffeomorphism and vector bundle structure
\begin{equation}
\label{eqn:ffstar diffeo}
\ff^*_{\calH} \cong \Rbar_Z\times T^*Y \to \Rbar_Z\times Y \cong \ff
\end{equation}
where $(y,\theta)$ is considered as an element of $T^*Y$. 
On the other hand, extending \eqref{eqn:Mpm diffeo} we have a fibre (not vector) bundle
\begin{equation}
\label{eqn:Mstar bundle}
M_{+,\calH}^* \to M_+ \cong I_+ \times Y
\end{equation}
where the fibre over $(z,y)$ is $\{(\xi,\theta):\,2\calH=1, \xi>0\}\subset \R_\xi \times T_y^*Y$, with the Hamiltonian in \eqref{eqn:Ham 2k-2} taken with $\eps=0$, so $w=z$.\footnote{For example, in the warped product case $2\calH=\xi^2 + z^{2k-2}|\theta|^2$, so the fibre over a point $z>0$ is a half ellipsoid.}

Here is an application of Theorem \ref{Thm:GeneralRescaling}. It describes the limit of the exponential map when restricted to a sufficiently narrow cone around $\eta=0$.
\begin{Prop}
\label{Prop:LimitingDynamicbeta=1}
Fix $y_0\in Y$ and a sequence of directions $\eta_{0,\eps}\in T^*_{y_0}Y$ such that $\theta_0\coloneqq \lim_{\eps \to 0}  \eta_{0,\eps}/\eps^{2k-1}$ exists. Let $\gamma_{\eps}=(z_{\eps},y_{\eps},\xi_{\eps},\eta_{\eps})$ be the family of upwards moving unit speed geodesics starting in $(z=0,y_0)$ with $Y$-momentum $\eta_{0,\eps}$.  

Then the curves $\tau\mapsto(Z_\eps(\tau),y_{\eps}(\tau),\xi_\eps(\tau),\theta_\eps(\tau))$, where $Z_\eps = \frac{z_\eps}\eps$ and $\theta_\eps = \frac{\eta_\eps}{w(\eps,z_\eps)^{2k-1}}$, converge to a smooth curve in  $\ff^*_{\calH}$, uniformly for bounded $\tau$. The limiting curve satisfies 
\begin{align}
Z'&=f(Z), \label{eq:Z'ff} \\
 y'&=\theta^\sharp, \label{eq:y'ff}\\
\theta'&=-(2k-1)f'(Z) \theta-\frac{1}{2}\partial_y\left (S+\vert \theta\vert^2\right).\label{eq:theta'ff}
\end{align}
\end{Prop}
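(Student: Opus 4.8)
The plan is to invoke Theorem~\ref{Thm:GeneralRescaling} with $\alpha=2k-1$, $\beta=1$, and to track the family of geodesics through the rescaling maps. First I would set up the situation precisely: for each $\eps>0$, the lifted geodesic $\gamma_\eps$ is an integral curve of $V$ on the energy hypersurface $\{2\calH=1,\ \xi>0\}$ of $T^*M$; pushing it through the bundle isomorphism ${}^{2k-1}X^*\to X^*$ (valid where $w>0$) and reparametrising time by $d\tau/dt=w^{-1}$ turns it into an integral curve of $\mathcal{V}=wV$ on $X^*_\calH$. By the Corollary to Theorem~\ref{Thm:GeneralRescaling}, since we are assuming $\kappa=2k-2$, the vector field $\mathcal{V}$ is smooth on all of ${}^{2k-1}X^*$, in particular up to and along the front face $\ff^*$; and by the last clause of that theorem it is tangent to the boundary (as $\alpha=2k-1>1$), so $\ff^*_\calH$ is an invariant set of the flow. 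The restriction of $\mathcal{V}$ to $\ff^*_\calH$ is exactly the vector field with components \eqref{eq:Z'ff}--\eqref{eq:theta'ff}: this is read off from the front-face restrictions computed in the proof of Theorem~\ref{Thm:GeneralRescaling} (the $Z'=f(Z)\xi$, $y'=\theta^\sharp$, $\theta'=-(2k-1)\xi\theta-\frac12\partial_y(|\theta|^2+f(Z)^2\xi^2 S)$ formulas), after using that $2\calH=\xi^2$ on $\{w=0\}$ together with $\xi>0$ forces $\xi\equiv 1$ there, and that $w_z=f'(Z)$ in the $(Z,\eps)$ coordinates.

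The second step is the convergence of initial data. The hypothesis is that $\theta_0:=\lim_{\eps\to0}\eta_{0,\eps}/\eps^{2k-1}$ exists in $T^*_{y_0}Y$. Since $\gamma_\eps$ starts at $z=0$, we have $w(\eps,0)=\eps$, so $\theta_\eps(0)=\eta_{0,\eps}/w(\eps,0)^{2k-1}=\eta_{0,\eps}/\eps^{2k-1}\to\theta_0$; also $Z_\eps(0)=0$, $y_\eps(0)=y_0$, and $\xi_\eps(0)$ is determined up to sign by the energy relation $\xi^2=1-w^{2k-2}\cG$, which at $w=\eps\to0$ with $\theta_\eps(0)\to\theta_0$ bounded gives $\xi_\eps(0)\to 1$; the sign is $+$ since we take upward-moving geodesics. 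Hence the initial data $(Z_\eps(0),y_\eps(0),\xi_\eps(0),\theta_\eps(0))$ converge in ${}^{2k-1}X^*$ to the point $(Z=0,y_0,\xi=1,\theta_0)\in\ff^*_\calH$.

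Now the conclusion follows from continuous dependence of solutions of a smooth ODE on initial conditions and parameters. Concretely: regard $\eps\in[0,1)$ as an extra parameter, so that the family $(\mathcal{V})_\eps$ together with $\eps$ itself defines a single smooth vector field on the manifold-with-corners ${}^{2k-1}X^*\times[0,1)$ (this is precisely the content of $\mathcal{V}$ being smooth up to the front face). Its integral curve through the limiting initial point, in the slice $\eps=0$, stays in $\ff^*_\calH$ for all $\tau$ (by boundary-tangency / invariance of $\{\eps=0\}$) and there solves \eqref{eq:Z'ff}--\eqref{eq:theta'ff}; note this limiting ODE has $\xi\equiv 1$ and so genuinely lives on $\ff^*_\calH$, and its right-hand side is smooth, so the solution exists and is unique on some interval around $\tau=0$. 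By the theorem on smooth (in particular continuous) dependence on initial data and parameters for flows of smooth vector fields on manifolds with corners, the solutions with data $(Z_\eps(0),\dots)$ and parameter $\eps$ converge, uniformly on any compact $\tau$-interval contained in the existence interval of the limit solution, to the limit solution. This gives exactly the asserted uniform convergence of $\tau\mapsto(Z_\eps(\tau),y_\eps(\tau),\xi_\eps(\tau),\theta_\eps(\tau))$ to a smooth curve in $\ff^*_\calH$ satisfying \eqref{eq:Z'ff}--\eqref{eq:theta'ff}.

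The main obstacle is the uniformity over $\eps$ near the front face, i.e.\ making rigorous that the convergence is controlled \emph{uniformly} and not just for each fixed initial condition: this is exactly where one needs that $\mathcal{V}$ extends smoothly (and with smooth $\eps$-dependence) across $\ff^*$, which is why the hypothesis $\kappa=2k-2$ enters, and why Lemma~\ref{Lem:GeneralwBound} (giving $w(\tau)\le C\eps$ on bounded $\tau$-intervals) is the right companion statement — it guarantees that the rescaled geodesic does not escape to $w\gg\eps$, i.e.\ stays in a region where the smooth extension is the relevant object, for $\tau$ in a fixed compact set. Once one grants the smooth extension, the rest is the standard ODE dependence theorem and is routine; I would not grind through the Gronwall estimate explicitly but simply cite continuous dependence on initial conditions and parameters.
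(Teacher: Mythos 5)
Your proof is correct and follows essentially the same route as the paper's (which is much terser): rescale via $\alpha=2k-1$, $\beta=1$, note $\mathcal{V}=wV$ is smooth up to the front face and tangent to it, observe the rescaled initial data $(Z_\eps(0),y_\eps(0),\xi_\eps(0),\theta_\eps(0))\to(0,y_0,1,\theta_0)\in\ff^*_\calH$, and invoke smooth dependence on initial conditions and parameters; Lemma~\ref{Lem:GeneralwBound} and the explicit front-face equations are exactly the companion facts the paper also cites. One small bookkeeping point: you quote the $\theta'$-restriction from the statement of Theorem~\ref{Thm:GeneralRescaling} as $-(2k-1)\xi\theta-\tfrac12\partial_y(|\theta|^2+f(Z)^2\xi^2 S)$ and then set $\xi=1$, but that does not literally produce \eqref{eq:theta'ff} (you would get $-(2k-1)\theta$, not $-(2k-1)f'(Z)\theta$, and $f(Z)^2 S$, not $S$). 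This is not your error: the displayed formula in the theorem statement is inconsistent with the theorem's own proof, which computes the first term as $-\alpha f'(Z)\xi\theta$ and the $S$-term as $-\tfrac12\xi^2\partial_yS$ (the last term of $\theta'$ is $\tfrac{w^{\beta-\alpha}(\xi-\cdots)^2}{2D^2}\partial_yD$, and with $\beta-\alpha+\kappa=0$ this restricts to $-\tfrac12\xi^2\partial_yS$ on the front face, with no $f(Z)^2$ factor). Reading off the restriction from the \emph{proof} of the theorem rather than its (apparently typo'd) statement gives \eqref{eq:theta'ff} directly after setting $\xi=1$, as you intend with your remark $w_z=f'(Z)$.
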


\begin{proof}

 Due to Lemma \ref{Lem:GeneralwBound}, $\lim_{\eps\to 0} w(\tau)=0$ locally uniformly in $\tau$. The convergence then follows by smooth dependence on initial data since the rescaled vector field is smooth up to the boundary. The limiting equations were deduced as part of Theorem \ref{Thm:GeneralRescaling}.
 
 \end{proof}

Note that \eqref{eq:Z'ff}-\eqref{eq:theta'ff}  describe the components of the rescaled vector field $\mathcal{V}$ restricted to the front face $\ff^*_{\calH}$.

For later use we note that the front face equations of motion make sense also in the $E$-coordinates. The $E$- and $\theta$-equations then read
\begin{align}
E'&=-E F(E) , \label{eq:E'ff} \\ 
  \theta'&=-(2k-1)(F(E)-E F'(E))\theta -\frac{1}{2}\partial_y\left (S+\vert \theta\vert^2\right). \label{eq:Etheta'ff} 
\end{align}

On the front face, \eqref{eq:NormthetaEvol} reads
\begin{equation}
\frac{d}{d\tau}\vert \theta\vert^2=-2(2k-1) f'(Z)\vert \theta\vert^2-\ip{\partial_y S}{\theta}
\label{eq:ffNormthetaEvol}
\end{equation} 
in the $Z$-coordinate.

\begin{Lem}
\label{Lem:thetaBounded}
The limiting momentum $\theta$ of Proposition \ref{Prop:LimitingDynamicbeta=1} is uniformly bounded in $\tau$.
\end{Lem}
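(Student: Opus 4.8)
The plan is to monitor $\theta$ along the front-face flow by introducing the auxiliary quantity
\[ \Phi(\tau)\coloneqq \vert\theta(\tau)\vert^2\, f(Z(\tau))^{2(2k-1)}, \]
which is designed so that the damping term in the $\theta$-equation \eqref{eq:theta'ff} is absorbed. Indeed, differentiating $\Phi$ along the front-face flow and using $Z'=f(Z)$ from \eqref{eq:Z'ff} together with the norm evolution \eqref{eq:ffNormthetaEvol}, the exponent $2(2k-1)$ is precisely the one for which the two contributions proportional to $f'(Z)\vert\theta\vert^2$ cancel, and one is left with the clean identity
\[ \Phi' = -\,f(Z)^{2(2k-1)}\,\ip{\partial_y S}{\theta}. \]
Since $S$ is smooth on the compact front face $\ff\cong\Rbar_Z\times Y$ (see \eqref{eqn:ff diffeo}), there is $c>0$ with $\vert\partial_y S\vert\le c$, so
\[ \vert\Phi'\vert \le c\, f(Z)^{2(2k-1)}\,\vert\theta\vert = c\, f(Z)^{2k-1}\sqrt{\Phi}. \]

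Next I would convert this into a bound on $\vert\theta\vert=\sqrt{\Phi}/f(Z)^{2k-1}$. By \eqref{eqn:props f} we have $f\ge 1$, hence $Z'=f(Z)\ge 1>0$, so $Z$ may be used as parameter along the curve; in this variable the previous estimate reads $\vert\partial_Z\sqrt{\Phi}\vert\le \tfrac{c}{2}\,f(Z)^{2k-2}$. The properties $f\ge 1$ and $f(Z)\sim\vert Z\vert$ as $\vert Z\vert\to\infty$ give constants $0<c_0\le C_0$ with $c_0(1+\vert Z\vert)\le f(Z)\le C_0(1+\vert Z\vert)$ for all $Z$. The curves of Proposition \ref{Prop:LimitingDynamicbeta=1} start at $z=0$, hence at $Z=0$, where $\Phi=\vert\theta_0\vert^2$ (recall $f(0)=1$); integrating $\partial_Z\sqrt{\Phi}$ from $Z=0$ and using the upper bound on $f$ yields $\sqrt{\Phi(Z)}\le \vert\theta_0\vert + C_1(1+\vert Z\vert)^{2k-1}$ for all $Z$ (for $\tau<0$ one has $Z(\tau)<0$ and integrates from $Z(\tau)$ up to $0$, with the same outcome). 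Dividing by $f(Z)^{2k-1}$ and using the lower bound on $f$,
\[ \vert\theta\vert \le \frac{\vert\theta_0\vert + C_1(1+\vert Z\vert)^{2k-1}}{c_0^{2k-1}(1+\vert Z\vert)^{2k-1}} \le \frac{\vert\theta_0\vert + C_1}{c_0^{2k-1}}, \]
which is independent of $Z$, hence of $\tau$.

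This is an a priori bound, so it also shows the front-face curve cannot leave a compact subset of $\ff^*_{\calH}$ in finite time (the only possible escape would be $\vert\theta\vert\to\infty$, while $Z$ grows at most exponentially since $Z'=f(Z)\lesssim 1+\vert Z\vert$); hence the limiting curve is defined for all $\tau\in\R$ and the estimate holds throughout. The lemma itself is elementary once $\Phi$ is found; the only point needing care is the bookkeeping of $f$-exponents — one must check that $2(2k-1)$ is exactly the exponent killing the $f'$-terms in $\Phi'$, and that $\int_0^{Z} f^{2k-2}$ has the same growth order $\vert Z\vert^{2k-1}$ as $f(Z)^{2k-1}$, so that the quotient above stays bounded. (As a consistency check, in the warped-product case $\partial_y S=0$ forces $\Phi\equiv\vert\theta_0\vert^2$, which recovers $\vert\eta\vert=\vert\theta\vert\,w^{2k-1}\equiv\mathrm{const}\cdot\eps^{2k-1}$, i.e.\ conservation of angular momentum.)
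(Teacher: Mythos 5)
Your proof is correct. You have written out, with an explicit integrating factor, the dissipation argument that the paper only sketches in its second proof: rewriting \eqref{eq:ffNormthetaEvol} in terms of $\Phi=\vert\theta\vert^2 f(Z)^{2(2k-1)}$ indeed kills the damping term exactly (the exponent $2(2k-1)$ is the one that cancels the $-2(2k-1)f'\vert\theta\vert^2$ against $\frac{d}{d\tau}f^{2(2k-1)}\cdot\vert\theta\vert^2$), and the remaining bound $\vert\Phi'\vert\le c f^{2k-1}\sqrt{\Phi}$ together with $f(Z)\asymp 1+\vert Z\vert$ and the matching growth order of $\int_0^Z f^{2k-2}$ gives a uniform estimate on $\vert\theta\vert$. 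The paper offers two proofs of this lemma: (i) a one-line deduction from Proposition \ref{Prop:etaEst} (the angular-momentum estimate with $\kappa=2k-2$), which gives $\vert\theta\vert=\mathcal{O}(Z/f(Z))=\mathcal{O}(1)$ directly, and (ii) a qualitative version of your argument (``the negative first term in \eqref{eq:ffNormthetaEvol} dominates once $\vert\theta\vert$ is large, since $f'(Z)\to1$''). Your version is essentially (ii), but upgraded to a clean Lyapunov/Gronwall-type estimate with an explicit constant; this buys you a self-contained argument that does not depend on the earlier momentum estimate and that also delivers long-time existence of the front-face flow as a by-product, which the paper's phrasing leaves implicit. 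The closing warped-product consistency check ($\Phi\equiv\vert\theta_0\vert^2$ recovering Clairaut) is a nice sanity check of the exponent. Minor technical point: when asserting $\vert\partial_Z\sqrt{\Phi}\vert\le\tfrac{c}{2}f^{2k-2}$, one should note this is an estimate in the sense of absolutely continuous functions also across zeros of $\Phi$ (where $\theta=0$, so $\Phi'=0$ anyway), but this is standard and not a gap.
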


\begin{proof}
We offer two proofs. 
Note that \eqref{eq:Z'ff} says $Z\to \infty$ as $\tau \to \infty$, since $f\geq 1$.  
The first proof is to note that Proposition \ref{Prop:etaEst} with $\kappa=2k-2$ says 
\[\vert \theta\vert=\er(z)w^{-1}=\er(Z)f(Z)^{-1}=\er(1),\]
as $\vert Z\vert \to \infty$.

For the second proof, observe that $f'(Z)\to 1$, so we can assume $f'(Z)\geq 1/2$ for all large $\tau$. Looking at \eqref{eq:ffNormthetaEvol}, one notices that the negative first term will dominate when $\theta$ is large enough, causing $\vert \theta\vert^2$ to decrease. This makes $\vert \theta\vert^2$ bounded.
\end{proof}

\subsection{Critical points of the rescaled geodesic vector field}
\label{Subsection:Crit}  
The blown-up space $X$ has two corners of codimension 2, given by $\ff\cap M_\pm$.
In the $E$-coordinate system $\ff\cap M_+$ is given by $\{z=E=0\}\times Y$. Also, $\ff\cap M_+$ can be identified with the zero section $\theta=0$ of $\ff^*_\calH\cap M^*_{+,\calH}$, see \eqref{eqn:ffstar diffeo}. We denote these identifications by
\begin{equation}
\label{eqn:ybar notation}
\begin{array}{ccccc}
Y & \cong & \ff \cap M_+ & \hookrightarrow & \ff^*_\calH\cap M^*_{+,\calH} \\
y & \mapsto & \ybar & \mapsto & \ybar^* 
\end{array}
\end{equation}
so $\ybar=(E=0,z=0,y)$ and $\ybar^*=(E=0,z=0,y,\theta=0)$. Similarly, we have $\ybar_-$ and $\ybar_-^*$ for $\ff\cap M_-$.
In terms of the identification of $y$ and $\ybar$ we have functions
\[S^\pm \coloneqq S_{\vert \ff \cap M_{\pm}}\colon Y\to \R.\]
The functions $S^+$ and $S^-$  can be different. For simplicity, we will formulate most results for the upper portion of the front face, $\ff\setminus M_-$. The corresponding results for $\ff\setminus M_+$ are analogous, with $(E,S^+)$ replaced by $(E_-,S^-)$.

Outside the front face $\ff^*_\calH$ the vector field $V$ is simply the geodesic vector field for a smooth Riemannian metric, so it is non-zero. Therefore, all critical points of $\mathcal{V}=wV$ must lie on the front face. 
On the front face $\calV$ has no critical points in the $Z$-coordinate system, i.e., away from $M_{\pm,\calH}^*$, since the $Z$-component is $f(Z)\geq 1$ by  \eqref{eq:Z'ff}.
In the $E$-coordinate system, $\calV$ is given by \eqref{eq:y'ff}, \eqref{eq:E'ff}, \eqref{eq:Etheta'ff}. Its critical points are at the points $(E,y,\theta)$ satisfying
\[E=0,\quad \partial_yS^+(y)=0,\quad \theta=0.\]
Using the notation \eqref{eqn:ybar notation} we obtain:
\begin{Lem}
\label{Lem:crit pts}
The critical points of the rescaled geodesic vector field on $X^*_\calH$ are precisely the points $\ybar^*_c$ for critical points $y_c$ of $S^+$, and  $\ybar^*_{c,-}$ for critical points $y_c$ of $S^-$.
\end{Lem}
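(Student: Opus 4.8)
The plan is to pin the zeros of $\calV=wV$ to the front face $\ff^*_\calH$ and then read them off from the front-face equations of motion collected in Subsection~\ref{Subsection:ffDyn}. First I would observe that away from $\ff^*_\calH$ one has $w>0$, while $V$ there is the geodesic vector field of a genuine Riemannian metric $g_\eps$ restricted to its unit cotangent bundle, hence nowhere vanishing; so $\calV\neq0$ off the front face and every critical point of $\calV$ must lie on $\ff^*_\calH$. Since $\alpha=2k-1>1$, Theorem~\ref{Thm:GeneralRescaling} guarantees that $\calV$ is tangent to $\ff^*$, so on $\ff^*_\calH$ a point is a zero of $\calV$ precisely when it is a zero of the restricted vector field $\calV|_{\ff^*_\calH}$, whose components are \eqref{eq:Z'ff}--\eqref{eq:theta'ff} in the $Z$-chart and \eqref{eq:y'ff}, \eqref{eq:E'ff}, \eqref{eq:Etheta'ff} in the $E$-chart.

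Next I would dispose of the part of $\ff^*_\calH$ lying in the $Z$-chart, namely $\ff^*_\calH\setminus(M^*_{+,\calH}\cup M^*_{-,\calH})$: there the $Z$-component is $Z'=f(Z)\geq1>0$ by \eqref{eqn:props f}, so $\calV|_{\ff^*_\calH}$ is nonzero on all of it. This leaves only the two corners. Near $\ff^*_\calH\cap M^*_{+,\calH}$ I would work in the $E$-chart, where the front face is cut out by $z=0$ and carries coordinates $(E,y,\theta)$ with the corner at $E=0$: \eqref{eq:E'ff} forces $E'=0\iff E=0$ since $F\geq1$; \eqref{eq:y'ff} forces $y'=\theta^\sharp=0\iff\theta=0$; and then \eqref{eq:Etheta'ff}, evaluated at $E=0$ and $\theta=0$, collapses to $\theta'=-\tfrac12\partial_yS^+(y)$ (the term $\partial_y\vert\theta\vert^2$ vanishing at $\theta=0$), so $\theta'=0\iff\partial_yS^+(y)=0$. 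Under the identification \eqref{eqn:ybar notation} this is exactly the set of points $\ybar^*_c$ with $y_c$ a critical point of $S^+$. The corner $\ff^*_\calH\cap M^*_{-,\calH}$ is treated identically with $(E,S^+)$ replaced by $(E_-,S^-)$, producing the points $\ybar^*_{c,-}$; combining the three cases gives the lemma.

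I do not expect any genuine obstacle, since each ingredient is already in hand; the only care needed is bookkeeping. One should check that the $Z$- and $E$-charts together exhaust $\ff^*_\calH$, that the zeros found in the $E$-chart indeed lie in $X^*_\calH$ (on the front face $2\calH=\xi^2$, so the energy constraint together with $\xi>0$ pins $\xi=1$, hence $\xi'=0$ automatically and no further condition on $\xi$ arises), and that these points agree with the ones denoted $\ybar^*_c$, $\ybar^*_{c,-}$ under \eqref{eqn:ybar notation}.
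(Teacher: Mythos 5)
Your argument is correct and follows the paper's own proof essentially verbatim: reduce to the front face because $V$ is the geodesic vector field of a nondegenerate metric off $\{w=0\}$, rule out critical points in the $Z$-chart from $Z'=f(Z)\geq1$, and read off the zeros in the $E$-chart from \eqref{eq:E'ff}, \eqref{eq:y'ff}, \eqref{eq:Etheta'ff}. The extra remark that $2\calH=\xi^2$ with $\xi>0$ pins $\xi=1$ and makes the $\xi$-equation automatic is a correct and welcome bit of bookkeeping the paper leaves implicit.
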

The linearised flow around such a critical point is not too complicated:
\begin{Lem}
\label{Lem:LinFlow}
Choose coordinates such that $y_c$ corresponds to $0$. Then the linearised flow around the critical point $\overline{y}_c^*$ reads
\begin{equation}
\begin{pmatrix} z'\\ E'\\ y'\\ \theta' 
\end{pmatrix}=\begin{pmatrix}
1 & 0 & 0 & 0\\ 0 & -1 & 0 & 0 \\ -b_+^\sharp &0 & 0& h^{-1} \\ Q & 0 & -\frac{1}{2}S_{yy}^+ & -(2k-1)\mathds{1}
\end{pmatrix}\begin{pmatrix}
z \\ E \\ y \\ \theta
\end{pmatrix}.
\label{eq:LinFlow}
\end{equation}
Here 
\[Q=\frac{1}{2}\partial_z \partial_y S(\overline{y}_c),  \quad  b_+^\sharp= b^\sharp(\overline{y}_c),\]
and $S_{yy}^+$ means the Hessian of $S^+$ computed with respect to the metric\footnote{Actually, the metric does not matter at a critical point. The reason being that $\mathrm{Hess}(S^+)_{ij}=S^+_{,ij}-\Gamma_{ij}^p S^+_{,p}=S^+_{,ij}$ at a critical point.} $h$.
\end{Lem}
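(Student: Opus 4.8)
The plan is to write the rescaled vector field $\calV=wV$ in the projective chart $(z,E,y,\theta)$ adapted to the corner $\ff\cap M_+$, restrict to the energy hypersurface $\{2\calH=1,\ \xi>0\}$ (so that $\xi$ becomes an implicit function of the remaining variables), and compute its Jacobian at the critical point $\overline y_c^*=(z=0,\ E=0,\ y=y_c,\ \theta=0)$ produced by Lemma \ref{Lem:crit pts}. First I would record the components of $\calV$ in these coordinates: the $z$- and $E$-components are $z'=(w\xi-w^{2k}\ip{b}{\theta})/D$ and $E'=-EF(E)(\xi-w^{2k-1}\ip{b}{\theta})/D$ (from the proof of Theorem \ref{Thm:GeneralRescaling}), the $y$-component is \eqref{eq:y'}, and the $\theta$-component is \eqref{eq:theta'}, where $w=zF(E)$ and $w_z=F(E)-EF'(E)$.

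The computation then rests on a short list of facts valid at or near the corner: $F(0)=1$ with $F'(0)$ finite, so $w=z+O(zE)$ and hence $w=z$, $w_z=1$ to first order; on the energy surface $\xi=\sqrt{1-w^{2k-2}\cG}=1+O(w^2)$ and $D=1-w^{2k-2}S-w^{2k}\vert b\vert^2=1+O(w^2)$, both $\equiv 1$ to first order; $\theta$ is only $O(w^{1-k})$-bounded, so any term carrying an extra power $w^{2k-1}$ alongside $\theta$ (such as $w^{2k}\ip{b}{\theta}$ or $w^{2k-1}\ip{b}{\theta}$) is $O(w^k)=O(z^k)$ and negligible at first order; and the hypotheses $\partial_yS^+(y_c)=0$ together with \eqref{eqn:S assumption}, which gives $\partial_E\partial_yS=0$ on $\ff\cap M_+$ (differentiate $\partial_ES\equiv0$ along the corner in $y$), so that $\partial_yS=(\partial_z\partial_yS)(\overline y_c)\,z+S^+_{yy}\,y+O(2)$ near the critical point, with $S^+_{yy}=\mathrm{Hess}(S^+)(y_c)$.

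With these in hand the Jacobian assembles quickly: $z'=w\xi/D+\dots=z+O(2)$ gives the first row $(1,0,0,0)$; $E'=-EF(E)\xi/D+\dots=-E+O(2)$ gives $(0,-1,0,0)$; $y'=\theta^\sharp-z'b^\sharp=h^{-1}\theta-b_+^\sharp z+O(2)$ gives $(-b_+^\sharp,0,0,h^{-1})$. For $\theta'$ the one delicate point is the two apparently singular prefactors $w_z/w$ and $z'/w^2$ in \eqref{eq:theta'}: because $z'=w\xi/D+O(w^k)$, the products $z'\cdot(w_z/w)=w_z\xi/D+O(w^{k-1})$ and $(z')^2/w^2=(\xi/D)^2+O(w^{k-1})$ extend smoothly to the corner with value $1$ there, so at first order \eqref{eq:theta'} reduces to $\theta'=-\tfrac12\partial_y\vert\theta\vert^2-(2k-1)\theta-\tfrac12\partial_yS+O(2)$; the first term is quadratic in $\theta$, the middle term contributes $-(2k-1)\mathds{1}$, and expanding $\partial_yS$ as above produces the $z$- and $y$-entries $Q=\tfrac12\partial_z\partial_yS(\overline y_c)$ and $-\tfrac12 S^+_{yy}$, while the $E$-entry vanishes (by \eqref{eqn:S assumption}, and since every remaining $E$-dependent term in $\theta'$ carries a factor $\theta$ or $w$). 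Collecting the rows gives \eqref{eq:LinFlow}.

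The main obstacle is precisely the bookkeeping in the $\theta'$-equation. Because $\theta$ is \emph{not} bounded near the front face (only $w^{k-1}\theta$ is), one cannot simply invoke the simplified equations \eqref{eq:theta'Leading} with their blanket ``$O(w)$''; instead one must track exact powers of $w$ and verify that the singular combinations $w_z/w$ and $z'/w^2$ genuinely cancel against the $z'$-prefactor, extending to smooth functions with limit $1$ on $\ff^*_\calH$, before linearizing. Once that is settled the rest is a routine first-order Taylor expansion, and the vanishing of the zeroth-order term is exactly the statement that $y_c$ is a critical point of $S^+$.
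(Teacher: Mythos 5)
Your proposal takes the same basic approach as the paper — compute the Jacobian of $\calV$ restricted to the energy hypersurface in the projective chart $(z,E,y,\theta)$ at the critical point — but the paper's own proof is essentially a one-paragraph ``straightforward from the equations of motion'' with two short remarks (the linearisation $z'\approx z$, and the vanishing of the $E$-column via \eqref{eqn:S assumption}), whereas you actually carry out the computation, including the genuinely delicate cancellation of the seemingly singular prefactors $z'\cdot(w_z/w)$ and $(z')^2/w^2$, which the paper leaves to Theorem \ref{Thm:GeneralRescaling} implicitly. That filling-in is correct and worthwhile. One small internal inconsistency in your last step: you expand $\partial_yS\approx z\,\partial_z\partial_yS+y\,S^+_{yy}$ and multiply by the (correctly derived) prefactor $-\tfrac12$, which yields the $z$-entry $-\tfrac12\partial_z\partial_yS$, yet you then record it as $Q=+\tfrac12\partial_z\partial_yS$ to match the lemma's displayed matrix; if one follows your own Taylor expansion literally the entry carries a minus sign, so either the lemma's $Q$ has a sign typo or the matrix should show $-Q$. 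This does not affect anything downstream (the eigenvalue analysis is independent of $Q$ because of the block-triangular structure), but it is worth reconciling your computation with what you write at the end.
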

\begin{proof}
This is mostly straight forward from the equations of motion \eqref{eq:z'}, \eqref{eq:y'}, and \eqref{eq:theta'}. Some parts deserve remarks. In $(z,E)$-coordinates, \eqref{eq:z'} reads
\[z'=zF(E)(1+z^{2k-1}F(E)^{2k-1})/D,\]
which linearises to $z F(0)=z$. This also gives the claimed form of the linearised $y$-equation. We are assuming that $\partial_E S=0$ on $\ff \cap M_{\pm}$, hence there is no $Q$-like term in the second column.
\end{proof}

\subsection{Dynamics on the front face near the critical points}
\label{Subsection:ffLin}
We now consider the restriction of $\calV$ to $\ff^*_\calH$.
Let $y_c$ be a critical point of $S_+$, and choose coordinates $y$ near $y_c\in Y$ in terms of which $y_c=0$. Then the linearisation of $\calV$ around $\ybar^*_c$ is given by \eqref{eq:LinFlow} with the $z$-rows and columns deleted, i.e.
\begin{equation}
\label{eq:ffLin}
\begin{pmatrix}
E'\\ y'\\ \theta'
\end{pmatrix}=\begin{pmatrix}
-1 & 0 & 0\\ 0& 0 &h^{-1} \\ 0& -\frac{1}{2} S_{yy}^+ & -(2k-1)\mathds{1} 
\end{pmatrix}\begin{pmatrix}
E \\ y \\ \theta
\end{pmatrix}.
\end{equation}

We note how the $E$-equation has completely decoupled.
We proceed by finding the eigenvalues of the linear  $(y,\theta)$-system,
\[\begin{pmatrix}
 y'\\ \theta'
\end{pmatrix}=\begin{pmatrix}
 0 &h^{-1} \\ -\frac{1}{2} S_{yy}^+ & -(2k-1)\mathds{1} 
\end{pmatrix}\begin{pmatrix}
 y \\ \theta´
\end{pmatrix}.\]
One readily checks that eigenvectors with eigenvalue $\mu$ can be written
\begin{equation}
\label{eqn:eigenvectors in ff}
\begin{pmatrix}
u\\ \mu h u
\end{pmatrix}
\end{equation}
where $u$ is an eigenvector of $h^{-1}S^+_{yy}$
with eigenvalue $-2\mu(\mu+2k-1)$.
If $a_j$ denote an eigenvalues for $h^{-1}S_{yy}^+$, then the corresponding eigenvalues for this linear system are 
\[\mu_j^{\pm}=\frac{-(2k-1) \pm \sqrt{(2k-1)^2-2a_j}}{2}.\]
For each $a_j<0$, there is one negative and one positive eigenvalue, $\mu_j^+$ and $\mu_j^-$.

Let us formulate these computations as a lemma before we study the stable and unstable manifolds.

\begin{Lem}
\label{Lem:CritPoints}
The linearisation around a critical point $\ybar^*_c$ as in Lemma \ref{Lem:CritPoints} is described by \eqref{eq:ffLin}. The eigenvalues are $-1$ with multiplicity at least\footnote{The multiplicity increases by $1$ for each $a_j$ with $a_j=4(k-1)$.} $1$ and 
\[\mu^{\pm}_j =\frac{-(2k-1) \pm \sqrt{(2k-1)^2-2a_j}}{2},\]
where $a_j$ is an eigenvalue of $h^{-1}S^+_{yy}$ at $y_c$, with eigenvectors given in \eqref{eqn:eigenvectors in ff}.
\end{Lem}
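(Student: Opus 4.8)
The plan is to read the linearisation off directly from the front-face equations of motion and then diagonalise the resulting constant-coefficient block matrix by hand. First I would recall that on $\ff^*_\calH$ the rescaled field $\calV$ is given in the $E$-chart by \eqref{eq:y'ff}, \eqref{eq:E'ff}, \eqref{eq:Etheta'ff}, and that a critical point is $\ybar^*_c=(E,y,\theta)=(0,y_c,0)$ with $\partial_yS^+(y_c)=0$ by Lemma \ref{Lem:crit pts}. Linearising at this point: the equation $E'=-EF(E)$ becomes $E'=-E$ because $F(0)=1$ by \eqref{eqn:props f}, and this row and column decouple since $F$ depends only on $E$ while the standing assumption $\partial_ES=0$ at $\ff\cap M_+$ (i.e.\ \eqref{eqn:S assumption}) removes any $E$-dependence from the $y$- and $\theta$-equations; $y'=\theta^\sharp$ is already linear; and in \eqref{eq:Etheta'ff} one has $F-EF'\to1$ at the corner while $\partial_y(S+|\theta|^2)$ vanishes to first order with linear part $S^+_{yy}(y-y_c)$, so $\theta'=-\tfrac12S^+_{yy}\,y-(2k-1)\theta$ to first order. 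This is exactly \eqref{eq:ffLin}; equivalently it is \eqref{eq:LinFlow} with the $z$-row and $z$-column struck out, the striking being justified because the front face $\{z=0\}$ is $\calV$-invariant and $z'$ vanishes there.

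For the eigenvalues I would treat the lower $2n\times 2n$ block $\begin{pmatrix}0 & h^{-1}\\ -\tfrac12S^+_{yy} & -(2k-1)\mathds{1}\end{pmatrix}$. For an eigenvector $(u,v)$ with eigenvalue $\mu$ the first row-block gives $v=\mu hu$; substituting into the second gives $-\tfrac12S^+_{yy}u=(\mu+2k-1)\mu hu$, i.e.\ $h^{-1}S^+_{yy}u=-2\mu(\mu+2k-1)u$. Hence $u$ is an eigenvector of the $h$-self-adjoint endomorphism $h^{-1}S^+_{yy}$, with some eigenvalue $a_j\in\R$, and $\mu$ solves $2\mu^2+2(2k-1)\mu+a_j=0$, whose roots are $\mu_j^\pm=\tfrac12\bigl(-(2k-1)\pm\sqrt{(2k-1)^2-2a_j}\bigr)$ with eigenvector $(u,\mu_j^\pm hu)$ as in \eqref{eqn:eigenvectors in ff}. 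Together with the decoupled $E$-direction (eigenvalue $-1$, eigenvector in the $E$-direction) this exhausts the spectrum on $\ff^*_\calH$; and $-1$ reappears among the $\mu_j^\pm$ precisely when some $a_j=4(k-1)$, which accounts for the jump in multiplicity recorded in the footnote.

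I do not expect any real obstacle: everything rests on material already in place and the computation is finite-dimensional linear algebra. The only points needing a word of care are that dropping the $z$-direction is legitimate because the front face is invariant (so the spectrum on $\ff^*_\calH$ is a genuine sub-collection of the full linearisation's spectrum), that $h^{-1}S^+_{yy}$ is diagonalisable over $\R$ — being $h^{-1}$ times a symmetric form — so the eigenvector description of the $(y,\theta)$-block is complete, and the borderline cases $(2k-1)^2=2a_j$ (a double $\mu$) and $a_j=4(k-1)$ (collision with the $E$-eigenvalue), which the statement already accommodates by only asserting multiplicity ``at least $1$''. I would note these and move on.
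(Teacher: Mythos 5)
Your proof is correct and takes essentially the same route as the paper: read off the linearisation on $\ff^*_\calH$ (equivalently, strike the $z$-row and column from \eqref{eq:LinFlow}, which is legitimate since $\{z=0\}$ is an invariant subspace of the linear flow), then diagonalise the $(y,\theta)$-block by reducing to the eigenproblem for $h^{-1}S^+_{yy}$. The only slip is purely notational: with $\dim Y=n-1$, the $(y,\theta)$-block is $2(n-1)\times2(n-1)$, not $2n\times2n$; this does not affect the argument.
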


We now assume $S^+$ is a Morse function. This ensures that the above eigenvalues $a_j$ are all non-zero, so the eigenvalues $\mu_{j}^{\pm}$ all have non-vanishing real parts. The critical point $\ybar^*_c$ is then said to be \textbf{hyperbolic}. Recall that the \textbf{Morse index} of a Morse function $\mathrm{ind}_{y_c} S^+$ at a critical point $y_c$ is the dimension of the negative eigenspace of the Hessian at the point $y_c$. The point $y_c$ is a minimum if the index is $0$, a maximum if the index is $n-1=\dim Y$, and a saddle point otherwise. 

Let 
\[\Phi_\tau\colon \ff^*_{\calH} \to \ff^*_{\calH}\]
be the flow taking an initial point $(Z_0,y_0,\theta_0)$ to the unique solution of the front face dynamics \eqref{eq:Z'ff}-\eqref{eq:theta'ff} at time $\tau$. 
Since $Y$ is compact, $S^+$ has finitely many critical points. For any such critical point $y_c\in \mathrm{Crit}(S^+)$, we introduce the \textbf{stable manifold} $\mathcal{M}_{y_c}^s$ as the set of all points in $\ff^*_{\calH}\setminus M_{-,\calH}$ which flow into $\ybar^*_c$;
\[(Z_0,y_0,\theta_0)\in \mathcal{M}_{y_c}^s \iff \lim_{\tau \to \infty} \Phi_{\tau}(Z_0,y_0,\theta_0)=\ybar^*_c.\]

\begin{Prop}[The stable manifold theorem]
\label{Prop:StableMfd}
Assume $S^+$ is Morse and let $y_{c}$ be a critical point of $S^+$. Then the stable manifold $\mathcal{M}_{y_c}^s$ is an injectively immersed submanifold of codimension 
\[\mathrm{codim} \,\mathcal{M}_{y_c}^s =\mathrm{ind}_{y_c}(S^+).\]
Furthermore, there is a neighbourhood $U_{y_c}$ of $\ybar^*_c$ such that $\mathcal{M}_{y_c}^s\cap U_{y_c}$ is an embedded submanifold. Finally, the convergence as $\tau\to \infty$ is exponential.
\end{Prop}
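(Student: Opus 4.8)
The plan is to recognise $\mathcal{M}^s_{y_c}$ as the global stable manifold of the hyperbolic fixed point $\ybar^*_c$ of the smooth flow $\Phi_\tau$ on $\ff^*_\calH$, and to deduce its properties from the classical $C^\infty$ (un)stable manifold theorem together with the standard saturation procedure, the only extra care being that $\ybar^*_c$ lies on the boundary of $\ff^*_\calH$.

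\textbf{Step 1: hyperbolicity and the unstable dimension.} By Lemma~\ref{Lem:CritPoints} the linearisation of $\calV|_{\ff^*_\calH}$ at $\ybar^*_c$ has eigenvalues $-1$ (from the decoupled $E$-equation) together with $\mu^\pm_j=\tfrac12\bigl(-(2k-1)\pm\sqrt{(2k-1)^2-2a_j}\bigr)$, $j=1,\dots,n-1$, where the $a_j$ are the eigenvalues of $h^{-1}S^+_{yy}$ at $y_c$. Since $S^+$ is Morse, every $a_j\neq 0$. If $a_j<0$ then $\sqrt{(2k-1)^2-2a_j}>2k-1$, so $\mu_j^+>0$ and $\mu_j^-<-(2k-1)<0$; if $a_j>0$ then $\mathrm{Re}\,\mu_j^\pm<0$ in each of the three cases according to the sign of $(2k-1)^2-2a_j$. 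Hence no eigenvalue is purely imaginary, $\ybar^*_c$ is hyperbolic, the number of eigenvalues with positive real part equals $\#\{j:a_j<0\}=\mathrm{ind}_{y_c}(S^+)$, and all stable eigenvalues have real part $\le-\lambda_0$ for some $\lambda_0>0$.

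\textbf{Step 2: local stable manifold at a boundary point.} The point $\ybar^*_c$ lies on the boundary hypersurface $M^*_{+,\calH}\cap\ff^*_\calH=\{E=0\}$ of the manifold-with-boundary $\ff^*_\calH\cong\Rbar_Z\times T^*Y$, and by Theorem~\ref{Thm:GeneralRescaling} (valid since $\alpha=2k-1>1$) $\calV$ is tangent to $\{E=0\}$. The $(E,y,\theta)$-equations \eqref{eq:E'ff}, \eqref{eq:y'ff}, \eqref{eq:Etheta'ff} have coefficients depending smoothly on $E$, so I would extend $\calV|_{\ff^*_\calH}$ smoothly across $\{E=0\}$ into a boundaryless neighbourhood (extending the coefficients $F,S,b,h$ smoothly past $E=0$); this does not change the linearisation at $\ybar^*_c$. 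The $C^\infty$ stable manifold theorem applied to the extension produces an embedded local stable manifold $\mathcal{M}^s_{\mathrm{loc}}$ through $\ybar^*_c$, tangent to the stable subspace, of dimension $(2n-1)-\mathrm{ind}_{y_c}(S^+)$ and with exponential attraction at any rate $\lambda<\lambda_0$. Since the stable subspace contains $\partial_E$, the function $E$ has $0$ as a regular value on $\mathcal{M}^s_{\mathrm{loc}}$ near $\ybar^*_c$; hence $\mathcal{M}^s_{\mathrm{loc}}\cap\{E\ge0\}$ is an embedded submanifold-with-boundary of $\ff^*_\calH$ of codimension $\mathrm{ind}_{y_c}(S^+)$, its boundary $\mathcal{M}^s_{\mathrm{loc}}\cap\{E=0\}$ being the local stable manifold of the invariant restricted flow on $M^*_{+,\calH}\cap\ff^*_\calH$. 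Together with a small $U_{y_c}$ this gives the embedded-neighbourhood assertion.

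\textbf{Step 3: globalisation and exponential rate.} By definition $p\in\mathcal{M}^s_{y_c}$ iff $\Phi_\tau(p)\to\ybar^*_c$; such an orbit stays in a compact set, so it is forward-complete and eventually enters $U_{y_c}$, whence $\mathcal{M}^s_{y_c}=\bigcup_{\tau\ge0}\Phi_\tau^{-1}\bigl(\mathcal{M}^s_{\mathrm{loc}}\cap\{E\ge0\}\bigr)$, the local piece being chosen forward-invariant. Each set $\Phi_\tau^{-1}(\cdot)$ is an embedded submanifold-with-boundary of an open subset of $\ff^*_\calH$, of the same codimension, since $\Phi_\tau$ is a diffeomorphism onto its image; the union is increasing, so $\mathcal{M}^s_{y_c}$ carries the structure of an injectively immersed submanifold (with boundary along $M^*_{+,\calH}$) of codimension $\mathrm{ind}_{y_c}(S^+)$, not necessarily globally embedded. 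Exponential convergence for a general $p\in\mathcal{M}^s_{y_c}$ then follows from that on $\mathcal{M}^s_{\mathrm{loc}}$ and the finite time the orbit of $p$ needs to reach it. The only real obstacle beyond bookkeeping is precisely the boundary position of $\ybar^*_c$, dealt with in Step~2; one also notes that no orbit of the putative stable manifold reaches $Z=\pm\infty$ in finite time (ruled out by $Z'=f(Z)\sim|Z|$) nor is lost through $M^*_{-,\calH}$ (which is invariant, disjoint from $\ybar^*_c$, and in any case excluded in the definition of $\mathcal{M}^s_{y_c}$).
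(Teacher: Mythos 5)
Your proof is correct and follows essentially the same route the paper takes: count stable and unstable eigenvalues from Lemma~\ref{Lem:CritPoints}, invoke the local $C^\infty$ stable manifold theorem at $\ybar^*_c$ for the embedded local piece and the exponential rate, and saturate by the backward flow to obtain the global injectively immersed submanifold (the paper simply cites Teschl for the local statement and Smale for the global one). The one place you genuinely add something is Step~2: you observe that $\ybar^*_c$ sits on the boundary hypersurface $\{E=0\}$ of $\ff^*_\calH$, where the textbook theorem does not literally apply, and you resolve this by extending $\calV$ smoothly across $\{E=0\}$, applying the boundaryless theorem to the extension, and then cutting with $\{E\ge 0\}$ -- which works because the stable subspace contains the $\partial_E$-direction (eigenvalue $-1$), so $E$ restricts to a submersion on the local stable manifold and $\{E=0\}$ is a clean boundary face of it. The paper glosses over this boundary subtlety entirely; your explicit handling fills a small but real gap, and the eigenvalue case analysis in Step~1 is complete and consistent with the paper's dimension count.
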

\begin{proof}
The description locally near the critical point follows from a standard version of the stable manifold theorem, see for instance \cite[Theorem 9.4]{Teschl}. The tangent space of $\mathcal{M}_{y_c}^s$ at $\ybar^*_c$ is the span of the eigenvectors of Lemma \ref{Lem:CritPoints} whose eigenvalues have negative real part. The dimension is given as the number of eigenvalues with negative real part. The eigenvalues are $-1$ and the $2n-2$ numbers $\mu_j^{\pm}$. The $\mu_j^-$ always have negative real part, and there are $n-1$ of them. The $\mu_j^+$ have negative  real part precisely when the $a_j$ are positive, hence there are $n-1-\mathrm{ind}_{y_c}(S^+)$ of these, giving a total dimension of $2n-1- \mathrm{ind}_{y_c}(S^+)$. Since $\ff^*_{\calH}$ is $(2n-1)$-dimensional, the codimension is $\mathrm{ind}_{y_c}(S^+)$. The exponential convergence follows from hyperbolicity, see for instance \cite[Theorem 9.5]{Teschl}. 

For the global structure, see for instance \cite[9.1 Theorem]{Smale}.
\end{proof}

From this we obtain:
\begin{Prop}
\label{Prop:ffGeod}
Assume $S^+$ is a Morse function. Then:
\begin{itemize}
\item For any maximum $y_{\max}$ of $S^+ $, the geodesics in $\ff$ running into $\ybar_{\max}$ foliate a neighbourhood of $\ybar_{\max}$ in $\ff$. 
\item For any minimum $y_{\min}$ of $S^+$, every geodesic in $\ff$ which starts near $\ybar^*_{\min}$ runs into $y_{\min}$.
\end{itemize}

\end{Prop}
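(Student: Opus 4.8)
The plan is to read everything off the local stable manifold theorem, Proposition~\ref{Prop:StableMfd}, the eigenvalue computation of Lemma~\ref{Lem:CritPoints}, and the bundle projection $\pi\colon \ff^*_\calH\cong\Rbar_Z\times T^*Y\to\Rbar_Z\times Y\cong\ff$ from \eqref{eqn:ffstar diffeo}. Recall that, by definition, a geodesic in $\ff$ is the image under $\pi$ of an orbit of the front-face flow $\Phi_\tau$ of $\calV|_{\ff^*_\calH}$, and that such a geodesic ``runs into'' a corner point $\ybar_c$ exactly when the corresponding orbit converges to $\ybar^*_c$ (so that $Z\to\infty$, $y\to y_c$ and $\theta\to 0$). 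Thus both assertions become statements about how the local stable manifold $\mathcal{M}^s_{y_c}\subset\ff^*_\calH$ of a critical point sits relative to the fibres of $\pi$.

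For a minimum $y_{\min}$ of $S^+$ one has $\mathrm{ind}_{y_{\min}}(S^+)=0$, so Proposition~\ref{Prop:StableMfd} gives that $\mathcal{M}^s_{y_{\min}}$ is, near $\ybar^*_{\min}$, an embedded submanifold of codimension $0$ containing $\ybar^*_{\min}$; hence it contains an open neighbourhood $U^*$ of $\ybar^*_{\min}$ in $\ff^*_\calH$. Every orbit of $\Phi_\tau$ starting in $U^*$ therefore lies in $\mathcal{M}^s_{y_{\min}}$ and converges to $\ybar^*_{\min}$, so, projecting by $\pi$, its geodesic in $\ff$ runs into $y_{\min}$. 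This proves the second bullet.

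For a maximum $y_{\max}$ the key point is that $\pi$ identifies $\mathcal{M}^s_{y_{\max}}$ with a neighbourhood of $\ybar_{\max}$ in $\ff$. Here $\mathrm{ind}_{y_{\max}}(S^+)=n-1$, so $\dim\mathcal{M}^s_{y_{\max}}=(2n-1)-(n-1)=n=\dim\ff$. By Lemma~\ref{Lem:CritPoints}, $T_{\ybar^*_{\max}}\mathcal{M}^s_{y_{\max}}$ is spanned by the eigenvectors of \eqref{eq:ffLin} with negative real part: the $E$-eigenvector for the eigenvalue $-1$, together with the $n-1$ vectors $(u_j,\mu^-_j h u_j)$ of \eqref{eqn:eigenvectors in ff}, where $u_1,\dots,u_{n-1}$ are eigenvectors of $h^{-1}S_{yy}^+$ with eigenvalues $a_j$. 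Since $S^+$ is Morse and $y_{\max}$ is a maximum, all $a_j<0$; hence each $\mu^-_j$ is real and negative while each $\mu^+_j>0$, so precisely these $n$ eigenvectors occur (and the eigenvalue $-1$ has multiplicity one). The differential $d\pi$ at $\ybar^*_{\max}$ kills the $\theta$-directions, so it sends the $E$-eigenvector to $\partial_E$ and $(u_j,\mu^-_j h u_j)$ to $u_j$; as the $u_j$ form a basis of $T_{y_{\max}}Y$, the images span $T_{\ybar_{\max}}\ff=\R\partial_E\oplus T_{y_{\max}}Y$, and $d\pi\colon T_{\ybar^*_{\max}}\mathcal{M}^s_{y_{\max}}\to T_{\ybar_{\max}}\ff$ is an isomorphism. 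Consequently $\pi$ restricts to a diffeomorphism from a neighbourhood $\mathcal{N}^*$ of $\ybar^*_{\max}$ in $\mathcal{M}^s_{y_{\max}}$ onto a neighbourhood $\mathcal{N}$ of $\ybar_{\max}$ in $\ff$. Since $\mathcal{M}^s_{y_{\max}}$ is $\Phi_\tau$-invariant and all its orbits converge to $\ybar^*_{\max}$, the orbits through points of $\mathcal{N}^*$ fill $\mathcal{N}^*$ and are pairwise disjoint away from $\ybar^*_{\max}$; transporting them by $\pi|_{\mathcal{N}^*}$ yields geodesics in $\ff$ running into $\ybar_{\max}$ that foliate $\mathcal{N}$, which is the first bullet.

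The only non-formal step, and hence the main obstacle, is the transversality computation in the maximum case: one must check that the presence of the $\theta$-components in the stable eigenvectors does not collapse the projection, i.e.\ that $d\pi$ is onto. As indicated, this reduces to the Morse hypothesis (the non-degenerate Hessian makes the $u_j$ a basis of $T_{y_{\max}}Y$) together with the elementary sign analysis of $\mu^\pm_j$ at a maximum, both of which are already packaged in Lemma~\ref{Lem:CritPoints}. A minor point deserving a word of care is the meaning of ``foliate'' at the limit point $\ybar_{\max}$ itself: the geodesics foliate the punctured neighbourhood $\mathcal{N}\setminus\{\ybar_{\max}\}$ and all limit onto $\ybar_{\max}$, which is the standard picture of a hyperbolic sink restricted to its stable manifold, and this is the sense in which the statement is meant.
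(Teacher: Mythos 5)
Your proof is correct and takes essentially the same approach as the paper: both invoke Proposition~\ref{Prop:StableMfd} for the dimension count, identify the stable tangent directions from Lemma~\ref{Lem:CritPoints}, and observe that $d\pi$ restricted to the stable manifold is an isomorphism at $\ybar^*_{\max}$ (so $\pi$ is a local diffeomorphism there), while at a minimum the stable manifold is open. The only difference is that you spell out the Morse/sign analysis and the ``foliate'' convention more explicitly than the paper does, but the mathematics is the same.
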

In other words:
\begin{itemize}
\item
For each point on $\ff$ near $\ybar_{\max}$ there is a \textit{unique} direction so that the geodesic starting at that point in this direction ends at $\ybar_{\max}$.
\item 
For each point on $\ff$ near $\ybar_{\min}$
and \textit{all} directions near $\theta=0$ (i.e.\ close to the $Z$-direction) the geodesic starting at that point in this direction ends at $\ybar_{\min}$.
\end{itemize}
\begin{proof}
At a maximum, the Morse index is $n-1$. By the stable manifold theorem, Proposition \ref{Prop:StableMfd}, the stable manifold $\mathcal{M}_{y_{\max}}^s\subset \ff^*_{\calH}$ is smooth and $n$-dimensional. The tangent space is spanned by the eigenvectors $\begin{pmatrix}
1\\0\\0
\end{pmatrix}$ and 
 $\begin{pmatrix}
0 \\ u_j\\ \mu_j^- h u_j
\end{pmatrix}$, where $u_j$ are the eigenvectors of $h^{-1}S^+_{yy}$. Projecting $\mathcal{M}^s_{y_{\max}}$ onto $\ff$ via $\pi\colon {}^{2k-1} X^*\to X$ is therefore a local diffeomorphism.
At a minimum, the Morse index is $0$. Hence the stable manifold has dimension $2n-1$. 

\end{proof}
For later use, we also note that the same proof also shows that for any critical point $y_c$ of $S^+$, the projection $\pi\colon \mathcal{M}_{y_c}^s \to \ff$ is a submersion near $\ybar^*_c$.

\subsection{Global dynamics on the front face}
\label{Subsection:Globalff}
We have seen in Proposition \ref{Prop:LimitingDynamicbeta=1} that sufficiently vertically starting geodesics have a front face limit with well-defined dynamics. The next result says that these limiting front face curves necessarily flow into critical points of $\mathcal{V}$.

\begin{Thm}
\label{Thm:Asymptotics}
Assume $S^+$ has isolated critical points\footnote{This is slightly weaker than being Morse.}.

Let $\sigma=(Z,y,\theta)$ be a solution of the front face dynamics \eqref{eq:Z'ff}, \eqref{eq:y'ff}, \eqref{eq:theta'ff} (and $\xi=1$) with arbitrary initial value in $\ff^*_{\calH}\setminus M^*_{-,\calH}$. 
 
 Then $\sigma$ converges to $(E=0,y_{\mathrm{crit}},0)$ as $\tau\to \infty$, for some critical point $y_{\mathrm{crit}}$ of $S^+$.
\end{Thm}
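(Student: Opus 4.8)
The plan is to study the dynamics of the front face system \eqref{eq:Z'ff}--\eqref{eq:theta'ff} by first disposing of the $Z$-direction, then using a Lyapunov-type argument to force convergence into the critical set. First I would note that \eqref{eq:Z'ff} says $Z'=f(Z)\geq 1$, so $Z(\tau)\to+\infty$ monotonically; hence switching to the $E=\frac{1}{Z}$ coordinate, $E(\tau)\to 0$ as $\tau\to\infty$ via \eqref{eq:E'ff}, and $f'(Z)=F(E)-EF'(E)\to F(0)=1$. So asymptotically the $(y,\theta)$-subsystem \eqref{eq:Etheta'ff}, \eqref{eq:y'ff} becomes an autonomous perturbation of
\[ y'=\theta^\sharp,\qquad \theta'=-(2k-1)\theta-\tfrac12\partial_y\big(S^++|\theta|^2\big). \]
The first main step is to show $\theta$ stays bounded: this is already done in Lemma \ref{Lem:thetaBounded} (or re-derive it from \eqref{eq:ffNormthetaEvol}: once $f'(Z)\geq \tfrac12$, the term $-2(2k-1)f'(Z)|\theta|^2$ dominates $-\ip{\partial_y S}{\theta}$ for $|\theta|$ large). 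Consequently the forward orbit of $\sigma$ has compact closure in $\overline{\ff^*_\calH}\setminus M^*_{-,\calH}$ minus a neighbourhood of $M_-$, so its $\omega$-limit set $\Omega$ is nonempty, compact, connected, and invariant, and is contained in $\{E=0\}=\ff^*_\calH\cap M^*_{+,\calH}$.

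The heart of the argument is to show $\Omega$ is a single critical point. For this I would exhibit a strict Lyapunov function for the limiting $(y,\theta)$-flow on $\{E=0\}$. The natural candidate is the "energy"
\[ \mathcal{E}(y,\theta)\coloneqq \tfrac12 S^+(y)+\tfrac12|\theta|^2, \]
whose derivative along \eqref{eq:y'ff}, \eqref{eq:Etheta'ff} at $E=0$ is
\[ \mathcal{E}' = \tfrac12\ip{\partial_y S^+}{\theta} + \ip{\theta}{\theta'} = \tfrac12\ip{\partial_y S^+}{\theta} + \ip{\theta}{-(2k-1)\theta - \tfrac12\partial_y S^+ - \tfrac12\partial_y|\theta|^2}. \]
Here I must be slightly careful with the $\partial_y|\theta|^2$ term (it reflects $y$-dependence of $h$ used to raise indices); writing everything invariantly, the mixed terms $\tfrac12\ip{\partial_y S^+}{\theta}$ cancel the $-\tfrac12\ip{\theta}{\partial_y S^+}$ contribution and the remaining quadratic-in-$\theta$ terms combine — using $\ip{\theta}{\partial_y|\theta|^2}$ computed against the Levi-Civita connection of $h$ — to leave $\mathcal{E}'=-(2k-1)|\theta|^2\leq 0$, with equality exactly on $\{\theta=0\}$. (If this exact cancellation is obstructed by the $h$-dependence, the fix is to use the more careful bookkeeping of \eqref{eq:NormthetaEvol}/\eqref{eq:ffNormthetaEvol}, or to replace $|\theta|^2$ by the metric-contracted quantity that makes the identity clean; in the warped-product case it is immediate.) By LaSalle's invariance principle, $\Omega$ is contained in the largest invariant subset of $\{\theta=0\}\cap\{E=0\}$; but on $\{\theta=0,E=0\}$ the equations of motion give $\theta'=-\tfrac12\partial_y S^+$, so invariance forces $\partial_y S^+=0$. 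Hence $\Omega\subseteq\{(E=0,y,\theta=0):\partial_y S^+(y)=0\}$, which by the isolated-critical-points hypothesis is a finite set; since $\Omega$ is connected, $\Omega=\{\ybar^*_c\}$ for a single critical point $y_c$, and $\sigma(\tau)\to(E=0,y_c,0)$ as claimed.

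The main obstacle I anticipate is twofold. First, making the Lyapunov computation genuinely rigorous for the \emph{nonautonomous} system (i.e. before taking $\tau\to\infty$): the extra terms carry factors of $E$ and $f'(Z)-1$, both of which tend to $0$, so one should either invoke an asymptotically-autonomous version of LaSalle (e.g. via the Thieme–Markus theory of asymptotically autonomous semiflows, which guarantees the $\omega$-limit set of the full system is an internally chain-recurrent set of the limit system) or argue directly that $\mathcal{E}$ is eventually decreasing up to an integrable error $\int^\infty |E(\tau)|\,d\tau<\infty$, which follows from $E'=-EF(E)\le -cE$. Second, one must rule out the orbit escaping toward $M_{-,\calH}$, i.e. $Z\to-\infty$; but this cannot happen since $Z$ is strictly increasing, so the only boundary the orbit can approach is $E=0$, i.e. $M_{+,\calH}$ — consistent with the statement being about $\ff^*_\calH\setminus M^*_{-,\calH}$. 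The exponential rate of convergence, if wanted, then follows from hyperbolicity of $\ybar^*_c$ once $S^+$ is assumed Morse, via Proposition \ref{Prop:StableMfd}.
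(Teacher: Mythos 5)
Your proposal is correct in substance and uses essentially the same Lyapunov function as the paper, but packages the argument differently. The paper works with $\mathcal{G}=S+\lvert\theta\rvert^2$ (using the full $S$, not just $S^+$) as a Lyapunov function for the genuinely nonautonomous system: it computes $\mathcal{G}'=-2(2k-1)f'(Z)\lvert\theta\rvert^2+\partial_Z S\,Z'$ directly from \eqref{eq:ffNormthetaEvol}, then bounds the error term $\int\partial_Z S\,Z'\,d\tau$ using smoothness of $S$ on the blow-up (so $\partial_Z S=O(Z^{-2})$ and $Z'\sim Z$ make the integrand $O(e^{-\tau})$). This yields $\lvert\theta\rvert\in L^2$ and $\theta\to 0$, and then convergence of $S$ to a critical value plus the isolated-critical-points hypothesis gives convergence of $y$. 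You instead invoke $\omega$-limit sets and (asymptotically-autonomous) LaSalle theory, and conclude via connectedness of $\Omega$ inside the discrete critical set. Your route is more structured and, at the last step, arguably tighter than the paper's somewhat informal "since the flow is complete, this limit has to be a critical point"; the paper's route avoids the overhead of citing Thieme–Markus by doing a single explicit integral estimate. Both are valid, and the "alternative fix" you flag (eventually-decreasing-up-to-integrable-error) is precisely what the paper does.

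One technical point you correctly anticipate but should nail down: the cancellation in $\mathcal{E}'$ is delicate precisely because $\lvert\theta\rvert^2=h^{ij}\theta_i\theta_j$ depends on $y$ through $h$. The clean way to organize this is not to differentiate $\theta$ and $\lvert\cdot\rvert^2$ separately and hope for cancellation of the $\partial_y\lvert\theta\rvert^2$ contributions, but to use \eqref{eq:ffNormthetaEvol} as the starting point — there the $h$-dependence has already been absorbed, leaving only $-2(2k-1)f'(Z)\lvert\theta\rvert^2-\ip{\partial_y S}{\theta}$. Adding $\ip{\partial_y S}{y'}=\ip{\partial_y S}{\theta}$ then cancels cleanly. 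So your parenthetical fix is exactly right, and should be promoted from a caveat to the main calculation.
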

\begin{proof} Let us first assume $\sigma(0)\notin M^*_{+,\calH}$ - we will deal with this case in the end.
The $E\to 0\iff Z\to \infty$-convergence follows from $Z'=f(Z)\geq 1$. 
We first assume $\partial_Z S=0$, since the argument becomes more transparent then. We claim $\mathcal{G}=S+\vert \theta\vert^2$ is essentially a Lyapunov function for the non-autonomous $(y,\theta)$-system. The evolution of $\vert \theta\vert^2$ on the front face, \eqref{eq:ffNormthetaEvol}, says
\[\frac{d}{d\tau}\vert \theta\vert^2=-2(2k-1)f'(Z)\vert \theta\vert^2-\ip{\partial_yS}{\theta}.\]
 We then compute 
\[\frac{d}{d\tau} \mathcal{G} =\frac{d}{d\tau}\vert \theta\vert^2+\ip{\partial_y S}{y'}=-2(2k-1)f'(Z)\vert \theta\vert^2.\]
Since $Z\to \infty,$ $f'(Z)>\frac{1}{2}$ for all large enough $\tau$ (recall that $f'(Z)\xrightarrow{Z\to \infty}=1$). So $\mathcal{G}$ is decreasing and clearly bounded from below, hence it converges as $\tau\to \infty$. But then
\[\int_0^\infty f'(Z)\vert \theta\vert^2\, d\tau\]
exists. Since we know from Lemma \eqref{Lem:thetaBounded} that $\theta$ is uniformly bounded, we also know that $\frac{d}{d\tau} \vert \theta\vert^2$ is uniformly bounded. Hence $\vert \theta\vert \to 0$.
 
When $\partial_Z S\neq 0$ the above argument does not show that $\mathcal{G}$ is decreasing. But we still know that $\mathcal{G}$ is bounded, hence 
\[\mathcal{G}(T)-\mathcal{G}(\tau_0)=\int_{\tau_0}^T \frac{d}{d\tau} \mathcal{G}\, d\tau=-2(2k-1)\int_{\tau_0}^Tf'(Z)\vert \theta\vert^2\, d\tau +\int_{\tau_0}^T \partial_Z S Z'\, d\tau.\]
is uniformly bounded in $T$. We will argue that the second integral on the right hand side is uniformly bounded, hence the first integral also has to be bounded. One can then argue as when $\partial_ZS=0$. We know $Z'=f(Z)\sim Z$ for $Z\to \infty$. Hence $Z(\tau)\geq c e^{\tau}$ as $\tau\to \infty$ for some $c>0$. 
If $\partial_ZS=\mathcal{O}(Z^{-1-\delta})$ for some $\delta>0$, then 
\[\int_{\tau_0}^T \partial_ZS Z'\, d\tau\lesssim \int_{\tau_0}^T e^{-\delta \tau}\, d\tau\leq \frac{1}{\delta}\]
is uniformly bounded.  
 Since $S$ is assumed smooth on the front face, we get \[\partial_ZS=\frac{\partial E}{\partial Z} \partial_ES=-Z^{-2}\partial_ES =\er(Z^{-2}),\]
 i.e. we can choose $\delta=1$.

The above arguments show that $\theta$ converges to $0$ and that $\vert \theta\vert$ is in $L^2$. This turn ensures that $y'\to 0$, but not directly that $y$ also converges;  this would follow if $\vert \theta\vert$ were in $L^1$. The above arguments however show that $S(Z(\tau),y(\tau))$ converges. 
Since the flow is complete, this limit has to be a critical point of the rescaled geodesic vector field $\mathcal{V}$, i.e. $S$ converges to a critical value $S^+(y_{\mathrm{crit}})$. Per assumption, the critical points of $S^+$ are isolated, so $y(\tau)$ has to converge to a critical point of $S^+$.

When $\sigma(0)\in M_{+,\calH}^*$, the above argument simplifies, since $E(\tau)=0$ for all $\tau$ in that case, and consequently $S=S^+$. The change in $\vert \theta\vert^2$, \eqref{eq:NormthetaEvol}, reads
\[\frac{d}{d\tau}\vert \theta\vert^2=-2(2k-1)\vert \theta\vert^2-\ip{\partial_yS^+}{\theta},\]
making $\mathcal{G}=S^+ + \vert \theta\vert^2$ a Lyapunov function. The rest of the argument is the same.
 
\end{proof}

\begin{Rem}
\label{Rem:ReachingMax}
The proof shows one more thing, namely that $\vert \theta\vert^2+S$ is decreasing when $Z\geq 0$ and $\partial_ZS=0$. Hence a necessary condition for a curve to end up in $y_{\infty}$ is
\begin{equation}
S(y_{\infty})\leq S(y_0)+\vert \theta_0\vert^2.
\label{eq:Sineq}
\end{equation}
So if $y_{0}$ is close to a minimum and $y_{\infty}$ is a maximum, $\vert \theta_0\vert^2$ has to be large enough for this inequality to be fulfilled.
In particular, a geodesic with $S(y_0)<S^+_{\max}$ and $\theta_0=0$ can not land in a maximum. 

 The inequality \eqref{eq:Sineq} imposes no restriction if $y_{\infty}$ is a global minimum.
\end{Rem}

\begin{Rem}
In our guiding example \eqref{eqn:example}, $\partial_ZS\neq 0$, but it does satisfy $\partial_Z S=\mathcal{O}(Z^{-1-{2k}})$. See Lemma \ref{Lem:ExampleS}. Also for the family of examples discussed in Appendix \ref{Section:GeneratingExamples} it is generally the case that $\partial_Z S\neq 0$. 
\end{Rem}

\begin{Rem}
Theorem \ref{Thm:Asymptotics} is an analogue of \cite[Theorem 1]{GrGr15}. 
\end{Rem}

Combining this with our description of the stable manifolds of Proposition \ref{Prop:StableMfd}, we get that the every point lies in some stable manifold. To make this a bit more precise, we need some notation. Let $\mathrm{ind}(S^+)$ denote all the Morse indices,
\[\mathrm{ind}(S^+)=\left\{\mathrm{ind}_{y_c}(S^+) \, \colon\, y_c\in \mathrm{Crit}(S^+)\right\}.\] 
For any $\lambda\in \mathrm{ind}(S^+)$, let $\mathcal{M}_\lambda\subset \ff^*_{\calH} \setminus M_{-,\calH}^*$ denote the initial values of curves landing in a critical point with Morse index $\lambda$, 
\[\mathcal{M}_{\lambda}=\bigcup_{\substack{y_c\in \mathrm{Crit}(S^+) \\ \mathrm{ind}_{y_c}(S^+)=\lambda}} \mathcal{M}^s_{y_c}.\]
 
\begin{Cor}
\label{Cor:GeneralDirections}
Assume $S^+$ is Morse. Then 
\[\ff^*_{\calH}\setminus M^*_{-,\calH} =\bigcup_{\lambda\in \mathrm{ind}(S^+)} \mathcal{M}_\lambda,\]
where each $\mathcal{M}_\lambda$ is an injectively immersed submanifold of codimension $\lambda$.
\end{Cor}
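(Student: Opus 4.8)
The plan is to assemble Corollary~\ref{Cor:GeneralDirections} from two ingredients already in hand: Theorem~\ref{Thm:Asymptotics}, which guarantees that \emph{every} initial point in $\ff^*_{\calH}\setminus M^*_{-,\calH}$ flows (as $\tau\to\infty$) into some critical point $\ybar^*_c$ with $y_c\in\mathrm{Crit}(S^+)$, and Proposition~\ref{Prop:StableMfd}, which identifies the set of points flowing into a fixed $\ybar^*_c$ as an injectively immersed submanifold $\mathcal{M}^s_{y_c}$ of codimension $\mathrm{ind}_{y_c}(S^+)$. First I would note that the Morse hypothesis makes $\mathrm{Crit}(S^+)$ finite (since $Y$ is compact), so the union over $\lambda\in\mathrm{ind}(S^+)$ is finite, and by definition $\mathcal{M}_\lambda=\bigcup_{\mathrm{ind}_{y_c}(S^+)=\lambda}\mathcal{M}^s_{y_c}$ is a finite union of injectively immersed submanifolds all of the same codimension $\lambda$.

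The covering statement $\ff^*_{\calH}\setminus M^*_{-,\calH}=\bigcup_{\lambda}\mathcal{M}_\lambda$ is then immediate: given any point $p$ in this set, Theorem~\ref{Thm:Asymptotics} produces a critical point $y_{\mathrm{crit}}$ with $\Phi_\tau(p)\to\ybar^*_{y_{\mathrm{crit}}}$, hence $p\in\mathcal{M}^s_{y_{\mathrm{crit}}}\subseteq\mathcal{M}_{\lambda}$ with $\lambda=\mathrm{ind}_{y_{\mathrm{crit}}}(S^+)$. Conversely every $\mathcal{M}_\lambda$ lies in $\ff^*_{\calH}\setminus M^*_{-,\calH}$ by construction. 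The one point needing a word is why a finite union of injectively immersed submanifolds of the \emph{same} codimension $\lambda$ is again an injectively immersed submanifold of codimension $\lambda$: this is because the stable manifolds for distinct critical points are disjoint (a trajectory has a unique $\omega$-limit critical point here), so the union is a disjoint union of manifolds of equal dimension, which is trivially a (not necessarily connected) injectively immersed submanifold.

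The main obstacle --- really the only non-formal point --- is disjointness of the $\mathcal{M}^s_{y_c}$ for different $y_c$ sharing the same Morse index $\lambda$, i.e.\ that the limit critical point is well-defined. Theorem~\ref{Thm:Asymptotics} as stated gives convergence of $y(\tau)$ to \emph{a} critical point; combined with $E(\tau)\to 0$ and $\theta(\tau)\to 0$ this pins down the full limit point $\ybar^*_{y_{\mathrm{crit}}}$ uniquely along a given trajectory, so a point $p$ cannot lie in two different stable manifolds. Hence I would structure the proof as: (1) finiteness of $\mathrm{Crit}(S^+)$ and hence of the index set; (2) invoke Theorem~\ref{Thm:Asymptotics} for the covering, noting the limit is unique along each trajectory so the $\mathcal{M}^s_{y_c}$ partition the set; (3) invoke Proposition~\ref{Prop:StableMfd} for the codimension of each $\mathcal{M}^s_{y_c}$; (4) conclude that each $\mathcal{M}_\lambda$, being a finite disjoint union of injectively immersed submanifolds of codimension $\lambda$, is itself an injectively immersed submanifold of codimension $\lambda$.

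\begin{proof}
Since $Y$ is compact and $S^+$ is Morse, $\mathrm{Crit}(S^+)$ is finite, and hence so is $\mathrm{ind}(S^+)$. Let $p\in\ff^*_{\calH}\setminus M^*_{-,\calH}$ be arbitrary. By Theorem~\ref{Thm:Asymptotics}, the front face trajectory through $p$ converges as $\tau\to\infty$ to $(E=0,y_{\mathrm{crit}},0)=\ybar^*_{y_{\mathrm{crit}}}$ for some $y_{\mathrm{crit}}\in\mathrm{Crit}(S^+)$; in particular $p\in\mathcal{M}^s_{y_{\mathrm{crit}}}\subseteq\mathcal{M}_\lambda$ with $\lambda=\mathrm{ind}_{y_{\mathrm{crit}}}(S^+)$. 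Conversely each $\mathcal{M}_\lambda$ is by definition contained in $\ff^*_{\calH}\setminus M^*_{-,\calH}$. This proves the asserted equality. Moreover the limiting critical point of a trajectory is unique, so the sets $\mathcal{M}^s_{y_c}$, $y_c\in\mathrm{Crit}(S^+)$, are pairwise disjoint, and therefore so is the decomposition $\ff^*_{\calH}\setminus M^*_{-,\calH}=\bigsqcup_\lambda\mathcal{M}_\lambda$, each $\mathcal{M}_\lambda$ being the disjoint union $\bigsqcup_{\mathrm{ind}_{y_c}(S^+)=\lambda}\mathcal{M}^s_{y_c}$.

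By Proposition~\ref{Prop:StableMfd}, for each critical point $y_c$ the stable manifold $\mathcal{M}^s_{y_c}$ is an injectively immersed submanifold of $\ff^*_{\calH}$ of codimension $\mathrm{ind}_{y_c}(S^+)$. Fixing $\lambda\in\mathrm{ind}(S^+)$, the set $\mathcal{M}_\lambda$ is thus a finite disjoint union of injectively immersed submanifolds, all of the same dimension $(2n-1)-\lambda$; such a finite disjoint union is again an injectively immersed submanifold, of codimension $\lambda$. This establishes the claim.
\end{proof}
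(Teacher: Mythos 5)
Your proof is correct and follows the same route the paper (implicitly) intends: Theorem~\ref{Thm:Asymptotics} supplies the covering, Proposition~\ref{Prop:StableMfd} supplies the smooth structure and codimension, and uniqueness of the $\omega$-limit makes the stable manifolds disjoint so that each $\mathcal{M}_\lambda$ is a disjoint union of injectively immersed pieces of the same codimension. The paper states this corollary without a proof, treating it as an immediate consequence; your write-up just makes that explicit.
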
  
Note that $\mathcal{M}_{\min}=\mathcal{M}_0$ and $\mathcal{M}_{\max}=\mathcal{M}_{(n-1)}$ are always present in this union.
 
\begin{Rem}
\label{Rem:MorseTop}
The topology of $Y$ puts some restrictions on the possible Morse indices. For instance, the Poincar\'{e}-Hopf theorem says
\[\chi(Y)=\sum_{y_c\in \mathrm{Crit}(S^+)} (-1)^{\mathrm{ind}_{y_c}(S^+)},\]
where $\chi(Y)$ is the Euler characteristic. Another example is Reeb's sphere theorem, \cite[Theorem 4.1]{MilnorMorse}; if $S^+$ only has one maximum and minimum and no saddle points, then $Y$ is homeomorphic to $\S^{n-1}$.    
\end{Rem}

Another way of describing the front face dynamics is to look at directions eventually hitting a critical point of a given Morse index. For any $Z_0\in \R$, we introduce
\[\Upsilon_{Z_0}^\lambda\coloneqq \mathcal{M}^\lambda\cap \{Z=Z_0\}\]
as the sets of initial conditions $(y,\theta)$ such that $(Z_0,y,\theta)$ flows into a critical point of index $\lambda$.
\begin{Prop}
\label{Prop:Discrete}
$\Upsilon_{Z_0}^\lambda$  a codimension $(\lambda+1)$ submanifold of $\ff^*_{\calH}$.
\end{Prop}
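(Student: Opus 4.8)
The plan is to realise $\Upsilon_{Z_0}^\lambda$ as a transverse intersection inside the front face $\ff^*_\calH$, using two ingredients already in hand: $\mathcal{M}^\lambda$ is an injectively immersed submanifold of codimension $\lambda$ (Corollary \ref{Cor:GeneralDirections}), and the restricted vector field $\calV$ has $Z$-component $Z'=f(Z)\geq1$ by \eqref{eq:Z'ff}, so it is nowhere tangent to the hypersurfaces $\{Z=Z_0\}$, $Z_0\in\R$.

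First I would note that $\mathcal{M}^\lambda=\bigcup_{y_c}\mathcal{M}^s_{y_c}$, the union over critical points $y_c$ of $S^+$ of index $\lambda$, is invariant under the front-face flow $\Phi_\tau$: each $\mathcal{M}^s_{y_c}$ is a union of integral curves of $\calV$, so $\calV$ is everywhere tangent to $\mathcal{M}^\lambda$. Moreover $\Upsilon_{Z_0}^\lambda=\mathcal{M}^\lambda\cap\{Z=Z_0\}$ lies away from the faces $M^*_{\pm,\calH}$ (since $Z_0$ is finite), and at every point of it $\calV$ is nonzero, because the critical points of $\calV$ sit at $Z=\pm\infty$ (Lemma \ref{Lem:crit pts}). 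Now fix $p\in\Upsilon_{Z_0}^\lambda$. Since $Z'=f(Z_0)\geq1>0$ at $p$, the vector $\calV_p$ lies in $T_p\mathcal{M}^\lambda$ but not in $T_p\{Z=Z_0\}$; as $\{Z=Z_0\}$ has codimension one in $\ff^*_\calH$, this forces $T_p\mathcal{M}^\lambda+T_p\{Z=Z_0\}=T_p\ff^*_\calH$. Hence $\mathcal{M}^\lambda$ and $\{Z=Z_0\}$ meet transversally at every common point, so $\Upsilon_{Z_0}^\lambda$ is a submanifold of codimension $\lambda+1$; it is embedded wherever $\mathcal{M}^\lambda$ is, and globally carries the injectively immersed structure inherited from $\mathcal{M}^\lambda$.

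For the final assertion, a maximum $y_{\max}$ of $S^+$ has Morse index $\lambda=\dim Y$, so $\Upsilon_{Z_0}^{\max}$ has codimension $\dim Y+1$ in the $(2\dim Y+1)$-dimensional $\ff^*_\calH$, i.e.\ dimension $\dim Y$. By Proposition \ref{Prop:ffGeod}, the projection $\mathcal{M}^s_{y_{\max}}\to\ff$ is a local diffeomorphism near $\ybar_{\max}$; restricting it to $\Upsilon_{Z_0}^{\max}$ yields a local diffeomorphism onto $\{Z=Z_0\}\cap\ff\cong Y$. In particular $\Upsilon_{Z_0}^{\max}$ meets each cotangent fibre in a discrete set — i.e.\ over each base point only discretely (in fact locally uniquely) many directions flow into a maximum.

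The argument is short; the only point requiring care is that $\mathcal{M}^\lambda$ is merely injectively immersed, not embedded, which I expect to be the main (minor) obstacle, together with pinning down in precisely what sense $\Upsilon_{Z_0}^{\max}$ is "discrete." Neither is serious: an injective immersion is locally an embedding, so the transversality computation and the local submanifold conclusion are purely local, and the global structure of $\Upsilon_{Z_0}^\lambda$ is then the immersed one induced from $\mathcal{M}^\lambda$; the discreteness statement is the one made precise via the local diffeomorphism above.
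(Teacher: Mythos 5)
Your transversality argument is correct and is essentially the paper's: you exhibit a tangent vector to $\mathcal{M}^\lambda$ (the flow vector $\calV$, nonzero with positive $Z$-component by \eqref{eq:Z'ff}) that is not tangent to the codimension-one hypersurface $\{Z=Z_0\}$, and conclude the codimensions add. If anything, using flow-invariance of $\mathcal{M}^\lambda$ at an arbitrary point is a bit cleaner than the paper's phrasing, which points to the specific tangent vector $(1,0,0)$ at the critical point. Your reading of ``discrete'' as ``discrete over each base point,'' established via the local diffeomorphism from Proposition~\ref{Prop:ffGeod}, also matches what is meant (cf.\ the Remark following the proposition, which speaks of a discrete set of \emph{directions}).

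However, there is a genuine gap that you flag but do not close, and your claim that it is ``not serious'' is where I disagree. The paper's statement asserts that $\Upsilon_{Z_0}^\lambda$ is a submanifold, and the paper's proof explicitly emphasises ``a submanifold (and not just injectively immersed).'' Your argument yields only an injectively immersed submanifold: you say it is ``embedded wherever $\mathcal{M}^\lambda$ is,'' but the region where $\mathcal{M}^s_{y_c}$ is known to be embedded is a neighbourhood $U_{y_c}$ of the critical point $\overline{y}_c^*$ (Proposition~\ref{Prop:StableMfd}), which sits at $Z=+\infty$; a level set $\{Z=Z_0\}$ with $Z_0$ finite need not meet that neighbourhood, so this observation gives you nothing for the $Z_0$ in question. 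Transversality is a purely local statement and cannot by itself rule out accumulation phenomena (e.g.\ a stable manifold spiralling so that $\Upsilon_{Z_0}^\lambda$ has accumulation points), which is exactly what distinguishes an immersed from an embedded submanifold.

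The missing ingredient is the flow argument the paper uses: for $Z_1$ large enough, $\{Z=Z_1\}$ lies in the neighbourhood $U_{y_c}$, so $\Upsilon_{Z_1}^\lambda$ is embedded by the stable manifold theorem; and since the $Z$-dynamics on the front face is autonomous ($Z'=f(Z)$, independent of $(y,\theta)$), a \emph{fixed}-time flow map $\Phi_{\tau}$ sends $\{Z=Z_0\}$ diffeomorphically onto $\{Z=Z_1\}$ while preserving $\mathcal{M}^\lambda$, hence restricts to a diffeomorphism $\Upsilon_{Z_0}^\lambda \to \Upsilon_{Z_1}^\lambda$. This is what promotes your locally embedded, injectively immersed set to an actual embedded submanifold for every $Z_0$. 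Adding this step completes your proof.
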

\begin{proof}
By the proof of Proposition \ref{Prop:ffGeod}, $\begin{pmatrix}
1 \\ 0\\ 0
\end{pmatrix}$ is tangent to $\mathcal{M}_\lambda$, and $\{Z=Z_0\}$ clearly has the remaining tangent directions. $\mathcal{M}_\lambda$ and $\{Z=Z_0\}$  therefore intersect transversally. The codimension of a transverse intersection is the sum of the codimensions. By the stable manifold theorem, $\Upsilon_{Z_0}^\lambda$ is a submanifold (and not just injectively immersed) for $Z_0$ large enough. But the flow maps the $\Upsilon_{Z}^\lambda$ diffeomorphically unto each other for varying $Z$, hence they are all submanifolds.
\end{proof}
Using the identification \eqref{eqn:ffstar diffeo}, we may identify $\Upsilon_{Z_0}^\lambda$ with a submanifold of $T^*Y$. We will implicitly do so from now on.


\subsection{Dynamics on $M_+$ near the critical points}
\label{Subsection:eps=0Dyn}
Recall that $M_+^*$ is a subset of $(z,y,\xi,\theta)$-space, see \eqref{eqn:Mstar bundle}. The rescaled Hamilton vector field $\calV$ restricted to $M_+^*$ is given by the equations \eqref{eq:z'Leading}-\eqref{eq:theta'Leading}, where one takes $\eps=0$. This amounts to replacing $w$ by $z$ and $w_z$ by $1$.
Geometrically, this is the rescaled Hamilton vector field on a space with a cuspidal singularity. This was studied in \cite{GrGr15}. 
As shown in Lemma \ref{Lem:CritPoints} the critical points of $\calV$ on $M_+^*$ are given by 
\[z=0, \quad \theta=0=\partial_yS^+(y)\]
and $\xi=1$. Near such a point the equation $2\calH=1$ can  be solved for $\xi$ locally by \eqref{eqn:Ham 2k-2}, so $M_+^*$ is parametrised by $z,y,\theta$ there.
The linearisation of $\calV$ around such a critical point is computed in\footnote{The matrix here is slightly different than in \cite{GrGr15} since our definition of $S$ is different: our $S$ is $k(k-1)$ times the $S$ in \cite{GrGr15}.
} \cite[Equation 3.10]{GrGr15} and \eqref{eq:LinFlow} (by deleting the rows and columns with $E$): 
\[\begin{pmatrix}
z'\\ y'\\ \theta'
\end{pmatrix}=\begin{pmatrix}
1 & 0 & 0\\ -b^\sharp_+ & 0 &h^{-1} \\ Q& -\frac{1}{2} S_{yy}^+ & -(2k-1) 
\end{pmatrix}\begin{pmatrix}
z \\ y \\ \theta
\end{pmatrix}.\]
 One can find the eigenvalues of this matrix, and \cite[Lemma 4.1]{GrGr15} says they are almost as for the linearised front face dynamics (but the eigenvectors do not decouple between the $z$ and the $(y,\theta)$ subspaces). The difference is that the $z$-direction corresponds to a positive eigenvalue when $\xi=1$. This is in contrast to the negative eigenvalue, $E'=-  E$, of \eqref{eq:ffLin}.  

The stable manifold theorem can be applied here as well, and this was done in \cite{GrGr15}. One result is as follows.
\begin{Prop}[{\cite[Proposition 4.2]{GrGr15}}]
\label{Prop:z=0Geod}
Assume $S^+$ is Morse. 
\begin{itemize}
\item For any maximum $y_{\max}$ of $S^+$, there is a neighbourhood $\overline{y}_{\max}\in U\subset M_{+}$ such that  $U$
 is foliated by geodesics starting at $\overline{y}_{\max}$. 
\item For any minimum $y_{\min}$ of $S^+$, there is a unique geodesic starting at $\overline{y}_{\min}\in M_+$.
\end{itemize}

\end{Prop}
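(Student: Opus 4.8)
The plan is to reduce the proposition, exactly as in the proof of Proposition~\ref{Prop:ffGeod}, to the (un)stable manifold theorem at the critical points of the rescaled field $\calV$, but now on $M^*_{+,\calH}$ rather than on the front face $\ff^*_{\calH}$. The single structural change is the one already highlighted in Subsection~\ref{Subsection:eps=0Dyn}: by Lemma~\ref{Lem:CritPoints} (equivalently \cite[Eq.~3.10]{GrGr15}) the $z$-line at a critical point $\ybar^*_c$ carries the eigenvalue $+1$, whereas on the front face the $E$-line carried $-1$; consequently it is the \emph{unstable} manifold $\mathcal{M}^u_{y_c}$ of $\ybar^*_c$, not the stable one, that parametrises the relevant geodesics. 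First I would pin down this correspondence: since $\eps=0$ on $M_+$, one deals with unit speed geodesics of the cuspidal metric $g_0$ issuing from $\overline{y}_c=(z=0,y_c)$ into $z>0$; lifting via the rescaling of Theorem~\ref{Thm:GeneralRescaling} with $\alpha=2k-1$, $\beta=1$, they become curves on $M^*_{+,\calH}$, and from $z'=z\xi+\er(z^{k+1})$ (equation~\eqref{eq:z'Leading} with $w=z$, $\xi\to1$) one gets $z(\tau)\to0$ exponentially as $\tau\to-\infty$, whereupon \eqref{eq:y'Leading}, \eqref{eq:theta'Leading} force $y\to y_c$, $\theta\to0$. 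So these geodesics are precisely the points of $\mathcal{M}^u_{y_c}$, and conversely. That \emph{every} geodesic from the cusp, not only those starting near a critical point, has such a backward limit is the $M_+$-analogue of Theorem~\ref{Thm:Asymptotics}, established in \cite{GrGr15}, and is what makes the critical points exhaust the picture.

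Next I would invoke hyperbolicity. As $S^+$ is Morse, the eigenvalues of $h^{-1}S^+_{yy}$ at $y_c$, call them $a_j$, are nonzero, so the linearisation of $\calV$ at $\ybar^*_c$ --- block triangular with eigenvalue $+1$ from the $z$-line together with the $2n-2$ eigenvalues $\mu_j^\pm=\tfrac12\bigl(-(2k-1)\pm\sqrt{(2k-1)^2-2a_j}\,\bigr)$ of the $(y,\theta)$-block, as in Lemma~\ref{Lem:CritPoints} and \cite[Lemma~4.1]{GrGr15} --- has no eigenvalue on the imaginary axis. The unstable manifold theorem (i.e.\ \cite[Theorem~9.4]{Teschl} applied to $-\calV$; cf.\ Proposition~\ref{Prop:StableMfd}) then gives that $\mathcal{M}^u_{y_c}$ is, near $\ybar^*_c$, an embedded submanifold of dimension equal to the number of eigenvalues with positive real part, approached exponentially as $\tau\to-\infty$.

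Finally I would run the dimension count, separately for maxima and minima, mirroring Proposition~\ref{Prop:ffGeod}. At a maximum $y_{\max}$ all $a_j<0$, so each pair $\mu_j^\pm$ contributes exactly one positive eigenvalue $\mu_j^+$; together with $+1$ this is $n=\dim M_+$ positive eigenvalues, so $\dim\mathcal{M}^u_{y_{\max}}=n$. Its tangent space at $\ybar^*_{\max}$ is spanned by the $+1$-eigenvector, which has nonzero $z$-component, and the $\mu_j^+$-eigenvectors, whose $y$-components are the eigenvectors $u_j$ of $h^{-1}S^+_{yy}$ and span $T_{y_c}Y$; hence $\pi\colon\mathcal{M}^u_{y_{\max}}\to M_+$ is a local diffeomorphism at $\ybar^*_{\max}$, and pushing the flow forward exhibits a neighbourhood of $\overline{y}_{\max}$ as foliated, away from $\overline{y}_{\max}$ itself, by the geodesics emanating from it. At a minimum $y_{\min}$ all $a_j>0$, so every $\mu_j^\pm$ has negative real part, the only positive eigenvalue is $+1$, and $\dim\mathcal{M}^u_{y_{\min}}=1$: a single integral curve of $\calV$, i.e.\ the unique geodesic from $\overline{y}_{\min}$.

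I expect the genuine obstacle to be the first step --- matching ``geodesics of the singular metric $g_0$ issuing from the cuspidal point'' with the smooth object $\mathcal{M}^u_{y_c}$, including the exhaustiveness claim that no such geodesic runs off to $|\theta|=\infty$ or fails to converge backward to a critical point. This is exactly the analysis of \cite{GrGr15}, so in our exposition the statement reduces to that reference together with the identification, recorded in Subsection~\ref{Subsection:eps=0Dyn} and Lemma~\ref{Lem:CritPoints}, of the $\eps=0$ restriction of our rescaled field with the one studied there (modulo the normalisation $S\leftrightarrow k(k-1)S$). A lesser point to check is that the coupling entries $-b^\sharp_+$ and $Q$ in the off-diagonal block, and the non-generic coincidence $\mu_j^+=1$ (i.e.\ $a_j=-4k$), do not disturb the spanning property used in the maximum case; this is handled by \cite[Lemma~4.1]{GrGr15} together with a generalised-eigenvector argument.
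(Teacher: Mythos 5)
The paper does not prove this proposition itself --- it states it as a citation to \cite[Proposition 4.2]{GrGr15}, with only a surrounding sketch in Subsection~\ref{Subsection:eps=0Dyn} (linearisation, positive eigenvalue on the $z$-line, comparison with the front-face case). Your reconstruction --- hyperbolicity of $\ybar^*_c$ from the Morse hypothesis, the unstable manifold theorem on $M^*_{+,\calH}$, the dimension count giving $n$ at a maximum and $1$ at a minimum, the projection $\pi\colon\mathcal{M}^u_{y_c}\to M_+$ being a local diffeomorphism at a maximum, and the explicit acknowledgement that the exhaustiveness and the backward-convergence-to-critical-points claim are the real content supplied by \cite{GrGr15} --- matches that sketch and correctly identifies where the proof genuinely resides.
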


The combination of Proposition \ref{Prop:ffGeod} and Proposition \ref{Prop:z=0Geod} in the case of a minimum $y_{\min}$ is illustrated in Figure \ref{Fig:ffGeod}. Geodesics on the front face near $y_{\min}$ all flow into $y_{\min}$, whereas there is only one exiting direction.

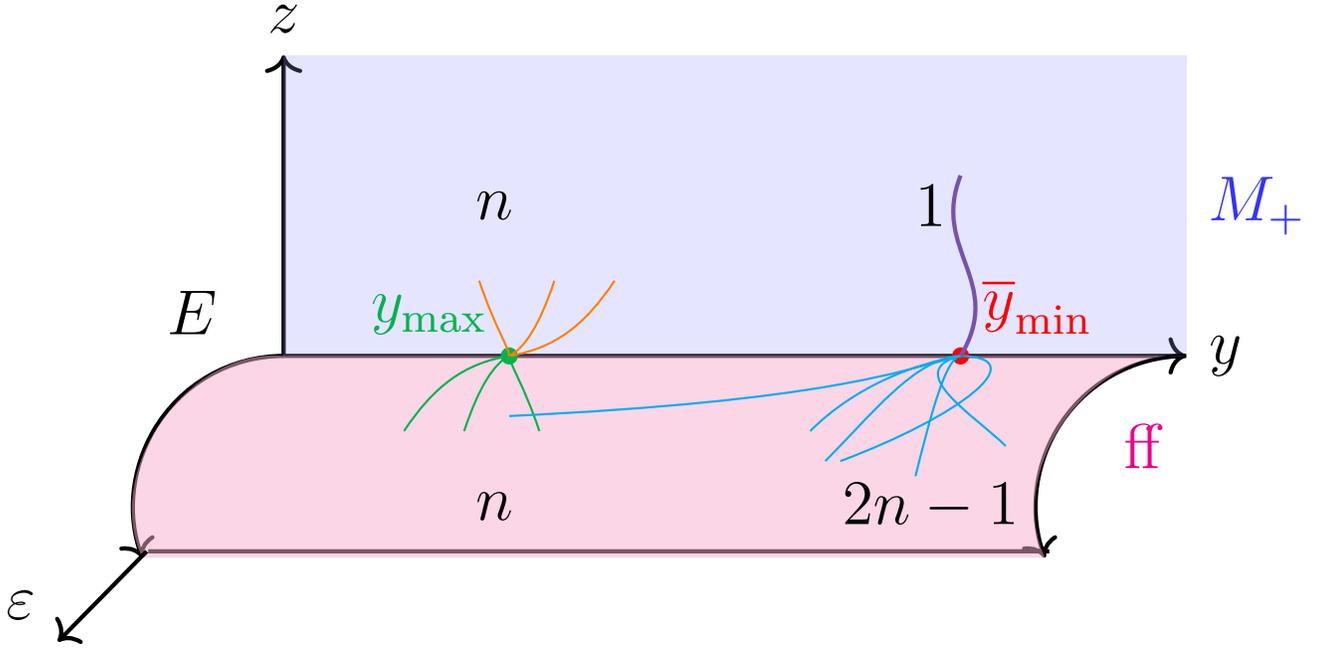
\begin{figure}
\scalebox{2}{
\begin{tikzpicture}
\draw[thick,->] (-3,0) coordinate (h1) -- (3,0) coordinate (h2) node[anchor=west] {$y$};
\draw[thick,->] (-3,0) -- (-3,2) node[anchor=south] {$z$};
\draw[thick,->] (-3,0) arc (90:200:1cm) coordinate (h3);
\fill[blue!40,nearly transparent] (-3,0) -- (3,0) -- (3,2) -- (-3,2) -- cycle;
\node[blue!80] at (3,1) [anchor=west] {$M_+$};
\node at (-3.3,0) [anchor=south east] {$E$};
\draw[thick,-] (-3.9,-1.3)--(2.09,-1.3);
\draw[thick,->] (3,0) arc (90:200:1cm) coordinate (h4);
\path[fill=magenta!40, opacity=.5] (h3) to  (h4) to [ bend left=60] (h2) to  (h1) to [ bend right=60] (h3); 
\node[magenta] at (3,-0.6) [anchor=east]  {$\ff$};
\draw[thick,->] (-3.91,-1.3)--(-4.5,-1.9) node[anchor=south east] {$\varepsilon$};
\filldraw[color=red](1.5,0) circle (0.05) node[anchor=south west] {$\overline{y}_{\min}$};
\draw[cyan] (0.5,-0.5) .. controls (0.8,-0.2) and (1.2,-0.05) .. (1.5,0);
\draw[cyan] (0.6,-0.7) .. controls (0.9,-0.4) and (1.2,-0.01) .. (1.5,0);
\draw[cyan] (0.7,-0.7) .. controls (1.5,-0.4) and (2.0,-0.01) .. (1.5,0);
\draw[cyan] (1.2,-0.8) .. controls (1.3,-0.4) and (1.4,-0.01) .. (1.5,0);
\draw[cyan] (1.8,-0.6) .. controls (1.6,-0.4) and (1.1,-0.1) .. (1.5,0);
\node at (1.3,-1) {$2n-1$};
\node at (-1.6,-1) {$n$};
\node at (1.3,1) {$1$};
\node at (-1.6,1) {$n$};
\draw[thick,royalpurple] (1.5,0) .. controls (1.8,0.5) and (1.3,0.7) .. (1.5,1.2);
\filldraw[color=green!70!blue](-1.5,0) circle (0.05) node[anchor=south east] {$y_{\max}$};
\draw[color=green!70!blue] (-2.2,-0.5) .. controls (-2.0,-0.2) and (-1.8,-0.05) .. (-1.5,0);
\draw[color=red!50!yellow] (-1.5,0) .. controls (-1.2,0.05) and (-1.0,0.2) .. (-0.8,0.5);
\draw[color=green!70!blue] (-1.8,-0.5) .. controls (-1.7,-0.2) and (-1.6,-0.05) .. (-1.5,0);
\draw[color=red!50!yellow] (-1.5,0) .. controls (-1.4,0.05)  and (-1.3,0.2) .. (-1.2,0.5);
\draw[color=green!70!blue] (-1.3,-0.5) .. controls (-1.4,-0.2) and (-1.5,-0.05) .. (-1.5,0);
\draw[color=red!50!yellow] (-1.5,0) .. controls (-1.5,0.05) and (-1.6,0.2)  .. (-1.7,0.5);
\draw[cyan] (-1.5,-0.4) .. controls (.6,-0.3) and (1.1,-0.1) .. (1.5,0);
\end{tikzpicture}
}
\caption{The blowup $X$. A maximum and a minimum of $S^+$ are shown.
Near the maximum $y_{\max}$, the geodesics flowing into/out of it foliate neighbourhoods in $\ff$ and $M_+$, respectively. Near the minimum $y_{\min}$, all nearly vertical geodesics on the front face flow into it, and there is a unique one leaving it in $M_+$ (the $yz$-plane in the picture). Note how these geodesics can intersect each other near $y_{\min}$, and do not yield a foliation. We also note how there are geodesics starting near the maximum which still flow into the minimum.
The lower numbers $n,2n-1$ refer to the dimensions of the corresponding stable manifolds in $\ff^*_{\calH}$, as computed in Proposition \ref{Prop:ffGeod}. The upper numbers $n,1$ refer to the dimensions of the unstable manifolds in $M_{+,\calH}^*$, as in Proposition \ref{Prop:z=0Geod}.}
\label{Fig:ffGeod}

\end{figure}

\subsection{Dynamics near the boundary}
\label{ssec:dyn near boundary}
\renewcommand{\calO}{\mathcal{U}}
We now use the results of the previous sections about the dynamics \textit{on} the boundary faces to draw conclusions about the dynamics \textit{near} the boundary, in the case where $S^+$ is a Morse function.
We will show that most geodesics which start at the waist $z=0$ almost vertically will be focussed for small $\eps$, meaning that they will run close to one of finitely many distinguished geodesics. We need some terminology to make this precise. Fix some height  $0<z_1\in I$ and consider the restrictions of $X^*_{\calH}$ to $\{z=z_1\}$ and the lift $\beta^*\{z=0\}$:
\[X^*_{\calH,z_1}\coloneqq X^*_{\calH} \cap \{ z=z_1\},\]
\[X^*_{\calH,0} \coloneqq  X^*_{\calH} \cap \{ Z=0\}.\]
See Figure \ref{Fig:PoincareMap}.
Points on $X^*_{\calH,0}$ may be written as triples $(\eps,y,\theta)$. 
Note that $X^*_{\calH,0}$ is a manifold with boundary, and the boundary is the \lq edge\rq\ 
$$\ff^*_{\calH,0} \coloneqq X^*_{\calH,0} \cap \ff^*_\calH$$ at $\eps=0$.
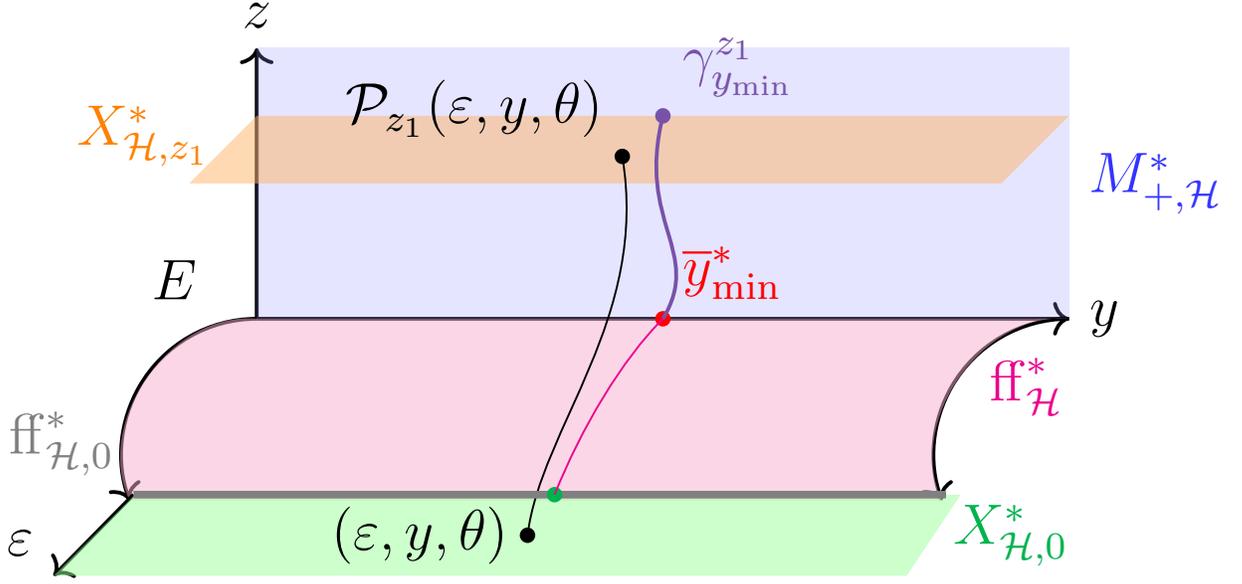
\begin{figure}
\scalebox{1.8}{
\begin{tikzpicture}
\draw[thick,->] (-3,0) coordinate (h1) -- (3,0) coordinate (h2) node[anchor=west] {$y$};
\draw[thick,->] (-3,0) -- (-3,2) node[anchor=south] {$z$};
\draw[thick,->] (-3,0) arc (90:200:1cm) coordinate (h3);
\fill[blue!40,nearly transparent] (-3,0) -- (3,0) -- (3,2) -- (-3,2) -- cycle;
\node[blue!80] at (3,1) [anchor=west] {$M_{+,\calH}^*$};
\node at (-3.3,0) [anchor=south east] {$E$};
\draw[thick,-] (-3.9,-1.3)--(2.09,-1.3);
\draw[thick,->] (3,0) arc (90:200:1cm) coordinate (h4);
\path[fill=magenta!40, opacity=.5] (h3) to  (h4) to [ bend left=60] (h2) to  (h1) to [ bend right=60] (h3); 
\node[magenta] at (3.1,-0.5) [anchor=east]  {$\ff^*_{\calH}$};
\draw[thick,->] (-3.91,-1.3)--(-4.5,-1.9) node[anchor=south east] {$\varepsilon$};
\filldraw[color=red](0,0) circle (0.05) node[anchor=south west] {$\overline{y}_{\min}^*$};
\fill[green!40,opacity=0.5] (-3.9,-1.3) -- (-4.5,-1.9) -- (1.8,-1.9) -- (2.2,-1.3) -- cycle;
\node[green!70!blue] at (2,-1.6) [anchor=west] {$X^*_{\calH,0}$};
\fill[orange!60,opacity=0.5] (-3,1.5) -- (3,1.5) -- (2.5,1.0) --  (-3.5,1.0) -- cycle;
\node[orange] at (-3.2,1.35) [anchor=east] {$X^*_{\calH,z_1}$};
\filldraw[color=black](-1,-1.6) circle (0.05) node[anchor=east] {$(\eps,y,\theta)$};
\draw[black] (-1,-1.6) .. controls (-0.9,-0.8) and (-0.1,0) .. (-0.3,1.2);
\filldraw[color=black](-0.3,1.2) circle (0.05) node[anchor=south east] {$\mathcal{P}_{z_1}(\eps,y,\theta)$};
\draw[thick,royalpurple] (0,0) .. controls (0.3,0.5) and (-0.2,0.7) .. (0,1.5);
\filldraw[color=royalpurple](0,1.5) circle (0.05) node[anchor=south west] {$\gamma_{y_{\min}}^{z_1}$};
\draw[ultra thick,gray] (-3.91,-1.3)--(2.09,-1.3);
\node[anchor=south east, gray] at (-3.91,-1.3) {$\ff^*_{\calH,0}$};
\filldraw[green!70!blue](-0.8,-1.3) circle (0.05);
\draw[magenta] (-0.8,-1.3) .. controls (-0.6,-0.8) and (-0.3,-0.3) .. (0,0);
\end{tikzpicture}
}
\caption{$X^*_{\calH}$ with the level hypersurfaces $X_{\calH,0}^*$, $X_{\calH,z_1}^*$. An $\eps>0$ its geodesic and its nearby boundary geodesics, the Poincar\'{e} map $\mathcal{P}_{z_1}$ and the point $\gamma_{y_{\min}}^{z_1}$ are also illustrated. The $\theta$-direction is suppressed.}
\label{Fig:PoincareMap}

\end{figure}

Consider the Poincar\'{e} map 
\[\mathcal{P}_{z_1} \colon X^*_{\calH,0} \setminus \ff^*_{\calH,0}\to X^*_{\calH,z_1},\]
sending an initial point $(\eps, y,\theta)$ to the intersection of the (image of the) associated geodesic with $X^*_{\calH,z_1}$. This intersection exists and is unique since $z$ is strictly increasing along the flow for all $z$ close to $0$; recall that we may assume the interval $I$ to be sufficiently small.
For any minimum $y_{\min}$ of $S^+$, we denote by $\gamma_{y_{\min}}$ the unique 
upward moving geodesic in $M_+^*$ starting in $\overline{y}_{\min}^*$ (see Proposition \ref{Prop:z=0Geod}), and by 
$\gamma_{y_{\min}}^{z_1}$ its intersection with $X^*_{\calH,z_1}$.  
  Let $d$ denote any smooth distance function\footnote{By a \textbf{smooth distance function $d$} we mean a Riemannian distance function of some smooth Riemannian metric.} on $X^*_{\calH,z_1}$ (they are all equivalent near $\eps=0$ by the compactness of $Y$). 

\begin{Thm}
\label{Thm:focussing}
There is $\varrho>0$ and for each minimum $\ymin$ of $S^+$ an open subset $\calO_\ymin$ of $\ff^*_{\calH,0}$ so that: 
\begin{enumerate}
 \item[(a)] For each compact subset $K\subset X^*_{\calH,0}$ satisfying $K\cap \, \ff^*_{\calH,0} \subset \calO_\ymin$ there is a constant $C$ so that
 for all $q=(\eps,y,\theta)\in K$
 \[ d(\calP_{z_1}(q), \gamma_\ymin^{z_1}) \leq C \eps^\varrho;\]
 \item[(b)] The union $\calO\coloneqq \displaystyle\bigcup_{\ymin} \calO_\ymin$ is dense in $\ff^*_{\calH,0}$.
\end{enumerate}
\end{Thm}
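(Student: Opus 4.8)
The plan is to transfer the qualitative dynamical picture established on the two boundary faces $\ff^*_\calH$ and $M^*_{+,\calH}$ to a quantitative statement about $\eps>0$ geodesics, using the stable/unstable manifold machinery together with smooth dependence on $\eps$ (Lemma \ref{Lem:GeneralwBound}, Proposition \ref{Prop:LimitingDynamicbeta=1}). The starting observation is that the rescaled vector field $\calV$ is smooth on ${}^{2k-1}X^*$ all the way to the corner $\ff^*_\calH\cap M^*_{+,\calH}$ and is tangent to all boundary faces, so an $\eps>0$ geodesic, once written in the rescaled coordinates $(Z \text{ or } E, y, \theta)$ and rescaled time $\tau$, is an orbit of a smooth flow that is a small perturbation (of size $\er(w)=\er(\eps)$ on compact $\tau$-intervals, by Lemma \ref{Lem:GeneralwBound}) of the boundary flow.

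\textbf{Construction of $\calO_\ymin$.} First I would fix a minimum $\ymin$ of $S^+$. By Proposition \ref{Prop:z=0Geod} the critical point $\ybar^*_\ymin$ on $M^*_{+,\calH}$ has a $2$-dimensional unstable manifold whose unstable direction transverse to $\{z=0\}$ has positive eigenvalue (the $z$-row of \eqref{eq:LinFlow}), and the unique geodesic $\gamma_\ymin$ leaving it; meanwhile on $\ff^*_\calH$, by Proposition \ref{Prop:ffGeod} and Proposition \ref{Prop:StableMfd}, the stable manifold $\mathcal{M}^s_\ymin$ is \emph{open} (codimension $0$, since $\mathrm{ind}_\ymin S^+=0$) and its closure meets the edge $\ff^*_{\calH,0}$ in an open set. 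I define $\calO_\ymin := \mathcal{M}^s_\ymin\cap \ff^*_{\calH,0}$, an open subset of the edge. By Theorem \ref{Thm:Asymptotics} every point of $\ff^*_{\calH,0}\setminus M^*_{-,\calH}$ lies in some $\mathcal{M}^s_{y_c}$, and the non-minimum stable manifolds have positive codimension (Corollary \ref{Cor:GeneralDirections}); hence the union of the $\calO_\ymin$ over all minima is the complement in $\ff^*_{\calH,0}$ of a finite union of positive-codimension immersed submanifolds (plus the analogous $S^-$-piece, which is irrelevant for upward geodesics near $\ff\setminus M_-$), and is therefore dense. This gives part (b).

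\textbf{The shadowing estimate (part (a)).} Fix a compact $K\subset X^*_{\calH,0}$ with $K\cap\ff^*_{\calH,0}\subset\calO_\ymin$. For a point $q_0\in K\cap\ff^*_{\calH,0}$, the front-face orbit through $q_0$ converges exponentially to $\ybar^*_\ymin$ as $\tau\to\infty$ (Proposition \ref{Prop:StableMfd}), enters a fixed neighbourhood $U$ of $\ybar^*_\ymin$ at some finite time $\tau_1=\tau_1(q_0)$ bounded uniformly on $K$, and the subsequent exit along $M_+^*$ follows $\gamma_\ymin$. For $\eps>0$ small, the $\eps$-orbit through $q=(\eps,y,\theta)\in K$ stays $\er(\eps)$-close to the front-face orbit through the corresponding boundary point on the compact $\tau$-interval $[0,\tau_1]$ (Gronwall plus smoothness of $\calV$, as in Proposition \ref{Prop:LimitingDynamicbeta=1}); so at time $\tau_1$ it lies within $C\eps$ of the local stable manifold of $\ybar^*_\ymin$, hence (by the local product structure / $\lambda$-lemma near the hyperbolic point) within $C'\eps^{\varrho_1}$ of the \emph{unstable} manifold after a further bounded time, where $\varrho_1$ is controlled by the ratio of the weakest contracting to the expanding rate among the eigenvalues $-1,\mu_j^\pm$ and the positive $z$-eigenvalue. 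Since the only unstable direction at $\ybar^*_\ymin$ in $X^*_{\calH,z_1}$-relevant variables is the one along $\gamma_\ymin$ (Proposition \ref{Prop:z=0Geod}), tracking forward until the orbit reaches $\{z=z_1\}$ — a further uniformly bounded $\tau$-time, since $z'>0$ is bounded below once $z$ is bounded away from $0$ — yields $d(\calP_{z_1}(q),\gamma_\ymin^{z_1})\le C\eps^\varrho$ for a suitable $\varrho>0$ depending only on the linearisation data. Choosing $\varrho$ as the infimum of the relevant spectral gap ratios over the finitely many minima makes it uniform.

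\textbf{Main obstacle.} The delicate point is the passage through the corner $\ff^*_\calH\cap M^*_{+,\calH}$: one is chaining a long (infinite-$\tau$, exponentially contracting) segment along $\ff^*_\calH$ with a segment along $M^*_{+,\calH}$ where the $z$-direction is \emph{expanding}, and one must control how an initial $\er(\eps)$ error in the $z$ (equivalently $E$) variable gets amplified. The clean way to handle this is a quantitative Hartman–Grobman / stable-manifold normal form near $\ybar^*_\ymin$ valid uniformly in $\eps$ (legitimate because $\calV$ depends smoothly on $\eps$ up to $\eps=0$ and the critical point persists as an $\eps$-family — indeed $E=0$ is invariant and the critical point sits at $z=E=0$), together with the observation that the unstable coordinate of the $\eps$-orbit is exactly of size $\approx\eps$ when it enters $U$, so its flow-out stays $\eps^\varrho$-close to $\gamma_\ymin^{z_1}$ for the bounded time needed to reach $z_1$; the exponent $\varrho$ is then forced by how long (in $\tau$) one must wait, which is $\er(\log(1/\eps))$, times the spectral gap. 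I would isolate this corner analysis as the technical heart of the argument and treat the two transition times ($\tau_1$ on $\ff^*_\calH$, and the time from $U$ to $\{z=z_1\}$) as the quantities that must be estimated uniformly on $K$.
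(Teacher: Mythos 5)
Your decomposition of the argument matches the paper's almost exactly: defining $\calO_\ymin$ as the stable-manifold intersection with $\ff^*_{\calH,0}$, deducing part (b) from the codimension count of non-minimum stable manifolds, and reducing part (a) to three Poincar\'e-map segments (along $\ff^*_\calH$, through the corner $\ff^*_\calH\cap M^*_{+,\calH}$, then along $M^*_{+,\calH}$), with the middle segment being the technical core. You have correctly isolated where the work is.

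Where you diverge is in the corner step, and this is the gap. You propose to handle it by a quantitative Hartman--Grobman / normal form / $\lambda$-lemma statement near the hyperbolic corner equilibrium, uniform in $\eps$. This is precisely the ``Dulac map'' route that the authors discuss in Remark \ref{Rem:Dulac}: they note that H\"older continuity of the Dulac map at a hyperbolic saddle with one-dimensional unstable manifold is folklore, would give a stronger result than Theorem \ref{Thm:focussing}(a), but that a usable reference is hard to find beyond low dimensions (\cite{Dulac} treats dimensions 2 and 3 only). The general theorems you cite do not give this for free: Hartman--Grobman produces only a $C^0$ conjugacy with no modulus of continuity, the $\lambda$-lemma is a qualitative $C^1$-convergence statement, and a $C^1$ normal form would require resonance conditions you have not checked. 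The paper sidesteps all of this with a short self-contained argument: choosing coordinates $(z,v)$ centred at $p^*=\ybar^*_\ymin$ with $\gamma_\ymin=\{v=0\}$, one writes $z'=A(z,v)z$, $v'=B(z,v)v$, picks $\varsigma>0$ with $A\le 2$ and $\ip{Bv}{v}\le-\varsigma|v|^2$ near $p^*$, and observes that $\chi(z,v)=z^\varsigma|v|^2$ is nonincreasing along orbits; since $z\approx\eps$ and $|v|=\er(1)$ on entering the neighbourhood, this gives $|v|\le C\eps^{\varsigma/2}$ on reaching $z=z_2$, so $\varrho=\varsigma/2$. That Lyapunov computation is the missing piece in your proposal.

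A minor inaccuracy in your sketch: you first say the orbit ends up $\eps^{\varrho_1}$-close to the unstable manifold ``after a further bounded time,'' but the transit time from $z\approx\eps$ to $z=z_2$ is $\er(\log(1/\eps))$, not bounded; you do correct this later when you attribute $\varrho$ to a spectral-gap ratio times a $\log(1/\eps)$ wait. Also, the phrase ``the critical point persists as an $\eps$-family'' is misleading: for $\eps>0$ the vector field $\calV$ has no critical points at all (it is a rescaled geodesic flow of a smooth metric), and the equilibrium exists only on the boundary corner $z=E=0$; what you actually need, and what the Lyapunov argument exploits, is that $\calV$ is smooth up to that corner and that $z=0$, $E=0$ are separately invariant.
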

In fact, the proof shows that the set in (b) is the complement of a finite union of submanifolds of $\ff^*_{\calH,0}$ of codimension at least $1$. The codimensions occurring correspond to the positive Morse indices of $S^+$.  The constants $C,\varrho$ can be chosen to be the same for all $z_1$ in any compact subset of $I\cap (0,\infty)$. 

Before proving Theorem \ref{Thm:focussing}, we give a version in terms of sequences of geodesics on $(M,g_\eps)$.  Let $\gamma_{\eps}$ be a family of unit speed geodesics starting in $(z=0,y_{\eps}(0),\theta_{\eps}(0))$, where $(y_{\eps}(0),\theta_{\eps}(0)) \in T^*Y$. Assume the initial values converge,
\[\lim_{\eps\to 0} (y_{\eps}(0),\theta_{\eps}(0))=(y_0,\theta_0).\] 
Let 
\[\gamma_{\eps}^{z_1}\coloneqq \mathcal{P}_{z_1}(\eps,y_{\eps},\theta_{\eps})\]
denote the intersection of $\gamma_{\eps}$ with the level set $X^*_{\calH,z_1}$ as above.
\begin{Cor}
\label{Cor:Focussing}
There is an open, dense subset $\mathcal{U}\subset T^*Y$ and constants $C,\varrho>0$ such that if $(y_0,\theta_0)\in \mathcal{U}$, then there is a minimum $y_{\min}$ of $S^+$ such that, for any family $\gamma_\eps$ of geodesics as above,
\begin{equation}
d(\gamma_{\eps}^{z_1},\gamma_{y_{\min}}^{z_1})\leq C \eps^{\varrho}.
\end{equation}
\end{Cor}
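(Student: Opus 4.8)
The strategy is to deduce the Corollary directly from Theorem \ref{Thm:focussing} by a compactness argument. First I would set $\mathcal{U}$ to be the preimage of $\calO$ under the natural identification: a limit $(y_0,\theta_0) \in T^*Y$ corresponds via \eqref{eqn:ffstar diffeo} to a point $(\ybar,\theta_0)$ of $\ff^*_{\calH}$ lying on the edge $\ff^*_{\calH,0}$ (since $Z=0$ there). More precisely, I would define $\mathcal{U} \subset T^*Y$ to be the set of $(y_0,\theta_0)$ whose image point in $\ff^*_{\calH,0}$ lies in $\calO = \bigcup_{\ymin}\calO_\ymin$. Since $\calO$ is open and dense in $\ff^*_{\calH,0}$ by Theorem \ref{Thm:focussing}(b), and the identification is a diffeomorphism, $\mathcal{U}$ is open and dense in $T^*Y$. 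If $(y_0,\theta_0)\in\mathcal{U}$, it lies in $\calO_{\ymin}$ for some minimum $\ymin$ of $S^+$, and this is the minimum whose geodesic $\gamma_\ymin$ the Corollary will compare to.

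Next I would produce the compact set $K$ to which Theorem \ref{Thm:focussing}(a) applies. Given a family $\gamma_\eps$ with $(y_\eps(0),\theta_\eps(0)) \to (y_0,\theta_0)$, the points $q_\eps = (\eps, y_\eps(0),\theta_\eps(0))$ form, together with their limit $q_0 = (0,y_0,\theta_0) \in \ff^*_{\calH,0}$, a compact subset $K = \{q_\eps : \eps \in [0,\eps_1]\} \cup \{q_0\}$ of $X^*_{\calH,0}$ for $\eps_1$ small enough (using that $X^*_{\calH,0}$ is a manifold with boundary, with the sequence converging into the interior-or-boundary). By construction $K \cap \ff^*_{\calH,0} = \{q_0\} \subset \calO_\ymin$. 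Theorem \ref{Thm:focussing}(a) then yields a constant $C$ with $d(\calP_{z_1}(q_\eps), \gamma_\ymin^{z_1}) \leq C\eps^\varrho$ for all $\eps \le \eps_1$; since $\calP_{z_1}(q_\eps) = \gamma_\eps^{z_1}$ by definition of $\gamma_\eps^{z_1}$, this is exactly the claimed estimate. Shrinking $C$ if necessary (or absorbing the finitely many $\eps \in (\eps_1,1)$ into the constant, using that $d(\gamma_\eps^{z_1},\gamma_\ymin^{z_1})$ is bounded there) gives the bound for all $\eps$.

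One subtlety I would address carefully: the Corollary asserts a single open dense $\mathcal{U}$ and uniform constants $C,\varrho$ working for all families simultaneously. The exponent $\varrho$ is already uniform (it comes from Theorem \ref{Thm:focussing}). For $C$, the point is that Theorem \ref{Thm:focussing}(a) gives, for each minimum $\ymin$ and each compact $K$ with $K\cap\ff^*_{\calH,0}\subset\calO_\ymin$, a constant depending a priori on $K$; but an inspection shows one may instead fix, for each $\ymin$, a single compact neighbourhood (in $X^*_{\calH,0}$) of a slightly shrunk $\calO_\ymin'$ with $\overline{\calO_\ymin'}\subset\calO_\ymin$ relatively compact, redefine $\mathcal{U}$ using the $\calO_\ymin'$, and take $C = \max_\ymin C_{\ymin}$. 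The only genuine work is checking that $\calO_\ymin'$ can still be chosen so that $\bigcup_\ymin \calO_\ymin'$ is dense — which follows since each $\calO_\ymin$ is open and the union is dense, so a nested-exhaustion choice retains density. The main obstacle is thus not analytic but bookkeeping: matching the "compact-set" formulation of Theorem \ref{Thm:focussing}(a) to the "pointwise-limit of a sequence" formulation here, and ensuring the constants can be made uniform over all admissible families. Everything else is immediate from Theorem \ref{Thm:focussing} and the identifications \eqref{eqn:ffstar diffeo}, \eqref{eqn:ybar notation}.
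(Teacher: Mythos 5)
Your proof is correct and is clearly the intended argument; the paper in fact supplies no explicit proof of this Corollary (it is stated between Theorem \ref{Thm:focussing} and its proof, with only a remark about relating $\theta_\eps(0)$ to $\dot y_\eps(0)$), so there is no competing account to compare against. Taking $\mathcal{U}$ to be the image of $\calO$ under the identification $\ff^*_{\calH,0}\cong T^*Y$ from \eqref{eqn:ffstar diffeo}, forming the compact set $K$ from the tail of the sequence together with its limit point $q_0$, and invoking Theorem \ref{Thm:focussing}(a) is exactly what is needed.

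On your careful point about uniformity of $C$: you are right that the constant produced by Theorem \ref{Thm:focussing}(a) depends on $K$, and hence — even after shrinking to $\calO_\ymin'$ with $\overline{\calO_\ymin'}\subset\calO_\ymin$ compact — the range of $\eps$ for which $q_\eps$ lands in $K$ still depends on the rate at which $(y_\eps(0),\theta_\eps(0))\to(y_0,\theta_0)$, so the absorption of the remaining $\eps$-range introduces a family-dependent constant. This is not a gap in your argument; it reflects a mild imprecision in the way the Corollary places the quantifier on $C$ before the quantifier on the family. Under the standard reading that $C$ may depend on the family (and in particular on $(y_0,\theta_0)$), the proof is complete after your second paragraph, and the shrinking step is not needed. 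Only if $C$ must literally be uniform over all admissible families would the issue you raise be genuinely unresolvable, and in that case the statement itself — not the proof — would need adjustment.
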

Note that by \eqref{eq:y'Leading}, $\theta_\eps(0)$ is related to the initial horizontal velocity $\ydot_\eps(0)$ via $\eps \ydot_\eps(0) = \theta_\eps(0)^\sharp + \mathcal{O}(\eps)$. This yields the formulation of the Focussing Theorem in the introduction.

\begin{proof}[Proof of Theorem \ref{Thm:focussing}]
Consider the flow in $\ff^*_\calH$. 
We define $\calO_\ymin$ to be the intersection of the stable manifold of $\overline{y}_{\min}^*$ with $\ff^*_{\calH,0} = \ff^*_\calH\cap X^*_{\calH,0}$. By  Proposition \ref{Prop:Discrete}
(or its proof) this is open, and the complement of the union in (b) is a finite union of  submanifolds of codimension at least $1$. This proves (b).

To prove (a) we first consider a neighbourhood of $p^*\coloneqq \overline{y}_{\min}^*$. Since $\gamma_\ymin$ is transversal to $\ff^*_\calH$ we may choose coordinates $v$ replacing $(E,y,\theta)$ so that
$(z,v)$ are local coordinates centred at $p^*$ and so that $\gamma_\ymin$ is given by $v=0$ locally. Since the rescaled geodesic vector field $\mathcal{V}$ is tangential to $\gamma_\ymin$ this implies that the $v$-component of $\mathcal{V}$ vanishes at $v=0$. Also, since $\mathcal{V}$ is tangential to $\ff^*_{\calH}=\{z=0\}$ its $z$-component vanishes at $z=0$. Therefore, we may write the flow equations as
 $$ z' = A(z,v)\,z\,,\quad v' = B(z,v)\,v$$
where $A$ and $B$ are a scalar resp.\ $m\times m$-matrix-valued function depending smoothly on $(z,v)$ (where $m=\dim\ff^*_\calH=2n-1$). Also,
$A(0,0)=1$ by \eqref{eq:LinFlow}, and all eigenvalues of $B(0,0)$ have negative real parts by Proposition \ref{Prop:StableMfd}. 
This implies that there is an open neighbourhood $U_{p^*}$ of $p^*$ in $X^*_{\calH}$, a scalar product $\langle\cdot,\cdot\rangle$ in $\R^{m}$ with induced norm $\vert\cdot\vert$, and a constant $\varsigma>0$ 
so that  for all $(z,v)\in U_{p^*}$ we have
\begin{equation}
\label{eqn:AB estimates}
 A(z,v) \leq 2\,,\quad \ip{B(z,v)v}{v} \leq - \varsigma |v|^2\,.
\end{equation}

Now consider the function
\[ \chi(z,v) \coloneqq z^{\varsigma} |v|^2\,. \]
Along an integral curve we have
\begin{align*}
\chi' &= \varsigma z^{\varsigma-1} z' |v|^2 + 2 z^{\varsigma}\langle v',v\rangle \\
&= z^{\varsigma} \left( \varsigma A |v|^2 +  2\ip{Bv}{v} \right)
\end{align*}
and this is $\leq 0$ on $U_{p^*}$ by \eqref{eqn:AB estimates}.

Therefore, $\chi$ is decreasing along integral curves that run inside $U_{p^*}$.
Choosing $\varrho\coloneqq \varsigma/2$, this implies for such an integral curve $\gamma$ and for any fixed small $z_2>0$:

\begin{equation}
\parbox{\dimexpr\linewidth-4em}{%
    \strut
    If $\gamma$ starts within an $\mathcal{O}(\eps)$ neighborhood of $\ff^*_\calH$ (i.e.\ with $z=\mathcal{O}(\eps)$) and with $v$ bounded then it will arrive at $z=z_2$ within an $\mathcal{O}(\eps^\varrho)$ neighborhood of $\gamma_\ymin^{z_2}$.
    \strut
  }
\label{eq:FocussingStatement}
\end{equation}
 Here and below all statements about $\eps$ are meant for all sufficiently small $\eps$.
By shrinking $U_{p^*}$ we may assume that the following holds for all sufficiently small $E_0>0$ and some fixed $z_2>0$: 
all integral curves $\gamma$ starting in $U_{p^*}$ at $E=E_0$ remain in $U_{p^*}$ until they hit $z=z_2$.

\medskip

The idea of the proof of (a) now is to write the Poincaré map $\mathcal{P}_{z_1}$ as the composition of three maps (see Figure \ref{Fig:PoincareMapProof}), where we write $X^*_{\calH,E=E_0} \coloneqq X^*_\calH \cap \{E=E_0\}$:
\[
\mathcal{P}^\ff\colon X^*_{\calH,0} \to X^*_{\calH,E=E_0}\,,\quad \mathcal{P}_\trans\colon X^*_{\calH,E=E_0} \setminus \ff^*_\calH \to X^*_{\calH,z_2}\setminus M^*_+\,,\quad
\mathcal{P}_{z_2,z_1}\colon X^*_{\calH,z_2} \to X^*_{\calH,z_1} 
\]
suitably localized. 
Here $\mathcal{P}^{\ff}$, $\mathcal{P}_{z_2,z_1}$ are the  Poincar\'{e} maps near the boundary hypersurfaces $\ff^*_\calH$ and $M^*_{\calH,+}$ and away from their intersection, see Figure \ref{Fig:PoincareMapProof}. 
\begin{figure}
\scalebox{1.6}{
\begin{tikzpicture}
\draw[thick,->] (-3,0) coordinate (h1) -- (3,0) coordinate (h2) node[anchor=west] {$y$};
\draw[thick,->] (-3,0) -- (-3,2) node[anchor=south] {$z$};
\draw[thick,->] (-3,0) arc (90:200:1cm) coordinate (h3);
\fill[blue!40,nearly transparent] (-3,0) -- (3,0) -- (3,2) -- (-3,2) -- cycle;
\node[blue!80] at (3,1) [anchor=west] {$M_{+,\calH}^*$};
\draw[thick,-] (-3.9,-1.3)--(2.09,-1.3);
\draw[thick,->] (3,0) arc (90:200:1cm) coordinate (h4);
\path[fill=magenta!40, opacity=.5] (h3) to  (h4) to [ bend left=60] (h2) to  (h1) to [ bend right=60] (h3); 
\node[magenta] at (3,-0.6) [anchor=east]  {$\ff^*_{\calH}$};
\draw[thick,->] (-3.91,-1.3)--(-4.5,-1.9) node[anchor=south east] {$\varepsilon$};
\fill[green!40,opacity=0.5] (-3.9,-1.3) -- (-4.5,-1.9) -- (1.8,-1.9) -- (2.2,-1.3) -- cycle;
\fill[green!50,opacity=0.8] (-3.9,-0.5) -- (2.2,-0.5) -- (2,-0.1) -- (-4.3,-0.1) -- cycle;
\node[green!70!blue] at (2,-1.6) [anchor=west] {$X^*_{\calH,0}$};
\node[green!60!blue] at (-4.5,-0.1) [anchor=east] {$X^*_{\calH,E=E_0}$};
\filldraw[green!70!blue](-0.8,-1.3) circle (0.05);
\draw[magenta] (-0.8,-1.3) .. controls (-0.6,-0.8) and (-0.3,-0.3) .. (0,0);
\fill[orange!60,opacity=0.5] (-3,1.5) -- (3,1.5) -- (2.5,1.0) --  (-3.5,1.0) -- cycle;
\node[orange] at (-3.2,1.35) [anchor=east] {$X^*_{\calH,z_1}$};
\fill[yellow!60,opacity=0.8] (-3,0.6) -- (3,0.6) -- (2.5,0.1) --  (-3.5,0.1) -- cycle;
\node[yellow!75!black] at (-3.3,0.25) [anchor=east] {$X^*_{\calH,z_2}$};
\filldraw[color=black](-1,-1.6) circle (0.05) node[anchor=east] {$(\eps,y,\theta)$};
\draw[black,->] (-1,-1.6) to [out=100,in=-120] (-0.58,-0.43);
\draw[black,->] (-0.58,-0.43) to [out=60,in=-100] (-0.33,0.25);
\draw[black,->] (-0.33,0.25) to [out=100,in=-90] (-0.3,1.15);
\filldraw[color=black](-0.3,1.2) circle (0.05);
\node[anchor=south east]  at (-0.3,0.5) {$\mathcal{P}_{z_2,z_1}$};
\filldraw[color=black](-0.33,0.3) circle (0.05) node[anchor=north east] {$\mathcal{P}_{\mathrm{trans}}$};
\filldraw[color=black](-0.56,-0.4) circle (0.05);
\node[anchor=north east] at (-0.8,-0.5) {$\mathcal{P}^{\ff}$};
\draw[thick,royalpurple] (0,0) .. controls (0.3,0.5) and (-0.2,0.7) .. (0,1.5);
\filldraw[color=royalpurple](0,1.5) circle (0.05) node[anchor=south west] {$\gamma_{y_{\min}}^{z_1}$};
\filldraw[color=red](0,0) circle (0.05) node[anchor=south west] {$\overline{y}_{\min}^*$};
\end{tikzpicture}
}
\caption{The same setting as Figure \ref{Fig:PoincareMap}, with the intermediate level sets $X^*_{\calH,E=E_0}$ and $X^*_{\calH,z=z_2}$, and corresponding Poincar\'{e} maps added. The black dots along the geodesic starting in $(\varepsilon,y,\theta)$ are the successive applications of the Poincar\'{e} maps: $(\eps,y,\theta)$, $\mathcal{P}^{\ff}(\eps,y,\theta)$  and $\mathcal{P}_{\mathrm{trans}}\circ \mathcal{P}^{\ff}(\eps,y,\theta)$ respectively.} 
\label{Fig:PoincareMapProof}

\end{figure}
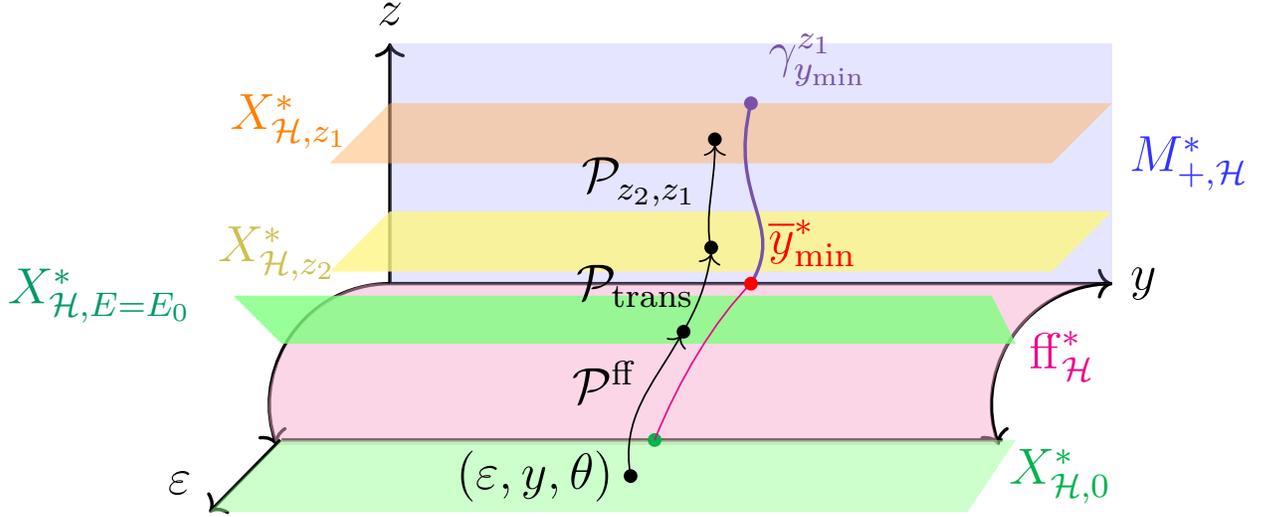
These maps are smooth\footnote{They are well-defined since $E$ is  decreasing (away from $M^*_{\calH,+}$) and $z$ is increasing (away from $\ff^*_\calH$) along the flow, with derivatives bounded away from zero.}, while $\mathcal{P}_\trans$ is the Poincaré map near the edge $\ff^*_\calH\cap M^*_{\calH,+}$, and when localised near $p^*$ is responsible for the focussing by
\eqref{eq:FocussingStatement}.\footnote{Note that $\mathcal{P}_\trans$ is not defined at the boundary $\ff^*_\calH$ since flow lines starting in $\ff^*_\calH$ remain there, hence never reach $z=z_2$. See also Remark \ref{Rem:Dulac} below.}
 Furthermore, $E_0$ and $z_2$ are chosen sufficiently small so the hypothesis of \eqref{eq:FocussingStatement} holds.

More precisely, (a) follows from the following observations. Let $\calO_\ymin$ be as defined above and $K$ as in the theorem. Let $U_{p^*,0} = U_{p^*}\cap \ff^*_\calH$.
\begin{itemize}
\item
 The map $\mathcal{P}^\ff$ sends $K_0\coloneqq K\cap \ff^*_\calH$ into $U_{p^*,0} \cap X^*_{\calH,E=E_0}$, for $E_0>0$ sufficiently small, because $K_0$ is contained in the stable manifold of $p^*$.
\item
Therefore, since $\mathcal{P}^\ff$ is smooth and $K_0$ is compact, it sends an $\eps$-neighbourhood of $K_0$ in $K$ to an $\mathcal{O}(\eps)$-neighbourhood $K'$ of $U_{p^*,0} \cap X^*_{\calH,E=E_0}$ contained in $U_{p^*}$, for all sufficiently small $\eps$.
\item
By \eqref{eq:FocussingStatement}, $\mathcal{P}_\trans$ sends $K'$ to an $\mathcal{O}(\eps^\varrho)$ neighbourhood of $\gamma_\ymin^{z_2}$.
\item
Since $\mathcal{P}_{z_2,z_1}$ is smooth it sends this $\mathcal{O}(\eps^\varrho)$ neighbourhood of $\gamma_\ymin^{z_2}$ to an $\mathcal{O}(\eps^\varrho)$ neighbourhood of $\gamma_\ymin^{z_1}$.
\end{itemize}
This proves (a) for some some $\eps_0$-neighbourhood $K'$ of $K_0$ in $K$, which of course suffices.
\end{proof}
\begin{Rem} \label{Rem:Dulac}
As we remarked above, the map $\mathcal{P}_\trans$ is not defined at the boundary $\ff^*_\calH$. However, near $p^*$ it can be extended  to $U_{p^*}\cap\ff^*_\calH$, by sending any point of this set to $\gamma_\ymin^{z_2}$, and this extension is continuous. The extended map is often called the \textbf{Dulac map} for the hyperbolic critical point $p^*$  in the dynamical systems literature. It is defined if the unstable manifold of $p^*$ has dimension one, as is the case here. It
seems to be folklore that the Dulac map is Hölder continuous, and this would be a stronger statement than our Theorem \ref{Thm:focussing}(a) (it would also give an estimate for the distance between two geodesics with small positive epsilons, rather than only comparing one with $\eps>0$ to the boundary map at $\eps=0$). However, we could not find a reference for such a result (but see \cite{Dulac} in 2 and 3 dimensions), so we decided to include our own proof for the weaker statement that suffices for our purposes.
\end{Rem}
We illustrate Theorem \ref{Thm:focussing} in the case of our Leitmotiv, the surfaces of \eqref{eqn:example}, 
\begin{equation}
\label{eqn:RepeatExample}
M_\eps=\left\{u^2 +\frac{v^2}{1-\delta^2} = z^{2k}+\eps^{2k}\right\}\subset \R^3.
\end{equation}
We show in Appendix \ref{Section:GeneratingExamples} that the induced metric from $\R^3$ fits into our framework and $S^\pm=k(k-1) \vert(u,v)\vert^2$, where $\vert(u,v)\vert$ is the distance from $(u,v,z)\in M_{\eps}$ to the $z$-axis. When the eccentricity does not vanish, $\delta\neq 0$, this is a Morse function with minima and maxima at the intersections of the ellipse with the minor and major axis respectively. Here is what we know and do not know about the 'focussing set' $\mathcal{U}$ in this case. By Theorem \ref{Thm:focussing}, $\mathcal{U}$ is a dense subset of $\ff^*_{\calH,0}$, and its complement is 1-dimensional. Let $\pm y_{\max}=(0,\pm 1)\in \S^1$. Then $(\pm y_{\max},\theta=0)\not \in \calO$, i.e. vertically starting geodesics starting at the major axis do not focus. These geodesics are shown as dashed lines in Figure \ref{fig:EllipsePlots}. We do not in general know if all other vertically starting geodesics focus:
\begin{Quest}
\label{FocussingQuestion}
For the example surface $M_{\eps}$, is $(y,\theta=0)\in \mathcal{U}$ for all $y\neq \pm y_{\max}$?
\end{Quest}
By Remark \ref{Rem:ReachingMax}, the answer would be 'yes' if $S_{\vert \ff}$ were $Z$-independent. We can compute this function explicitly for many examples, including $M_{\eps}$, see \eqref{eq:Sonff} below. The result is $S_{\vert\ff}(Z,y)=a(Z)S^{+}(y)$ for some explicit function $a$ depending on $k$ and $p$ (for $w=\sqrt[p]{\eps^p+\vert z\vert^p}$). The surface $M_{\eps}$ corresponds to $p=2k$, and \eqref{eq:Sonff} tells us that $a$ is not constant for this value. It is not even monotone. The function $a$ is constant for $p=k=2$, i.e. for the surface
\begin{equation}
M_\eps'\coloneqq \left\{u^2 +\frac{v^2}{1-\delta^2} = \left(z^{2}+\eps^{2}\right)^2\right\}\subset \R^3,
\label{eq:M'}
\end{equation}
so here the answer to the question is 'yes'. The argument of Theorem \ref{Thm:Asymptotics} which gives rise to Remark \ref{Rem:ReachingMax} relies on the fact that $\mathcal{G}=S_{\vert \ff}+\vert \theta\vert^2$ is a decreasing function when $a$ is constant. If $a$ were increasing, which it is for $p\leq 2k-2$ but not for $p=2k$, one could modify the arguments of Theorem \ref{Thm:Asymptotics} to show that the answer to Question \ref{FocussingQuestion} would be 'yes'. 
 We do not know if our inability to answer the above question is just a shortcoming of our methods, or if geodesics starting vertically very close to $\pm y_{\max}$ can indeed avoid being focussed.




Figure \ref{fig:EllipsePlots} shows \eqref{eqn:RepeatExample} for $\eps=1, \eps=0.2,$ and $\eps=0.1$. In each case, 10 geodesics starting at equidistributed points along $z=0$ are shown. All start with $\theta(0)=0$. One notices how they accumulate near two geodesics $v=0$  as $\eps\to 0$, corresponding to the minima of $S^+$. Figure \ref{fig:CirclePlot} shows how the same initial values do not produce any focussing when the eccentricity vanishes, $\delta=0$, i.e. when the cross section is a circle. In this case, $S^\pm$ is constant.

\begin{Rem}
\label{Rem:explan eps to 0}
Among the main results of \cite{GrGr15} are the following statements: for a cuspidal singularity (i.e.\ spaces and metrics as in \eqref{eq:MetricAnsatz} with $\eps=0$) geodesics emanating upwards from the singularity $z=0$ must do so from critical points of $S^+$; if $S^+$ is Morse then a unique geodesic emanates from each minimum from $S^+$, while an open dense set emanates from all other critical points combined; and if, in addition, $n=2$ then these geodesics combine to define an exponential map based at the singularity, which is a local homeomorphism if the Hessian of $S^+$ at each minimum has sufficiently small eigenvalues.

Some of these results may be understood in terms of our Focussing Theorem \ref{Thm:focussing}, as we now explain in the case of Example \eqref{eqn:example} with $\delta\neq0$, under the assumption that the answer to Question \ref{FocussingQuestion} is 'yes'.

Let $M^z_\eps$ be the cross section of $M_\eps$ at height $z$.
For each $\eps>0$ consider the projection map $P_\eps$ from $M_\eps^1$ to the waist $M_\eps^0$, defined by letting $P_\eps(p)$ be the point on $M_\eps^0$ closest to $p\in M_\eps^1$. 
This point is unique: if $q$ is any closest point then the minimising geodesic from $p$ to $q$ must hit the waist perpendicularly. So if $q$, $q'$ are two closest points then  the geodesic triangle formed by $p,q,q'$ (recall that the waist is totally geodesic) would have sum of interior angles at least $2\pi$, which is impossible since  $M_\eps$ has negative Gauß curvature everywhere (outside the waist), as one can see by a direct computation. 

Parametrising $M_\eps$ by $\R\times \S^1$ as before, we can write $P_\eps(1,y) = (0,\Ptilde_\eps(y))$ for a map $\Ptilde_\eps:\S^1\to \S^1$. Clearly, $\Ptilde_\eps$ has the two minima of $S^+$ as fixed points. 
Now the Focussing Theorem \ref{Thm:focussing}, combined with the assumption\footnote{Which is true at least for $M_{\eps}'$ defined in \eqref{eq:M'}.} $(y,\theta=0)$ lies in the focussing set for all non-maxima points $y$ of $S^+$,
implies that $\Ptilde_\eps(y)$ converges to one of the maxima of $S^+$ as $\eps\to0$, for each non-minimum $y$ of $S^+$.

Therefore, $\Ptilde_\eps$ converges pointwise to the discontinuous map $\Ptilde_0$ which sends the minima to themselves, the left half to the left maximum and the right half to the right maximum. 

In summary and put differently, the exponential map for $M_0$ based at the singularity  is the limit of the exponential maps for $M_\eps$ based at the waist (with geodesics leaving the waist orthogonally), as $\eps\to0$, and the focussing implies its peculiar behaviour with respect to maxima and minima of $S^+$.
\end{Rem}

\subsection{The case of constant $S^\pm$}
We have mainly considered the case of $S^\pm$ being Morse. One can also say something when $S^\pm$ is constant.

Here is the analogue of Theorem \ref{Thm:Asymptotics}. 
\begin{Thm}
\label{Thm:SconstAsymptotics}
Assume $S^+$ is constant.

Let $\sigma=(Z,y,\theta)$ be a solution of the front face dynamics \eqref{eq:Z'ff}, \eqref{eq:y'ff}, \eqref{eq:theta'ff} (and $\xi=1$) with arbitrary initial value in $\ff^*_{\calH}\setminus M^*_{\calH,-}$. 
 
 Then $\sigma$ converges to $(E=0,y_0,0)$ as $\tau\to \infty$ for some $y_0$. The convergence is exponential.

\end{Thm}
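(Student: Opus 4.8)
The plan is to mimic the proof of Theorem \ref{Thm:Asymptotics}, but now exploiting that $S^+$ constant makes the situation much cleaner, so that no Lyapunov-function-plus-$L^2$ argument is needed and genuine (exponential) convergence of $y$ itself can be extracted. First I would note, exactly as in Theorem \ref{Thm:Asymptotics}, that $Z'=f(Z)\geq 1$ forces $Z(\tau)\to\infty$ (equivalently $E\to 0$) as $\tau\to\infty$, and moreover $f'(Z)\to 1$, so $f'(Z)\geq \tfrac12$ for $\tau$ large. Next, since $S^+$ is constant we have $\partial_y S=0$ \emph{on} $\ff\cap M_+$; but on the front face away from $M_+$ the relevant quantity is $\partial_y S(Z,y)$, which need not vanish. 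However, by the smoothness of $S$ on the blow-up and the assumption \eqref{eqn:S assumption} ($\partial_E S=0$ at $\ff\cap M_+$), together with $\partial_y S^+\equiv 0$, we get $\partial_y S(Z,y)=\er(Z^{-2})$ as $Z\to\infty$ (the same estimate used at the end of the proof of Theorem \ref{Thm:Asymptotics}: $\partial_Z S=\er(Z^{-2})$, and the $y$-derivative of the constant limit vanishes to the appropriate order). So along $\sigma$ the inhomogeneous term in the $\theta$-equation \eqref{eq:theta'ff} is $\er(Z^{-2})$, hence integrable in $\tau$ since $Z(\tau)\gtrsim e^{\tau}$.

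The core step is then to analyse the $(y,\theta)$-subsystem. From \eqref{eq:ffNormthetaEvol} with $\partial_y S=\er(Z^{-2})$ we get
\[
\frac{d}{d\tau}\vert\theta\vert^2 = -2(2k-1)f'(Z)\vert\theta\vert^2 + \er(Z^{-2})\vert\theta\vert,
\]
and since $f'(Z)\geq\tfrac12$ eventually, a Grönwall/comparison argument gives $\vert\theta(\tau)\vert \leq C e^{-(2k-1)\tau/2} + \er(e^{-2\tau})$, so $\theta\to 0$ exponentially. (Alternatively one runs the linearised eigenvalue computation of Lemma \ref{Lem:CritPoints}: with $S^+_{yy}=0$ the eigenvalues are $-1$ and $\mu_j^+=0$, $\mu_j^-=-(2k-1)$; the zero eigenvalue is exactly the direction $y$ moves along, which is why $y$ converges but to a non-canonical point.) Having $\vert\theta\vert$ exponentially small and $y'=\theta^\sharp + \er(\text{stuff that is } \er(w))$ — on the front face precisely $y'=\theta^\sharp$ by \eqref{eq:y'ff} — we integrate: $\vert y(\tau_2)-y(\tau_1)\vert \leq \int_{\tau_1}^{\tau_2}\vert\theta\vert\,d\tau \to 0$, so $y(\tau)$ is Cauchy and converges to some $y_0$, exponentially fast. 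Combined with $E\to 0$ and $\theta\to 0$ this gives $\sigma(\tau)\to(E=0,y_0,0)$ with exponential rate, which is the claim.

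The step I expect to be the genuine obstacle — everything else is routine once $S^+$ is constant — is establishing the decay estimate $\partial_y S(Z,y)=\er(Z^{-2})$ (or more generally some integrable-in-$\tau$ decay) in a way that is clean and does not secretly require assumption \eqref{eqn:S assumption} to do more work than it can; one must be careful that "$S^+$ constant" only controls the restriction to the corner and the transversal derivative there, and the interior front-face values $S(Z,y)$ could a priori have a $y$-dependent piece decaying only like $Z^{-1}$, which is \emph{not} integrable against $Z'\,d\tau$ in the $\vert\theta\vert^2$ balance but, crucially, \emph{is} still fine in the $\theta$-equation because there it is multiplied by $z'=\er(w)$-type factors — so the bookkeeping of which decay rate is needed where has to be done honestly. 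If one only gets $\partial_y S=\er(Z^{-1})$ then $\theta$ still decays (the homogeneous rate $e^{-(2k-1)\tau/2}$ dominates $Z^{-1}\sim e^{-\tau}$ when $2k-1>2$, i.e. $k\geq 2$, which is exactly our standing assumption), and one still concludes, but the borderline case $k=2$ deserves a careful look. I would handle this by first proving $\theta\to0$ exponentially from the $\vert\theta\vert^2$-ODE using only $f'(Z)\geq\tfrac12$ and any polynomial decay of $\partial_y S$, then deducing convergence of $y$ by integrability of $\vert\theta\vert$, and finally citing the standard hyperbolic stable-manifold estimates (as in Proposition \ref{Prop:StableMfd}, \cite[Theorem 9.5]{Teschl}) near the limiting point $(E=0,y_0,0)$ to upgrade to the stated exponential convergence of the full triple.
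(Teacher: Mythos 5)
Your approach is essentially the one the paper takes: show $Z(\tau)\gtrsim e^\tau$, estimate $\partial_y S$ on the front face by Taylor expansion in $E$, run a Gr\"onwall/integrating-factor argument on the $|\theta|^2$-evolution \eqref{eq:ffNormthetaEvol}, and then integrate $y'=\theta^\sharp$. That chain already delivers exponential convergence of the full triple $(E,y,\theta)$, so the proof stands on those steps alone.

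Two remarks. First, the paper does not invoke assumption \eqref{eqn:S assumption} here and is content with $\partial_y S=\er(E)=\er(Z^{-1})$: writing $\nu=\tfrac{2k-1}{2}$, the integrating-factor computation needs only $\nu>1$, i.e.\ $k\geq 2$, which is the standing hypothesis; $k=2$ gives $\nu=\tfrac32$ and is not borderline. Your worry that $\er(Z^{-1})$ might be marginal is therefore unfounded, and your stronger $\er(Z^{-2})$ claim (which does use \eqref{eqn:S assumption} via $\partial_E S=0$ and $\partial_y S^+\equiv 0$) is true but not needed. Second, the concluding appeal to hyperbolic stable-manifold estimates near $(E=0,y_0,0)$ should be dropped: as you yourself compute from Lemma \ref{Lem:CritPoints}, when $S^+_{yy}=0$ the linearisation has $n-1$ eigenvalues $\mu_j^+=0$, so every critical point on the corner is \emph{non}-hyperbolic, and Proposition \ref{Prop:StableMfd} (or \cite[Theorem 9.5]{Teschl}) does not apply. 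Fortunately you do not need it: once $|\theta(\tau)|\leq Ce^{-\delta\tau}$, you get $|y(\tau)-y_0|\leq\int_\tau^\infty |\theta|\,ds\leq \tfrac{C}{\delta}e^{-\delta\tau}$, and $E=1/Z\lesssim e^{-\tau}$, so the exponential rate for the whole triple is already in hand. Strike the last sentence and the proof is fine as written, and it is the paper's argument.
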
  
\begin{proof}
From \eqref{eq:Z'ff} we know $Z'=f(Z)\sim Z\implies Z(\tau)\geq ce^\tau$ for some constant $c>0$ and all sufficiently large $\tau$. Since $S$ is smooth on the front face, we can Taylor expand around $E=0$ and find $S=S^++\er(E)=S^+ + \er(Z^{-1})$. Since $\partial_y S^+=0$, we get $\partial_y S=\er(Z^{-1})$. 
Since $f'(Z)\to 1$, we can choose $\tau$ large enough that $f'(Z)\geq \frac{1}{2}$. The evolution of $\vert \theta\vert^2$ \eqref{eq:ffNormthetaEvol} can then be estimated as
\[ \frac{d}{d\tau} \vert \theta\vert^2 \leq -(2k-1)\vert \theta\vert^2 +2Ce^{-\tau} \vert \theta\vert .\]
Dividing by $\vert \theta\vert$ and introducing $\nu=\frac{2k-1}{2}$, we therefore find
\[\frac{d}{d\tau} \left( e^{\nu\tau} \vert \theta\vert\right)\leq Ce^{(\nu-1)\tau},\]
which integrates to 
\[\vert \theta\vert(\tau)\leq  e^{-\nu(\tau-\tau_0)}\vert\theta\vert(\tau_0)+\frac{C}{\nu-1} e^{-\tau}.\]
This shows that $\theta\to 0$ exponentially. Hence $y$ also converges exponentially by \eqref{eq:y'ff}.  
\end{proof}

\subsection{Hamiltonian for the front face dynamics}

We note as a curiosity that the limiting boundary dynamics can be written as time-dependent Hamiltonian system for $y,\theta$. Indeed, we first observe that the radial motion completely decouples and one can solve for $Z(\tau)$ or $E(\tau)$. We next observe that choosing $\psi(\tau)$ to satisfy 
\[\psi'(\tau)=(2k-1)f'(Z), \quad \psi(0)=0,\]
we achieve
\[e^{\psi}(\theta'+(2k-1)f'(Z) \theta)=(e^{\psi}\theta)'.\]
We can in fact find $\psi$ by using the $Z'$-equation to write 
\[\psi'=(2k-1)f'(Z)=(2k-1)\frac{f'(Z)}{f(Z)} Z'=(2k-1)\frac{d}{d\tau} \ln f(Z(\tau)),\]
hence
\[e^{\psi}=f(Z)^{2k-1}.\]
Rescale the momentum and time once more, setting 
\[\Theta\coloneqq e^{\psi} \theta\]
and
\[\frac{ds}{d\tau}\coloneqq e^{-\psi(\tau)},\quad s(0)=0.\]
We will use a dot for the $s$-derivative\footnote{Since we have not used the original $t$-derivative in a while, this should not cause any confusion.} $\dot{y}\coloneqq \frac{dy}{ds}$ and so on. We then find
\[\dot{y}=\Theta^\#=\mathcal{F}_{\Theta}\]
\[\dot{\Theta}=-\mathcal{F}_{y},\]
where 
\[\mathcal{F}=\mathcal{F}(s,y,\Theta)\coloneqq \frac{1}{2}\left(e^{2\psi} S(y)+\vert \Theta\vert^2\right)=\frac{1}{2}\left(\dot{\tau}(s)^2 S(y)+\vert \Theta\vert^2\right).\]
This is the time-dependent Hamiltonian announced at the outset. 
This is analogous to the way that in the singular case $\eps=0$ the front face dynamics is given by a damped Hamiltonian system with potential $S$, see \cite[Eq. (1.7)]{GrGr15}.

As a concrete example, if $w=\sqrt{z^2+\eps^2}$, then $Z(\tau)=\sinh(\tau)$, $f(Z)=\cosh(\tau)$ and  $f'(Z(\tau))=\tanh(\tau)$.

\appendix
\section{Examples}
\label{Section:GeneratingExamples}
Our general setting arises in embedded examples generalizing \eqref{eqn:example}, as follows. Let $Y\subset\R^{N-1}$ be a smooth closed submanifold of dimension $n-1$. For $w=w(\eps,z)$ as in the main text, let
\[M_{\eps} = \Psi(I\times Y) 
\]
where 
\begin{alignat}{3}
\label{eqn:def beta}
\Psi: \R \times & \R^{N-1} \to \R  \times && \R^{N-1},\quad (z,y) \mapsto (z,w(\eps,z)^k y)
\\
\cup &   && \!\!\!\!\cup  \notag
\\
I \times & Y &&  \!\!\!\!\! M_\eps \notag
\end{alignat}
We recover Example \eqref{eqn:example} by choosing $n=2$, $N=3$, $Y=\left\{(u,v)\,\colon\,  u^2+\frac{v^2}{1-\delta^2}=1\right\}$ an ellipse, and $w^{2k}=\eps^{2k}+z^{2k}$.

Since $\Psi$ is regular except at $(\eps,z)=(0,0)$, the set $M_\eps$ is a smooth submanifold if $\eps>0$. Also, $M_0$ is smooth except for a cuspidal singularity at the origin.

\newcommand{\eucl}{{\mathrm{eucl}}}

On $M_\eps$ we consider the metric induced by the Euclidean metric $g_\eucl$ on $\R^N$. 
We denote its pull-back to $\R\times Y$ by $g_\eps$. 
Since $g_\eps$ is also the restriction of $\Psi^*g_\eucl$ to $I\times Y$, it is in fact a two-tensor on all of $I\times Y$ for all $\eps\in[0,1)$, including at $z=0$, $\eps=0$.\footnote{This tensor has some degree of differentiability, but in general is not smooth, i.e.\ $C^\infty$, since $w$ is not smooth at $(0,0)$. The smoothness can be described most precisely in terms of the blown-up space $X$ introduced in Section \ref{sec:setting}.}

Note that $I\times Y$ is independent of $\eps$, hence smooth even at $\eps=0$.
The fact that $M_0$ has a singularity at the origin is reflected by the fact that $g_0$ is degenerate (not positive definite) at $z=0$.

\begin{Prop}
Let $M_{\eps}\subset \R^{N}$ be the family of manifolds defined above and let $g_{\eps}$ be the metric induced from the Euclidean metric on $\R^{N}$. Then there is a change of $z$-coordinate such that $g_{\eps}$ has the form \eqref{eq:MetricAnsatz} with $\kappa=2k-2$. The function $S$ is
\[S(z,y,\eps) = a(\eps,z)\, \vert y\vert^2 +\mathcal{O}(w^{k-1})\,,\ a = k\left( w w_{zz} + (k-1) w_z^2\right)\]  
where $|y|$ is the Euclidean norm of $y\in Y\subset \R^{N-1}$. Furthermore, the mixed term $w^{2k} b\, dz$ of \eqref{eq:MetricAnsatz} vanishes on the waist $z=0$.
\end{Prop}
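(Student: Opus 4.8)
The plan is to compute the induced metric explicitly in the parametrisation $\Psi$, observe that it already differs from the normal form \eqref{eq:MetricAnsatz-general1} only in the \emph{order} of its mixed term, and then remove that single defect by an explicit, $y$-dependent change of the vertical coordinate. First I would write Euclidean coordinates on $\R^N$ as $(z,u)$ with $u\in\R^{N-1}$; since $\Psi(z,y)=(z,w^k y)$ with $w=w(\eps,z)$ we have $du^i=k w^{k-1}w_z\,y^i\,dz+w^k\,dy^i$, and a short computation should give, on all of $I\times Y$,
\[
g_\eps=\bigl(1+k^2w^{2k-2}w_z^2\,|y|^2\bigr)\,dz^2+2k\,w^{2k-1}w_z\,\ip{y}{dy}\,dz+w^{2k}h_0,
\]
where $h_0$ is the metric induced on $Y$ from $\R^{N-1}$, $|y|$ the Euclidean length of the position vector and $\ip{y}{\cdot}$ is restricted to $TY$. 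Writing $S_0$ for the function $|y|^2$ restricted to $Y$, one has $\ip{y}{dy}=\tfrac12\,dS_0$. Comparing with \eqref{eq:MetricAnsatz-general1}: the $dz^2$-coefficient is already of the form $1+\er(w^{2k-2})$ and the last term already has the exact prefactor $w^{2k}$, so $\kappa=2k-2$ is forced, and the \emph{only} obstruction is that the mixed term carries the power $w^{2k-1}$ rather than $w^{2k}$; equivalently, the naive choice $b=\tfrac{k w_z}{2w}\,dS_0$ is not smooth on the blow-up $X$ (it blows up like $\eps^{-1}$ near $\ff$).

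To cure this I would introduce $\bar z\coloneqq z+\tfrac k2\,w^{2k-1}w_z\,S_0(y)$, keeping $y$ fixed. Since $w^{2k-1}w_z$ vanishes at $(\eps,z)=(0,0)$ and $\partial\bar z/\partial z=1+\er(w^{2k-2})$, the map $(z,y)\mapsto(\bar z,y)$ is a diffeomorphism on a neighbourhood of the waist $\{z=0\}$, which it fixes (because $w_z=0$ there). Solving $dz$ from $d\bar z=\mu\,dz+\tfrac k2 w^{2k-1}w_z\,dS_0$, $\mu=1+\tfrac k2(w^{2k-1}w_z)_z S_0$, and substituting, I expect to obtain
\[
g_\eps=\frac{A}{\mu^2}\,d\bar z^2+\frac{2c(\mu-A)}{\mu^2}\,d\bar z\cdot dS_0+\Bigl(w^{2k}h_0+\frac{c^2(A-2\mu)}{\mu^2}\,dS_0^2\Bigr),
\]
with $A=1+k^2w^{2k-2}w_z^2S_0$ and $c=\tfrac k2 w^{2k-1}w_z$. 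The last bracket is $w^{2k}\bigl(h_0+\er(w^{2k-2})\bigr)$, hence of the form $w^{2k}h$ for a metric $h$ on $Y$; the coefficient $A/\mu^2$ will play the role of $1-w^\kappa S$ and $\tfrac{2c(\mu-A)}{\mu^2}\,dS_0$ that of $2w^{2k}b$.

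Then I would read off the coefficients. Using $(w^{2k-1}w_z)_z=(2k-1)w^{2k-2}w_z^2+w^{2k-1}w_{zz}$ one finds $\mu-A=\tfrac k2 w^{2k-2}(w w_{zz}-w_z^2)S_0$ and
\[
\frac{A}{\mu^2}=1-w^{2k-2}\,k\bigl(w w_{zz}+(k-1)w_z^2\bigr)S_0+\er(w^{4k-4}),
\]
which is exactly the claimed form, with $\kappa=2k-2$, $S=a\,|y|^2+\er(w^{k-1})$ and $a=k\bigl(w w_{zz}+(k-1)w_z^2\bigr)$. The new mixed coefficient comes out $\tfrac{k^2}{2}w^{4k-3}w_z(w w_{zz}-w_z^2)S_0$ plus higher order, so $b=\er(w^{2k-3}w_z)\,dS_0$; for $k\ge2$ this is smooth on $X$ (the factors $w_z$ and $w w_{zz}$ are homogeneous of degree $\le 0$, hence smooth on the blow-up, and $2k-3\ge1$), and it vanishes on the waist $\{z=0\}$ since it is a multiple of $w_z$. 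Finally I would check that in the new coordinate the scaling function may be taken to be $\tilde w(\eps,\bar z)\coloneqq w(\eps,\bar z)$, which is again $1$-homogeneous and satisfies \eqref{eqn: w monotone} and \eqref{eqn:w norm eps=0}; since $w(\eps,z)=\tilde w(\eps,\bar z)+\er(\tilde w^{2k})$, this replacement produces only $\er(\tilde w^{2k-2})$ corrections, which are absorbed into $S$ and $h$. Property \eqref{eqn:S assumption} at $\ff\cap M_\pm$ would then be verified separately from the explicit form of $a$ near $E=0$, as in Lemma \ref{Lem:ExampleS}.

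The conceptual content is small once one guesses the $y$-dependent shift, so the hard part will be the order bookkeeping of the last step: one has to be sure that (i) the $d\bar z^2$-correction lands exactly at order $w^{2k-2}$, with the precise constant $k(w w_{zz}+(k-1)w_z^2)$ rather than the $-k^2w_z^2$ one obtains \emph{before} the coordinate change — i.e.\ that the shift genuinely reorganises the $dz^2$ term; (ii) the leftover $dS_0^2$ term is of order $w^{2k}$ and no worse, so that it can be hidden inside $h$; and (iii) the residual mixed term is $\er(w^{2k})$ and still carries a factor $w_z$, so that it vanishes at the waist. A secondary, easy-once-noticed point is the replacement of the now $y$-dependent $w(\eps,z)$ by $\tilde w(\eps,\bar z)$ in order to keep a genuine $1$-homogeneous scaling function; this costs only higher-order modifications of $S$ and $h$, which is precisely what the stated error $\er(w^{k-1})$ allows.
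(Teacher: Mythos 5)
Your proof is correct and takes essentially the same route as the paper: compute the induced metric in the parametrisation $\Psi$, observe that the only defect relative to the ansatz \eqref{eq:MetricAnsatz} is the mixed term being of order $w^{2k-1}$ rather than $w^{2k}$, and remove it by a $y$-dependent shift $z\mapsto z+c(\eps,z)\,|y|^2$ of the vertical coordinate, then read off $S$, $b$, $h$ to the stated orders. The one small point of divergence is the normalisation of the shift: you take $c=\tfrac{k}{2}w^{2k-1}w_z=\tfrac12 WW_z$, while the paper sets $\zeta=z+B$ with $B=WW_z|y|^2$. Your $c$ is the one that actually cancels the mixed term to leading order and reproduces the stated $a=k(ww_{zz}+(k-1)w_z^2)$; the paper's $B$ appears to carry a spurious factor of $2$ in its definition (or equivalently in how $B_y\cdot dy$ is meant), although the final formula $\Atilde=1-WW_{\zeta\zeta}|y|^2+\ldots$, and hence the proposition, is stated correctly. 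Your bookkeeping of the three orders you flag is also right: $A/\mu^2=1-w^{2k-2}a|y|^2+\mathcal{O}(w^{4k-4})$ lands exactly at order $2k-2$; the leftover $dS_0^2$ piece is $\mathcal{O}(w^{4k-2})=w^{2k}\cdot\mathcal{O}(w^{2k-2})$ and hence absorbable in $h$; and $b=\mathcal{O}(w^{2k-3})\,w_z\,dS_0$ is smooth on $X$ for $k\ge 2$ and vanishes at the waist because of the factor $w_z$. The final remark about replacing the $y$-dependent $w(\eps,z)$ by $\tilde w(\eps,\bar z)$ is a genuine point that the paper handles by observing $w(\eps,z)=w(\eps,\zeta)+O_{2k-1}$; your estimate is slightly off in the stated exponent, but the conclusion — corrections of relative order $\mathcal{O}(w^{2k-2})$, absorbable into $S$ and $h$ within the claimed $\mathcal{O}(w^{k-1})$ error — is correct.
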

Note that $Y$ can be considered as the cross-section of $M_0$ at $z=1$, and then $|y|$ is the distance from $y\in Y$ to the $z$-axis.

\begin{proof}
Fix $\eps>0$ throughout. 
Denote coordinates on the ambient space of $M_\eps$, i.e.\ the image space $\R\times\R^{N-1}$ in \eqref{eqn:def beta}, by $(z,u)$. We first calculate the pull-back of the Euclidean metric $dz^2+|du|^2$ under the map \eqref{eqn:def beta}. We denote this metric on $\R\times\R^{N-1}$ also by $g_\eps$. In the end we will restrict $g_\eps$ to $\R\times Y$.
We write $W=w^k$.
From $\Psi^*du = W_z y\,dz + W\,dy$ we get
$$ g_\eps \coloneqq \Psi^*(dz^2 + |du|^2) = A\,dz^2 + 2 dz\,B_y\cdot dy + W^2 \,|dy|^2 $$
where
$$ A = 1+ W_z^2|y|^2,\quad B =  W W_z\, |y|^2.$$
This has almost the desired form except that $B_y$ only vanishes like $w^{2k-1}$ as $w\to0$ instead of $w^{2k}$. We will see that we can remedy this by replacing the $z$-coordinate by
$$ \zeta = z + B.$$
We define $\Xtilde$ as the blow-up of $[0,1)_\eps\times \R_z\times\R^{N-1}$ in $\{z=\eps=0\}=\{(0,0)\}\times\R^{N-1}$, in analogy to the space $X$ defined in Section \ref{sec:setting}.
Note that $B$ is homogeneous of degree $2k-1$ in $(\eps,z)$ and therefore smooth on  $\Xtilde$, vanishing to order $2k-1$ on the front face.
Using $2k-1\geq2$ one sees easily (for example by a calculation in projective coordinates) that the coordinate change from $(\eps,z,y)$ to $(\eps,\zeta,y)$ induces a diffeomorphism of $\Xtilde$%
, with inverse
$$ z = \zeta - B + O_{4k-3}$$
where we denote by $O_l$ a smooth function or tensor (in terms of the basis $dz$ resp. $d\zeta$ and $dy$) on $\Xtilde$ vanishing to order $l$ at the front face. This is an identity between functions on $\Xtilde$.


\newcommand{\Atilde}{\tilde{A}}
\newcommand{\Btilde}{\tilde{B}}

Inserting $dz = (1-B_\zeta)\,d\zeta - B_y\cdot dy + O_{4k-4}$ into $g_\eps$ 
(with partial derivatives taken in the $(\eps,\zeta,y)$ coordinate system on $\Xtilde$)
we get
after a short calculation that 
$g_\eps = \Atilde \,d\zeta^2 + 2 d\zeta \Btilde\cdot dy + W^2 h$ where
\begin{align*}
\Atilde &= A(1-B_\zeta)^2 + O_{4k-4}
\\
\Btilde &= \left[(1-A)(1-B_\zeta) + O_{4k-4}\right] B_y
\\
h & =  |dy|^2 + (A-2) (B_y\cdot dy)^2 + O_{6k-5} 
\end{align*}
Here we used that $B$ and $B_y$ are $O_{2k-1}$. 
From $1-A=-W_z^2|y|^2 = O_{2k-2}$ we get $\Btilde = O_{4k-3}$, which is even better than the desired order $2k$ of vanishing.
To calculate $\Atilde$, first note that $W_z=\frac{\partial\zeta}{\partial z}W_\zeta= (1+O_{2k-2})W_\zeta$, which yields
$$ \Atilde = 1 - W W_{\zeta\zeta} |y|^2 + O_{3k-3}.$$
Finally, we insert $W=w^k$. Note that the function $w$, when written in $(\eps,\zeta,y)$-coordinates, is not 1-homogeneous in $(\eps,\zeta)$, and also depends on $y$. However, this is irrelevant since 
$$ w(\eps,z) = w(\eps,\zeta- B + O_{4k-3}) = w(\eps,\zeta) + O_{2k-1} $$
where $(\zeta,\eps)\mapsto w(\zeta,\eps)$ is 1-homogeneous and independent of $y$.
To get the statement of the proposition, we change notation from $\zeta$ to $z$ and restrict $\Atilde$, $\Btilde$ and $h$ to $\R\times Y$.

Before changing coordinates, the mixed term reads $2 dz B_y\cdot dy$, where
\[B=W_zW  \vert y\vert^2.\]
Since $z=0$ is a minimum for $W$, $W_z(z=0)=0$. Changing $z$-coordinate to 
\[\zeta=z+B,\]
the new mixed term has the form $2d\zeta \tilde{B}_y\cdot dy$, with
\[\tilde{B}=\left[(1-A)(1-B_{\zeta})+O_{4k-4}\right]B_{y}.\]
Since $\zeta(z=0)=0$ and $B_y(z=0)=0$, also $\tilde{B}$ vanishes at $\zeta=z=0$.
\end{proof}

\begin{Rem}
It is natural to ask if the result remains true if one uses a different metric on $\R^N$. 
As shown in \cite{BeyGri:IGIC} (for the singular space at $\eps=0$), this is the case if the $z$-axis $\R\times \{0\}^{N-1}\subset \R^{N}$ is a geodesic. 
With other metrics on $\R^{N}$, one can get other values of $\kappa$ in the range $k\leq \kappa\leq 2k-2$. We expect that the corresponding statement still holds in the $\eps$-dependent case.
\end{Rem}
We now check assumption \eqref{eqn:S assumption}.
\begin{Lem}
\label{Lem:ExampleS}
When $w=\sqrt[p]{\eps^p+\vert z\vert^p}$, $S_{\vert \ff}$ satisfies 
\[\partial_Z S_{\vert \ff}=\mathcal{O}(Z^{-1-p}),\quad \vert Z\vert \to \infty,\]
and
\[S_{\vert \ff\cap M_{\pm}}(y)=k(k-1)\vert y\vert^2,\]
where $\vert y\vert^2$ is the squared distance from $Y$ to the $z$-axis in $\R^{N}$ as above. 

\end{Lem}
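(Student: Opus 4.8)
The plan is to start from the explicit formula for $S$ obtained in the previous Proposition, namely
\[ S(z,y,\eps) = a(\eps,z)\,\vert y\vert^2 + \mathcal{O}(w^{k-1}),\quad a = k\bigl(ww_{zz}+(k-1)w_z^2\bigr), \]
and to trace how $a$ behaves on the front face, both in the interior (in the $Z$-coordinate, as $\vert Z\vert\to\infty$) and at the corner $\ff\cap M_\pm$ (in the $E$-coordinate, at $E=0$). The key point is that for $w=w_p=\sqrt[p]{\eps^p+\vert z\vert^p}$ one has very explicit expressions for $w_z$ and $w_{zz}$, and hence for $a$. Concretely, for $z>0$, $w_z = (z/w)^{p-1}$ and a short computation gives $w_{zz} = (p-1)z^{p-2}w^{-1}(1-(z/w)^p) = (p-1)z^{p-2}w^{1-p}\,(\eps/w)^p$, so that $ww_{zz} = (p-1)(z/w)^{p-2}(\eps/w)^p$. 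In the projective coordinate $Z=z/\eps$ we have $w/\eps = f(Z) = (1+Z^p)^{1/p}$, and $z/w = Z/f(Z)$, $\eps/w = 1/f(Z)$, so
\[ a = k\Bigl( (p-1)\,\tfrac{Z^{p-2}}{f(Z)^{2p-2}} + (k-1)\,\tfrac{Z^{2p-2}}{f(Z)^{2p-2}}\Bigr) = \frac{k}{f(Z)^{2p-2}}\bigl((p-1)Z^{p-2} + (k-1)Z^{2p-2}\bigr). \]

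First I would compute the behaviour as $Z\to\infty$. Since $f(Z)^{2p-2} = (1+Z^p)^{(2p-2)/p} = Z^{2p-2}(1+Z^{-p})^{(2p-2)/p}$, the leading term of $a$ is $k(k-1)$, and expanding $(1+Z^{-p})^{-(2p-2)/p} = 1 - \tfrac{2p-2}{p}Z^{-p} + O(Z^{-2p})$ together with $(p-1)Z^{p-2}+(k-1)Z^{2p-2} = (k-1)Z^{2p-2}(1 + \tfrac{p-1}{k-1}Z^{-p})$ shows that
\[ a(Z) = k(k-1) + c\,Z^{-p} + O(Z^{-2p}) \]
for some constant $c$ (one may read off $c = k(p-1) - k(k-1)\tfrac{2p-2}{p}\cdot\frac{k-1}{k-1}$, but the precise value is irrelevant), i.e.\ $\partial_Z a = O(Z^{-1-p})$. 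Then I would argue that the $\mathcal{O}(w^{k-1})$ remainder in $S$, restricted to $\ff$, is a smooth function of $1/Z$ (equivalently of $E$) vanishing at $E=0$ when $k\ge2$; one has to check that $\partial_Z$ of it is also $O(Z^{-1-p})$ — this is the only place where a little care is needed, and it follows from the fact that the remainder, being smooth on the blow-up $X$ and homogeneous-like in the relevant sense, is a smooth function of $E=1/Z$ and thus $\partial_Z(\,\cdot\,) = -Z^{-2}\partial_E(\,\cdot\,)$, so an extra factor $Z^{-2}$ is gained; combined with $\vert y\vert^2$ being $Z$-independent this gives $\partial_Z S_{\vert\ff} = O(Z^{-1-p})$. (Strictly, one should note $\partial_Z\vert y\vert^2=0$, so the $Z$-derivative hits only $a$ and the remainder.)

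For the corner statement, I would simply evaluate at $E=0$, i.e.\ $Z\to\pm\infty$: from the expansion above, $a\to k(k-1)$, and the $\mathcal{O}(w^{k-1})$ remainder vanishes on $\ff\cap M_\pm$ for $k\ge 2$ (it is $O(w^{k-1})$ and $w=0$ there; more precisely its restriction to the front face is $O(E^{k-1})$, hence $0$ at $E=0$). Therefore $S_{\vert\ff\cap M_\pm}(y) = k(k-1)\vert y\vert^2$. Finally I would recall from the remark following the previous Proposition that $Y$ is the cross-section of $M_0$ at $z=1$, so $\vert y\vert$ is indeed the Euclidean distance from the corresponding point of $Y$ to the $z$-axis in $\R^N$, which identifies the function with the claimed geometric quantity. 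The computation for $z<0$ is identical by the symmetry $z\mapsto -z$ of $w_p$, giving the statement at $\ff\cap M_-$ with $E_-$ in place of $E$.

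The main obstacle I anticipate is bookkeeping the remainder term $\mathcal{O}(w^{k-1})$ in $S$ precisely enough to conclude that its $Z$-derivative is genuinely $O(Z^{-1-p})$ and not merely $O(Z^{-1})$ or $O(Z^{-2})$; this requires unwinding what "smooth on the blow-up" means in the $(E,z)$ projective chart and checking that on $\ff$ the remainder depends on $E$ only through a specific power (governed by $p$, since all the non-smoothness enters through $w$ and its $z$-derivatives), so that differentiation in $Z$ produces the stated decay. Everything else is the elementary explicit differentiation of $w_p$ and a Taylor expansion at $Z=\infty$, which I would not spell out in full.
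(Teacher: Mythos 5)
Your approach is the same as the paper's: compute $w_z$ and $ww_{zz}$ explicitly for $w_p$, rewrite $a$ in the $Z$-coordinate using $w=\eps f(Z)$ to get $a=k\bigl((p-1)Z^{p-2}+(k-1)Z^{2p-2}\bigr)/f(Z)^{2p-2}$, then expand at $Z=\infty$; the paper states exactly this formula as its equation \eqref{eq:Sonff} and leaves the $O(Z^{-1-p})$ claim as a quick check, which you carry out correctly.

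The one place where you overcomplicate things is the $\mathcal{O}(w^{k-1})$ remainder in $S$. You treat it as a potentially nontrivial function on $\ff$ that "vanishes at $E=0$" and whose $Z$-derivative needs a separate decay estimate. But $w$ is a boundary defining function for the front face, so $w\equiv 0$ on $\ff$; since $k\ge2$, the $\mathcal{O}(w^{k-1})$ remainder vanishes \emph{identically} on $\ff$, not merely at the corner. Hence $S_{\vert\ff}=a_{\vert\ff}\,\vert y\vert^2$ exactly, and the "obstacle" you anticipate does not arise. Your final conclusions are nonetheless correct.
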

In terms of the coordinate $E$ we have $Z=E^{-1}$, so $\partial_Z=-E^{2}\partial_E$, so $\partial_E S_{|\ff} = \mathcal{O}(E^{p-1})$. This implies that assumption \eqref{eqn:S assumption} is satisfied if $p>1$. 
\begin{proof}
To avoid a lot of signum functions, we consider $z>0$. Then
\[w_z=\left(\frac{z}{w}\right)^{p-1},\quad  ww_{zz}=(p-1)\frac{\eps^p z^{p-2}}{w^{2p-2}}.\] 
In terms of the coordinate $Z$, we therefore get 
\begin{equation}
S_{\vert \ff}=k\vert y\vert^2 \frac{Z^{p-2}}{f(Z)^{2p-2}}\left((k-1)Z^p+(p-1)\right)
\label{eq:Sonff}
\end{equation}

for the restriction to the front face. The $Z$-derivative of this is $\mathcal{O}(Z^{-1-p})$, as one readily checks. Restricting to $\ff\cap M_{\pm}$ corresponds to sending $\vert Z\vert\to \infty$. Since $f(Z)\sim \vert  Z\vert$, the claim follows.

\end{proof}
\begin{Rem}
The assumptions of Theorem \ref{Thm:Asymptotics} are satisfied for all these families of examples. Whether $S_{\vert \ff \cap M_{\pm}}$ is Morse or not depends on the cross section $Y$. When the cross section is an ellipse, $S$ is Morse with minima and maxima corresponding to the intersections with the minor and major axes respectively.
\end{Rem}

\subsection{Ellipse as cross section}
\label{Section:Elliptic}
Here we gather a few formulas used for the numerical calculations of our main example \eqref{eqn:example}. Let $W=w^k$ (one could allow for more general functions). Let 

\begin{equation}
M_{\eps}\coloneqq \left\{(u,v,z)\in \R^3 \, \colon\, u^2+\frac{v^2}{1-\delta^2}=W(\eps,z)^2\right\}
\label{eq:Ellipsoidal}
\end{equation}
as before.

We introduce adapted polar coordinates
\[u=W\cos\phi,\quad v=\sqrt{1-\delta^2}W\sin \phi.\]
Then 
\[du^2+dv^2=dW^2+W^2 d\phi^2-\delta^2(\sin\phi \,dW +W \cos \phi \,d\phi)^2,\]
hence
\[g=(1+W_z^2(1-\delta^2\sin^2\phi))dz^2   -2\delta^2W W_z\sin \phi \cos\phi \,dz d\phi +W^2(1-\delta^2 \cos^2\phi) d\phi^2.\]

\begin{Lem}
The Hamiltonian for $g$ reads
\begin{equation}
2\calH=\frac{\eta^2}{W^2(1-\delta^2\cos^2\phi)}+\frac{\left(\xi+\delta^2 \frac{W_z}{W}\frac{\sin\phi \cos\phi}{(1-\delta^2\cos^2\phi)}\eta\right)^2}{1+\frac{W_z^2(1-\delta^2)}{1-\delta^2\cos^2\phi}}.
\end{equation}
\end{Lem}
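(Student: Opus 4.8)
The plan is to obtain the Hamiltonian by specializing the general formula \eqref{eq:ExactHamiltonianEta} (equivalently the dual-metric identity \eqref{eqn:Gstar claim}) to the one-dimensional base $Y=\S^1$, rather than recomputing anything from scratch. Writing the metric displayed just above the statement as $g = A\,dz^2 + 2\,dz\cdot(C\,d\phi) + H\,d\phi^2$ with
\[
A = 1+W_z^2(1-\delta^2\sin^2\phi),\qquad C = -\delta^2 W W_z\sin\phi\cos\phi,\qquad H = W^2(1-\delta^2\cos^2\phi),
\]
the Lemma preceding \eqref{eq:ExactHamiltonianEta} gives $2\calH = \dfrac{\eta^2}{H} + \dfrac1D\left(\xi - \dfrac{C}{H}\eta\right)^2$ with $D = A - \dfrac{C^2}{H}$, since in dimension one $H^\sharp = H^{-1}$, $b^\sharp = (C/H)\partial_\phi$ and $|b|^2_{H^\sharp} = C^2/H$.

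First I would substitute the three coefficients and record the two immediate identifications: $\eta^2/H$ is exactly the first term of the claimed expression, and
\[
-\frac{C}{H} = \frac{\delta^2 W W_z\sin\phi\cos\phi}{W^2(1-\delta^2\cos^2\phi)} = \delta^2\frac{W_z}{W}\frac{\sin\phi\cos\phi}{1-\delta^2\cos^2\phi},
\]
so that $\xi - (C/H)\eta$ is precisely the quantity inside the parentheses in the numerator of the second term. It then only remains to identify the denominator, i.e.\ to show $D = 1 + \dfrac{W_z^2(1-\delta^2)}{1-\delta^2\cos^2\phi}$.

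For this single nontrivial step I would expand $D = A - C^2/H$, factor out $W_z^2$, and put the bracket over the common denominator $1-\delta^2\cos^2\phi$; the numerator is $(1-\delta^2\sin^2\phi)(1-\delta^2\cos^2\phi) - \delta^4\sin^2\phi\cos^2\phi$, in which the $\delta^4$ terms cancel and the $\delta^2$ terms combine, via $\sin^2\phi+\cos^2\phi=1$, to $-\delta^2$, leaving $1-\delta^2$. This is elementary and is the only real computation; I do not expect any genuine obstacle. As a fully self-contained alternative one could bypass \eqref{eq:ExactHamiltonianEta} entirely and just invert the symmetric $2\times2$ matrix $\left(\begin{smallmatrix} A & C\\ C & H\end{smallmatrix}\right)$ by hand, then complete the square in $\xi$; this produces $D=(AH-C^2)/H$ and the same cancellation. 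I would close by noting that $g$ degenerates exactly where $H=0$, i.e.\ at $z=\eps=0$, consistent with the cuspidal degeneration studied throughout.
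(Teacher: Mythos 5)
Your proposal is correct, and it is exactly the paper's intended approach: the paper's own proof of this Lemma consists of the single line ``Same proof as for \eqref{eq:ExactHamiltonianEta},'' i.e.\ specialize the general dual-metric formula, and you have carried out precisely that specialization with the required cancellation $(1-\delta^2\sin^2\phi)(1-\delta^2\cos^2\phi)-\delta^4\sin^2\phi\cos^2\phi = 1-\delta^2$ worked out correctly.
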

\begin{proof}
Same proof as for \eqref{eq:ExactHamiltonianEta}.
\end{proof}

Introduce the auxiliary function $\psi$ via
\[\exp(2\psi)=1-\delta^2 \cos^2\phi.\]
This will help simplify some equations.
The Hamiltonian equations of motion then read 
\[\dot{z}=\frac{\xi+\delta^2 \frac{W_z}{W}\frac{\sin\phi \cos\phi}{(1-\delta^2\cos^2\phi)}}{1+\frac{W_z^2(1-\delta^2)}{1-\delta^2\cos^2\phi}}=\frac{\xi e^{2\psi}+\delta^2 \frac{W_z}{W}\sin\phi\cos\phi}{e^{2\psi}+W_z^2(1-\delta^2)},\]
\[\dot{\phi}=\frac{1}{W(1-\delta^2\cos^2\phi)}\left(\frac{\eta}{W}+\delta^2 W_z \sin\phi\cos\phi \dot{z}\right)=\frac{\eta}{W^2} e^{-2\psi}+\frac{W_z}{W}\dot{z} \psi_{\phi},\]
\[\dot{\xi}= \frac{e^{-2\psi}\eta^2}{W^2} \frac{W_z}{W}-\psi_{\phi}\eta \left(\frac{W_z}{W}\right)_z \dot{z}+W_z W_{zz} (1-\delta^2)e^{-2\psi}\dot{z}^2,\]
\begin{align*}\dot{\eta}&=\psi_{\phi}\frac{e^{-2\psi}\eta^2}{W^2}-\frac{W_z}{W}\psi_{\phi\phi}\eta \dot{z}-W_z^2(1-\delta^2)e^{-2\psi}\psi_{\phi} \dot{z}^2\\
& =\psi_{\phi}\frac{e^{-2\psi}}{W^2(1+W_z^2(1-\delta^2)e^{-2\psi})}\left(\eta^2-1\right)-\frac{W_z}{W}\psi_{\phi\phi}\eta \dot{z}.
\end{align*}

If one prefers the Lagrangian picture, the equations are
\[\ddot{z}+\frac{(1-\delta^2)W_z}{e^{2\psi}+W_z^2(1-\delta^2)}\left(W_{zz}\dot{z}^2-W\dot{\phi}^2\right)=0\]
\[\ddot{\phi}+ \frac{\psi_{\phi}}{e^{2\psi}+W_z^2(1-\delta^2)}\left(\dot{\phi}^2-\frac{W_{zz}}{W} \dot{z}^2\right) +2\frac{W_z}{W}\dot{z}\dot{\phi}=0.\]

The plots shown in Figure \ref{fig:WindingExamples} and \ref{fig:EllipsePlots} were produced by numerically solving this coupled ODE using the Euler method and MATLAB.


\begin{thebibliography}{99}

\bibitem[\textsc{ARS21}]{ARS} Pierre Albin, Fr\'{e}d\'{e}ric Rochon,
and David Sher, \emph{Resolvent, heat kernel and torsion under 
degeneration to fibered cusps},  Memoirs of the American Mathematical Society
Volume: 269, (2021). 


\bibitem[\textsc{BeLy07}]{BerLyt:TSGHLSS}
Andreas Bernig and Alexander Lytchak.
\newblock Tangent spaces and {G}romov-{H}ausdorff limits of subanalytic spaces.
\newblock {\em J. Reine Angew. Math.}, 608, pp. 1--15, (2007).

\bibitem[\textsc{BeGr25}]{BeyGri:IGIC}
Antje Beyer and Daniel Grieser, \emph{The inner geometry of incomplete cusps},
manuscript in preparation.


\bibitem[\textsc{BoNa01}]{Dulac}
Patrick Bonckaert and Vincent Naudot, \emph{Asymptotic properties of the Dulac map near a
hyperbolic saddle in dimension three},
Ann. Fac. Sci. Toulouse, 6e série, tome 10,
no 4, pp. 595-617, (2001).


\bibitem[\textsc{BePo82}]{DWarped} John K. Beem and T. G. Powell, \emph{Geodesic Completeness and Maximality in Lorentzian
Warped Products}, Tensor (N. S.), 39, pp. 31--36 (1982).


\bibitem[\textsc{Che87}]{CheegerEta}
Jeff Cheeger, \emph{Eta invariants, the adiabatic approximation and conical singularities.}
J. Differential Geom., 26, pp. 175--211, (1987).

\bibitem[\textsc{Dai06}]{DaiEta}
Xianzhe Dai, \emph{Eta invariants for manifold with boundary}, Analysis, Geometry and Topology of Elliptic Operators, pp. 141--172, (2006).

\bibitem[\textsc{DaMe12}]{DaiMel}
Xianzhe Dai and Richard ~B. Melrose, \emph{Adiabatic Limit, Heat Kernel and Analytic Torsion}, In: Dai, X., Rong, X. (eds) Metric and Differential Geometry. Progress in Mathematics, vol 297, pp. 233--298 Birkhäuser, (2012).

\bibitem[\textsc{DoMcgo95}]{hyperbolic}
Jozef Dodziuk and Jeffrey Mcgowan, \emph{The Spectrum of the Hodge Laplacian for a Degenerating Family of Hyperbolic Three Manifolds}, Trans. Amer. Math. Soc. 347(6), pp. 1981--1995, (1995).

\bibitem[\textsc{GHL04}]{GHL}
Sylvestre Gallot, Dominique Hulin and Jacques Lafontaine,
\textit{Riemannian Geometry (Third Edition)},
Springer, (2004).

\bibitem[\textsc{GrGr15}]{GrGr15}
Vincent Grandjean and Daniel Grieser,
\textit{The exponential map at a cuspidal singularity},
    Journal f\"ur die reine und angewandte Mathematik, Volume 2018, Issue 736,  pp. 33--67. (2015).
    
    \bibitem[Gri01]{Gri:BBC}
Daniel ~Grieser, \emph{Basics of the {$b$}-calculus}, In J.~Gil, D.~Grieser, and M.~Lesch, editors, {\em Approaches to
  Singular Analysis}, Advances in Partial Differential Equations, pp. 30--84,  Birkh\"auser, (2001).



\bibitem[\textsc{Gri11}]{Gri:NDOCS}
Daniel Grieser, \emph{A natural differential operator on conic spaces},
{\em Discrete Contin. Dyn. Syst.}, Dynamical systems, differential
  equations and applications. 8th AIMS Conference. Suppl. Vol. I, pp. 568--577,
  (2011).
  
\bibitem[\textsc{Gri17}]{GriQuasimodes}
  Daniel Grieser, \emph{Scales, blow-up and quasimode constructions}, in Geometric and Computational Spectral Theory, Contemporary Mathematics, AMS, (2017). 
  
\bibitem[\textsc{GrJe07}]{GriJer}
Daniel Grieser and David Jerison, \emph{Asymptotics of eigenfunctions on plane domains},
    Pacific Journal of Mathematics 240(1), pp. 109--133, (2007).  
    
\bibitem[\textsc{GrLy24}]{GriLye}
Daniel Grieser and Jørgen Olsen Lye,
\textit{Geodesics orbiting a singularity},
J. Geom. 115, 1, (2024).


\bibitem[\textsc{Gro92}]{Gromov}
Mikhael Gromov, \emph{Spectral geometry of semi-algebraic sets}, Ann. Inst. Fourier, 42, pp. 249--274, (1992).



\bibitem[\textsc{MaMe90}]{MazMel}
Rafe R. Mazzeo and  Richard B. Melrose, \emph{The adiabatic limit, Hodge cohomology and Leray's spectral sequence for a fibration},
J. Differential Geom. 31(1): pp. 185--213, (1990).


\bibitem[Mel96]{Mel:DAMWC}
Richard ~B. Melrose, \emph{Differential analysis on manifolds with corners}, Book in preparation. http://www-math.mit.edu/$\sim$rbm/book.html, (1996).

\bibitem[\textsc{Mel08}]{Mel:RBIASS}
Richard ~B. Melrose, \emph{Real blow ups: Introduction to analysis on singular spaces}, Notes for lectures at MSRI, http://www-math.mit.edu/$\sim$rbm/InSisp/InSiSp.html, (2008).

\bibitem[\textsc{MeWu04}]{MeWu:Geo}
Richard B. Melrose and Jared Wunsch, \emph{Propagation of singularities for the wave equation on conic manifolds}, Inventiones mathematicae, volume 156, pp. 235–299, (2004).

\bibitem[\textsc{Mil63}]{MilnorMorse}
John Milnor, \emph{Morse Theory}, Princeton University Press, (1963).

\bibitem[\textsc{Sam79}]{Sam79} V. S. Samovol, \emph{Linearization of systems of differential equations in the neighborhood of invariant toroidal
manifolds}, Proc. Moscow Math. Soc. 38, pp. 187--219, (1979).

\bibitem[\textsc{See92}]{SeeleyConic} Robert Seeley, \emph{Conic degeneration of the Gauss-Bonnet operator}, J. Anal. Math. 59, pp. 205--215, (1992).

\bibitem[\textsc{Sma63}]{Smale} Stephen Smale, \emph{Stable manifolds for differential equations and diffeomorphisms}, Annali della Scuola Normale Superiore di Pisa, Classe di Scienze 3e série, tome 17,
no 1-2, pp. 97--116, (1963).

\bibitem[\textsc{Sto82}]{Sto:EMISP}
David~A. Stone.
\newblock {\em The exponential map at an isolated singular point}.
\newblock Number 256 in Memoirs of the American Mathematical Society. American
  Mathematical Society, (1982).
  
\bibitem[\textsc{Tes11}]{Teschl}
Gerald Teschl, \emph{Ordinary Differential Equations and Dynamical Systems}, AMS, (2011).

\bibitem[\textsc{Won43}]{Won43} Yung-Chow Wong, \emph{Some Einstein Spaces with Conformally Separable Fundamental Tensors}, Trans. Am. Math. Soc., Vol. 53, No. 2, pp. 157--194 (1943).

\bibitem[\textsc{Yan40}]{Yan40} Kentaro Yano, \emph{Conformally separable quadratic differential forms}, Proc. Imp. Acad. 16(3): pp. 83--86 (1940).


  

\bibitem[\textsc{Yos97}]{Yoshikawa}
Ken-Ichi Yoshikawa, \emph{Degeneration of Algebraic Manifolds and the Spectrum of Laplacian}, Nagoya Mathematical Journal 146, pp. 83--129, (1997). 




\end{thebibliography}
\end{document}